\newtheorem{theorem}{Theorem}[section]
\newtheorem{lemma}[theorem]{Lemma}
\newtheorem{proposition}[theorem]{Proposition}
\newtheorem{corollary}[theorem]{Corollary}
\theoremstyle{definition}
\newtheorem{remark}[theorem]{Remark}
\theoremstyle{definition}
\newtheorem{definition}[theorem]{Definition}
\theoremstyle{definition}
\theoremstyle{definition}
\newtheorem{example}[theorem]{Example}
\def\Acal{\mathcal{A}}\def\Bcal{\mathcal{B}}\def\Ccal{\mathcal{C}}\def\Dcal{\mathcal{D}}\def\Ecal{\mathcal{E}}\def\Fcal{\mathcal{F}}\def\Gcal{\mathcal{G}}\def\Hcal{\mathcal{H}}\def\Ical{\mathcal{I}}\def\Jcal{\mathcal{J}}\def\Kcal{\mathcal{K}}\def\Lcal{\mathcal{L}}\def\Mcal{\mathcal{M}}\def\Ncal{\mathcal{N}}\def\Ocal{\mathcal{O}}\def\Pcal{\mathcal{P}}\def\Qcal{\mathcal{Q}}\def\Rcal{\mathcal{R}}\def\Scal{\mathcal{S}}\def\Tcal{\mathcal{T}}\def\Ucal{\mathcal{U}}\def\Vcal{\mathcal{V}}\def\Wcal{\mathcal{W}}\def\Xcal{\mathcal{X}}\def\Ycal{\mathcal{Y}}\def\Zcal{\mathcal{Z}}
\def\abf{\mathbf{a}}\def\bbf{\mathbf{b}}\def\cbf{\mathbf{c}}\def\dbf{\mathbf{d}}\def\ebf{\mathbf{e}}\def\fbf{\mathbf{f}}\def\gbf{\mathbf{g}}\def\hbf{\mathbf{h}}\def\ibf{\mathbf{i}}\def\jbf{\mathbf{j}}\def\kbf{\mathbf{k}}\def\lbf{\mathbf{l}}\def\mbf{\mathbf{m}}\def\nbf{\mathbf{n}}\def\obf{\mathbf{o}}\def\pbf{\mathbf{p}}\def\qbf{\mathbf{q}}\def\rbf{\mathbf{r}}\def\sbf{\mathbf{s}}\def\tbf{\mathbf{t}}\def\ubf{\mathbf{u}}\def\vbf{\mathbf{v}}\def\wbf{\mathbf{w}}\def\xbf{\mathbf{x}}\def\ybf{\mathbf{y}}\def\zbf{\mathbf{z}}
\def\afr{\mathfrak{a}}
\def\Sfr{{ \mathfrak{S}}}
\def\one{{\mathbbm{1}}}
\def\C{\mathbb{C}}
\def\R{\mathbb{R}}
\def\N{\mathbb{N}}
\def\Z{\mathbb{Z}}
\def\Q{\mathbb{Q}}
\def\P{\mathbb{P}}
\def\Ahat{\hat{A}} 
\def\ipar{{(i)}}
\def\kpar{{(k)}}
\newcommand\parr[1]{{({#1})}}
\def\<{{\langle}}
\def\>{{\rangle}}
\def\e{{\epsilon}}
\def\l{{\lambda}}
\def\m{{\mu}}
\def\lm{{\l/\m}}
\def\RP{{\R P}}
\def\CP{{\C P}}
\def\multiset#1#2{\left(\!\left({#1\atopwithdelims..#2}\right)\!\right)}
\def\toi{{\xhookrightarrow{i}}}
\def\weakMap{\leadsto}
\def\Id{\operatorname{Id}}
\def\Vol{\operatorname{Vol}}
\def\det{{ \operatorname{det}}}
\def\tr{{ \operatorname{tr}}}
\def\Ker{{ \operatorname{Ker}}}
\def\Im{{ \operatorname{Im}}}
\def\diag{{ \operatorname{diag}}}
\def\rank{{ \operatorname{rank}}}
\def\rk{{\mathrm{rk}}}
\def\cork{ \operatorname{cork}}
\def\codim{ \operatorname{codim}}
\def\Vert{{ \operatorname{Vert}}}
\def\Edge{{ \operatorname{Edges}}}
\def\op{{ \operatorname{op}}}
\def\type{{\operatorname{type}}}
\def\sh{{ \operatorname{sh}}}
\def\Conv{ \operatorname{Conv}}
\def\Span{ \operatorname{Span}}
\def\proj{ \operatorname{proj}}
\def\TR{{\Delta}}
\def\Cube{{f}}
\def\TS{{\mathcal{S}}}
\def\TC{{\mathcal{O}}}
\def\TP{{\mathbb{P}}}
\def\x{{\xbf}}
\def\i{{\ibf}}
\def\Cyc{{ \Ccal}}
\def\Sol{{ \operatorname{Sol}}}
\def\Pbf{\mathbf{P}}
\def\Cbf{\mathbf{C}}
\def\sgn{ \operatorname{sgn}}%TODO MOVE TO THE TEMPLATE
\def\ut{u}
\def\vt{v}
\def\g{{\tilde{g}}}
\def\Nt{{\tilde{N}}}
\def\Vt{{{V}}}
\def\et{{{e}}}
\def\Et{{{E}}}
\def\Pt{{{P}}}
\def\wtld{{{\wt}}}
\def\wtt{{{w}}}
\def\wind{{ \operatorname{wind}}}
\def\ee{{\Nt}}
\def\gen{h}
\def\pathst{\Pcal}
\def\outg{ {\operatorname{out}_\Nt}}
\def\inc{ {\operatorname{in}_\Nt}}
\def\ubft{{{\ubf}}}
\def\wbft{{{\wbf}}}
\def\vbft{{{\vbf}}}
\def\Pbft{{{\Pbf}}}
\newcommand\pleh[1]{{[h_{#1}]}}
\newcommand\plee[1]{{[e_{#1}]}}
\def\Kbar{{\bar{K}}}
\begin{document}
\numberwithin{equation}{section}

\title{Periodicity and integrability for the cube recurrence}
\author{Pavel Galashin}
\address{Department of Mathematics, Massachusetts Institute of Technology,
Cambridge, MA 02139, USA}
\email{{\href{mailto:galashin@mit.edu}{galashin@mit.edu}}}

\date{\today}

\subjclass[2010]{
Primary: 13F60. Secondary: 37K10, 05E99.
}

\keywords{Cluster algebras, cube recurrence, linear recurrence, Zamolodchikov periodicity, groves, T-system, discrete BKP equation}

\begin{abstract}
Zamolodchikov periodicity is a property of $T$- and $Y$-systems,
arising in the thermodynamic Bethe ansatz. Zamolodchikov integrability was
recently considered as its affine analog in our joint work with P.~Pylyavskyy. Here we prove periodicity and
integrability for similar discrete dynamical systems based on the cube recurrence, also known
as the discrete BKP equation. The periodicity part was conjectured by Henriques
in 2007.
\end{abstract}

\maketitle

\setcounter{tocdepth}{2}
\tableofcontents

\section{Introduction}
 
 The \emph{cube recurrence} is a discrete dynamical system that has been studied under two different guises. It was shown by Miwa~\cite{Miwa} in 1982 that the \emph{$\tau$-function} of the \emph{BKP hierarchy} satisfies a certain recurrence relation (equation~\eqref{eq:cube} below) which has been extensively studied afterwards, see e.g.~\cite{Schief,DM1,DM2,DM3}. The same recurrence relation was introduced by Propp~\cite{Propp} under the name \emph{cube recurrence} and was studied from the point of view of algebraic combinatorics~\cite{FZCube,CS,HS}. 
 
 In our recent work with Pavlo Pylyavskyy~\cite{GP1,GP2,GP3}, we investigated the behavior of \emph{$T$- and $Y$-systems}. These are discrete dynamical systems associated to a directed graph (a \emph{quiver}) $Q$. The celebrated \emph{Zamolodchikov periodicity conjecture}~\cite{Z,RVT,KN,KNS,FZ} states that when $Q$ is a \emph{tensor product} of two finite $ADE$ Dynkin diagrams then both of the systems associated with $Q$ are periodic. This conjecture has been proved by Keller~\cite{K} in 2013. In~\cite{GP1} we gave a complete classification of quivers for which the $T$-system is periodic (which is equivalent to the $Y$-system being periodic), we called these quivers \emph{finite $\boxtimes$ finite quivers}. For the case when $Q$ is a specific orientation of a square grid graph, the $T$-system associated with $Q$ becomes just the \emph{octahedron recurrence} introduced by Speyer~\cite{Sp}. Thus Zamolodchikov periodicity conjecture applied to this case states that the octahedron recurrence in a rectangle is periodic. This fact was shown by Volkov~\cite{Vo} and later by Di Francesco-Kedem~\cite{DK1}. 
 
 If instead of a rectangle one takes the octahedron recurrence in a cylinder, the values at every vertex satisfy a linear recurrence as it was shown by Pylyavskyy in~\cite{P}. In our joined work~\cite{GP2} later we gave a combinatorial formula for the recurrence coefficients in terms of domino tilings of the cylinder, conjectured that the same holds for all \emph{affine $\boxtimes$ finite quivers}, and showed that it does not hold for any other quiver. 
 
 Finally, in~\cite{GP3} we showed that if the $T$-system has zero \emph{algebraic entropy} then the quiver is an \emph{affine $\boxtimes$ affine quiver}, and we deduced the converse for the case of the octahedron recurrence in a torus as a simple consequence of Speyer's matching formula~\cite{Sp}. Moreover, we conjectured that for each of the affine $\boxtimes$ affine quivers, the degrees of the Laurent polynomials appearing in the $T$-system grow quadratically.
 
 In this text, we investigate the cube recurrence from a similar point of view. We show that the cube recurrence \emph{in a triangle} is periodic which has been conjectured by A.~Henriques~\cite{Henriques}. Next, for the cube recurrence \emph{in a cylinder}, we show that the values at every vertex satisfy a linear recurrence. We also give a combinatorial formula for the recurrence coefficients in terms of \emph{groves} introduced by Carroll and Speyer~\cite{CS}. Finally, we show that the cube recurrence \emph{in a torus} has zero algebraic entropy and its degrees grow quadratically. Both of these facts follow immediately from the results of~\cite{CS}.

 \section{Main results}
\subsection{The unbounded cube recurrence}

Let us recall the original definition of the cube recurrence from~\cite{Propp}. Since we will consider several bounded variations, we call it the \emph{unbounded} cube recurrence. For any $m\in\Z$, let $\TP_m=\{(i,j,k)\in\Z^3\mid i+j+k=m\}$ be a triangular lattice in the plane. For the unbounded case, we will concentrate on $\TP_0$. Let $e_{12}=(1,-1,0),e_{23}=(0,1,-1),$ and $e_{31}=(-1,0,1)$ be three vectors in $\TP_0$. For every vertex $v\in\TP_0$, we introduce a variable $x_v$ and we let $\x$ be the set of all these variables. For every vertex $v=(i,j,k)\in\TP_0$, we define its \emph{color} $\e_v\in\{0,1,2\}$ by $\e_v\equiv j-k\pmod 3\in\{0,1,2\}$. The following definition is an analog of the \emph{octahedron recurrence} of~\cite{Sp}. 

\begin{definition}\label{dfn:cube_unbounded}
 The \emph{unbounded cube recurrence} is a family $\Cube_v(t)$ of rational functions in $\x$ defined whenever $t\equiv \e_v\pmod 3$.  For every vertex $v=(i,j,k)\in\TP_0$ we set $\Cube_v(\e_v)=x_v$. For every such $v$ and every $t\equiv\e_v\pmod 3$, $\Cube_v$ satisfies
 \begin{equation}\label{eq:cube}
 \begin{split}
  \Cube_v(t+3)\Cube_v(t)&=\Cube_{v+e_{12}}(t+2)\Cube_{v-e_{12}}(t+1)+\\
  \Cube_{v+e_{23}}(t+2)&\Cube_{v-e_{23}}(t+1)+\Cube_{v+e_{31}}(t+2)\Cube_{v-e_{31}}(t+1).
 \end{split}
 \end{equation}
\end{definition}

One can easily observe that Definition~\ref{dfn:cube_unbounded} determines $\Cube_v(t)$ uniquely for any $t\equiv\e_v\pmod 3$. Propp~\cite{Propp} conjectured that the values of $\Cube_v(t)$ are Laurent polynomials in $\x$. This was proved by Fomin-Zelevinsky~\cite{FZCube}, and Carroll and Speyer~\cite{CS} later gave an explicit formula for them in terms of \emph{groves}, see Section~\ref{sect:groves}. %In particular, it follows from~\cite{CS} that $\Cube_v(t)$ is a \emph{positive} Laurent polynomial in $\x$.

We are going to consider three versions of the unbounded cube recurrence: the cube recurrence \emph{in a triangle}, \emph{in a cylinder}, and \emph{in a torus}.

\subsection{Cube recurrence in a triangle}

\begin{figure}
 
\makebox[\textwidth]{
\scalebox{0.7}{
\begin{tikzpicture}[scale=0.6]
\coordinate (L1) at (90:5.00);
\coordinate (L2) at (330:5.00);
\coordinate (L3) at (210:5.00);
\coordinate (C005) at (barycentric cs:L1=0.0,L2=0.0,L3=5.0);
\coordinate (C014) at (barycentric cs:L1=0.0,L2=1.0,L3=4.0);
\coordinate (C023) at (barycentric cs:L1=0.0,L2=2.0,L3=3.0);
\coordinate (C032) at (barycentric cs:L1=0.0,L2=3.0,L3=2.0);
\coordinate (C041) at (barycentric cs:L1=0.0,L2=4.0,L3=1.0);
\coordinate (C050) at (barycentric cs:L1=0.0,L2=5.0,L3=0.0);
\coordinate (C104) at (barycentric cs:L1=1.0,L2=0.0,L3=4.0);
\coordinate (C113) at (barycentric cs:L1=1.0,L2=1.0,L3=3.0);
\coordinate (C122) at (barycentric cs:L1=1.0,L2=2.0,L3=2.0);
\coordinate (C131) at (barycentric cs:L1=1.0,L2=3.0,L3=1.0);
\coordinate (C140) at (barycentric cs:L1=1.0,L2=4.0,L3=0.0);
\coordinate (C203) at (barycentric cs:L1=2.0,L2=0.0,L3=3.0);
\coordinate (C212) at (barycentric cs:L1=2.0,L2=1.0,L3=2.0);
\coordinate (C221) at (barycentric cs:L1=2.0,L2=2.0,L3=1.0);
\coordinate (C230) at (barycentric cs:L1=2.0,L2=3.0,L3=0.0);
\coordinate (C302) at (barycentric cs:L1=3.0,L2=0.0,L3=2.0);
\coordinate (C311) at (barycentric cs:L1=3.0,L2=1.0,L3=1.0);
\coordinate (C320) at (barycentric cs:L1=3.0,L2=2.0,L3=0.0);
\coordinate (C401) at (barycentric cs:L1=4.0,L2=0.0,L3=1.0);
\coordinate (C410) at (barycentric cs:L1=4.0,L2=1.0,L3=0.0);
\coordinate (C500) at (barycentric cs:L1=5.0,L2=0.0,L3=0.0);
\draw[line width=0.25mm,black] (C050) -- (C005);
\draw[line width=0.25mm,black] (C005) -- (C005);
\draw[line width=0.25mm,black] (C050) -- (C050);
\draw[line width=0.25mm,black] (C140) -- (C104);
\draw[line width=0.25mm,black] (C104) -- (C014);
\draw[line width=0.25mm,black] (C140) -- (C041);
\draw[line width=0.25mm,black] (C230) -- (C203);
\draw[line width=0.25mm,black] (C203) -- (C023);
\draw[line width=0.25mm,black] (C230) -- (C032);
\draw[line width=0.25mm,black] (C320) -- (C302);
\draw[line width=0.25mm,black] (C302) -- (C032);
\draw[line width=0.25mm,black] (C320) -- (C023);
\draw[line width=0.25mm,black] (C410) -- (C401);
\draw[line width=0.25mm,black] (C401) -- (C041);
\draw[line width=0.25mm,black] (C410) -- (C014);
\draw[line width=0.25mm,black] (C500) -- (C500);
\draw[line width=0.25mm,black] (C500) -- (C050);
\draw[line width=0.25mm,black] (C500) -- (C005);
\draw[fill=green] (barycentric cs:L1=0.0,L2=0.0,L3=5.0) circle (0.14285714285714288);
\draw[fill=red] (barycentric cs:L1=0.0,L2=1.0,L3=4.0) circle (0.14285714285714288);
\draw[fill=blue] (barycentric cs:L1=0.0,L2=2.0,L3=3.0) circle (0.14285714285714288);
\draw[fill=green] (barycentric cs:L1=0.0,L2=3.0,L3=2.0) circle (0.14285714285714288);
\draw[fill=red] (barycentric cs:L1=0.0,L2=4.0,L3=1.0) circle (0.14285714285714288);
\draw[fill=blue] (barycentric cs:L1=0.0,L2=5.0,L3=0.0) circle (0.14285714285714288);
\draw[fill=blue] (barycentric cs:L1=1.0,L2=0.0,L3=4.0) circle (0.14285714285714288);
\draw[fill=green] (barycentric cs:L1=1.0,L2=1.0,L3=3.0) circle (0.14285714285714288);
\draw[fill=red] (barycentric cs:L1=1.0,L2=2.0,L3=2.0) circle (0.14285714285714288);
\draw[fill=blue] (barycentric cs:L1=1.0,L2=3.0,L3=1.0) circle (0.14285714285714288);
\draw[fill=green] (barycentric cs:L1=1.0,L2=4.0,L3=0.0) circle (0.14285714285714288);
\draw[fill=red] (barycentric cs:L1=2.0,L2=0.0,L3=3.0) circle (0.14285714285714288);
\draw[fill=blue] (barycentric cs:L1=2.0,L2=1.0,L3=2.0) circle (0.14285714285714288);
\draw[fill=green] (barycentric cs:L1=2.0,L2=2.0,L3=1.0) circle (0.14285714285714288);
\draw[fill=red] (barycentric cs:L1=2.0,L2=3.0,L3=0.0) circle (0.14285714285714288);
\draw[fill=green] (barycentric cs:L1=3.0,L2=0.0,L3=2.0) circle (0.14285714285714288);
\draw[fill=red] (barycentric cs:L1=3.0,L2=1.0,L3=1.0) circle (0.14285714285714288);
\draw[fill=blue] (barycentric cs:L1=3.0,L2=2.0,L3=0.0) circle (0.14285714285714288);
\draw[fill=blue] (barycentric cs:L1=4.0,L2=0.0,L3=1.0) circle (0.14285714285714288);
\draw[fill=green] (barycentric cs:L1=4.0,L2=1.0,L3=0.0) circle (0.14285714285714288);
\draw[fill=red] (barycentric cs:L1=5.0,L2=0.0,L3=0.0) circle (0.14285714285714288);
\node[anchor=south west] (NC140) at (barycentric cs:L1=1.0,L2=4.0,L3=0.0) {$1$};
\node[anchor=south east] (NC104) at (barycentric cs:L1=1.0,L2=0.0,L3=4.0) {$1$};
\node[anchor=north] (NC014) at (barycentric cs:L1=0.0,L2=1.0,L3=4.0) {$1$};
\node[anchor=south west] (NC230) at (barycentric cs:L1=2.0,L2=3.0,L3=0.0) {$1$};
\node[anchor=south east] (NC203) at (barycentric cs:L1=2.0,L2=0.0,L3=3.0) {$1$};
\node[anchor=north] (NC023) at (barycentric cs:L1=0.0,L2=2.0,L3=3.0) {$1$};
\node[anchor=south west] (NC320) at (barycentric cs:L1=3.0,L2=2.0,L3=0.0) {$1$};
\node[anchor=south east] (NC302) at (barycentric cs:L1=3.0,L2=0.0,L3=2.0) {$1$};
\node[anchor=north] (NC032) at (barycentric cs:L1=0.0,L2=3.0,L3=2.0) {$1$};
\node[anchor=south west] (NC410) at (barycentric cs:L1=4.0,L2=1.0,L3=0.0) {$1$};
\node[anchor=south east] (NC401) at (barycentric cs:L1=4.0,L2=0.0,L3=1.0) {$1$};
\node[anchor=north] (NC041) at (barycentric cs:L1=0.0,L2=4.0,L3=1.0) {$1$};
\node[anchor=south] (NC100) at (barycentric cs:L1=1.0,L2=0.0,L3=0.0) {$1$};
\node[anchor=north west] (NC010) at (barycentric cs:L1=0.0,L2=1.0,L3=0.0) {$1$};
\node[anchor=north east] (NC001) at (barycentric cs:L1=0.0,L2=0.0,L3=1.0) {$1$};
\node[anchor=north] (NC113) at (barycentric cs:L1=1.0,L2=1.0,L3=3.0) {$a$};
\node[anchor=north] (NC122) at (barycentric cs:L1=1.0,L2=2.0,L3=2.0) {$b$};
\node[anchor=north] (NC131) at (barycentric cs:L1=1.0,L2=3.0,L3=1.0) {$c$};
\node[anchor=north] (NC212) at (barycentric cs:L1=2.0,L2=1.0,L3=2.0) {$d$};
\node[anchor=north] (NC221) at (barycentric cs:L1=2.0,L2=2.0,L3=1.0) {$e$};
\node[anchor=north] (NC311) at (barycentric cs:L1=3.0,L2=1.0,L3=1.0) {$f$};

  \begin{scope}[transform canvas={xshift = -1cm}]
  
\draw[->] (-5.00,2.00) -- (-6.00,3.73) node[anchor=south west] {$e_{12}$};
\draw[->] (-5.00,2.00) -- (-3.00,2.00) node[anchor=south] {$e_{23}$};
\draw[->] (-5.00,2.00) -- (-6.00,0.27) node[anchor=south east] {$e_{31}$};
\end{scope}
\end{tikzpicture}}
}
\caption{\label{fig:tr_5} The triangle $\TR_5$.} 
\end{figure}

Given an integer $m\geq 3$, we define the $m$-th \emph{triangle} $\TR_m\subset\Z^3$ by $\TR_m=\{(i,j,k)\in\TP_m\mid i,j,k\geq 0\}$. For example, $\TR_5$ is shown in Figure~\ref{fig:tr_5}. We refer to $v\in\TR_m$ as a \emph{boundary vertex} if either one of $i,j,k$ is zero. For every non-boundary vertex $v$ we introduce a variable $x_v$ and we let $\x^\TR$ be the set of all these variables.

% \begin{equation}\label{blah}
%  \begin{split}
% A & = \frac{\pi r^2}{2} \\
%  & = \frac{1}{2} \pi r^2
%  \end{split}
% \end{equation}
% 
% \begin{align} \label{align}
% 2x - 5y &=  8 \\ 
% 3x + 9y &=  -12
% \end{align}

\def\Cubet{\Cube^\TR}
\def\Cubec{\Cube^\TC}
\def\Cubetorus{\Cube^\Torus}

\begin{definition}\label{dfn:cube_triangle}
 The \emph{cube recurrence in a triangle} is a family $\Cubet_v(t)$ of rational functions in $\x^\TR$ defined whenever $t\equiv \e_v\pmod 3$ and $v\in\TR_m$. For boundary vertices $v\in\TR_m$ we set $\Cubet_v(t)=1$ and for every non-boundary vertex $v=(i,j,k)\in\TR_m$ we set $\Cubet_v(\e_v)=x_v$. For every such $v$ and every $t\equiv\e_v\pmod 3$, $\Cubet_v$ satisfies~\eqref{eq:cube}. 
 \end{definition}

As we have already mentioned, the bounded version of the octahedron recurrence in a rectangle is periodic, see \cite{Vo,DK1,K}. Our first main result is the following analogous assertion for the cube recurrence in a triangle, which was conjectured by Henriques in~\cite[Section~6]{Henriques}:
 
\begin{theorem}\label{thm:cube_periodic}
  The values of the cube recurrence in a triangle $\TR_m$ are Laurent polynomials in $\x^\TR$. Moreover, let $\sigma:\TR_m\to\TR_m$ be the counterclockwise rotation of $\TR_m$: $\sigma(i,j,k)=(j,k,i)$. Then for every $v\in\TR_m$ and every $t\equiv\e_v\pmod3$, we have 
  \[\Cubet_v(t+2m)=\Cubet_{\sigma v} (t).\] 
  Thus the cube recurrence in a triangle satisfies $\Cubet_v(t+6m)=\Cubet_v(t)$.%TODO: positive?
\end{theorem}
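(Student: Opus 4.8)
The final (period-$6m$) statement is an immediate consequence of the twisted periodicity: since $\sigma^{3}=\Id$, iterating $\Cubet_v(t+2m)=\Cubet_{\sigma v}(t)$ gives $\Cubet_v(t+6m)=\Cubet_{\sigma v}(t+4m)=\Cubet_{\sigma^{2}v}(t+2m)=\Cubet_{\sigma^{3}v}(t)=\Cubet_v(t)$. So the real content is the Laurent property together with the identity $\Cubet_v(t+2m)=\Cubet_{\sigma v}(t)$, and the plan is to obtain both from the combinatorial \emph{grove} model of Carroll and Speyer~\cite{CS}.

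The first step is to set up an explicit grove expansion for the \emph{bounded} recurrence. For each admissible pair $(v,t)$ there should be a finite subgraph $R(v,t)$ of the triangular graph underlying $\TR_m$ — the ``light cone'' of $(v,t)$, i.e.\ the set of vertices whose initial value can influence $\Cubet_v(t)$ — equipped with a distinguished set of boundary nodes, together with a grove expansion
\[
\Cubet_v(t)=\sum_{G}\operatorname{wt}(G),
\]
the sum running over all groves $G$ on $R(v,t)$ (spanning forests realizing the prescribed boundary connectivity), with $\operatorname{wt}(G)=\prod_{w}x_w^{c_w(G)}$ a Laurent monomial in $\x^{\TR}$ in which the boundary vertices of $\TR_m$ contribute trivially because $\Cubet_w\equiv 1$ there. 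This is essentially what Carroll and Speyer prove for the unbounded cube recurrence; the bounded version is obtained by the same induction on $t$ through~\eqref{eq:cube}, with the relations $\Cubet_w(t)=1$ on $\partial\TR_m$ acting as frozen boundary values. Granting it, the first assertion of the theorem is immediate, a finite sum of Laurent monomials being a Laurent polynomial. (Laurentness could instead be deduced from the unbounded case~\cite{FZCube} by specializing the boundary variables to $1$, but the grove model is what I will need for the periodicity.)

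For the periodicity I would construct a weight-preserving bijection $\Phi$ from the groves on $R(v,t+2m)$ to the groves on $R(\sigma v,t)$, given — after the identification to follow — simply by rotating the forest by $\sigma$. The geometric heart of the matter is that $R(v,t)$ grows with $t$ until it saturates $\TR_m$, and that once saturated its combinatorial type (the region with its boundary markings) is eventually periodic in $t$ with period exactly $2m$; intuitively, $2m$ is the ``time'' the light cone needs to sweep from $v$ across $\TR_m$ to the opposite side and back, and $\sigma$ is the symmetry that carries the marked graph $R(v,t+2m)$ onto $R(\sigma v,t)$. The boundary condition $\Cubet_w\equiv 1$ on $\partial\TR_m$ is exactly what makes this work: it forces the part of the naive light cone that would overflow $\TR_m$ to collapse, in every grove, to a configuration supported on boundary vertices, hence of trivial weight, so that $\operatorname{wt}(\Phi(G))=\operatorname{wt}(G)$. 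Summing over groves gives $\Cubet_v(t+2m)=\Cubet_{\sigma v}(t)$; when $v$ is a boundary vertex both sides are $1$ and there is nothing to check.

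The step I expect to be the genuine obstacle is this geometric one: pinning down $R(v,t)$ for the bounded recurrence, proving that its combinatorial type stabilizes precisely once $t$ has advanced by $2m$ and that $\sigma$ realizes the period, and verifying that the overflow is indeed forced — in other words, extracting the ``$2m$'' and the appearance of $\sigma$ from the honest combinatorics of groves in a triangle. (One must also keep track of the residues modulo $3$ attached to the vertices and to $t$, so that the two sides of each identity are indexed consistently, but this is routine.) Everything downstream — the monomial bookkeeping inside $\operatorname{wt}$ and the passage from twisted $2m$-periodicity to ordinary $6m$-periodicity — is then formal.
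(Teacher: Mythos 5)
Your reduction of the $6m$-periodicity to the twisted identity via $\sigma^3=\Id$ is fine, but the rest of the proposal leaves the entire content of the theorem unproven. You correctly identify the ``genuine obstacle'' --- establishing a grove expansion for the \emph{bounded} recurrence, showing that the light cone $R(v,t)$ stabilizes with combinatorial period exactly $2m$, that $\sigma$ realizes this period, and that the overflow past $\partial\TR_m$ is forced to collapse with trivial weight --- and then you do not address it. None of these claims is routine. Already in the cylinder case (one bounded direction) the paper has to run a delicate degeneration argument (substituting $x_u=q^{\kappa(u)}$ with a rapidly growing convex exponent and letting $q\to 0$, Proposition~\ref{prop:CS}) just to show that only groves agreeing with a fixed reference grove outside the strip survive; for a triangle, bounded on all three sides, the analogous control is harder and is not sketched. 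More importantly, no mechanism is offered for why the period should be $2m$ or why the twist should be $\sigma$: the assertion that ``$2m$ is the time the light cone needs to sweep across $\TR_m$ and back'' is exactly the theorem restated as an intuition, and the proposed bijection $\Phi$ between groves on regions of very different sizes is never defined. As it stands the argument is circular at its core.

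For comparison, the paper's proof avoids groves entirely: it realizes $\TR_m$ as the bounded regions of a pseudoline arrangement $\Acal^{\parr t}$ in the sense of Henriques--Speyer, checks that mutating all regions of color $\e$ carries $\Acal^{\parr t}$ to $\Acal^{\parr{t+1}}$ with the cube-recurrence exchange relation, and observes that $\Acal^{\parr{t+2m}}$ coincides with $\Acal^{\parr t}$ up to the relabeling induced by $\sigma$. Both Laurentness and the independence of the assigned value from the mutation path are then supplied wholesale by Theorem~\ref{thm:HS}, which is where all the hard work is hidden. If you want to pursue a grove-theoretic proof you would need, at minimum, a bounded-triangle analogue of Theorem~\ref{thm:CS} together with an honest identification of the stabilized region and its $2m$-periodicity; that would be a genuinely new argument, not a repackaging of the known one.
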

Our proof of Theorem~\ref{thm:cube_periodic} in Section~\ref{sect:periodicity} is based on Henriques and Speyer's \emph{multidimensional cube recurrence}~\cite{HS}.

\subsection{Cube recurrence in a cylinder}

\def\wt{ \operatorname{wt}}
\def\htt{ \operatorname{ht}}
\def\adj{ \operatorname{adj}}

We define the \emph{cube recurrence in a cylinder} as follows. Let $m\geq 2,n\geq 1$ be two integers and define the \emph{strip} 
\[\TS_m=\{(i,j,k)\in\TP_0\mid 0\leq i\leq m\}.\] 
We let $g$ be the vector $ne_{23}=(0,n,-n)$ and consider the cylinder $\TC=\TS_m/3\Z g$. Informally speaking, the \emph{cube recurrence in a cylinder} is just the cube recurrence in a strip $\TS_m$ with initial conditions being invariant with respect to the shift by $3g$. Let us explain this more precisely. For every $v=(i,j,k)\in\TS_{m}$ with $0<i<m$, we introduce a variable $x_v$. For any $k\in\Z$, we set $x_{v+3kg}=x_v$. Let $\x^\TC$ be the (finite) family of these indeterminates. We say that $v=(i,j,k)\in\TS_m$ is a \emph{boundary vertex} if $i=0$ or $i=m$. 

\begin{definition}\label{dfn:cube_cylinder}
 The \emph{cube recurrence in a cylinder} is a family $\Cubec_v(t)$ of rational functions in $\x^\TC$ for $v\in\TS_{m}$ that satisfies~\eqref{eq:cube} for every non-boundary vertex $v\in\TS_m$. The boundary conditions are $\Cubec_v(t)=1$ for all $t\in\Z$ and $v$ a boundary vertex, and also $\Cubec_v(\e_v)=x_v$ for all non-boundary $v\in\TS_m$.
\end{definition}

As we have explained in the introduction, the values of the octahedron recurrence in a cylinder satisfy a linear recurrence whose coefficients admit a nice formula in terms of domino tilings~\cite{P,GP2}. Theorems~\ref{thm:recurrence_1},~\ref{thm:cube_formula}, and~\ref{thm:pleth} below give analogous statements for the cube recurrence in a cylinder.

\begin{theorem}\label{thm:recurrence_1}
 Fix any $n\geq1$ and $m\geq2$ and let $v\in\TS_{m}$ be a vertex. Then the sequence \[\left(\Cubec_v(\e_v+3t)\right)_{t\in\N}\] 
 satisfies a linear recurrence: there exist Laurent polynomials $H_0,H_1,\dots,H_M$ in $\x^\TC$ such that $H_0,H_M\neq 0$ and
 \[H_0\Cubec_v(\e_v+3t)+H_1\Cubec_v(\e_v+3(t+1))+\dots+H_M\Cubec_v(\e_v+3(t+M))=0\]
 holds for any $t\in\N$ that is sufficiently large. 
\end{theorem}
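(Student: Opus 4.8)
The plan is to exhibit an explicit finite-dimensional linear system over the field of rational functions $\Q(\x^\TC)$ that captures the entire dynamics of the cube recurrence in the cylinder, and then to invoke the Cayley–Hamilton theorem. First I would pass to the universal cover: the cube recurrence $\Cubec_v(t)$ is a priori defined on all of the strip $\TS_m$, and by the definition of $\x^\TC$ (the initial data being $3g$-periodic) together with the recurrence~\eqref{eq:cube} being translation-equivariant, we have $\Cubec_{v+3g}(t)=\Cubec_v(t)$ for all $v$ and all admissible $t$. The key observation is then that the recurrence~\eqref{eq:cube} expresses $\Cubec_v(t+3)$ as a rational function of the values on the two ``previous'' diagonal slices. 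Concretely, for each residue $r\in\{0,1,2\}$ let $L_r=\{v\in\TS_m : \e_v\equiv r\}/3\Z g$, a finite set, and let $V_t$ for $t\equiv r\pmod 3$ denote the tuple $(\Cubec_v(t))_{v\in L_r}$ together with $(\Cubec_v(t-1))_{v\in L_{r'}}$ from the adjacent slice, so that $V_t$ is a point in an affine space of fixed finite dimension $D$ over $\Q(\x^\TC)$ and equation~\eqref{eq:cube} (with the boundary conditions $\Cubec_v(t)=1$ on $i=0,m$) defines a fixed birational map $F\colon V_t\mapsto V_{t+3}$ independent of $t$.

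Next I would linearize. The map $F$ is not linear, but the Laurentness of the cube recurrence — which is part of the statement of Theorem~\ref{thm:recurrence_1}'s companion results and ultimately due to Fomin–Zelevinsky~\cite{FZCube} / Carroll–Speyer~\cite{CS} — means every $\Cubec_v(t)$ lies in the Laurent polynomial ring $R=\Z[\x^\TC{}^{\pm1}]$. I would then consider the $R$-module (or $\Q(\x^\TC)$-vector space) $W$ spanned by the collection of all monomials, or more robustly all Laurent monomials up to bounded degree, that actually occur; the cleanest route, following Pylyavskyy~\cite{P} and~\cite{GP2} in the octahedron case, is to use the grove formula of~\cite{CS} to show that $\Cubec_v(\e_v+3t)$, as a function of $t$, is a finite $R$-linear combination of sequences of the form $t\mapsto \alpha^t$ where each $\alpha$ is a monomial in $\x^\TC$ indexed by a periodic grove-like configuration on the cylinder; since there are only finitely many such ``primitive'' periodic configurations up to translation, the generating function $\sum_t \Cubec_v(\e_v+3t)\, z^t$ is a rational function of $z$ with coefficients in $R$, whose denominator $H(z)=H_0+H_1 z+\dots+H_M z^M\in R[z]$ has $H_0,H_M\neq 0$; clearing denominators yields exactly the asserted recurrence for $t$ sufficiently large (the ``sufficiently large'' accounting for the numerator degree).

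Alternatively, and perhaps more self-containedly, I would argue as follows: the iterates $\Id, F, F^2, F^3,\dots$ applied to the initial vector $V_{\e_v}$ all land in the affine space $\Q(\x^\TC)^D$, but the relevant invariant is that the cube recurrence is a subtraction-free / cluster-type dynamical system whose ``$\tau$-function'' interpretation (Miwa~\cite{Miwa}; this is the BKP $\tau$-function) shows the values satisfy bilinear Hirota-type relations; bilinear relations in $D$ unknowns, iterated, force a linear recurrence on any single coordinate by a resultant/elimination argument — projecting the algebraic variety traced out by $(V_{\e_v},V_{\e_v+3},V_{\e_v+6},\dots)$ to a single coordinate and using that the projection must be a proper algebraic subset once the number of iterates exceeds the dimension. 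The main obstacle, and where care is genuinely required, is establishing that the denominator polynomial $H$ is \textbf{nonzero} and that both its leading and trailing coefficients $H_0, H_M$ are nonzero — i.e. that the linear recurrence is genuine and two-sided, not degenerate — and pinning down what ``sufficiently large $t$'' must be; I expect to handle this by exhibiting the explicit grove-indexed eigenvalues $\alpha$ and checking they are distinct as monomials (so no unexpected cancellation collapses the recurrence), with the precise bound on $t$ coming from the degree of the numerator in the rational generating function, which in turn is controlled by the finitely many initial grove configurations that are not yet ``periodic.'' The finer formula for the $H_i$ in terms of groves on the cylinder is then the content of Theorems~\ref{thm:cube_formula} and~\ref{thm:pleth}, but for Theorem~\ref{thm:recurrence_1} itself only the existence of a rational generating function is needed.
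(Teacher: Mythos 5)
Your second paragraph identifies the correct high-level strategy --- reduce to the Carroll--Speyer grove formula and show the generating function $\sum_t \Cubec_v(\e_v+3t)z^t$ is rational --- and this is indeed the skeleton of the paper's argument. But the proposal has genuine gaps at every step where work is actually required. First, the Carroll--Speyer formula is for the \emph{unbounded} recurrence; you never explain why the cylinder values (with boundary conditions $\Cubec_v(t)=1$ on $i=0,m$) are themselves sums over groves in the strip. The paper needs a nontrivial degeneration argument for this (Proposition~\ref{prop:CS}, substituting $x_u=q^{\kappa(u)}$ for a rapidly growing convex $\kappa$ and letting $q\to0$). Second, your claim that there are ``only finitely many primitive periodic configurations'' and hence the generating function is rational is precisely the assertion that needs proof, and it is not a counting-of-orbits statement: the paper proves it by converting groves to families of non-intersecting paths in a cylindrical network (Theorems~\ref{thm:bij},~\ref{thm:bij_groves}, Corollary~\ref{cor:paths_cyl}) and invoking the transfer-matrix theorem for such networks (Theorem~\ref{thm:planar}). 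Worse, the map from groves to $r$-paths is only an \emph{injection}, so one must characterize its image (Proposition~\ref{prop:chord}) and then show the defective paths do not spoil the recurrence (Lemma~\ref{lemma:M} and the three-segment decomposition in the proof of Theorem~\ref{thm:pleth}); none of this is visible in your sketch.

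Third, your proposed verification that $H_0,H_M\neq0$ --- ``exhibiting the explicit grove-indexed eigenvalues $\alpha$ and checking they are distinct as monomials'' --- cannot be carried out, because the eigenvalues are not monomials: the characteristic polynomial is $Q^\plee{r}(t)$ where $Q(t)=J_mt^m-\dots+(-1)^mJ_0$, and its roots are algebraic functions of the grove sums $J_s$, not Laurent monomials in $\x^\TC$. (The nonvanishing of the extreme coefficients instead comes from $J_0=J_m=1$, i.e.\ the uniqueness of the two extremal groves.) Finally, your two alternative routes do not work as stated: Cayley--Hamilton applies to linear maps and the evolution map $F$ is birational, not linear; and eliminating variables from iterated bilinear (Hirota) relations yields at best a polynomial \emph{algebraic} relation among consecutive values, not a linear recurrence. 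So while the guiding intuition is right, the proposal is missing the degeneration to the cylinder, the grove-to-path bijection together with its image characterization, and the cylindrical-network linear-recurrence theorem --- which together constitute essentially the entire proof.
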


\begin{figure}
 
\makebox[\textwidth]{
\begin{tabular}{c|c}

\scalebox{0.7}{
\begin{tikzpicture}[scale=0.6]
\coordinate (L1) at (90:5.00);
\coordinate (L2) at (330:5.00);
\coordinate (L3) at (210:5.00);
\coordinate (C005) at (barycentric cs:L1=0.0,L2=0.0,L3=5.0);
\coordinate (C014) at (barycentric cs:L1=0.0,L2=1.0,L3=4.0);
\coordinate (C023) at (barycentric cs:L1=0.0,L2=2.0,L3=3.0);
\coordinate (C032) at (barycentric cs:L1=0.0,L2=3.0,L3=2.0);
\coordinate (C041) at (barycentric cs:L1=0.0,L2=4.0,L3=1.0);
\coordinate (C050) at (barycentric cs:L1=0.0,L2=5.0,L3=0.0);
\coordinate (C104) at (barycentric cs:L1=1.0,L2=0.0,L3=4.0);
\coordinate (C113) at (barycentric cs:L1=1.0,L2=1.0,L3=3.0);
\coordinate (C122) at (barycentric cs:L1=1.0,L2=2.0,L3=2.0);
\coordinate (C131) at (barycentric cs:L1=1.0,L2=3.0,L3=1.0);
\coordinate (C140) at (barycentric cs:L1=1.0,L2=4.0,L3=0.0);
\coordinate (C15m1) at (barycentric cs:L1=1.0,L2=5.0,L3=-1.0);
\coordinate (C2m14) at (barycentric cs:L1=2.0,L2=-1.0,L3=4.0);
\coordinate (C203) at (barycentric cs:L1=2.0,L2=0.0,L3=3.0);
\coordinate (C212) at (barycentric cs:L1=2.0,L2=1.0,L3=2.0);
\coordinate (C221) at (barycentric cs:L1=2.0,L2=2.0,L3=1.0);
\coordinate (C230) at (barycentric cs:L1=2.0,L2=3.0,L3=0.0);
\coordinate (C24m1) at (barycentric cs:L1=2.0,L2=4.0,L3=-1.0);
\draw[line width=0.25mm,black] (C113) -- (C203);
\draw[line width=0.25mm,black] (C113) -- (C014);
\draw[line width=0.25mm,black] (C113) -- (C122);
\draw[line width=0.25mm,black] (C131) -- (C221);
\draw[line width=0.25mm,black] (C131) -- (C032);
\draw[line width=0.25mm,black] (C131) -- (C140);
\draw[line width=0.25mm,black] (C122) -- (C212);
\draw[line width=0.25mm,black] (C122) -- (C023);
\draw[line width=0.25mm,black] (C122) -- (C131);
\draw[line width=0.25mm,black] (C212) -- (C113);
\draw[line width=0.25mm,black] (C212) -- (C221);
\draw[line width=0.25mm,black] (C221) -- (C122);
\draw[line width=0.25mm,black] (C221) -- (C230);
\draw[line width=0.25mm,black] (C014) -- (C104);
\draw[line width=0.25mm,black] (C014) -- (C023);
\draw[line width=0.25mm,black] (C041) -- (C131);
\draw[line width=0.25mm,black] (C041) -- (C050);
\draw[line width=0.25mm,black] (C104) -- (C2m14);
\draw[line width=0.25mm,black] (C104) -- (C005);
\draw[line width=0.25mm,black] (C104) -- (C113);
\draw[line width=0.25mm,black] (C140) -- (C230);
\draw[line width=0.25mm,black] (C140) -- (C041);
\draw[line width=0.25mm,black] (C140) -- (C15m1);
\draw[line width=0.25mm,black] (C15m1) -- (C24m1);
\draw[line width=0.25mm,black] (C15m1) -- (C050);
\draw[line width=0.25mm,black] (C023) -- (C113);
\draw[line width=0.25mm,black] (C023) -- (C032);
\draw[line width=0.25mm,black] (C032) -- (C122);
\draw[line width=0.25mm,black] (C032) -- (C041);
\draw[line width=0.25mm,black] (C203) -- (C104);
\draw[line width=0.25mm,black] (C203) -- (C212);
\draw[line width=0.25mm,black] (C230) -- (C131);
\draw[line width=0.25mm,black] (C230) -- (C24m1);
\draw[line width=0.25mm,black] (C2m14) -- (C203);
\draw[line width=0.25mm,black] (C24m1) -- (C140);
\draw[line width=0.25mm,black] (C005) -- (C014);
\draw[line width=0.25mm,black] (C050) -- (C140);
\draw[fill=green] (barycentric cs:L1=1.0,L2=1.0,L3=3.0) circle (0.14285714285714288);
\draw[fill=blue] (barycentric cs:L1=1.0,L2=3.0,L3=1.0) circle (0.14285714285714288);
\draw[fill=red] (barycentric cs:L1=1.0,L2=2.0,L3=2.0) circle (0.14285714285714288);
\draw[fill=blue] (barycentric cs:L1=2.0,L2=1.0,L3=2.0) circle (0.14285714285714288);
\draw[fill=green] (barycentric cs:L1=2.0,L2=2.0,L3=1.0) circle (0.14285714285714288);
\draw[fill=red] (barycentric cs:L1=0.0,L2=1.0,L3=4.0) circle (0.14285714285714288);
\draw[fill=red] (barycentric cs:L1=0.0,L2=4.0,L3=1.0) circle (0.14285714285714288);
\draw[fill=blue] (barycentric cs:L1=1.0,L2=0.0,L3=4.0) circle (0.14285714285714288);
\draw[fill=green] (barycentric cs:L1=1.0,L2=4.0,L3=0.0) circle (0.14285714285714288);
\draw[fill=red] (barycentric cs:L1=1.0,L2=5.0,L3=-1.0) circle (0.14285714285714288);
\draw[fill=blue] (barycentric cs:L1=0.0,L2=2.0,L3=3.0) circle (0.14285714285714288);
\draw[fill=green] (barycentric cs:L1=0.0,L2=3.0,L3=2.0) circle (0.14285714285714288);
\draw[fill=red] (barycentric cs:L1=2.0,L2=0.0,L3=3.0) circle (0.14285714285714288);
\draw[fill=red] (barycentric cs:L1=2.0,L2=3.0,L3=0.0) circle (0.14285714285714288);
\draw[fill=green] (barycentric cs:L1=2.0,L2=-1.0,L3=4.0) circle (0.14285714285714288);
\draw[fill=blue] (barycentric cs:L1=2.0,L2=4.0,L3=-1.0) circle (0.14285714285714288);
\draw[fill=green] (barycentric cs:L1=0.0,L2=0.0,L3=5.0) circle (0.14285714285714288);
\draw[fill=blue] (barycentric cs:L1=0.0,L2=5.0,L3=0.0) circle (0.14285714285714288);
\node[anchor=north] (NC005) at (C005) {$1$};
\node[anchor=north] (NC014) at (C014) {$1$};
\node[anchor=north] (NC023) at (C023) {$1$};
\node[anchor=north] (NC032) at (C032) {$1$};
\node[anchor=north] (NC041) at (C041) {$1$};
\node[anchor=north] (NC050) at (C050) {$1$};
\node[anchor=north] (NC104) at (C104) {$c$};
\node[anchor=north] (NC113) at (C113) {$b$};
\node[anchor=north] (NC122) at (C122) {$a$};
\node[anchor=north] (NC131) at (C131) {$c$};
\node[anchor=north] (NC140) at (C140) {$b$};
\node[anchor=north] (NC15m1) at (C15m1) {$a$};
\node[anchor=south] (NC2m14) at (C2m14) {$1$};
\node[anchor=south] (NC203) at (C203) {$1$};
\node[anchor=south] (NC212) at (C212) {$1$};
\node[anchor=south] (NC221) at (C221) {$1$};
\node[anchor=south] (NC230) at (C230) {$1$};
\node[anchor=south] (NC24m1) at (C24m1) {$1$};
\node[anchor=east] (DC104) at (C104) {$\dots$};
\node[anchor=west] (DC15m1) at (C15m1) {$\dots$};
\end{tikzpicture}}
&
\scalebox{1.0}{
\begin{tikzpicture}[scale=0.3]
\coordinate (L1) at (90:7.00);
\coordinate (L2) at (330:7.00);
\coordinate (L3) at (210:7.00);
\coordinate (C007) at (barycentric cs:L1=0.0,L2=0.0,L3=7.0);
\coordinate (C016) at (barycentric cs:L1=0.0,L2=1.0,L3=6.0);
\coordinate (C025) at (barycentric cs:L1=0.0,L2=2.0,L3=5.0);
\coordinate (C034) at (barycentric cs:L1=0.0,L2=3.0,L3=4.0);
\coordinate (C043) at (barycentric cs:L1=0.0,L2=4.0,L3=3.0);
\coordinate (C052) at (barycentric cs:L1=0.0,L2=5.0,L3=2.0);
\coordinate (C061) at (barycentric cs:L1=0.0,L2=6.0,L3=1.0);
\coordinate (C070) at (barycentric cs:L1=0.0,L2=7.0,L3=0.0);
\coordinate (C106) at (barycentric cs:L1=1.0,L2=0.0,L3=6.0);
\coordinate (C115) at (barycentric cs:L1=1.0,L2=1.0,L3=5.0);
\coordinate (C124) at (barycentric cs:L1=1.0,L2=2.0,L3=4.0);
\coordinate (C133) at (barycentric cs:L1=1.0,L2=3.0,L3=3.0);
\coordinate (C142) at (barycentric cs:L1=1.0,L2=4.0,L3=2.0);
\coordinate (C151) at (barycentric cs:L1=1.0,L2=5.0,L3=1.0);
\coordinate (C160) at (barycentric cs:L1=1.0,L2=6.0,L3=0.0);
\coordinate (C17m1) at (barycentric cs:L1=1.0,L2=7.0,L3=-1.0);
\coordinate (C2m16) at (barycentric cs:L1=2.0,L2=-1.0,L3=6.0);
\coordinate (C205) at (barycentric cs:L1=2.0,L2=0.0,L3=5.0);
\coordinate (C214) at (barycentric cs:L1=2.0,L2=1.0,L3=4.0);
\coordinate (C223) at (barycentric cs:L1=2.0,L2=2.0,L3=3.0);
\coordinate (C232) at (barycentric cs:L1=2.0,L2=3.0,L3=2.0);
\coordinate (C241) at (barycentric cs:L1=2.0,L2=4.0,L3=1.0);
\coordinate (C250) at (barycentric cs:L1=2.0,L2=5.0,L3=0.0);
\coordinate (C26m1) at (barycentric cs:L1=2.0,L2=6.0,L3=-1.0);
\coordinate (C3m15) at (barycentric cs:L1=3.0,L2=-1.0,L3=5.0);
\coordinate (C304) at (barycentric cs:L1=3.0,L2=0.0,L3=4.0);
\coordinate (C313) at (barycentric cs:L1=3.0,L2=1.0,L3=3.0);
\coordinate (C322) at (barycentric cs:L1=3.0,L2=2.0,L3=2.0);
\coordinate (C331) at (barycentric cs:L1=3.0,L2=3.0,L3=1.0);
\coordinate (C340) at (barycentric cs:L1=3.0,L2=4.0,L3=0.0);
\coordinate (C35m1) at (barycentric cs:L1=3.0,L2=5.0,L3=-1.0);
\coordinate (C36m2) at (barycentric cs:L1=3.0,L2=6.0,L3=-2.0);
\coordinate (C4m25) at (barycentric cs:L1=4.0,L2=-2.0,L3=5.0);
\coordinate (C4m14) at (barycentric cs:L1=4.0,L2=-1.0,L3=4.0);
\coordinate (C403) at (barycentric cs:L1=4.0,L2=0.0,L3=3.0);
\coordinate (C412) at (barycentric cs:L1=4.0,L2=1.0,L3=2.0);
\coordinate (C421) at (barycentric cs:L1=4.0,L2=2.0,L3=1.0);
\coordinate (C430) at (barycentric cs:L1=4.0,L2=3.0,L3=0.0);
\coordinate (C44m1) at (barycentric cs:L1=4.0,L2=4.0,L3=-1.0);
\coordinate (C45m2) at (barycentric cs:L1=4.0,L2=5.0,L3=-2.0);
\coordinate (C5m24) at (barycentric cs:L1=5.0,L2=-2.0,L3=4.0);
\coordinate (C5m13) at (barycentric cs:L1=5.0,L2=-1.0,L3=3.0);
\coordinate (C502) at (barycentric cs:L1=5.0,L2=0.0,L3=2.0);
\coordinate (C511) at (barycentric cs:L1=5.0,L2=1.0,L3=1.0);
\coordinate (C520) at (barycentric cs:L1=5.0,L2=2.0,L3=0.0);
\coordinate (C53m1) at (barycentric cs:L1=5.0,L2=3.0,L3=-1.0);
\coordinate (C54m2) at (barycentric cs:L1=5.0,L2=4.0,L3=-2.0);
\coordinate (C55m3) at (barycentric cs:L1=5.0,L2=5.0,L3=-3.0);
\coordinate (C6m34) at (barycentric cs:L1=6.0,L2=-3.0,L3=4.0);
\coordinate (C6m23) at (barycentric cs:L1=6.0,L2=-2.0,L3=3.0);
\coordinate (C6m12) at (barycentric cs:L1=6.0,L2=-1.0,L3=2.0);
\coordinate (C601) at (barycentric cs:L1=6.0,L2=0.0,L3=1.0);
\coordinate (C610) at (barycentric cs:L1=6.0,L2=1.0,L3=0.0);
\coordinate (C62m1) at (barycentric cs:L1=6.0,L2=2.0,L3=-1.0);
\coordinate (C63m2) at (barycentric cs:L1=6.0,L2=3.0,L3=-2.0);
\coordinate (C64m3) at (barycentric cs:L1=6.0,L2=4.0,L3=-3.0);
\coordinate (C7m33) at (barycentric cs:L1=7.0,L2=-3.0,L3=3.0);
\coordinate (C7m22) at (barycentric cs:L1=7.0,L2=-2.0,L3=2.0);
\coordinate (C7m11) at (barycentric cs:L1=7.0,L2=-1.0,L3=1.0);
\coordinate (C700) at (barycentric cs:L1=7.0,L2=0.0,L3=0.0);
\coordinate (C71m1) at (barycentric cs:L1=7.0,L2=1.0,L3=-1.0);
\coordinate (C72m2) at (barycentric cs:L1=7.0,L2=2.0,L3=-2.0);
\coordinate (C73m3) at (barycentric cs:L1=7.0,L2=3.0,L3=-3.0);
\coordinate (C74m4) at (barycentric cs:L1=7.0,L2=4.0,L3=-4.0);
\draw[line width=0.25mm,black] (C115) -- (C205);
\draw[line width=0.25mm,black] (C115) -- (C016);
\draw[line width=0.25mm,black] (C115) -- (C124);
\draw[line width=0.25mm,black] (C151) -- (C241);
\draw[line width=0.25mm,black] (C151) -- (C052);
\draw[line width=0.25mm,black] (C151) -- (C160);
\draw[line width=0.25mm,black] (C124) -- (C214);
\draw[line width=0.25mm,black] (C124) -- (C025);
\draw[line width=0.25mm,black] (C124) -- (C133);
\draw[line width=0.25mm,black] (C142) -- (C232);
\draw[line width=0.25mm,black] (C142) -- (C043);
\draw[line width=0.25mm,black] (C142) -- (C151);
\draw[line width=0.25mm,black] (C412) -- (C502);
\draw[line width=0.25mm,black] (C412) -- (C313);
\draw[line width=0.25mm,black] (C412) -- (C421);
\draw[line width=0.25mm,black] (C421) -- (C511);
\draw[line width=0.25mm,black] (C421) -- (C322);
\draw[line width=0.25mm,black] (C421) -- (C430);
\draw[line width=0.25mm,black] (C313) -- (C403);
\draw[line width=0.25mm,black] (C313) -- (C214);
\draw[line width=0.25mm,black] (C313) -- (C322);
\draw[line width=0.25mm,black] (C331) -- (C421);
\draw[line width=0.25mm,black] (C331) -- (C232);
\draw[line width=0.25mm,black] (C331) -- (C340);
\draw[line width=0.25mm,black] (C016) -- (C106);
\draw[line width=0.25mm,black] (C016) -- (C025);
\draw[line width=0.25mm,black] (C061) -- (C151);
\draw[line width=0.25mm,black] (C061) -- (C070);
\draw[line width=0.25mm,black] (C601) -- (C7m11);
\draw[line width=0.25mm,black] (C601) -- (C502);
\draw[line width=0.25mm,black] (C601) -- (C610);
\draw[line width=0.25mm,black] (C610) -- (C700);
\draw[line width=0.25mm,black] (C610) -- (C511);
\draw[line width=0.25mm,black] (C610) -- (C62m1);
\draw[line width=0.25mm,black] (C17m1) -- (C26m1);
\draw[line width=0.25mm,black] (C17m1) -- (C070);
\draw[line width=0.25mm,black] (C7m11) -- (C6m12);
\draw[line width=0.25mm,black] (C7m11) -- (C700);
\draw[line width=0.25mm,black] (C71m1) -- (C610);
\draw[line width=0.25mm,black] (C71m1) -- (C72m2);
\draw[line width=0.25mm,black] (C223) -- (C313);
\draw[line width=0.25mm,black] (C223) -- (C124);
\draw[line width=0.25mm,black] (C223) -- (C232);
\draw[line width=0.25mm,black] (C232) -- (C322);
\draw[line width=0.25mm,black] (C232) -- (C133);
\draw[line width=0.25mm,black] (C232) -- (C241);
\draw[line width=0.25mm,black] (C205) -- (C3m15);
\draw[line width=0.25mm,black] (C205) -- (C106);
\draw[line width=0.25mm,black] (C205) -- (C214);
\draw[line width=0.25mm,black] (C250) -- (C340);
\draw[line width=0.25mm,black] (C250) -- (C151);
\draw[line width=0.25mm,black] (C250) -- (C26m1);
\draw[line width=0.25mm,black] (C502) -- (C6m12);
\draw[line width=0.25mm,black] (C502) -- (C403);
\draw[line width=0.25mm,black] (C502) -- (C511);
\draw[line width=0.25mm,black] (C520) -- (C610);
\draw[line width=0.25mm,black] (C520) -- (C421);
\draw[line width=0.25mm,black] (C520) -- (C53m1);
\draw[line width=0.25mm,black] (C2m16) -- (C205);
\draw[line width=0.25mm,black] (C26m1) -- (C35m1);
\draw[line width=0.25mm,black] (C26m1) -- (C160);
\draw[line width=0.25mm,black] (C7m22) -- (C6m23);
\draw[line width=0.25mm,black] (C7m22) -- (C7m11);
\draw[line width=0.25mm,black] (C72m2) -- (C62m1);
\draw[line width=0.25mm,black] (C72m2) -- (C73m3);
\draw[line width=0.25mm,black] (C034) -- (C124);
\draw[line width=0.25mm,black] (C034) -- (C043);
\draw[line width=0.25mm,black] (C043) -- (C133);
\draw[line width=0.25mm,black] (C043) -- (C052);
\draw[line width=0.25mm,black] (C304) -- (C4m14);
\draw[line width=0.25mm,black] (C304) -- (C205);
\draw[line width=0.25mm,black] (C304) -- (C313);
\draw[line width=0.25mm,black] (C340) -- (C430);
\draw[line width=0.25mm,black] (C340) -- (C241);
\draw[line width=0.25mm,black] (C340) -- (C35m1);
\draw[line width=0.25mm,black] (C5m13) -- (C6m23);
\draw[line width=0.25mm,black] (C5m13) -- (C4m14);
\draw[line width=0.25mm,black] (C5m13) -- (C502);
\draw[line width=0.25mm,black] (C53m1) -- (C62m1);
\draw[line width=0.25mm,black] (C53m1) -- (C430);
\draw[line width=0.25mm,black] (C53m1) -- (C54m2);
\draw[line width=0.25mm,black] (C36m2) -- (C45m2);
\draw[line width=0.25mm,black] (C36m2) -- (C26m1);
\draw[line width=0.25mm,black] (C6m23) -- (C7m33);
\draw[line width=0.25mm,black] (C6m23) -- (C5m24);
\draw[line width=0.25mm,black] (C6m23) -- (C6m12);
\draw[line width=0.25mm,black] (C63m2) -- (C72m2);
\draw[line width=0.25mm,black] (C63m2) -- (C53m1);
\draw[line width=0.25mm,black] (C63m2) -- (C64m3);
\draw[line width=0.25mm,black] (C4m14) -- (C5m24);
\draw[line width=0.25mm,black] (C4m14) -- (C3m15);
\draw[line width=0.25mm,black] (C4m14) -- (C403);
\draw[line width=0.25mm,black] (C44m1) -- (C53m1);
\draw[line width=0.25mm,black] (C44m1) -- (C340);
\draw[line width=0.25mm,black] (C44m1) -- (C45m2);
\draw[line width=0.25mm,black] (C4m25) -- (C4m14);
\draw[line width=0.25mm,black] (C45m2) -- (C54m2);
\draw[line width=0.25mm,black] (C45m2) -- (C35m1);
\draw[line width=0.25mm,black] (C6m34) -- (C6m23);
\draw[line width=0.25mm,black] (C64m3) -- (C73m3);
\draw[line width=0.25mm,black] (C64m3) -- (C54m2);
\draw[line width=0.25mm,black] (C74m4) -- (C64m3);
\draw[line width=0.25mm,black] (C55m3) -- (C64m3);
\draw[line width=0.25mm,black] (C55m3) -- (C45m2);
\draw[line width=0.25mm,black] (C007) -- (C016);
\draw[line width=0.25mm,black] (C070) -- (C160);
\draw[fill=blue] (barycentric cs:L1=1.0,L2=1.0,L3=5.0) circle (0.2);
\draw[fill=green] (barycentric cs:L1=1.0,L2=5.0,L3=1.0) circle (0.2);
\draw[fill=red] (barycentric cs:L1=5.0,L2=1.0,L3=1.0) circle (0.2);
\draw[fill=green] (barycentric cs:L1=1.0,L2=2.0,L3=4.0) circle (0.2);
\draw[fill=blue] (barycentric cs:L1=1.0,L2=4.0,L3=2.0) circle (0.2);
\draw[fill=red] (barycentric cs:L1=2.0,L2=1.0,L3=4.0) circle (0.2);
\draw[fill=red] (barycentric cs:L1=2.0,L2=4.0,L3=1.0) circle (0.2);
\draw[fill=blue] (barycentric cs:L1=4.0,L2=1.0,L3=2.0) circle (0.2);
\draw[fill=green] (barycentric cs:L1=4.0,L2=2.0,L3=1.0) circle (0.2);
\draw[fill=red] (barycentric cs:L1=1.0,L2=3.0,L3=3.0) circle (0.2);
\draw[fill=green] (barycentric cs:L1=3.0,L2=1.0,L3=3.0) circle (0.2);
\draw[fill=blue] (barycentric cs:L1=3.0,L2=3.0,L3=1.0) circle (0.2);
\draw[fill=green] (barycentric cs:L1=0.0,L2=1.0,L3=6.0) circle (0.2);
\draw[fill=blue] (barycentric cs:L1=0.0,L2=6.0,L3=1.0) circle (0.2);
\draw[fill=red] (barycentric cs:L1=1.0,L2=0.0,L3=6.0) circle (0.2);
\draw[fill=red] (barycentric cs:L1=1.0,L2=6.0,L3=0.0) circle (0.2);
\draw[fill=blue] (barycentric cs:L1=6.0,L2=0.0,L3=1.0) circle (0.2);
\draw[fill=green] (barycentric cs:L1=6.0,L2=1.0,L3=0.0) circle (0.2);
\draw[fill=blue] (barycentric cs:L1=1.0,L2=7.0,L3=-1.0) circle (0.2);
\draw[fill=green] (barycentric cs:L1=7.0,L2=-1.0,L3=1.0) circle (0.2);
\draw[fill=blue] (barycentric cs:L1=7.0,L2=1.0,L3=-1.0) circle (0.2);
\draw[fill=blue] (barycentric cs:L1=2.0,L2=2.0,L3=3.0) circle (0.2);
\draw[fill=green] (barycentric cs:L1=2.0,L2=3.0,L3=2.0) circle (0.2);
\draw[fill=red] (barycentric cs:L1=3.0,L2=2.0,L3=2.0) circle (0.2);
\draw[fill=red] (barycentric cs:L1=0.0,L2=2.0,L3=5.0) circle (0.2);
\draw[fill=red] (barycentric cs:L1=0.0,L2=5.0,L3=2.0) circle (0.2);
\draw[fill=green] (barycentric cs:L1=2.0,L2=0.0,L3=5.0) circle (0.2);
\draw[fill=blue] (barycentric cs:L1=2.0,L2=5.0,L3=0.0) circle (0.2);
\draw[fill=green] (barycentric cs:L1=5.0,L2=0.0,L3=2.0) circle (0.2);
\draw[fill=blue] (barycentric cs:L1=5.0,L2=2.0,L3=0.0) circle (0.2);
\draw[fill=blue] (barycentric cs:L1=2.0,L2=-1.0,L3=6.0) circle (0.2);
\draw[fill=green] (barycentric cs:L1=2.0,L2=6.0,L3=-1.0) circle (0.2);
\draw[fill=red] (barycentric cs:L1=6.0,L2=-1.0,L3=2.0) circle (0.2);
\draw[fill=red] (barycentric cs:L1=6.0,L2=2.0,L3=-1.0) circle (0.2);
\draw[fill=blue] (barycentric cs:L1=7.0,L2=-2.0,L3=2.0) circle (0.2);
\draw[fill=green] (barycentric cs:L1=7.0,L2=2.0,L3=-2.0) circle (0.2);
\draw[fill=blue] (barycentric cs:L1=0.0,L2=3.0,L3=4.0) circle (0.2);
\draw[fill=green] (barycentric cs:L1=0.0,L2=4.0,L3=3.0) circle (0.2);
\draw[fill=blue] (barycentric cs:L1=3.0,L2=0.0,L3=4.0) circle (0.2);
\draw[fill=green] (barycentric cs:L1=3.0,L2=4.0,L3=0.0) circle (0.2);
\draw[fill=red] (barycentric cs:L1=4.0,L2=0.0,L3=3.0) circle (0.2);
\draw[fill=red] (barycentric cs:L1=4.0,L2=3.0,L3=0.0) circle (0.2);
\draw[fill=red] (barycentric cs:L1=3.0,L2=-1.0,L3=5.0) circle (0.2);
\draw[fill=red] (barycentric cs:L1=3.0,L2=5.0,L3=-1.0) circle (0.2);
\draw[fill=blue] (barycentric cs:L1=5.0,L2=-1.0,L3=3.0) circle (0.2);
\draw[fill=green] (barycentric cs:L1=5.0,L2=3.0,L3=-1.0) circle (0.2);
\draw[fill=blue] (barycentric cs:L1=3.0,L2=6.0,L3=-2.0) circle (0.2);
\draw[fill=green] (barycentric cs:L1=6.0,L2=-2.0,L3=3.0) circle (0.2);
\draw[fill=blue] (barycentric cs:L1=6.0,L2=3.0,L3=-2.0) circle (0.2);
\draw[fill=red] (barycentric cs:L1=7.0,L2=-3.0,L3=3.0) circle (0.2);
\draw[fill=red] (barycentric cs:L1=7.0,L2=3.0,L3=-3.0) circle (0.2);
\draw[fill=green] (barycentric cs:L1=4.0,L2=-1.0,L3=4.0) circle (0.2);
\draw[fill=blue] (barycentric cs:L1=4.0,L2=4.0,L3=-1.0) circle (0.2);
\draw[fill=blue] (barycentric cs:L1=4.0,L2=-2.0,L3=5.0) circle (0.2);
\draw[fill=green] (barycentric cs:L1=4.0,L2=5.0,L3=-2.0) circle (0.2);
\draw[fill=red] (barycentric cs:L1=5.0,L2=-2.0,L3=4.0) circle (0.2);
\draw[fill=red] (barycentric cs:L1=5.0,L2=4.0,L3=-2.0) circle (0.2);
\draw[fill=blue] (barycentric cs:L1=6.0,L2=-3.0,L3=4.0) circle (0.2);
\draw[fill=green] (barycentric cs:L1=6.0,L2=4.0,L3=-3.0) circle (0.2);
\draw[fill=blue] (barycentric cs:L1=7.0,L2=4.0,L3=-4.0) circle (0.2);
\draw[fill=blue] (barycentric cs:L1=5.0,L2=5.0,L3=-3.0) circle (0.2);
\draw[fill=blue] (barycentric cs:L1=0.0,L2=0.0,L3=7.0) circle (0.2);
\draw[fill=green] (barycentric cs:L1=0.0,L2=7.0,L3=0.0) circle (0.2);
\draw[fill=red] (barycentric cs:L1=7.0,L2=0.0,L3=0.0) circle (0.2);
\end{tikzpicture}}
\\

\end{tabular}
}
\caption{\label{fig:strip} The strip $\TS_2$ with initial values shown for $n=1$ (left). The graph $G$ (right). The red, green, and blue colors correspond to $\e_v=0,1,2$ respectively.} 
\end{figure}
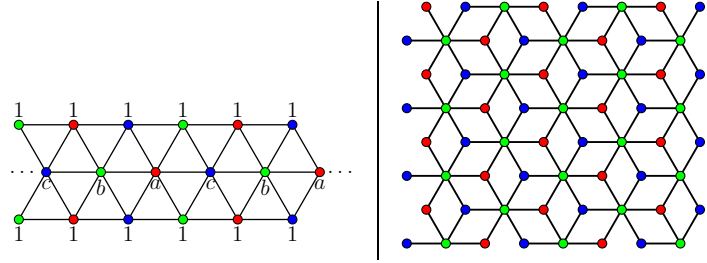

Let us now describe a formula for the recurrence coefficients $H_0,\dots,H_M$ when $v=(m-1,j,k)$ for some $j$ and $k$. We recall the definition of groves from~\cite{CS}. Consider the following infinite undirected graph $G$ with vertex set $\TP_0$ and edge set consisting of edges $(v,v+e_{12})$, $(v,v+e_{23})$, and $(v,v+e_{31})$ for every vertex $v\in\TP_0$ with $\e_v\neq 0$, that is, for every blue and green vertex $v$, see Figure~\ref{fig:strip} (right).

We let $G_{m}$ be the restriction of $G$ to $\TS_{m}$, thus $G_{m}$ is a graph in a strip with vertex set $\TS_{m}$ whose faces are all either lozenges or boundary triangles, see Figure~\ref{fig:grove_exc} (a). 
\begin{definition} \label{dfn:3n_m_grove}
A \emph{$(3n,m)$-grove} is a forest $F$ with vertex set $\TS_{m}$ satisfying the following conditions:
\begin{enumerate}
 \item $F$ is invariant under the shift by $3g$: if $(u,v)$ is an edge of $F$ then so is $(u+3g,v+3g)$;
 \item\label{item:all_edges} $F$ contains all edges $(v,v+e_{23})$ where $v$ is a red (i.e. $\e_v=0$) boundary vertex;
 \item for every lozenge face of $G_{m}$, $F$ contains exactly one of its two diagonals, and there are no other edges in $F$;
 \item\label{item:up_down} every connected component of $F$ contains a vertex $(0,j,k)$ and a vertex $(m,j',k')$ for some $j,j',k,k'\in\Z$.
\end{enumerate}
\end{definition}

For $v\in\TS_{m}$ and a $(3n,m)$-grove $F$, define $\deg_F(v)$ to be the number of edges of $F$ incident to $v$. Define the \emph{weight} of $F$ to be 
\[\wt(F)=\prod x_v^{\deg_F(v)-2},\] 
where the product is taken over all non-boundary vertices $v=(i,j,k)$ of $\TS_{m}$ satisfying $0\leq j<3n$, or equivalently, over all non-boundary vertices of the cylinder $\TC=\TS_m/3\Z g$.
Condition~\eqref{item:all_edges} together with the construction of $G_{m}$ implies that every connected component of $F$ is either \emph{green} (i.e. involves either only vertices $v$ with $\e_v=1$) or \emph{red-blue} (i.e. involves only vertices $v$ with $\e_v\neq 1$), see Figure~\ref{fig:grove_reg}. Consider any green connected component $C$ of $F$. Given such $C$, the unique green lower boundary vertex of $C$ is $u(C)=(0,j,-j)$ for some $j\equiv 2\pmod 3$, and there is a unique green upper boundary vertex $w(C)=(m,j',k')$. Such vertices exist by~\eqref{item:up_down} and are unique by~\eqref{item:up_down} as well since the green and blue connected components do not intersect each other. The possible values of $j'$ are 
\[j-2m,j-2m+3,\dots,j+m.\] 
We define 
\begin{equation}\label{eq:h_C}
h(C):=(j'-j+2m)/3\in\{0,1,\dots,m\}, 
\end{equation}
and it is clear that this number is the same for any green connected component of $F$. We define $h(F)$ to be equal to $h(C)$ where $C$ is any green connected component of $F$. Finally, for $s=0,1,\dots,m$, we define 
\begin{equation}\label{eq:J_r}
J_s:=\sum_{F:h(F)=s} \wt(F),
\end{equation}
where the sum is taken over all $(3n,m)$-groves $F$ with $h(F)=s$. As it is clear from Figure~\ref{fig:grove_exc} ((b) and (c)), for $s=0$ or $s=m$ there is only one grove $F$ with $h(F)=s$ and it satisfies $\wt(F)=1$, thus $J_0=J_m=1$. 

We are ready to state a formula for the recurrence coefficients when $v$ is adjacent to the boundary of $\TS_m$.
\begin{theorem}\label{thm:cube_formula}
 Fix any $n\geq 1$ and $m\geq 2$ and let $v=(m-1,j,k)\in\TS_{m}$. Then for any sufficiently large $\ell\equiv \e_v\pmod 3$ we have 
 \begin{equation}\label{eq:cube_formula}
 \sum_{s=0}^m (-1)^s J_s \Cubec_{v+(s+\ell) g}(\e_v+2(s+\ell)n)=0.
  \end{equation}
\end{theorem}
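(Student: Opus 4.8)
The plan is to derive~\eqref{eq:cube_formula} from the Carroll--Speyer grove formula~\cite{CS}, applying it both to the values on the left and to the coefficients $J_s$, and then matching the two sides by a transfer-matrix computation.

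\textbf{Step 1: grove expansion of the values.} Write $F_r:=\Cubec_{v+rg}(\e_v+2rn)$, so that~\eqref{eq:cube_formula} becomes $\sum_{s=0}^{m}(-1)^s J_s F_{s+\ell}=0$. The cube recurrence in the strip $\TS_m$ is the unbounded cube recurrence with all boundary indeterminates set to $1$, so (the grove formula of~\cite{CS}, adapted to the strip) expresses $\Cubec_w(\tau)=\sum_\Phi\wt(\Phi)$ as a sum of monomials in $\x^\TC$ over groves $\Phi$ in a finite region $R(w,\tau)\subseteq\TS_m$ attached to $(w,\tau)$; the $3g$-invariance of the initial data descends this to the cylinder $\TC$. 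For $w=v+rg=(m-1,\,j+rn,\,k-rn)$ and $\tau=\e_v+2rn$ I would verify that, since $v$ is one step from the boundary $i=m$, the region $R(v+rg,\e_v+2rn)$ is, up to boundary effects, a parallelogram inside the strip whose two slanted sides are independent of $r$ and whose extent along the direction $g$ grows by exactly one step of $g$ when $r\mapsto r+1$.

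\textbf{Step 2: transfer matrix.} This near-translation-invariance lets one sort the groves by a transfer matrix. Slicing $R(v+rg,\e_v+2rn)$ into $r$ slabs, one per step of $g$, a grove $\Phi$ restricts on each slab to a forest fragment whose trace on the two vertical cuts is one of finitely many ``crossing profiles''; by conditions~(3)--(4) of Definition~\ref{dfn:3n_m_grove} a global grove is a single ``brick wall'' of green edges together with one of red--blue edges of a fixed slope, so the profiles are indexed by a single integer ``drift'' parameter with finitely many values, and admissible transitions between consecutive slabs are encoded by a matrix $T$ with monomial entries in $\x^\TC$. Together with the fixed slanted sides this realizes $F_r$ as a matrix element of a product of copies of $T$, so that $\sum_{r\geq0}F_r z^r$ is a rational function of $z$; along the way this recovers Theorem~\ref{thm:recurrence_1} for $v$ adjacent to the boundary.

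\textbf{Step 3 (the crux): the denominator.} One must show the denominator of $\sum_{r\geq0}F_r z^r$ equals $\sum_{s=0}^m(-1)^s J_s z^s$. Expanding the determinant of $I$ minus the transfer operator as a signed sum over vertex-disjoint systems of closed walks in the profile graph (the cyclic Lindström--Gessel--Viennot lemma), such a system of total length $s$ is precisely the datum of a grove on $\TC$ built from $s$ slabs, i.e.\ a $(3n,m)$-grove $F$ with $h(F)=s$, carrying weight $\wt(F)$; non-crossingness of Carroll--Speyer groves forces this system to be a single long cycle (the green strands wind once around), which pins the sign to $(-1)^s$. It then remains to check that the drift parameter sweeps out exactly the range making $h(F)$ run over $\{0,1,\dots,m\}$ — this is the list of values of $j'$ recorded just before~\eqref{eq:h_C} — and that the two extremes give $J_0=J_m=1$, matching the constant term $1$ and the degree $m$. (Equivalently, Step~3 may be phrased as an explicit sign-reversing involution on pairs consisting of a grove contributing to $F_{s+\ell}$ and a $(3n,m)$-grove with $h=s$, obtained by trading one slab between them; the transfer-matrix language only organizes this bookkeeping.)

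\textbf{Step 4, and the main obstacle.} From $\bigl(\sum_{s=0}^m(-1)^s J_s z^s\bigr)\bigl(\sum_{r\geq0}F_r z^r\bigr)=P(z)$ with $\deg P$ bounded independently of everything, extracting the coefficient of $z^{s+\ell}$ for $\ell>\deg P$ yields~\eqref{eq:cube_formula}. The hard part is Step~3: arranging the transfer-matrix bookkeeping so that profiles, slabs and the brick-wall slope correspond exactly to $(3n,m)$-groves and their invariant $h$, and above all controlling the sign so that it comes out as $(-1)^s$ on the nose, with no stray unit and no spurious extra factor in the denominator. The planar, non-crossing nature of Carroll--Speyer groves — green and red--blue components never meeting — is what makes this work, rigidly constraining both the cyclic systems produced by the determinant expansion and the permutation whose signature is evaluated. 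A secondary technical nuisance, responsible for the hypothesis that $\ell$ be sufficiently large (and for the coefficients being constant in $\ell$), is reconciling the one-$g$-step slabs used by the transfer matrix with the $3g$-period of the initial data.
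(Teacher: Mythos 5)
Your overall architecture --- realize $F_r:=\Cubec_{v+rg}(\e_v+2rn)$ as a matrix element of a transfer operator, identify the denominator of $\sum_r F_rz^r$ with $\sum_s(-1)^sJ_sz^s$ by a cycle expansion, and extract coefficients --- is the right shape (it is essentially what the cited cylindrical-network theorem accomplishes after the paper first converts groves into families of lattice paths). But the two steps you yourself flag as the crux do not go through as written. First, Step 1 is not a routine adaptation of \cite{CS}: the cube recurrence in a cylinder is \emph{not} the unbounded recurrence with boundary indeterminates set to $1$, because the boundary values must equal $1$ for \emph{all} times, not just at time $\e_v$. Establishing the grove formula for $\Cubec$ over $G_m(v,t)$-groves (Proposition~\ref{prop:CS}) requires a genuine degeneration argument (substituting $x_u=q^{\kappa(u)}$ for a rapidly convex $\kappa$ and letting $q\to0$), and that argument is also what determines which connectivity classes of groves survive in the strip.

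Second, and more seriously, Steps 2--3. The groves computing $F_r$ are constrained by a \emph{global} boundary-connectivity condition (the analogue of $\bpart_F=\bpart_0$), so the trace of a grove on a vertical cut must record the partial link pattern of the components crossing it; the state space of your transfer operator is a set of non-crossing partial matchings, not a single integer drift. (Only the periodic, spanning $(3n,m)$-groves of Definition~\ref{dfn:3n_m_grove} are rigid ``brick walls'' indexed by $h$; the groves contributing to $F_r$ are not.) The identification of the cycle expansion of $\det(I-zT)$ with $(3n,m)$-groves is therefore exactly the hard content, and the mechanism you propose for it is internally inconsistent: a single closed walk of length $s$ contributes sign $(-1)^{1}$, not $(-1)^s$, to the determinant, so if non-crossingness forced ``a single long cycle'' the denominator could not equal $1-J_1z+J_2z^2-\cdots$. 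In the correct correspondence a $(3n,m)$-grove with $h(F)=s$ corresponds to $s$ pairwise vertex-disjoint cycles, and proving that $h(F)$ equals the number of cycles is a nontrivial lemma --- the paper does it by transporting everything to a planar cylindrical network via an explicit weight-preserving bijection between groves and non-intersecting path families and then running a local-move argument (Proposition~\ref{prop:Q_t}). That bijection is not cosmetic: it linearizes the connectivity constraint (Proposition~\ref{prop:chord} reduces it to a single chord condition, which for $v=(m-1,j,k)$ holds automatically, Proposition~\ref{prop:paths_1}) and makes the Lindstr\"om--Gessel--Viennot signs come out right. Without it, or an equivalent device, Steps 2--4 remain a program rather than a proof.
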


Note that we assumed $v=(m-1,j,k)\in\TS_{m}$ in Theorem~\ref{thm:cube_formula} while Theorem~\ref{thm:recurrence_1} holds for any $v\in\TS_m$. We give an analogous explicit formula for the recurrence coefficients for arbitrary $v\in\TS_m$. As we show in Section~\ref{sect:integrability}, both theorems can be deduced from our general results on \emph{cylindrical networks}~\cite{Networks} which we recall in Section~\ref{sect:networks}. 

To state a formula for general $v\in\TS_m$, we need to define a certain operation on polynomials as we did in~\cite[Section~4.1]{Networks}. Let $K$ be a field and consider a monic polynomial $Q(t)\in K[t]$ of degree $m$:
\[Q(t)=t^m-\alpha_1t^{m-1}+\dots+(-1)^m\alpha_m.\]
For each $1\leq r\leq m$, we would like to define a polynomial $Q^\plee{r}(t)$ of degree $m\choose r$. To do so, let us factor $Q(t)$ as a product of linear terms over the algebraic closure $\Kbar$ of $K$:
\[Q(t)=\prod_{i=1}^m (t-\gamma_i),\quad \gamma_1,\dots,\gamma_m\in\Kbar.\]
Then we define 
\begin{equation}\label{eq:plee}
Q^\plee{r}(t):=\prod_{1\leq i_1<i_2<\dots<i_r\leq m} (t-\gamma_{i_1}\gamma_{i_2}\dots\gamma_{i_r}).
\end{equation}
As we have shown in~\cite[Section~4.1]{Networks}, the polynomial $Q^\plee{r}(t)$ belongs to $K[t]$ rather than $\Kbar[t]$ since its coefficients are manifestly symmetric functions in $\gamma_1,\dots,\gamma_m$. 

Let us consider a specific polynomial $Q(t)$ defined by 
\begin{equation}\label{eq:Q_t}
Q(t)=J_mt^m-J_{m-1}t^{m-1}+\dots+(-1)^m J_0.
\end{equation}
In this case, $K$ is the field of rational functions in the variables $\x^\TC$. The polynomial $Q(t)$ is monic and has constant term $1$ since $J_0=J_m=1$ as we have already mentioned. We see that $Q(t)$ is the characteristic polynomial of the linear recurrence~\eqref{eq:cube_formula} for the vertex $v$ adjacent to the boundary of $\TS_m$.

\begin{theorem}\label{thm:pleth}
	Fix any $n\geq 1$ and $m\geq 2$ and let $v=(i,j,k)\in\TS_{m}$. Let $r=m-i$. Then the sequence $(\Cubec_{v+\ell g}(\e_v+2\ell n)))_{\ell\in\N}$ satisfies a linear recurrence with characteristic polynomial $Q^\plee{r}(t)$ for all sufficiently large values of $\ell$.
\end{theorem}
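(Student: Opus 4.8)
The plan is to model the cube recurrence in a cylinder by a cylindrical network and to apply the \emph{exterior power} behaviour of its compound boundary measurements established in~\cite{Networks}; Theorem~\ref{thm:pleth} then becomes the order-$r$ refinement of Theorem~\ref{thm:cube_formula}, which is its order-$1$ case.

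First I would set up the network model, following Section~\ref{sect:integrability} and the formalism recalled in Section~\ref{sect:networks}. From the strip $\TS_m$ one builds a cylindrical network $\Nt$ whose edge weights are Laurent monomials in $\x^\TC$, with $m$ marked sources on one boundary circle of the cylinder $\TC$ and $m$ marked sinks on the other. Using the grove formula of Carroll and Speyer~\cite{CS} and the fact that the green and red--blue connected components of a $(3n,m)$-grove are pairwise non-crossing (Figure~\ref{fig:grove_reg}), one translates a cube-recurrence value into a weighted count of families of vertex-disjoint paths in $\Nt$: for $v=(i,j,k)$ with $0<i<m$ and $r=m-i$, restricting a grove to the region between the column through $v$ and the boundary line $i=m$ strips off exactly $r$ path-like pieces, the rest contributing a common Laurent-monomial factor $c_\ell$. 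Thus for all sufficiently large $\ell\equiv\e_v\pmod3$,
\[
\Cubec_{v+\ell g}(\e_v+2\ell n)=c_\ell\cdot M^{(r)}_{I_v,\,J_v(\ell)},
\]
where $M^{(r)}_{I_v,J_v(\ell)}$ is the order-$r$ compound boundary measurement of $\Nt$ from the $r$ sources nearest $v$ to the $r$ sinks winding around $\TC$ a number of times growing linearly in $\ell$; this is an instance of the Lindstr\"om--Gessel--Viennot correspondence for cylindrical networks of~\cite{Networks}. Specializing to $i=m-1$, i.e.\ $r=1$, recovers~\eqref{eq:cube_formula} and identifies the $m\times m$ transfer matrix $T$ of $\Nt$ as one with characteristic polynomial $Q(t)$ of~\eqref{eq:Q_t}, whose coefficients are the grove sums $J_s$.

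Next I would run the general cylindrical-network argument of~\cite{Networks}. The order-$r$ compound boundary measurements of $\Nt$ along a family of boundary pairs of winding number $w$ are entries of $\wedge^r(T^w)=(\wedge^r T)^w$, so $\bigl(M^{(r)}_{I_v,J_v(\ell)}\bigr)_{\ell}$ is annihilated by $\det(t\cdot\Id-\wedge^r T)$ once $\ell$ is large. Factoring $Q(t)=\prod_{i=1}^m(t-\gamma_i)$ over the algebraic closure $\Kbar$ of the field of rational functions in $\x^\TC$, the $\gamma_i$ are the eigenvalues of $T$; hence the eigenvalues of $\wedge^r T$ are the products $\gamma_{i_1}\cdots\gamma_{i_r}$ over $r$-subsets $\{i_1<\dots<i_r\}$, and $\det(t\cdot\Id-\wedge^r T)=Q^\plee{r}(t)$ by~\eqref{eq:plee}. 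Passing back from $\bigl(M^{(r)}_{I_v,J_v(\ell)}\bigr)_{\ell}$ to $\bigl(\Cubec_{v+\ell g}(\e_v+2\ell n)\bigr)_{\ell}$ multiplies by $(c_\ell)_\ell$, which only rescales the annihilating polynomial by a nonzero constant and its variable by the stable ratio $c_{\ell+1}/c_\ell$; one checks from the explicit weights of $\Nt$ that this ratio is $1$ (on the nose when $r=1$, consistently with~\eqref{eq:cube_formula}). Therefore $Q^\plee{r}(t)$ --- which is monic with constant term $\prod_{i_1<\dots<i_r}(-\gamma_{i_1}\cdots\gamma_{i_r})=\pm1\neq0$ since $J_0=1$ --- annihilates $\bigl(\Cubec_{v+\ell g}(\e_v+2\ell n)\bigr)_{\ell}$ for all sufficiently large $\ell$, which is the claim.

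The main obstacle is the first step: building one cylindrical network $\Nt$ whose order-$r$ compound boundary measurements reproduce the cube-recurrence values for all $v$ at once, and tracking precisely the winding $J_v(\ell)$ and the prefactor $c_\ell$ as functions of $v$ and $\ell$ --- in particular the bijection between $(3n,m)$-groves and non-intersecting path families in $\Nt$, and the verification that its $r=1$ case is exactly Theorem~\ref{thm:cube_formula}. Given that dictionary, the passage from $Q$ to $Q^\plee{r}$ is a formal consequence of~\cite{Networks}.
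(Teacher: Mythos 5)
Your overall strategy is the paper's: encode groves as vertex-disjoint path families in a planar cylindrical network built from the lozenges of $\TS_m$, identify $Q(t)$ with the cycle-generating polynomial $Q_N(t)$ of that network, and invoke the $Q^\plee{r}$ machinery of~\cite{Networks} for $r$-paths between shifted $r$-vertices. However, there is a genuine gap at your central identity $\Cubec_{v+\ell g}(\e_v+2\ell n)=c_\ell\cdot M^{(r)}_{I_v,J_v(\ell)}$. For $r\geq 2$ the grove sum is \emph{not} equal to a compound boundary measurement (nor to the full weighted count of $r$-paths in $\pathst(\ubft_r,\wbft_r)$): the bijection between lozenge choices and path steps sends groves \emph{injectively} but not surjectively into $\pathst(\ubft_r,\wbft_r)$. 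There are $r$-paths whose preimage is a forest with the wrong boundary connectivity (the non-crossing partition it induces differs from the one required of a grove -- see Figure~\ref{fig:counter12} and Remark~\ref{rmk:counter}), there are $r$-paths that exit the region corresponding to $G_m(v,t)$, and the relevant source and sink $r$-vertices are in fact \emph{permutable} in the network, so even the Lindstr\"om--Gessel--Viennot determinant does not cleanly compute the disjoint-path count. Only for $r=1$ does surjectivity hold (Proposition~\ref{prop:paths_1}), which is why Theorem~\ref{thm:cube_formula} comes out directly; your argument silently assumes the same for all $r$.

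The missing ingredient is a way to control which $r$-paths actually come from groves. The paper does this with Lemma~\ref{lemma:M}: membership of an $r$-path in the image of $\bij$ depends only on its two end segments (the vertices $\ut$ with $h(\ut)$ within bounded distance $M_1$ of the sources or $M_2$ of the sinks), not on the long middle portion. One then cuts every $r$-path into three pieces; since there are finitely many possible first and third pieces, the grove sum becomes a finite linear combination of ``middle-segment'' counting sequences, each of which satisfies the linear recurrence with characteristic polynomial $Q^\plee{r}(t)$ by Theorem~\ref{thm:planar} (applied to $h$-constant, hence non-permutable, $r$-vertices). Without this lemma, or some substitute argument showing that the discrepancy between groves and arbitrary $r$-paths is itself annihilated by $Q^\plee{r}(t)$, your proof establishes the recurrence only for a quantity that differs from $\Cubec_{v+\ell g}(\e_v+2\ell n)$ when $r\geq 2$. (Separately, your prefactor $c_\ell$ is unnecessary: by Corollary~\ref{cor:paths_cyl} the weights match on the nose, $\wt(F)=\wt(\bij(F))$.)
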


\begin{remark}
	Note that Theorem~\ref{thm:recurrence_1} considers the sequence of values of $\Cubec$ at a fixed vertex $v$ whereas in Theorems~\ref{thm:cube_formula} and~\ref{thm:pleth}, the vertex $v+\ell g$ depends on $\ell$. However, it is a standard fact that if a sequence $f(\ell)$ satisfies a linear recurrence then for any integer $c\geq 1$, the sequence $f(c\ell)$ satisfies a linear recurrence as well (and the characteristic polynomial of the latter is obtained from the characteristic polynomial of the former by raising all of its roots to the $c$-th power, see~\cite[Corollary~3.4.2]{GP2}). We can choose $c=3$ so that the vertex $v+3\ell g$ would always be equal to $v$ modulo $3\Z g$ which gives a direct formula for the recurrence coefficients mentioned in Theorem~\ref{thm:recurrence_1}. 
\end{remark}

\def\Torus{{\mathcal{T}(A,B)}}
\subsection{Cube recurrence in a torus}
\begin{definition}
	Fix two linearly independent vectors $A,B\in\TP_0$ such that $\e_A=\e_B=0$. The cube recurrence $\Cubetorus$ \emph{in a torus} $\Torus$ is a special case of the unbounded cube recurrence where the initial values are required to be invariant with respect to the shifts by $A$ and $B$:
	\[x_{u}=x_{u+A}=x_{u+B},\quad \forall\,u\in\TP_0.\]
\end{definition}

The following is a stronger version of the zero algebraic entropy property which is used in the discrete dynamical systems literature as a standard test for integrability, see e.g.~\cite{OTGR}:
\begin{definition}
We say that a sequence $f(0),f(1),\dots$ of Laurent polynomials in some variables $\x$ has \emph{zero algebraic entropy} if the degrees of $f(\ell)$'s grow at most polynomially.
\end{definition}

\begin{theorem}\label{thm:entropy}
	For any vertex $v\in\Torus$, the sequence $(\Cubetorus_v(\e_v+\ell))_{\ell\geq 0}$ has zero algebraic entropy. In fact, the degrees of these polynomials grow quadratically in $\ell$.
\end{theorem}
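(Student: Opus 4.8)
\emph{Proof proposal.} The plan is to read both assertions off the grove formula of Carroll and Speyer~\cite{CS} for the unbounded cube recurrence, recalled in Section~\ref{sect:groves}. Fix $v\in\TP_0$ and write $t=\e_v+\ell$. By~\cite{CS} there is a finite triangular patch $R_v(t)\subset\TP_0$ of the triangular lattice, of side length $O(t)$ — so that $|R_v(t)|=O(t^2)$ — together with a finite collection of \emph{groves} $F$ (spanning forests of the associated lozenge graph obeying the structural and connectivity constraints of~\cite{CS}), such that $\Cube_v(t)=\sum_F\prod_w x_w^{\deg_F(w)-2}$, the product being over interior vertices $w$ of $R_v(t)$, every grove contributing with coefficient $+1$. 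Specializing to the torus $\Torus$ is just the substitution $x_w\mapsto x_{[w]}$, where $[w]$ denotes the class of $w$ in $\TP_0/(\Z A+\Z B)$; it creates no cancellation and only adds exponents, so $\Cubetorus_v(t)=\sum_F\prod_{[u]}x_{[u]}^{E_F([u])}$ with $E_F([u])=\sum_{w\equiv u}(\deg_F(w)-2)$.

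For the upper bound, hence zero algebraic entropy: every vertex of the triangular lattice has degree $\leq 6$, so $|\deg_F(w)-2|\leq 6$, and therefore $\sum_{[u]}|E_F([u])|\leq\sum_{w\in R_v(t)}|\deg_F(w)-2|\leq 6\,|R_v(t)|=O(t^2)$ for every grove $F$. Since the expansion is subtraction-free, the degree of $\Cubetorus_v(t)$ does not exceed the maximum of these numbers over $F$, i.e.\ $O(t^2)$; thus the degrees grow at most polynomially (indeed at most quadratically). (That $\Cubetorus_v(t)$ is a Laurent polynomial is automatic, being a specialization of the unbounded cube recurrence.)

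For the matching lower bound I would again use subtraction-freeness: it gives $\deg\Cubetorus_v(t)=\max_F\sum_{[u]}|E_F([u])|$, so it suffices to produce, for all large $t$, one grove $F_t$ with $E_{F_t}([u_0])\leq-\Omega(t^2)$ for some class $[u_0]$. The construction I have in mind takes $F_t$ to consist of $\Theta(t)$ ``comb'' components, each crossing $R_v(t)$ from one boundary segment to another (so the connectivity condition of~\cite{CS} holds) and built as a path carrying $\Theta(t)$ pendant leaves; one arranges the combs to be $(\Z A+\Z B)$-periodic in the bulk of $R_v(t)$, so that the $\Theta(t^2)$ leaves of $F_t$ occupy only boundedly many classes $[u]$, whence a leaf-class $[u_0]$ receives the value $-1$ from $\Omega(t^2)$ of its preimages and nothing of opposite sign. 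Combined with the upper bound this yields $\deg\Cubetorus_v(t)=\Theta(t^2)$, i.e.\ quadratic growth in $\ell$. The main obstacle is exactly this construction: the ``at most polynomial'' part is immediate from~\cite{CS}, but realizing an essentially $(\Z A+\Z B)$-periodic comb pattern in the interior of the triangular region $R_v(t)$ while respecting the one-diagonal-per-lozenge rule and the spanning/connectivity requirement, and patching it across the $O(t)$ boundary vertices without spoiling the leading-order count, takes a careful (though routine) combinatorial argument on the lozenge graph $R_v(t)$. As an alternative, one can pass to the ``tropical'' cube recurrence for the support function of the Newton polytope of $\Cubetorus_v(t)$ — which holds here with equality because the grove expansion is subtraction-free and Newton polytopes are additive under the product — and analyze its growth directly; but this merely shifts the difficulty to a non-degeneracy statement excluding eventual linear growth.
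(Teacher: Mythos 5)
Your approach is the same as the paper's: everything is read off the Carroll--Speyer grove formula (Theorem~\ref{thm:CS}), exactly as in Section~\ref{sect:groves_triangle}. Your upper bound is complete and is the paper's argument: each grove lives on $G(v,t)$, which has $\Theta(t^2)$ vertices, each vertex contributes an exponent bounded by an absolute constant, and positivity of the expansion rules out any subtlety from cancellation between groves. So the headline claim (zero algebraic entropy, degrees $O(\ell^2)$) is fully established by your write-up.

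The gap is exactly where you say it is: the $\Omega(\ell^2)$ lower bound needed for ``grow quadratically.'' Your worry is legitimate, because having $\Theta(t^2)$ vertices does not by itself force a monomial of quadratic degree: in a forest such as the one in Figure~\ref{fig:forest_zero}, almost every interior vertex has degree $2$, so by \eqref{eq:weight_s} its weight is supported on the $O(t)$ boundary vertices only. One therefore does need to exhibit, for all large $t$, a grove with $\Omega(t^2)$ vertices of degree $\neq 2$ whose contributions to some single class of $\TP_0/(\Z A+\Z B)$ do not cancel — and cancellation \emph{within} one grove is a real issue after the torus specialization, since $E_F([u])$ is a signed sum over the class. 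Your comb construction is a reasonable candidate, but you have not carried it out, and the prescribed boundary connectivity of groves is restrictive enough that ``routine'' is optimistic. For comparison, the paper's own proof of this theorem is a two-sentence assertion that the degree of $x_u$ ``grows as some constant multiple of'' the number of vertices of $G(v,t)$ congruent to $u$; it supplies no construction either. So you have reproduced the paper's argument where it is complete, and you have honestly flagged the one step that the paper also leaves unjustified rather than papering over it.
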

We explain how this result is a consequence of the groves formula~\cite{CS} in Section~\ref{sect:groves_triangle}.

\subsection{Examples}
In this section, we illustrate our main results by two examples.

\newcommand{\ccr}[6]{\scalebox{1}{$\begin{smallmatrix}
                      & & #1 & & \\
                      & #2&& #3 &\\
                      #4&& #5 && #6
                    \end{smallmatrix}$}}

\begin{table}
\scalebox{0.9}{
 \begin{tabu} to 1.1\textwidth{|c|X[c]|X[c]|X[c]|X[c]|}
 	\hline
 $t$ & $0,1,2$ & $3$& $4$& $5$ \\\hline
%   $\ccr{T_f(t)}{T_d(t+2)}{T_e(t+1)}{T_a(t+1)}{T_b(t)}{T_c(t+2)}$ &
  \ccr{\Cubet_f(t)}{\Cubet_d(t)}{\Cubet_e(t)}{\Cubet_a(t)}{\Cubet_b(t)}{\Cubet_c(t)} &
  \ccr{1}{1}{1}{1}{1}{3}& 
  \ccr{3}{\ast}{\ast}{\ast}{5}{\ast}& 
  \ccr{\ast}{\ast}{15}{7}{\ast}{\ast}& 
  \ccr{\ast}{41}{\ast}{\ast}{\ast}{7}
%   \ccr{\ast}{\ast}{\ast}{\ast}{\ast}{\ast}
   \\\hline
 \end{tabu}}

 \scalebox{0.9}{
 \begin{tabu} to 1.1\textwidth{|c|X[c]|X[c]|X[c]|X[c]|X[c]|}\hline
  $t$ & $6$& $7$& $8$& $9$& $10,11,12$\\\hline
  \ccr{\Cubet_f(t)}{\Cubet_d(t)}{\Cubet_e(t)}{\Cubet_a(t)}{\Cubet_b(t)}{\Cubet_c(t)} &
  \ccr{19}{\ast}{\ast}{\ast}{21}{\ast}& 
  \ccr{\ast}{\ast}{13}{9}{\ast}{\ast}& 
  \ccr{\ast}{5}{\ast}{\ast}{\ast}{5}& 
  \ccr{1}{\ast}{\ast}{\ast}{3}{\ast}& 
%   \ccr{\ast}{\ast}{1}{1}{\ast}{\ast}& 
%   \ccr{\ast}{1}{\ast}{\ast}{\ast}{1}& 
%   \ccr{3}{\ast}{\ast}{\ast}{1}{\ast}&
   \ccr{3}{1}{1}{1}{1}{1}\\\hline
 \end{tabu}}
\caption{\label{table:cube_evolution} The evolution of the cube recurrence in $\TR_5$. When $f_v(t)$ is undefined (i.e. when $t\not\equiv \e_v\pmod 3$), it is denoted by $\ast$.}
\end{table}

\begin{example}
Consider Theorem~\ref{thm:cube_periodic} for $m=5$. Suppose that we set $\Cubet_c(\e_c)=3$ and $\Cubet_v(\e_v)=1$ for $v=a,b,d,e,f$. Then the values of $\Cubet_v(t)$ for $t=0,1,\dots,12$ are shown in Table~\ref{table:cube_evolution}. For example, $\Cubet_e(7)=\frac{\Cubet_f(6)\Cubet_c(5)+\Cubet_b(6)+\Cubet_d(5)}{\Cubet_e(4)}=\frac{19\times 7+21+41}{15}=13$. Just as Theorem~\ref{thm:cube_periodic} suggests, increasing $t$ by $10$ corresponds to rotating the triangle counterclockwise.
\end{example}

\begin{figure}
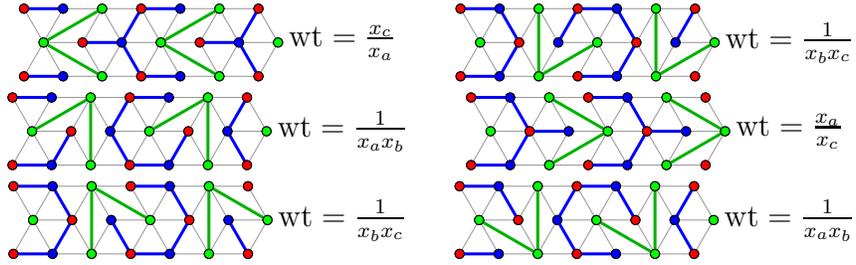

 
\makebox[\textwidth]{
% [inline block 0: 3 envs, 59853 chars -> data_tex | \begin{tabular}{cc} \begin{tabular}{c}...]


}
\caption{\label{fig:grove_reg} The six $(3,2)$-groves $F$ satisfying $h(F)=1$ given together with their weights.}
\end{figure}

\begin{example}	
Consider Theorem~\ref{thm:cube_formula} for $n=1,m=2$. In this case there are only three non-boundary vertices $a,b,c$ in $\TC=\TS_m/3\Z g$ and the sequence
\[(y_\ell)_{\ell\in\N}=(\Cubec_a(0),\Cubec_b(1),\Cubec_c(2),\Cubec_a(3),\dots)\] 
is defined by
\[y_0=x_a,\quad y_1=x_b,\quad y_2=x_c,\quad y_{\ell+3}=\frac{y_{\ell+2}y_{\ell+1}+2}{y_\ell}\]
for all $\ell\in\N$. 

For $n=1,m=2$, we have $J_0=J_2=1$, and all the six groves with $h(F)=1$ are shown in Figure~\ref{fig:grove_reg} which implies that 
\[J_1=\frac{x_c}{x_a}+\frac{x_a}{x_c}+\frac2{x_bx_c}+\frac2{x_ax_b}.\] 
Thus~\eqref{eq:cube_formula} applied to this case states that 
the sequence $(y_\ell)_{\ell\in\Z}$ satisfies a linear recurrence 
\begin{equation}\label{eq:recurrence_example}
 y_{\ell+4}-\left(\frac{x_c}{x_a}+\frac{x_a}{x_c}+\frac2{x_bx_c}+\frac2{x_ax_b}\right) y_{\ell+2}+y_\ell=0 
\end{equation}
for any sufficiently large $\ell\in\N$. 

Let us plug in $x_a=x_b=x_c=1$. Then the first few values of $(y_0,y_1,\dots)$ are $(1,1,1,3,5,17,29,99,169\dots)$, this is the sequence \href{http://oeis.org/A079496}{A079496} in the OEIS~\cite{OEIS}. According to~\eqref{eq:recurrence_example}, we should get
\[y_{\ell+4}-6y_{\ell+2}+y_{\ell}=0,\] 
which is indeed true, for example, $99-6\times 17+3=0$.
\end{example}

\section{Periodicity}\label{sect:periodicity}
\subsection{Background on the multidimensional cube recurrence}
We recall some results and definitions of Henriques and Speyer~\cite{HS}. They work in the context of \emph{zonotopal tilings}, but we will translate their results into the dual language of \emph{pseudoline arrangements}. 

Fix a disc $D$ in $\R^2$ and an integer $n$. Let $q_1,q_2,\dots,q_n,q_1',q_2',\dots,q_n'$ be $2n$ marked points on the boundary of $D$ in clockwise order. A \emph{pseudoline labeled by $k$} is a piecewise-smooth embedding $p:[0,1]\to D$ such that the intersection of the image of $p$ with the boundary of $D$ consists of two points $p(0)=q_k$ and $p(1)=q_k'$ for some $k$. In other words, one may view a pseudoline as a simple closed curve in $\RP^2$. A \emph{pseudoline arrangement} $\Acal=(p_1,p_2,\dots,p_n)$ is a collection of $n$ pseudolines where $p_k$ is labeled by $k$ and such that any two pseudolines intersect exactly once. An example of a pseudoline arrangement is given in Figure~\ref{fig:pseudo_ex}. We view (labeled) pseudoline arrangements up to orientation-preserving diffeomorphisms of $D$. 

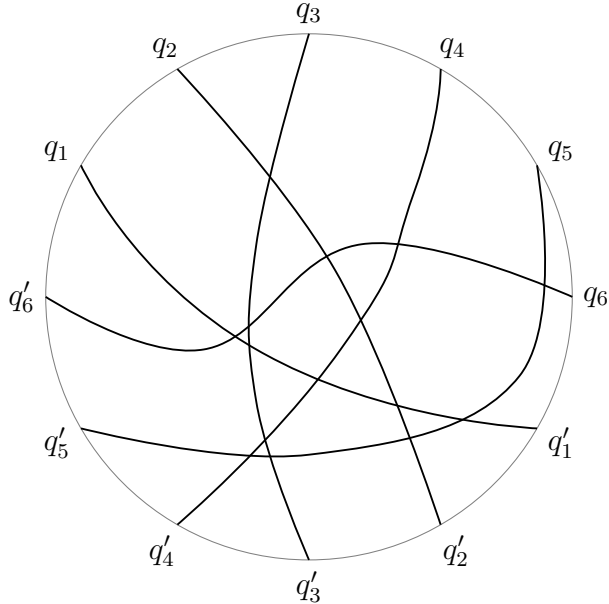
\begin{figure}

\begin{tikzpicture}[scale=0.7]
	\draw[opacity=0.5] (0,0) circle (5);
	\node[anchor=180] (q1) at (0:5) {$q_6$};
	\node[anchor=210] (q2) at (30:5) {$q_5$};
	\node[anchor=240] (q3) at (60:5) {$q_4$};
	\node[anchor=270] (q4) at (90:5) {$q_3$};
	\node[anchor=300] (q5) at (120:5) {$q_2$};
	\node[anchor=330] (q6) at (150:5) {$q_1$};
	\node[anchor=0] (qq1) at (180:5) {$q_6'$};
	\node[anchor=30] (qq2) at (210:5) {$q_5'$};
	\node[anchor=60] (qq3) at (240:5) {$q_4'$};
	\node[anchor=90] (qq4) at (270:5) {$q_3'$};
	\node[anchor=120] (qq5) at (300:5) {$q_2'$};
	\node[anchor=150] (qq6) at (330:5) {$q_1'$};
	\draw[line width=0.25mm] plot [smooth,tension=0.7] coordinates {(0:5) (1,1) (-2,-1) (180:5)};
	
	\draw[line width=0.25mm] plot [smooth,tension=0.7] coordinates {(30:5) (4,-1.5) (0,-3) (210:5)};
	\draw[line width=0.25mm] plot [smooth,tension=1] coordinates {(60:5) (2,2) (0.6,-1) (240:5)};
	\draw[line width=0.25mm] plot [smooth,tension=0.7] coordinates {(90:5) (-1,1) (-1,-2) (270:5)};
	\draw[line width=0.25mm] plot [smooth,tension=0.7] coordinates {(120:5) (0.5, 0.5) (300:5)};
	\draw[line width=0.25mm] plot [smooth,tension=1] coordinates {(150:5) (-1,-1) (330:5)};
\end{tikzpicture}

	\caption{\label{fig:pseudo_ex}An arrangement of six pseudolines.}
\end{figure}

Every pseudoline arrangement subdivides $D$ into a collection of regions. We call a region \emph{unbounded} if it is adjacent to the boundary of $D$, and \emph{bounded} otherwise. We say that a bounded region $R$ is \emph{triangular}, or \emph{simplicial} if it is adjacent to exactly three pseudolines.

Consider a pseudoline arrangement $\Acal$ and a triangular region $R\subset D$ of $\Acal$. The \emph{mutation} of $\Acal$ at $R$ is another pseudoline arrangement $\Acal'$ that is obtained from $\Acal$ by replacing the small neighborhood of $R$ as shown in Figure~\ref{fig:mutation}. More precisely, the small neighborhood of $R$ is diffeomorphic to a disc $D_0$ and the three pseudolines adjacent to $R$ form a pseudoline arrangement $\Acal_0$ inside $D_0$. There are exactly two distinct pseudoline arrangements with three pseudolines in $D_0$, denote them $\Acal_0$ and $\Acal_0'$. The mutation operation replaces $\Acal_0$ by $\Acal_0'$ and does not change the rest of $\Acal$.

\begin{figure}
	
\begin{tikzpicture}
	\begin{scope}[xshift=-3cm,scale=0.7]
	\draw[opacity=0.5,dashed] (0,0) circle (3);
	\draw[line width=0.25mm] plot [smooth,tension=0.7] coordinates {(0:3) (90:1) (180:3)};
	\draw[line width=0.25mm] plot [smooth,tension=0.7] coordinates {(120:3) (210:1) (300:3)};
	\draw[line width=0.25mm] plot [smooth,tension=0.7] coordinates {(240:3) (330:1) (60:3)};
	\node[scale=1] (R1) at (30:2.3) {$R_1$};
	\node[scale=1] (R2) at (90:2.3) {$R_2$};
	\node[scale=1] (R3) at (150:2.3) {$R_3$};
	\node[scale=1] (R4) at (210:2.3) {$R_4$};
	\node[scale=1] (R5) at (270:2.3) {$R_5$};
	\node[scale=1] (R6) at (330:2.3) {$R_6$};
	\node[scale=1] (Ri) at (0,0) {$R_i$};	
	\end{scope}
	\node (math) at (0,0) {$\longleftrightarrow$};
	\begin{scope}[xshift=3cm,scale=0.7]
	\draw[opacity=0.5,dashed] (0,0) circle (3);
	\draw[line width=0.25mm] plot [smooth,tension=0.7] coordinates {(0:3) (-90:1) (180:3)};
	\draw[line width=0.25mm] plot [smooth,tension=0.7] coordinates {(120:3) (30:1) (300:3)};
	\draw[line width=0.25mm] plot [smooth,tension=0.7] coordinates {(240:3) (150:1) (60:3)};
	\node[scale=1] (R1) at (30:2.3) {$R_1$};
	\node[scale=1] (R2) at (90:2.3) {$R_2$};
	\node[scale=1] (R3) at (150:2.3) {$R_3$};
	\node[scale=1] (R4) at (210:2.3) {$R_4$};
	\node[scale=1] (R5) at (270:2.3) {$R_5$};
	\node[scale=1] (R6) at (330:2.3) {$R_6$};
	\node[scale=1] (Ri) at (0,0) {$R_i'$};
	\end{scope}
\end{tikzpicture}

\caption{\label{fig:mutation} A mutation of a pseudoline arrangement at a simplicial region $R_i$.}
\end{figure}
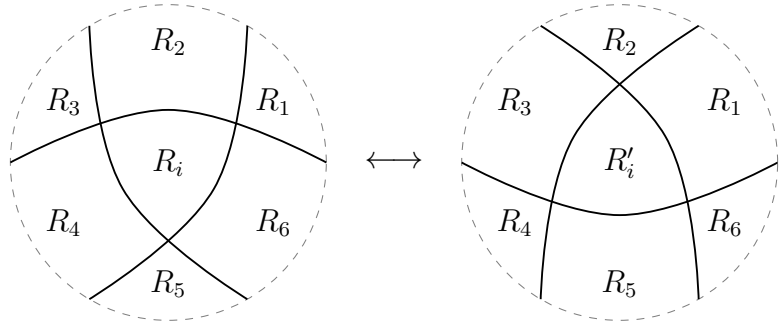

The \emph{multidimensional cube recurrence} is a certain way of assigning rational functions to all regions of all pseudoline arrangements with $n$ pseudolines. Start with some pseudoline arrangement $\Acal$ with $n$ pseudolines and let $R_1,\dots,R_N$ be its bounded regions. For $1\leq i\leq N$, assign a variable $x_i$ to the region $R_i$. Assign $1$ to every unbounded region $R$ of $\Acal$. Now, let $\Acal'$ be obtained from $\Acal$ by a mutation of a region $R_i$ into $R_i'$, and let $R_1,R_2,R_3,R_4,R_5,R_6$ be the regions adjacent to $R_i$ in counterclockwise order, see Figure~\ref{fig:mutation}. Then we assign a rational function $x_i'$ to $R_i'$ according to the following rule:
\[x_i'=\frac{x_1x_4+x_2x_5+x_3x_6}{x_i}.\]
Thus, for any sequence 
\[\Acal=\Acal^\parr 0,\Acal^\parr1,\dots,\Acal^\parr m=\Acal'\]
of pseudoline arrangements such that $\Acal^\parr i$ and $\Acal^\parr {i-1}$ are connected by a mutation for any $i=1,\dots, m$, this procedure defines an assignment of rational functions in $x_1,\dots,x_N$ to the regions of $\Acal'$. 
\begin{theorem}[\cite{HS}]\label{thm:HS}
 For each $\Acal'$ and each of its bounded regions $R$, the rational function in $x_1,\dots,x_N$ assigned to $R$ via the above procedure is actually a Laurent polynomial, and it does not depend on the mutation sequence that connects $\Acal$ with $\Acal'$. 
\end{theorem}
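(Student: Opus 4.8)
\emph{Overall strategy.} The plan is to establish the Laurent property and the independence of the mutation sequence \emph{simultaneously}, by producing an explicit, manifestly path-independent formula for the rational function attached to each region. To each bounded region $R$ of each pseudoline arrangement $\Acal'$ I would attach a finite set $\Gcal(R,\Acal')$ of \emph{multidimensional groves}: forest-like subgraphs of a graph read off from the combinatorics of the initial arrangement $\Acal$ (equivalently, from the associated zonotopal tiling), cut out by degree and boundary conditions, generalizing the groves of Carroll--Speyer~\cite{CS} recalled around Definition~\ref{dfn:3n_m_grove}. Each $F\in\Gcal(R,\Acal')$ would carry a Laurent-monomial weight $\wt(F)=\prod_v x_v^{\deg_F(v)-2}$, the product over the relevant vertices $v$ and $\deg_F(v)$ the degree of $v$ in $F$. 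Writing $\Phi(R):=\sum_{F\in\Gcal(R,\Acal')}\wt(F)$ (suppressing $\Acal'$ from the notation), the claim would be that the rational function the theorem assigns to $R$ equals $\Phi(R)$. Since $\Phi(R)$ is visibly a Laurent polynomial in $x_1,\dots,x_N$, and visibly depends only on the pair $(R,\Acal')$ and not on any choice of mutation path, \emph{both} assertions of the theorem follow at once, provided one knows that $\Phi$ satisfies the correct initial condition and exchange rule.

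\emph{What must be verified.} First, the initial condition: for the starting arrangement $\Acal$, the set $\Gcal(R_i,\Acal)$ should consist of a single grove of weight $x_i$, and $\Gcal(R,\Acal)$ of a single grove of weight $1$ when $R$ is unbounded. Second, the exchange rule: if $\Acal'$ is obtained from $\Acal$ by mutating a triangular region $R_i$ into $R_i'$, with $R_1,\dots,R_6$ the surrounding regions in counterclockwise order, then
\[
\Phi(R_i')\,\Phi(R_i)=\Phi(R_1)\Phi(R_4)+\Phi(R_2)\Phi(R_5)+\Phi(R_3)\Phi(R_6).
\]
This is a Plücker-type grove identity. The point that makes it tractable is that a mutation alters $\Acal$ only inside a bounded neighborhood of $R_i$: the identity should reduce to a \emph{local} statement, namely a weight-preserving bijection between grove pairs in $\Gcal(R_i')\times\Gcal(R_i)$ and the disjoint union $\Gcal(R_1)\times\Gcal(R_4)\sqcup\Gcal(R_2)\times\Gcal(R_5)\sqcup\Gcal(R_3)\times\Gcal(R_6)$, realized by a rewiring supported near $R_i$. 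Such a localized bijection is a finite combinatorial lemma, checkable on the arrangement of the at most six pseudolines meeting the local picture, in the spirit of the grove-swapping arguments of~\cite{CS}.

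\emph{Main obstacle.} The principal difficulty is \emph{defining the multidimensional groves correctly}: one must extract, from an arbitrary pseudoline arrangement $\Acal$, the right graph together with the right degree and boundary constraints so that both the initial condition and the localization of the exchange identity hold. Unlike in Definition~\ref{dfn:3n_m_grove}, where the underlying graph is fixed, here the geometry of pseudoline arrangements — their regions and vertices, and how mutations act on them — must be built into the very definition of a grove. I expect this setup step to be the real work; once the objects are chosen so that the local exchange lemma even makes sense, proving that lemma should be a bounded, if intricate, finite check.

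\emph{Fallback.} If the grove bookkeeping proves too cumbersome, one can split the two assertions. Path-independence follows from a diamond-lemma argument: the mutation graph on labeled $n$-pseudoline arrangements is connected (a classical fact about arrangements of pseudolines), so it suffices to check consistency of the cube recurrence with a generating set of $2$-cells of this graph — remutation $R_i\mapsto R_i'\mapsto R_i$ (immediate, since reapplying the rule returns $x_i''=x_i$ because the three-term numerator is invariant under the relevant relabeling of the six neighbors), commutation of mutations at disjoint triangles (immediate, as they affect disjoint variables), and finitely many relations supported on configurations with a bounded number of pseudolines (an explicit finite computation). The Laurent property can then be obtained separately by a Fomin--Zelevinsky-style caterpillar induction, slicing $\Acal'$ along sub-arrangements so as to reduce to the one-dimensional cube recurrence, whose Laurent property is already known~\cite{FZCube}.
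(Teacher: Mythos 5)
First, a point of reference: the paper does not prove Theorem~\ref{thm:HS} at all — it is quoted verbatim from \cite{HS} and used as a black box in the proof of Theorem~\ref{thm:cube_periodic}. So the only meaningful comparison is with Henriques and Speyer's own argument, not with anything in this text.

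Your primary strategy has a genuine gap, and you have effectively flagged it yourself: the ``multidimensional groves'' are never defined. The entire argument rests on the existence of families $\Gcal(R,\Acal')$ satisfying simultaneously the initial condition and a weight-preserving exchange bijection localized near the mutated triangle, and no such families are constructed. This is not a bookkeeping issue that can be deferred: a grove-type formula valid for \emph{arbitrary} pseudoline arrangements would in particular prove positivity of all coefficients, which is strictly stronger than the theorem and is not what \cite{HS} do. Moreover, the claim that the exchange identity ``should reduce to a local statement'' is doubtful even granting a definition — in the Carroll--Speyer setting \cite{CS} the groves carry global connectivity constraints on boundary components (cf.\ the conditions in Definition~\ref{dfn:3n_m_grove} and Proposition~\ref{prop:chord} of this paper), and a single mutation changes which connectivities are admissible; the verification there is a delicate argument, not a rewiring supported on six regions. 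So the sentence ``once the objects are chosen \dots\ proving that lemma should be a bounded finite check'' conceals the entire content of the proof.

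Your fallback is essentially the route actually taken in \cite{HS}, and its skeleton is sound, but the two steps you treat as routine are exactly where the mathematics lives. For path-independence, connectivity of the flip graph (Ringel) is not enough; the diamond lemma needs a generating set for the \emph{cycles} of that graph, which is a theorem about higher Bruhat orders and zonotopal tilings (Manin--Schechtman, Ziegler), and the resulting ``finite computation'' is the nontrivial consistency of the discrete BKP equation on a $4$-element configuration — an identity that must be stated and verified, not waved at. For Laurentness, the caterpillar method requires exhibiting a suitable sequence of arrangements and proving coprimality of the exchange Laurent polynomials after one and two mutations in this non-cluster setting; neither is automatic. As written, your proposal is a plausible outline with its load-bearing lemmas unproved.
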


\subsection{The proof of Theorem~\ref{thm:cube_periodic}}
Theorem~\ref{thm:HS} provides a clear strategy to prove Theorem~\ref{thm:cube_periodic}: we will assign a pseudoline arrangement $\Acal^\parr t$ to every $t\in \Z$ so that the arrangements $\Acal^\parr t$ and $\Acal^\parr {t+1}$ will be related by a sequence of mutations, and so that the non-boundary vertices of $\TR_m$ will correspond to the bounded regions of $\Acal^\parr t$.

\begin{figure}
% \centering

\begin{tabular}{|c|c|c|}\hline
\scalebox{0.6}{
\begin{tikzpicture}[scale=0.6]
\coordinate (L1) at (90:5.00);
\coordinate (L2) at (330:5.00);
\coordinate (L3) at (210:5.00);
\coordinate (C005) at (barycentric cs:L1=0.0,L2=0.0,L3=5.0);
\coordinate (C014) at (barycentric cs:L1=0.0,L2=1.0,L3=4.0);
\coordinate (C023) at (barycentric cs:L1=0.0,L2=2.0,L3=3.0);
\coordinate (C032) at (barycentric cs:L1=0.0,L2=3.0,L3=2.0);
\coordinate (C041) at (barycentric cs:L1=0.0,L2=4.0,L3=1.0);
\coordinate (C050) at (barycentric cs:L1=0.0,L2=5.0,L3=0.0);
\coordinate (C104) at (barycentric cs:L1=1.0,L2=0.0,L3=4.0);
\coordinate (C113) at (barycentric cs:L1=1.0,L2=1.0,L3=3.0);
\coordinate (C122) at (barycentric cs:L1=1.0,L2=2.0,L3=2.0);
\coordinate (C131) at (barycentric cs:L1=1.0,L2=3.0,L3=1.0);
\coordinate (C140) at (barycentric cs:L1=1.0,L2=4.0,L3=0.0);
\coordinate (C203) at (barycentric cs:L1=2.0,L2=0.0,L3=3.0);
\coordinate (C212) at (barycentric cs:L1=2.0,L2=1.0,L3=2.0);
\coordinate (C221) at (barycentric cs:L1=2.0,L2=2.0,L3=1.0);
\coordinate (C230) at (barycentric cs:L1=2.0,L2=3.0,L3=0.0);
\coordinate (C302) at (barycentric cs:L1=3.0,L2=0.0,L3=2.0);
\coordinate (C311) at (barycentric cs:L1=3.0,L2=1.0,L3=1.0);
\coordinate (C320) at (barycentric cs:L1=3.0,L2=2.0,L3=0.0);
\coordinate (C401) at (barycentric cs:L1=4.0,L2=0.0,L3=1.0);
\coordinate (C410) at (barycentric cs:L1=4.0,L2=1.0,L3=0.0);
\coordinate (C500) at (barycentric cs:L1=5.0,L2=0.0,L3=0.0);
\draw[fill=green] (barycentric cs:L1=0.0,L2=0.0,L3=5.0) circle (0.14285714285714288);
\draw[fill=red] (barycentric cs:L1=0.0,L2=1.0,L3=4.0) circle (0.14285714285714288);
\draw[fill=blue] (barycentric cs:L1=0.0,L2=2.0,L3=3.0) circle (0.14285714285714288);
\draw[fill=green] (barycentric cs:L1=0.0,L2=3.0,L3=2.0) circle (0.14285714285714288);
\draw[fill=red] (barycentric cs:L1=0.0,L2=4.0,L3=1.0) circle (0.14285714285714288);
\draw[fill=blue] (barycentric cs:L1=0.0,L2=5.0,L3=0.0) circle (0.14285714285714288);
\draw[fill=blue] (barycentric cs:L1=1.0,L2=0.0,L3=4.0) circle (0.14285714285714288);
\draw[fill=green] (barycentric cs:L1=1.0,L2=1.0,L3=3.0) circle (0.14285714285714288);
\draw[fill=red] (barycentric cs:L1=1.0,L2=2.0,L3=2.0) circle (0.14285714285714288);
\draw[fill=blue] (barycentric cs:L1=1.0,L2=3.0,L3=1.0) circle (0.14285714285714288);
\draw[fill=green] (barycentric cs:L1=1.0,L2=4.0,L3=0.0) circle (0.14285714285714288);
\draw[fill=red] (barycentric cs:L1=2.0,L2=0.0,L3=3.0) circle (0.14285714285714288);
\draw[fill=blue] (barycentric cs:L1=2.0,L2=1.0,L3=2.0) circle (0.14285714285714288);
\draw[fill=green] (barycentric cs:L1=2.0,L2=2.0,L3=1.0) circle (0.14285714285714288);
\draw[fill=red] (barycentric cs:L1=2.0,L2=3.0,L3=0.0) circle (0.14285714285714288);
\draw[fill=green] (barycentric cs:L1=3.0,L2=0.0,L3=2.0) circle (0.14285714285714288);
\draw[fill=red] (barycentric cs:L1=3.0,L2=1.0,L3=1.0) circle (0.14285714285714288);
\draw[fill=blue] (barycentric cs:L1=3.0,L2=2.0,L3=0.0) circle (0.14285714285714288);
\draw[fill=blue] (barycentric cs:L1=4.0,L2=0.0,L3=1.0) circle (0.14285714285714288);
\draw[fill=green] (barycentric cs:L1=4.0,L2=1.0,L3=0.0) circle (0.14285714285714288);
\draw[fill=red] (barycentric cs:L1=5.0,L2=0.0,L3=0.0) circle (0.14285714285714288);
\draw[opacity=0.7, line width=0.3mm, rounded corners=0.2cm] (barycentric cs:L1=-0.5,L2=2.5,L3=3.0)--(barycentric cs:L1=4.5,L2=0.0,L3=0.5)--(barycentric cs:L1=5.0,L2=0.5,L3=-0.5);
\node[anchor=north] (nodeA4B0) at (barycentric cs:L1=-0.5,L2=2.5,L3=3.0) {$q_4$};
\node[anchor=south west] (nodeA4B0) at (barycentric cs:L1=5.0,L2=0.5,L3=-0.5) {$q'_4$};
\draw[opacity=0.7, line width=0.3mm, rounded corners=0.2cm] (barycentric cs:L1=0.5,L2=5.0,L3=-0.5)--(barycentric cs:L1=0.0,L2=4.5,L3=0.5)--(barycentric cs:L1=2.5,L2=-0.5,L3=3.0);
\node[anchor=south west] (nodeA4B0) at (barycentric cs:L1=0.5,L2=5.0,L3=-0.5) {$q_2$};
\node[anchor=south east] (nodeA4B0) at (barycentric cs:L1=2.5,L2=-0.5,L3=3.0) {$q'_2$};
\draw[opacity=0.7, line width=0.3mm, rounded corners=0.2cm] (barycentric cs:L1=-0.5,L2=4.0,L3=1.5)--(barycentric cs:L1=2.5,L2=2.5,L3=0.0)--(barycentric cs:L1=4.0,L2=-0.5,L3=1.5);
\node[anchor=north] (nodeA2B2) at (barycentric cs:L1=-0.5,L2=4.0,L3=1.5) {$q_3$};
\node[anchor=south east] (nodeA2B2) at (barycentric cs:L1=4.0,L2=-0.5,L3=1.5) {$q'_3$};
\draw[opacity=0.7, line width=0.3mm, rounded corners=0.2cm] (barycentric cs:L1=-0.5,L2=1.0,L3=4.5)--(barycentric cs:L1=1.5,L2=0.0,L3=3.5)--(barycentric cs:L1=3.5,L2=2.0,L3=-0.5);
\node[anchor=north] (nodeA1B3) at (barycentric cs:L1=-0.5,L2=1.0,L3=4.5) {$q_5$};
\node[anchor=south west] (nodeA1B3) at (barycentric cs:L1=3.5,L2=2.0,L3=-0.5) {$q'_5$};
\draw[opacity=0.7, line width=0.3mm, rounded corners=0.2cm] (barycentric cs:L1=2.0,L2=3.5,L3=-0.5)--(barycentric cs:L1=0.0,L2=1.5,L3=3.5)--(barycentric cs:L1=1.0,L2=-0.5,L3=4.5);
\node[anchor=south west] (nodeA1B3) at (barycentric cs:L1=2.0,L2=3.5,L3=-0.5) {$q_1$};
\node[anchor=south east] (nodeA1B3) at (barycentric cs:L1=1.0,L2=-0.5,L3=4.5) {$q'_1$};
\end{tikzpicture}}
&
\scalebox{0.6}{
\begin{tikzpicture}[scale=0.6]
\coordinate (L1) at (90:5.00);
\coordinate (L2) at (330:5.00);
\coordinate (L3) at (210:5.00);
\coordinate (C005) at (barycentric cs:L1=0.0,L2=0.0,L3=5.0);
\coordinate (C014) at (barycentric cs:L1=0.0,L2=1.0,L3=4.0);
\coordinate (C023) at (barycentric cs:L1=0.0,L2=2.0,L3=3.0);
\coordinate (C032) at (barycentric cs:L1=0.0,L2=3.0,L3=2.0);
\coordinate (C041) at (barycentric cs:L1=0.0,L2=4.0,L3=1.0);
\coordinate (C050) at (barycentric cs:L1=0.0,L2=5.0,L3=0.0);
\coordinate (C104) at (barycentric cs:L1=1.0,L2=0.0,L3=4.0);
\coordinate (C113) at (barycentric cs:L1=1.0,L2=1.0,L3=3.0);
\coordinate (C122) at (barycentric cs:L1=1.0,L2=2.0,L3=2.0);
\coordinate (C131) at (barycentric cs:L1=1.0,L2=3.0,L3=1.0);
\coordinate (C140) at (barycentric cs:L1=1.0,L2=4.0,L3=0.0);
\coordinate (C203) at (barycentric cs:L1=2.0,L2=0.0,L3=3.0);
\coordinate (C212) at (barycentric cs:L1=2.0,L2=1.0,L3=2.0);
\coordinate (C221) at (barycentric cs:L1=2.0,L2=2.0,L3=1.0);
\coordinate (C230) at (barycentric cs:L1=2.0,L2=3.0,L3=0.0);
\coordinate (C302) at (barycentric cs:L1=3.0,L2=0.0,L3=2.0);
\coordinate (C311) at (barycentric cs:L1=3.0,L2=1.0,L3=1.0);
\coordinate (C320) at (barycentric cs:L1=3.0,L2=2.0,L3=0.0);
\coordinate (C401) at (barycentric cs:L1=4.0,L2=0.0,L3=1.0);
\coordinate (C410) at (barycentric cs:L1=4.0,L2=1.0,L3=0.0);
\coordinate (C500) at (barycentric cs:L1=5.0,L2=0.0,L3=0.0);
\draw[fill=green] (barycentric cs:L1=0.0,L2=0.0,L3=5.0) circle (0.14285714285714288);
\draw[fill=red] (barycentric cs:L1=0.0,L2=1.0,L3=4.0) circle (0.14285714285714288);
\draw[fill=blue] (barycentric cs:L1=0.0,L2=2.0,L3=3.0) circle (0.14285714285714288);
\draw[fill=green] (barycentric cs:L1=0.0,L2=3.0,L3=2.0) circle (0.14285714285714288);
\draw[fill=red] (barycentric cs:L1=0.0,L2=4.0,L3=1.0) circle (0.14285714285714288);
\draw[fill=blue] (barycentric cs:L1=0.0,L2=5.0,L3=0.0) circle (0.14285714285714288);
\draw[fill=blue] (barycentric cs:L1=1.0,L2=0.0,L3=4.0) circle (0.14285714285714288);
\draw[fill=green] (barycentric cs:L1=1.0,L2=1.0,L3=3.0) circle (0.14285714285714288);
\draw[fill=red] (barycentric cs:L1=1.0,L2=2.0,L3=2.0) circle (0.14285714285714288);
\draw[fill=blue] (barycentric cs:L1=1.0,L2=3.0,L3=1.0) circle (0.14285714285714288);
\draw[fill=green] (barycentric cs:L1=1.0,L2=4.0,L3=0.0) circle (0.14285714285714288);
\draw[fill=red] (barycentric cs:L1=2.0,L2=0.0,L3=3.0) circle (0.14285714285714288);
\draw[fill=blue] (barycentric cs:L1=2.0,L2=1.0,L3=2.0) circle (0.14285714285714288);
\draw[fill=green] (barycentric cs:L1=2.0,L2=2.0,L3=1.0) circle (0.14285714285714288);
\draw[fill=red] (barycentric cs:L1=2.0,L2=3.0,L3=0.0) circle (0.14285714285714288);
\draw[fill=green] (barycentric cs:L1=3.0,L2=0.0,L3=2.0) circle (0.14285714285714288);
\draw[fill=red] (barycentric cs:L1=3.0,L2=1.0,L3=1.0) circle (0.14285714285714288);
\draw[fill=blue] (barycentric cs:L1=3.0,L2=2.0,L3=0.0) circle (0.14285714285714288);
\draw[fill=blue] (barycentric cs:L1=4.0,L2=0.0,L3=1.0) circle (0.14285714285714288);
\draw[fill=green] (barycentric cs:L1=4.0,L2=1.0,L3=0.0) circle (0.14285714285714288);
\draw[fill=red] (barycentric cs:L1=5.0,L2=0.0,L3=0.0) circle (0.14285714285714288);
\draw[opacity=0.7, line width=0.3mm, rounded corners=0.2cm] (barycentric cs:L1=-0.5,L2=3.0,L3=2.5)--(barycentric cs:L1=4.5,L2=0.5,L3=0.0)--(barycentric cs:L1=5.0,L2=-0.5,L3=0.5);
\node[anchor=north] (nodeA4B0) at (barycentric cs:L1=-0.5,L2=3.0,L3=2.5) {$q_4$};
\node[anchor=south east] (nodeA4B0) at (barycentric cs:L1=5.0,L2=-0.5,L3=0.5) {$q'_4$};
\draw[opacity=0.7, line width=0.3mm, rounded corners=0.2cm] (barycentric cs:L1=1.0,L2=4.5,L3=-0.5)--(barycentric cs:L1=0.0,L2=3.5,L3=1.5)--(barycentric cs:L1=2.0,L2=-0.5,L3=3.5);
\node[anchor=south west] (nodeA3B1) at (barycentric cs:L1=1.0,L2=4.5,L3=-0.5) {$q_2$};
\node[anchor=south east] (nodeA3B1) at (barycentric cs:L1=2.0,L2=-0.5,L3=3.5) {$q'_2$};
\draw[opacity=0.7, line width=0.3mm, rounded corners=0.2cm] (barycentric cs:L1=-0.5,L2=1.5,L3=4.0)--(barycentric cs:L1=2.5,L2=0.0,L3=2.5)--(barycentric cs:L1=4.0,L2=1.5,L3=-0.5);
\node[anchor=north] (nodeA2B2) at (barycentric cs:L1=-0.5,L2=1.5,L3=4.0) {$q_5$};
\node[anchor=south west] (nodeA2B2) at (barycentric cs:L1=4.0,L2=1.5,L3=-0.5) {$q'_5$};
\draw[opacity=0.7, line width=0.3mm, rounded corners=0.2cm] (barycentric cs:L1=-0.5,L2=4.5,L3=1.0)--(barycentric cs:L1=1.5,L2=3.5,L3=0.0)--(barycentric cs:L1=3.5,L2=-0.5,L3=2.0);
\node[anchor=north] (nodeA1B3) at (barycentric cs:L1=-0.5,L2=4.5,L3=1.0) {$q_3$};
\node[anchor=south east] (nodeA1B3) at (barycentric cs:L1=3.5,L2=-0.5,L3=2.0) {$q'_3$};
\draw[opacity=0.7, line width=0.3mm, rounded corners=0.2cm] (barycentric cs:L1=2.5,L2=3.0,L3=-0.5)--(barycentric cs:L1=0.0,L2=0.5,L3=4.5)--(barycentric cs:L1=0.5,L2=-0.5,L3=5.0);
\node[anchor=south west] (nodeA0B4) at (barycentric cs:L1=2.5,L2=3.0,L3=-0.5) {$q_1$};
\node[anchor=south east] (nodeA0B4) at (barycentric cs:L1=0.5,L2=-0.5,L3=5.0) {$q'_1$};
\end{tikzpicture}}
&
\scalebox{0.6}{
\begin{tikzpicture}[scale=0.6]
\coordinate (L1) at (90:5.00);
\coordinate (L2) at (330:5.00);
\coordinate (L3) at (210:5.00);
\coordinate (C005) at (barycentric cs:L1=0.0,L2=0.0,L3=5.0);
\coordinate (C014) at (barycentric cs:L1=0.0,L2=1.0,L3=4.0);
\coordinate (C023) at (barycentric cs:L1=0.0,L2=2.0,L3=3.0);
\coordinate (C032) at (barycentric cs:L1=0.0,L2=3.0,L3=2.0);
\coordinate (C041) at (barycentric cs:L1=0.0,L2=4.0,L3=1.0);
\coordinate (C050) at (barycentric cs:L1=0.0,L2=5.0,L3=0.0);
\coordinate (C104) at (barycentric cs:L1=1.0,L2=0.0,L3=4.0);
\coordinate (C113) at (barycentric cs:L1=1.0,L2=1.0,L3=3.0);
\coordinate (C122) at (barycentric cs:L1=1.0,L2=2.0,L3=2.0);
\coordinate (C131) at (barycentric cs:L1=1.0,L2=3.0,L3=1.0);
\coordinate (C140) at (barycentric cs:L1=1.0,L2=4.0,L3=0.0);
\coordinate (C203) at (barycentric cs:L1=2.0,L2=0.0,L3=3.0);
\coordinate (C212) at (barycentric cs:L1=2.0,L2=1.0,L3=2.0);
\coordinate (C221) at (barycentric cs:L1=2.0,L2=2.0,L3=1.0);
\coordinate (C230) at (barycentric cs:L1=2.0,L2=3.0,L3=0.0);
\coordinate (C302) at (barycentric cs:L1=3.0,L2=0.0,L3=2.0);
\coordinate (C311) at (barycentric cs:L1=3.0,L2=1.0,L3=1.0);
\coordinate (C320) at (barycentric cs:L1=3.0,L2=2.0,L3=0.0);
\coordinate (C401) at (barycentric cs:L1=4.0,L2=0.0,L3=1.0);
\coordinate (C410) at (barycentric cs:L1=4.0,L2=1.0,L3=0.0);
\coordinate (C500) at (barycentric cs:L1=5.0,L2=0.0,L3=0.0);
\draw[fill=green] (barycentric cs:L1=0.0,L2=0.0,L3=5.0) circle (0.14285714285714288);
\draw[fill=red] (barycentric cs:L1=0.0,L2=1.0,L3=4.0) circle (0.14285714285714288);
\draw[fill=blue] (barycentric cs:L1=0.0,L2=2.0,L3=3.0) circle (0.14285714285714288);
\draw[fill=green] (barycentric cs:L1=0.0,L2=3.0,L3=2.0) circle (0.14285714285714288);
\draw[fill=red] (barycentric cs:L1=0.0,L2=4.0,L3=1.0) circle (0.14285714285714288);
\draw[fill=blue] (barycentric cs:L1=0.0,L2=5.0,L3=0.0) circle (0.14285714285714288);
\draw[fill=blue] (barycentric cs:L1=1.0,L2=0.0,L3=4.0) circle (0.14285714285714288);
\draw[fill=green] (barycentric cs:L1=1.0,L2=1.0,L3=3.0) circle (0.14285714285714288);
\draw[fill=red] (barycentric cs:L1=1.0,L2=2.0,L3=2.0) circle (0.14285714285714288);
\draw[fill=blue] (barycentric cs:L1=1.0,L2=3.0,L3=1.0) circle (0.14285714285714288);
\draw[fill=green] (barycentric cs:L1=1.0,L2=4.0,L3=0.0) circle (0.14285714285714288);
\draw[fill=red] (barycentric cs:L1=2.0,L2=0.0,L3=3.0) circle (0.14285714285714288);
\draw[fill=blue] (barycentric cs:L1=2.0,L2=1.0,L3=2.0) circle (0.14285714285714288);
\draw[fill=green] (barycentric cs:L1=2.0,L2=2.0,L3=1.0) circle (0.14285714285714288);
\draw[fill=red] (barycentric cs:L1=2.0,L2=3.0,L3=0.0) circle (0.14285714285714288);
\draw[fill=green] (barycentric cs:L1=3.0,L2=0.0,L3=2.0) circle (0.14285714285714288);
\draw[fill=red] (barycentric cs:L1=3.0,L2=1.0,L3=1.0) circle (0.14285714285714288);
\draw[fill=blue] (barycentric cs:L1=3.0,L2=2.0,L3=0.0) circle (0.14285714285714288);
\draw[fill=blue] (barycentric cs:L1=4.0,L2=0.0,L3=1.0) circle (0.14285714285714288);
\draw[fill=green] (barycentric cs:L1=4.0,L2=1.0,L3=0.0) circle (0.14285714285714288);
\draw[fill=red] (barycentric cs:L1=5.0,L2=0.0,L3=0.0) circle (0.14285714285714288);
\draw[opacity=0.7, line width=0.3mm, rounded corners=0.2cm] (barycentric cs:L1=-0.5,L2=3.5,L3=2.0)--(barycentric cs:L1=3.5,L2=1.5,L3=0.0)--(barycentric cs:L1=4.5,L2=-0.5,L3=1.0);
\node[anchor=north] (nodeA3B1) at (barycentric cs:L1=-0.5,L2=3.5,L3=2.0) {$q_4$};
\node[anchor=south east] (nodeA3B1) at (barycentric cs:L1=4.5,L2=-0.5,L3=1.0) {$q'_4$};
\draw[opacity=0.7, line width=0.3mm, rounded corners=0.2cm] (barycentric cs:L1=-0.5,L2=2.0,L3=3.5)--(barycentric cs:L1=3.5,L2=0.0,L3=1.5)--(barycentric cs:L1=4.5,L2=1.0,L3=-0.5);
\node[anchor=north] (nodeA3B1) at (barycentric cs:L1=-0.5,L2=2.0,L3=3.5) {$q_5$};
\node[anchor=south west] (nodeA3B1) at (barycentric cs:L1=4.5,L2=1.0,L3=-0.5) {$q'_5$};
\draw[opacity=0.7, line width=0.3mm, rounded corners=0.2cm] (barycentric cs:L1=1.5,L2=4.0,L3=-0.5)--(barycentric cs:L1=0.0,L2=2.5,L3=2.5)--(barycentric cs:L1=1.5,L2=-0.5,L3=4.0);
\node[anchor=south west] (nodeA2B2) at (barycentric cs:L1=1.5,L2=4.0,L3=-0.5) {$q_2$};
\node[anchor=south east] (nodeA2B2) at (barycentric cs:L1=1.5,L2=-0.5,L3=4.0) {$q'_2$};
\draw[opacity=0.7, line width=0.3mm, rounded corners=0.2cm] (barycentric cs:L1=-0.5,L2=5.0,L3=0.5)--(barycentric cs:L1=0.5,L2=4.5,L3=0.0)--(barycentric cs:L1=3.0,L2=-0.5,L3=2.5);
\node[anchor=north] (nodeA0B4) at (barycentric cs:L1=-0.5,L2=5.0,L3=0.5) {$q_3$};
\node[anchor=south east] (nodeA0B4) at (barycentric cs:L1=3.0,L2=-0.5,L3=2.5) {$q'_3$};
\draw[opacity=0.7, line width=0.3mm, rounded corners=0.2cm] (barycentric cs:L1=-0.5,L2=0.5,L3=5.0)--(barycentric cs:L1=0.5,L2=0.0,L3=4.5)--(barycentric cs:L1=3.0,L2=2.5,L3=-0.5);
\node[anchor=north] (nodeA0B4) at (barycentric cs:L1=-0.5,L2=0.5,L3=5.0) {$q'_1$};
\node[anchor=south west] (nodeA0B4) at (barycentric cs:L1=3.0,L2=2.5,L3=-0.5) {$q_1$};
\end{tikzpicture}}\\

$t=0$ & $t=1$ & $t=2$\\\hline
\end{tabular}

\begin{tabular}{|c|c|c|}\hline
\scalebox{0.6}{
\begin{tikzpicture}[scale=0.6]
\coordinate (L1) at (90:5.00);
\coordinate (L2) at (330:5.00);
\coordinate (L3) at (210:5.00);
\coordinate (C005) at (barycentric cs:L1=0.0,L2=0.0,L3=5.0);
\coordinate (C014) at (barycentric cs:L1=0.0,L2=1.0,L3=4.0);
\coordinate (C023) at (barycentric cs:L1=0.0,L2=2.0,L3=3.0);
\coordinate (C032) at (barycentric cs:L1=0.0,L2=3.0,L3=2.0);
\coordinate (C041) at (barycentric cs:L1=0.0,L2=4.0,L3=1.0);
\coordinate (C050) at (barycentric cs:L1=0.0,L2=5.0,L3=0.0);
\coordinate (C104) at (barycentric cs:L1=1.0,L2=0.0,L3=4.0);
\coordinate (C113) at (barycentric cs:L1=1.0,L2=1.0,L3=3.0);
\coordinate (C122) at (barycentric cs:L1=1.0,L2=2.0,L3=2.0);
\coordinate (C131) at (barycentric cs:L1=1.0,L2=3.0,L3=1.0);
\coordinate (C140) at (barycentric cs:L1=1.0,L2=4.0,L3=0.0);
\coordinate (C203) at (barycentric cs:L1=2.0,L2=0.0,L3=3.0);
\coordinate (C212) at (barycentric cs:L1=2.0,L2=1.0,L3=2.0);
\coordinate (C221) at (barycentric cs:L1=2.0,L2=2.0,L3=1.0);
\coordinate (C230) at (barycentric cs:L1=2.0,L2=3.0,L3=0.0);
\coordinate (C302) at (barycentric cs:L1=3.0,L2=0.0,L3=2.0);
\coordinate (C311) at (barycentric cs:L1=3.0,L2=1.0,L3=1.0);
\coordinate (C320) at (barycentric cs:L1=3.0,L2=2.0,L3=0.0);
\coordinate (C401) at (barycentric cs:L1=4.0,L2=0.0,L3=1.0);
\coordinate (C410) at (barycentric cs:L1=4.0,L2=1.0,L3=0.0);
\coordinate (C500) at (barycentric cs:L1=5.0,L2=0.0,L3=0.0);
\draw[fill=green] (barycentric cs:L1=0.0,L2=0.0,L3=5.0) circle (0.14285714285714288);
\draw[fill=red] (barycentric cs:L1=0.0,L2=1.0,L3=4.0) circle (0.14285714285714288);
\draw[fill=blue] (barycentric cs:L1=0.0,L2=2.0,L3=3.0) circle (0.14285714285714288);
\draw[fill=green] (barycentric cs:L1=0.0,L2=3.0,L3=2.0) circle (0.14285714285714288);
\draw[fill=red] (barycentric cs:L1=0.0,L2=4.0,L3=1.0) circle (0.14285714285714288);
\draw[fill=blue] (barycentric cs:L1=0.0,L2=5.0,L3=0.0) circle (0.14285714285714288);
\draw[fill=blue] (barycentric cs:L1=1.0,L2=0.0,L3=4.0) circle (0.14285714285714288);
\draw[fill=green] (barycentric cs:L1=1.0,L2=1.0,L3=3.0) circle (0.14285714285714288);
\draw[fill=red] (barycentric cs:L1=1.0,L2=2.0,L3=2.0) circle (0.14285714285714288);
\draw[fill=blue] (barycentric cs:L1=1.0,L2=3.0,L3=1.0) circle (0.14285714285714288);
\draw[fill=green] (barycentric cs:L1=1.0,L2=4.0,L3=0.0) circle (0.14285714285714288);
\draw[fill=red] (barycentric cs:L1=2.0,L2=0.0,L3=3.0) circle (0.14285714285714288);
\draw[fill=blue] (barycentric cs:L1=2.0,L2=1.0,L3=2.0) circle (0.14285714285714288);
\draw[fill=green] (barycentric cs:L1=2.0,L2=2.0,L3=1.0) circle (0.14285714285714288);
\draw[fill=red] (barycentric cs:L1=2.0,L2=3.0,L3=0.0) circle (0.14285714285714288);
\draw[fill=green] (barycentric cs:L1=3.0,L2=0.0,L3=2.0) circle (0.14285714285714288);
\draw[fill=red] (barycentric cs:L1=3.0,L2=1.0,L3=1.0) circle (0.14285714285714288);
\draw[fill=blue] (barycentric cs:L1=3.0,L2=2.0,L3=0.0) circle (0.14285714285714288);
\draw[fill=blue] (barycentric cs:L1=4.0,L2=0.0,L3=1.0) circle (0.14285714285714288);
\draw[fill=green] (barycentric cs:L1=4.0,L2=1.0,L3=0.0) circle (0.14285714285714288);
\draw[fill=red] (barycentric cs:L1=5.0,L2=0.0,L3=0.0) circle (0.14285714285714288);
\draw[opacity=0.7, line width=0.3mm, rounded corners=0.2cm] (barycentric cs:L1=-0.5,L2=2.5,L3=3.0)--(barycentric cs:L1=4.5,L2=0.0,L3=0.5)--(barycentric cs:L1=5.0,L2=0.5,L3=-0.5);
\node[anchor=north] (nodeA4B0) at (barycentric cs:L1=-0.5,L2=2.5,L3=3.0) {$q_5$};
\node[anchor=south west] (nodeA4B0) at (barycentric cs:L1=5.0,L2=0.5,L3=-0.5) {$q'_5$};
\draw[opacity=0.7, line width=0.3mm, rounded corners=0.2cm] (barycentric cs:L1=0.5,L2=5.0,L3=-0.5)--(barycentric cs:L1=0.0,L2=4.5,L3=0.5)--(barycentric cs:L1=2.5,L2=-0.5,L3=3.0);
\node[anchor=south west] (nodeA4B0) at (barycentric cs:L1=0.5,L2=5.0,L3=-0.5) {$q_3$};
\node[anchor=south east] (nodeA4B0) at (barycentric cs:L1=2.5,L2=-0.5,L3=3.0) {$q'_3$};
\draw[opacity=0.7, line width=0.3mm, rounded corners=0.2cm] (barycentric cs:L1=-0.5,L2=4.0,L3=1.5)--(barycentric cs:L1=2.5,L2=2.5,L3=0.0)--(barycentric cs:L1=4.0,L2=-0.5,L3=1.5);
\node[anchor=north] (nodeA2B2) at (barycentric cs:L1=-0.5,L2=4.0,L3=1.5) {$q_4$};
\node[anchor=south east] (nodeA2B2) at (barycentric cs:L1=4.0,L2=-0.5,L3=1.5) {$q'_4$};
\draw[opacity=0.7, line width=0.3mm, rounded corners=0.2cm] (barycentric cs:L1=-0.5,L2=1.0,L3=4.5)--(barycentric cs:L1=1.5,L2=0.0,L3=3.5)--(barycentric cs:L1=3.5,L2=2.0,L3=-0.5);
\node[anchor=north] (nodeA1B3) at (barycentric cs:L1=-0.5,L2=1.0,L3=4.5) {$q'_1$};
\node[anchor=south west] (nodeA1B3) at (barycentric cs:L1=3.5,L2=2.0,L3=-0.5) {$q_1$};
\draw[opacity=0.7, line width=0.3mm, rounded corners=0.2cm] (barycentric cs:L1=2.0,L2=3.5,L3=-0.5)--(barycentric cs:L1=0.0,L2=1.5,L3=3.5)--(barycentric cs:L1=1.0,L2=-0.5,L3=4.5);
\node[anchor=south west] (nodeA1B3) at (barycentric cs:L1=2.0,L2=3.5,L3=-0.5) {$q_2$};
\node[anchor=south east] (nodeA1B3) at (barycentric cs:L1=1.0,L2=-0.5,L3=4.5) {$q'_2$};
\end{tikzpicture}}
&
\scalebox{0.6}{
\begin{tikzpicture}[scale=0.6]
\coordinate (L1) at (90:5.00);
\coordinate (L2) at (330:5.00);
\coordinate (L3) at (210:5.00);
\coordinate (C005) at (barycentric cs:L1=0.0,L2=0.0,L3=5.0);
\coordinate (C014) at (barycentric cs:L1=0.0,L2=1.0,L3=4.0);
\coordinate (C023) at (barycentric cs:L1=0.0,L2=2.0,L3=3.0);
\coordinate (C032) at (barycentric cs:L1=0.0,L2=3.0,L3=2.0);
\coordinate (C041) at (barycentric cs:L1=0.0,L2=4.0,L3=1.0);
\coordinate (C050) at (barycentric cs:L1=0.0,L2=5.0,L3=0.0);
\coordinate (C104) at (barycentric cs:L1=1.0,L2=0.0,L3=4.0);
\coordinate (C113) at (barycentric cs:L1=1.0,L2=1.0,L3=3.0);
\coordinate (C122) at (barycentric cs:L1=1.0,L2=2.0,L3=2.0);
\coordinate (C131) at (barycentric cs:L1=1.0,L2=3.0,L3=1.0);
\coordinate (C140) at (barycentric cs:L1=1.0,L2=4.0,L3=0.0);
\coordinate (C203) at (barycentric cs:L1=2.0,L2=0.0,L3=3.0);
\coordinate (C212) at (barycentric cs:L1=2.0,L2=1.0,L3=2.0);
\coordinate (C221) at (barycentric cs:L1=2.0,L2=2.0,L3=1.0);
\coordinate (C230) at (barycentric cs:L1=2.0,L2=3.0,L3=0.0);
\coordinate (C302) at (barycentric cs:L1=3.0,L2=0.0,L3=2.0);
\coordinate (C311) at (barycentric cs:L1=3.0,L2=1.0,L3=1.0);
\coordinate (C320) at (barycentric cs:L1=3.0,L2=2.0,L3=0.0);
\coordinate (C401) at (barycentric cs:L1=4.0,L2=0.0,L3=1.0);
\coordinate (C410) at (barycentric cs:L1=4.0,L2=1.0,L3=0.0);
\coordinate (C500) at (barycentric cs:L1=5.0,L2=0.0,L3=0.0);
\draw[fill=green] (barycentric cs:L1=0.0,L2=0.0,L3=5.0) circle (0.14285714285714288);
\draw[fill=red] (barycentric cs:L1=0.0,L2=1.0,L3=4.0) circle (0.14285714285714288);
\draw[fill=blue] (barycentric cs:L1=0.0,L2=2.0,L3=3.0) circle (0.14285714285714288);
\draw[fill=green] (barycentric cs:L1=0.0,L2=3.0,L3=2.0) circle (0.14285714285714288);
\draw[fill=red] (barycentric cs:L1=0.0,L2=4.0,L3=1.0) circle (0.14285714285714288);
\draw[fill=blue] (barycentric cs:L1=0.0,L2=5.0,L3=0.0) circle (0.14285714285714288);
\draw[fill=blue] (barycentric cs:L1=1.0,L2=0.0,L3=4.0) circle (0.14285714285714288);
\draw[fill=green] (barycentric cs:L1=1.0,L2=1.0,L3=3.0) circle (0.14285714285714288);
\draw[fill=red] (barycentric cs:L1=1.0,L2=2.0,L3=2.0) circle (0.14285714285714288);
\draw[fill=blue] (barycentric cs:L1=1.0,L2=3.0,L3=1.0) circle (0.14285714285714288);
\draw[fill=green] (barycentric cs:L1=1.0,L2=4.0,L3=0.0) circle (0.14285714285714288);
\draw[fill=red] (barycentric cs:L1=2.0,L2=0.0,L3=3.0) circle (0.14285714285714288);
\draw[fill=blue] (barycentric cs:L1=2.0,L2=1.0,L3=2.0) circle (0.14285714285714288);
\draw[fill=green] (barycentric cs:L1=2.0,L2=2.0,L3=1.0) circle (0.14285714285714288);
\draw[fill=red] (barycentric cs:L1=2.0,L2=3.0,L3=0.0) circle (0.14285714285714288);
\draw[fill=green] (barycentric cs:L1=3.0,L2=0.0,L3=2.0) circle (0.14285714285714288);
\draw[fill=red] (barycentric cs:L1=3.0,L2=1.0,L3=1.0) circle (0.14285714285714288);
\draw[fill=blue] (barycentric cs:L1=3.0,L2=2.0,L3=0.0) circle (0.14285714285714288);
\draw[fill=blue] (barycentric cs:L1=4.0,L2=0.0,L3=1.0) circle (0.14285714285714288);
\draw[fill=green] (barycentric cs:L1=4.0,L2=1.0,L3=0.0) circle (0.14285714285714288);
\draw[fill=red] (barycentric cs:L1=5.0,L2=0.0,L3=0.0) circle (0.14285714285714288);
\draw[opacity=0.7, line width=0.3mm, rounded corners=0.2cm] (barycentric cs:L1=-0.5,L2=3.0,L3=2.5)--(barycentric cs:L1=4.5,L2=0.5,L3=0.0)--(barycentric cs:L1=5.0,L2=-0.5,L3=0.5);
\node[anchor=north] (nodeA4B0) at (barycentric cs:L1=-0.5,L2=3.0,L3=2.5) {$q_5$};
\node[anchor=south east] (nodeA4B0) at (barycentric cs:L1=5.0,L2=-0.5,L3=0.5) {$q'_5$};
\draw[opacity=0.7, line width=0.3mm, rounded corners=0.2cm] (barycentric cs:L1=1.0,L2=4.5,L3=-0.5)--(barycentric cs:L1=0.0,L2=3.5,L3=1.5)--(barycentric cs:L1=2.0,L2=-0.5,L3=3.5);
\node[anchor=south west] (nodeA3B1) at (barycentric cs:L1=1.0,L2=4.5,L3=-0.5) {$q_3$};
\node[anchor=south east] (nodeA3B1) at (barycentric cs:L1=2.0,L2=-0.5,L3=3.5) {$q'_3$};
\draw[opacity=0.7, line width=0.3mm, rounded corners=0.2cm] (barycentric cs:L1=-0.5,L2=1.5,L3=4.0)--(barycentric cs:L1=2.5,L2=0.0,L3=2.5)--(barycentric cs:L1=4.0,L2=1.5,L3=-0.5);
\node[anchor=north] (nodeA2B2) at (barycentric cs:L1=-0.5,L2=1.5,L3=4.0) {$q'_1$};
\node[anchor=south west] (nodeA2B2) at (barycentric cs:L1=4.0,L2=1.5,L3=-0.5) {$q_1$};
\draw[opacity=0.7, line width=0.3mm, rounded corners=0.2cm] (barycentric cs:L1=-0.5,L2=4.5,L3=1.0)--(barycentric cs:L1=1.5,L2=3.5,L3=0.0)--(barycentric cs:L1=3.5,L2=-0.5,L3=2.0);
\node[anchor=north] (nodeA1B3) at (barycentric cs:L1=-0.5,L2=4.5,L3=1.0) {$q_4$};
\node[anchor=south east] (nodeA1B3) at (barycentric cs:L1=3.5,L2=-0.5,L3=2.0) {$q'_4$};
\draw[opacity=0.7, line width=0.3mm, rounded corners=0.2cm] (barycentric cs:L1=2.5,L2=3.0,L3=-0.5)--(barycentric cs:L1=0.0,L2=0.5,L3=4.5)--(barycentric cs:L1=0.5,L2=-0.5,L3=5.0);
\node[anchor=south west] (nodeA0B4) at (barycentric cs:L1=2.5,L2=3.0,L3=-0.5) {$q_2$};
\node[anchor=south east] (nodeA0B4) at (barycentric cs:L1=0.5,L2=-0.5,L3=5.0) {$q'_2$};
\end{tikzpicture}}

&
\scalebox{0.6}{
\begin{tikzpicture}[scale=0.6]
\coordinate (L1) at (90:5.00);
\coordinate (L2) at (330:5.00);
\coordinate (L3) at (210:5.00);
\coordinate (C005) at (barycentric cs:L1=0.0,L2=0.0,L3=5.0);
\coordinate (C014) at (barycentric cs:L1=0.0,L2=1.0,L3=4.0);
\coordinate (C023) at (barycentric cs:L1=0.0,L2=2.0,L3=3.0);
\coordinate (C032) at (barycentric cs:L1=0.0,L2=3.0,L3=2.0);
\coordinate (C041) at (barycentric cs:L1=0.0,L2=4.0,L3=1.0);
\coordinate (C050) at (barycentric cs:L1=0.0,L2=5.0,L3=0.0);
\coordinate (C104) at (barycentric cs:L1=1.0,L2=0.0,L3=4.0);
\coordinate (C113) at (barycentric cs:L1=1.0,L2=1.0,L3=3.0);
\coordinate (C122) at (barycentric cs:L1=1.0,L2=2.0,L3=2.0);
\coordinate (C131) at (barycentric cs:L1=1.0,L2=3.0,L3=1.0);
\coordinate (C140) at (barycentric cs:L1=1.0,L2=4.0,L3=0.0);
\coordinate (C203) at (barycentric cs:L1=2.0,L2=0.0,L3=3.0);
\coordinate (C212) at (barycentric cs:L1=2.0,L2=1.0,L3=2.0);
\coordinate (C221) at (barycentric cs:L1=2.0,L2=2.0,L3=1.0);
\coordinate (C230) at (barycentric cs:L1=2.0,L2=3.0,L3=0.0);
\coordinate (C302) at (barycentric cs:L1=3.0,L2=0.0,L3=2.0);
\coordinate (C311) at (barycentric cs:L1=3.0,L2=1.0,L3=1.0);
\coordinate (C320) at (barycentric cs:L1=3.0,L2=2.0,L3=0.0);
\coordinate (C401) at (barycentric cs:L1=4.0,L2=0.0,L3=1.0);
\coordinate (C410) at (barycentric cs:L1=4.0,L2=1.0,L3=0.0);
\coordinate (C500) at (barycentric cs:L1=5.0,L2=0.0,L3=0.0);
\draw[fill=green] (barycentric cs:L1=0.0,L2=0.0,L3=5.0) circle (0.14285714285714288);
\draw[fill=red] (barycentric cs:L1=0.0,L2=1.0,L3=4.0) circle (0.14285714285714288);
\draw[fill=blue] (barycentric cs:L1=0.0,L2=2.0,L3=3.0) circle (0.14285714285714288);
\draw[fill=green] (barycentric cs:L1=0.0,L2=3.0,L3=2.0) circle (0.14285714285714288);
\draw[fill=red] (barycentric cs:L1=0.0,L2=4.0,L3=1.0) circle (0.14285714285714288);
\draw[fill=blue] (barycentric cs:L1=0.0,L2=5.0,L3=0.0) circle (0.14285714285714288);
\draw[fill=blue] (barycentric cs:L1=1.0,L2=0.0,L3=4.0) circle (0.14285714285714288);
\draw[fill=green] (barycentric cs:L1=1.0,L2=1.0,L3=3.0) circle (0.14285714285714288);
\draw[fill=red] (barycentric cs:L1=1.0,L2=2.0,L3=2.0) circle (0.14285714285714288);
\draw[fill=blue] (barycentric cs:L1=1.0,L2=3.0,L3=1.0) circle (0.14285714285714288);
\draw[fill=green] (barycentric cs:L1=1.0,L2=4.0,L3=0.0) circle (0.14285714285714288);
\draw[fill=red] (barycentric cs:L1=2.0,L2=0.0,L3=3.0) circle (0.14285714285714288);
\draw[fill=blue] (barycentric cs:L1=2.0,L2=1.0,L3=2.0) circle (0.14285714285714288);
\draw[fill=green] (barycentric cs:L1=2.0,L2=2.0,L3=1.0) circle (0.14285714285714288);
\draw[fill=red] (barycentric cs:L1=2.0,L2=3.0,L3=0.0) circle (0.14285714285714288);
\draw[fill=green] (barycentric cs:L1=3.0,L2=0.0,L3=2.0) circle (0.14285714285714288);
\draw[fill=red] (barycentric cs:L1=3.0,L2=1.0,L3=1.0) circle (0.14285714285714288);
\draw[fill=blue] (barycentric cs:L1=3.0,L2=2.0,L3=0.0) circle (0.14285714285714288);
\draw[fill=blue] (barycentric cs:L1=4.0,L2=0.0,L3=1.0) circle (0.14285714285714288);
\draw[fill=green] (barycentric cs:L1=4.0,L2=1.0,L3=0.0) circle (0.14285714285714288);
\draw[fill=red] (barycentric cs:L1=5.0,L2=0.0,L3=0.0) circle (0.14285714285714288);
\draw[opacity=0.7, line width=0.3mm, rounded corners=0.2cm] (barycentric cs:L1=-0.5,L2=3.5,L3=2.0)--(barycentric cs:L1=3.5,L2=1.5,L3=0.0)--(barycentric cs:L1=4.5,L2=-0.5,L3=1.0);
\node[anchor=north] (nodeA3B1) at (barycentric cs:L1=-0.5,L2=3.5,L3=2.0) {$q_5$};
\node[anchor=south east] (nodeA3B1) at (barycentric cs:L1=4.5,L2=-0.5,L3=1.0) {$q'_5$};
\draw[opacity=0.7, line width=0.3mm, rounded corners=0.2cm] (barycentric cs:L1=-0.5,L2=2.0,L3=3.5)--(barycentric cs:L1=3.5,L2=0.0,L3=1.5)--(barycentric cs:L1=4.5,L2=1.0,L3=-0.5);
\node[anchor=north] (nodeA3B1) at (barycentric cs:L1=-0.5,L2=2.0,L3=3.5) {$q'_1$};
\node[anchor=south west] (nodeA3B1) at (barycentric cs:L1=4.5,L2=1.0,L3=-0.5) {$q_1$};
\draw[opacity=0.7, line width=0.3mm, rounded corners=0.2cm] (barycentric cs:L1=1.5,L2=4.0,L3=-0.5)--(barycentric cs:L1=0.0,L2=2.5,L3=2.5)--(barycentric cs:L1=1.5,L2=-0.5,L3=4.0);
\node[anchor=south west] (nodeA2B2) at (barycentric cs:L1=1.5,L2=4.0,L3=-0.5) {$q_3$};
\node[anchor=south east] (nodeA2B2) at (barycentric cs:L1=1.5,L2=-0.5,L3=4.0) {$q'_3$};
\draw[opacity=0.7, line width=0.3mm, rounded corners=0.2cm] (barycentric cs:L1=-0.5,L2=5.0,L3=0.5)--(barycentric cs:L1=0.5,L2=4.5,L3=0.0)--(barycentric cs:L1=3.0,L2=-0.5,L3=2.5);
\node[anchor=north] (nodeA0B4) at (barycentric cs:L1=-0.5,L2=5.0,L3=0.5) {$q_4$};
\node[anchor=south east] (nodeA0B4) at (barycentric cs:L1=3.0,L2=-0.5,L3=2.5) {$q'_4$};
\draw[opacity=0.7, line width=0.3mm, rounded corners=0.2cm] (barycentric cs:L1=-0.5,L2=0.5,L3=5.0)--(barycentric cs:L1=0.5,L2=0.0,L3=4.5)--(barycentric cs:L1=3.0,L2=2.5,L3=-0.5);
\node[anchor=north] (nodeA0B4) at (barycentric cs:L1=-0.5,L2=0.5,L3=5.0) {$q'_2$};
\node[anchor=south west] (nodeA0B4) at (barycentric cs:L1=3.0,L2=2.5,L3=-0.5) {$q_2$};
\end{tikzpicture}}\\

$t=3$ & $t=4$ & $t=5$\\\hline
\end{tabular}

\caption{\label{fig:pseudolines} The arrangements $\Acal^\parr t$ for $m=5$ and $0\leq t\leq 5$.}
\end{figure}
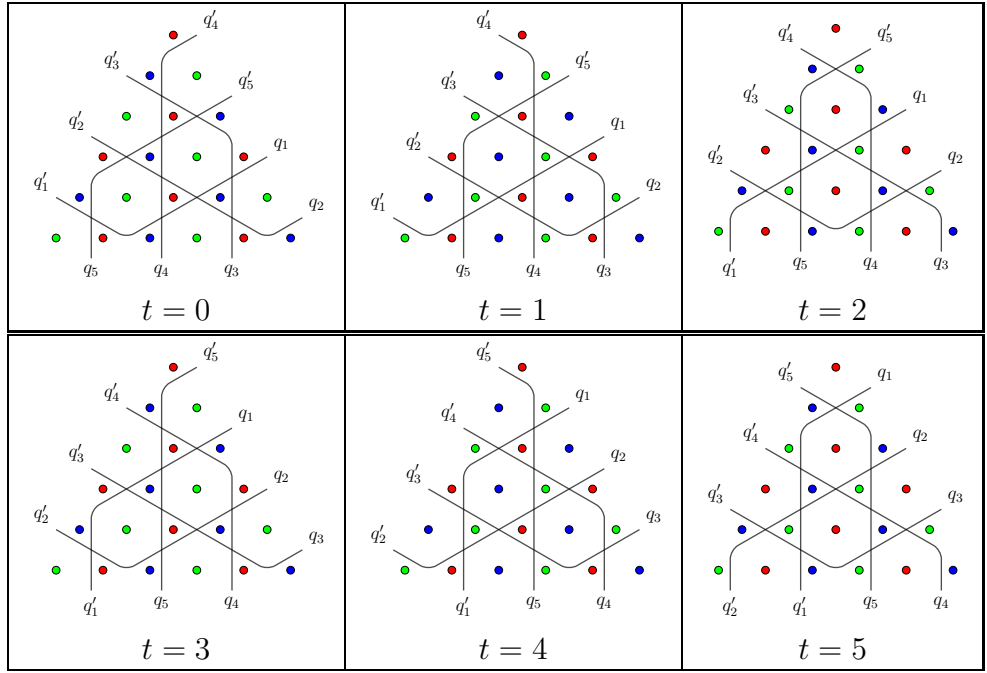

Let 
\[w_1=(-1,1/2,1/2),\quad w_2=(1/2,-1,1/2),\quad w_3=(1/2,1/2,-1),\] 
and $\e\in\{0,1,2\}$ be a color. We are going to introduce a pseudoline arrangement $\Acal^\parr \e$. For each integer $0\leq b\leq m-1$, put $a=m-1-b$ and add the following pseudolines to $\Acal^\parr\e$:
\begin{itemize}
 \item if $b\equiv 2+\e\pmod 3$, let $p=(a+1/2,b+1/2,0)$ and the pseudoline is the union of two rays\footnote{more precisely, we fix some big disc $D$ and intersect this union of two rays with $D$.}: $p+tw_2$ and $p+tw_1$ for $t\in\R_{\geq 0}$;
 \item if $b\equiv 1-\e\pmod 3$, let $p=(a+1/2,0,b+1/2)$ and the pseudoline is the union of two rays: $p+tw_1$ and $p+tw_3$ for $t\in\R_{\geq 0}$;
 \item if $b\equiv 2-m+\e\pmod 3$, let $p=(0,a+1/2,b+1/2)$ and the pseudoline is the union of two rays: $p+tw_2$ and $p+tw_3$ for $t\in\R_{\geq 0}$.
\end{itemize}

It is clear that for each $\e$, the above rules indeed define a family of pseudolines, i.e. every two of them intersect exactly once. Note that it is not true that each $b$ contributes exactly one pseudoline from the above list. However, regardless of the residue of $m$ modulo $3$, we get exactly $m$ pseudolines in $\Acal^\parr \e$. This is true for the following reason. Let $S$ be the set of all points on the boundary of $\TR_m$ that have a zero and two half-integers as coordinates. We get that $S$ contains precisely $3m$ points, and if we label them as $s_0,s_2,\dots,s_{3m-1}$ in the clockwise order then each $s_i$ appears in the definition of $\Acal^\parr \e$ whenever $i\equiv\e\pmod 3$. This proves that we get $m$ pseudolines in each of $\Acal^\parr\e$, and also gives a natural way to label the pseudolines of $\Acal^\parr\e$ by the elements of $[m]:=\{1,2,\dots,m\}$ by just saying that the $i$-th pseudoline in $\Acal^\parr\e$ is the one that comes from the point $s_{3(i-1)+\e}$. 

We now extend this family $\Acal^\parr0,\Acal^\parr 1,\Acal^\parr 2$ of pseudoline arrangements to a bigger family $\Acal^\parr t$ for $t\in \Z$ as follows. For each $t\in\Z$, the pseudoline arrangement $\Acal^\parr t$ coincides with $\Acal^\parr \e$ for $t\equiv\e\pmod 3$, except that the labels of the pseudolines are different. Namely, the $i$-th pseudoline in $\Acal^\parr t$ is the one that comes from the point $s_{3(i-1)+t}$ where we take the index $3(i-1)+t$ modulo $3m$. 

It is easy to see that for every $t$, each non-boundary vertex of $\TR_m$ is contained in a unique bounded region of $\Acal^\parr t$, and conversely, every bounded region of $\Acal^\parr t$ contains precisely one vertex of $\TR_m$. For $t\equiv\e\pmod 3$, mutating all vertices of color $\e$ transforms $\Acal^\parr t$ into $\Acal^\parr{t+1}$, and the formulas clearly match each other. Finally, we can see that the pseudoline arrangements $\Acal^\parr t$ and $\Acal^\parr{t+m}$ differ by a $180^\circ$ rotation, that is, by a transformation that switches $q_k$ with $q_k'$ for all $k\in [m]$. For example, in Figure~\ref{fig:pseudolines}, we see that $\Acal^\parr 0$ and $\Acal^\parr 5$ are the same modulo switching $q_k$ with $q_k'$ for $k=1,2,3,4,5$. Thus the arrangements $\Acal^\parr t$ and $\Acal^\parr{t+2m}$ coincide. However, for every non-boundary vertex $v\in\TR_m$ inside some bounded region of $\Acal^\parr t$, the vertex inside the corresponding bounded region of $\Acal^\parr{t+2m}$ is not $v$ but $\sigma v$. This finishes the proof of Theorem~\ref{thm:cube_periodic}.\qed

\def\Gbound{{\partial G(v,t)}}
\def\GboundG{{\partial^{\operatorname{g}} G(v,t)}}
\def\GboundB{{\partial^{\operatorname{b}} G(v,t)}}

\section{Groves and networks}\label{sect:groves}

In this section, we prove Theorems~\ref{thm:recurrence_1},~\ref{thm:cube_formula},~\ref{thm:pleth}, and~\ref{thm:entropy}. 

\subsection{Groves in a triangle}\label{sect:groves_triangle}

Recall that $\TP_0$ denotes the set of lattice points in the plane $i+j+k=0$ and $G$ is an undirected graph with vertex set $\TP_0$ and edge set consisting of edges $(v,v+e_{12})$, $(v,v+e_{23})$, $(v,v+e_{31})$ for every blue and green vertex $v\in\TP_0$. Let us fix a vertex $v=(i,j,k)\in\TP_0$ and an integer $t\geq 2$ such that $t+1\equiv\e_v\pmod 3$. We would like to explain Carroll-Speyer's formula for the value of the unbounded cube recurrence $\Cube_v(t+1)$ in terms of its initial values $\x$.

Let $\TR(v,t)$ be the convex hull of 
\[v+te_{12},v+te_{23},v+te_{31}.\] 
Define the graph $G(v,t)$ consisting of all the lozenges of $G$ that lie inside $\TR(v,t)$. Let $\Vert(v,t)$ be the vertex set of $G(v,t)$. We say that $u\in\Vert(v,t)$ is a \emph{boundary vertex} of $G(v,t)$ if it belongs to the outer face of $G(v,t)$. We denote by $\Gbound$ the set of all boundary vertices of $G(v,t)$. Let us list the elements of $\Gbound$ and label them
\[(a_1,a_2,\dots,a_{2t-1}=b_1,b_2,\dots,b_{2t-1}=c_1,c_2,\dots,c_{2t-1}=a_1)\]
in counterclockwise order. Explicitly, for $i=1,2,\dots,t-1$, we put
\[a_{2i}=v+(i, t-2i,i-t);\quad b_{2i}=v+(t-2i,i-t,i);\quad c_{2i}=v+(i-t,i,t-2i).\]
Similarly, for $i=0,1,\dots,t-1$, we put
\begin{eqnarray*}
a_{2i+1}&=&v+(i,t-1-2i,i-t+1);\\ 
b_{2i+1}&=&v+(t-1-2i,i-t+1,i);\\
c_{2i+1}&=&v+(i-t+1,i,t-1-2i). 
\end{eqnarray*}

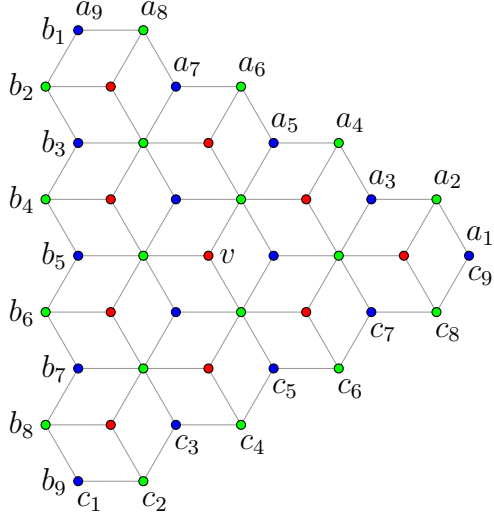
\begin{figure}
 \centering
\scalebox{1.0}{
\begin{tikzpicture}[scale=0.3]
\coordinate (L1) at (90:5.00);
\coordinate (L2) at (330:5.00);
\coordinate (L3) at (210:5.00);
\coordinate (Cm4I0I4) at (barycentric cs:L1=-3.0,L2=1.0,L3=5.0);
\coordinate (Cm4I1I3) at (barycentric cs:L1=-3.0,L2=2.0,L3=4.0);
\coordinate (Cm3Im1I4) at (barycentric cs:L1=-2.0,L2=0.0,L3=5.0);
\coordinate (Cm3I0I3) at (barycentric cs:L1=-2.0,L2=1.0,L3=4.0);
\coordinate (Cm3I1I2) at (barycentric cs:L1=-2.0,L2=2.0,L3=3.0);
\coordinate (Cm3I2I1) at (barycentric cs:L1=-2.0,L2=3.0,L3=2.0);
\coordinate (Cm2Im1I3) at (barycentric cs:L1=-1.0,L2=0.0,L3=4.0);
\coordinate (Cm2I0I2) at (barycentric cs:L1=-1.0,L2=1.0,L3=3.0);
\coordinate (Cm2I1I1) at (barycentric cs:L1=-1.0,L2=2.0,L3=2.0);
\coordinate (Cm2I2I0) at (barycentric cs:L1=-1.0,L2=3.0,L3=1.0);
\coordinate (Cm2I3Im1) at (barycentric cs:L1=-1.0,L2=4.0,L3=0.0);
\coordinate (Cm1Im2I3) at (barycentric cs:L1=0.0,L2=-1.0,L3=4.0);
\coordinate (Cm1Im1I2) at (barycentric cs:L1=0.0,L2=0.0,L3=3.0);
\coordinate (Cm1I0I1) at (barycentric cs:L1=0.0,L2=1.0,L3=2.0);
\coordinate (Cm1I1I0) at (barycentric cs:L1=0.0,L2=2.0,L3=1.0);
\coordinate (Cm1I2Im1) at (barycentric cs:L1=0.0,L2=3.0,L3=0.0);
\coordinate (Cm1I3Im2) at (barycentric cs:L1=0.0,L2=4.0,L3=-1.0);
\coordinate (Cm1I4Im3) at (barycentric cs:L1=0.0,L2=5.0,L3=-2.0);
\coordinate (C0Im2I2) at (barycentric cs:L1=1.0,L2=-1.0,L3=3.0);
\coordinate (C0Im1I1) at (barycentric cs:L1=1.0,L2=0.0,L3=2.0);
\coordinate (C0I0I0) at (barycentric cs:L1=1.0,L2=1.0,L3=1.0);
\coordinate (C0I1Im1) at (barycentric cs:L1=1.0,L2=2.0,L3=0.0);
\coordinate (C0I2Im2) at (barycentric cs:L1=1.0,L2=3.0,L3=-1.0);
\coordinate (C0I3Im3) at (barycentric cs:L1=1.0,L2=4.0,L3=-2.0);
\coordinate (C0I4Im4) at (barycentric cs:L1=1.0,L2=5.0,L3=-3.0);
\coordinate (C1Im3I2) at (barycentric cs:L1=2.0,L2=-2.0,L3=3.0);
\coordinate (C1Im2I1) at (barycentric cs:L1=2.0,L2=-1.0,L3=2.0);
\coordinate (C1Im1I0) at (barycentric cs:L1=2.0,L2=0.0,L3=1.0);
\coordinate (C1I0Im1) at (barycentric cs:L1=2.0,L2=1.0,L3=0.0);
\coordinate (C1I1Im2) at (barycentric cs:L1=2.0,L2=2.0,L3=-1.0);
\coordinate (C1I2Im3) at (barycentric cs:L1=2.0,L2=3.0,L3=-2.0);
\coordinate (C1I3Im4) at (barycentric cs:L1=2.0,L2=4.0,L3=-3.0);
\coordinate (C2Im3I1) at (barycentric cs:L1=3.0,L2=-2.0,L3=2.0);
\coordinate (C2Im2I0) at (barycentric cs:L1=3.0,L2=-1.0,L3=1.0);
\coordinate (C2Im1Im1) at (barycentric cs:L1=3.0,L2=0.0,L3=0.0);
\coordinate (C2I0Im2) at (barycentric cs:L1=3.0,L2=1.0,L3=-1.0);
\coordinate (C2I1Im3) at (barycentric cs:L1=3.0,L2=2.0,L3=-2.0);
\coordinate (C3Im4I1) at (barycentric cs:L1=4.0,L2=-3.0,L3=2.0);
\coordinate (C3Im3I0) at (barycentric cs:L1=4.0,L2=-2.0,L3=1.0);
\coordinate (C3Im2Im1) at (barycentric cs:L1=4.0,L2=-1.0,L3=0.0);
\coordinate (C3Im1Im2) at (barycentric cs:L1=4.0,L2=0.0,L3=-1.0);
\coordinate (C4Im4I0) at (barycentric cs:L1=5.0,L2=-3.0,L3=1.0);
\coordinate (C4Im3Im1) at (barycentric cs:L1=5.0,L2=-2.0,L3=0.0);
\draw[opacity=0.4] (Cm3I1I2) -- (Cm2I0I2);
\draw[opacity=0.4] (Cm3I1I2) -- (Cm4I1I3);
\draw[opacity=0.4] (Cm3I1I2) -- (Cm3I2I1);
\draw[opacity=0.4] (Cm3I2I1) -- (Cm2I1I1);
\draw[opacity=0.4] (C1Im3I2) -- (C1Im2I1);
\draw[opacity=0.4] (C1I2Im3) -- (C2I1Im3);
\draw[opacity=0.4] (C1I2Im3) -- (C0I2Im2);
\draw[opacity=0.4] (C1I2Im3) -- (C1I3Im4);
\draw[opacity=0.4] (C2Im3I1) -- (C3Im4I1);
\draw[opacity=0.4] (C2Im3I1) -- (C1Im3I2);
\draw[opacity=0.4] (C2Im3I1) -- (C2Im2I0);
\draw[opacity=0.4] (C2I1Im3) -- (C1I1Im2);
\draw[opacity=0.4] (Cm4I1I3) -- (Cm3I0I3);
\draw[opacity=0.4] (C1I3Im4) -- (C0I3Im3);
\draw[opacity=0.4] (C3Im4I1) -- (C3Im3I0);
\draw[opacity=0.4] (Cm1I0I1) -- (C0Im1I1);
\draw[opacity=0.4] (Cm1I0I1) -- (Cm2I0I2);
\draw[opacity=0.4] (Cm1I0I1) -- (Cm1I1I0);
\draw[opacity=0.4] (Cm1I1I0) -- (C0I0I0);
\draw[opacity=0.4] (Cm1I1I0) -- (Cm2I1I1);
\draw[opacity=0.4] (Cm1I1I0) -- (Cm1I2Im1);
\draw[opacity=0.4] (C0Im1I1) -- (C1Im2I1);
\draw[opacity=0.4] (C0Im1I1) -- (Cm1Im1I2);
\draw[opacity=0.4] (C0Im1I1) -- (C0I0I0);
\draw[opacity=0.4] (C0I1Im1) -- (C1I0Im1);
\draw[opacity=0.4] (C0I1Im1) -- (Cm1I1I0);
\draw[opacity=0.4] (C0I1Im1) -- (C0I2Im2);
\draw[opacity=0.4] (C1Im1I0) -- (C2Im2I0);
\draw[opacity=0.4] (C1Im1I0) -- (C0Im1I1);
\draw[opacity=0.4] (C1Im1I0) -- (C1I0Im1);
\draw[opacity=0.4] (C1I0Im1) -- (C2Im1Im1);
\draw[opacity=0.4] (C1I0Im1) -- (C0I0I0);
\draw[opacity=0.4] (C1I0Im1) -- (C1I1Im2);
\draw[opacity=0.4] (Cm2I0I2) -- (Cm1Im1I2);
\draw[opacity=0.4] (Cm2I0I2) -- (Cm3I0I3);
\draw[opacity=0.4] (Cm2I0I2) -- (Cm2I1I1);
\draw[opacity=0.4] (Cm2I2I0) -- (Cm1I1I0);
\draw[opacity=0.4] (Cm2I2I0) -- (Cm3I2I1);
\draw[opacity=0.4] (Cm2I2I0) -- (Cm2I3Im1);
\draw[opacity=0.4] (C0Im2I2) -- (C1Im3I2);
\draw[opacity=0.4] (C0Im2I2) -- (Cm1Im2I3);
\draw[opacity=0.4] (C0Im2I2) -- (C0Im1I1);
\draw[opacity=0.4] (C0I2Im2) -- (C1I1Im2);
\draw[opacity=0.4] (C0I2Im2) -- (Cm1I2Im1);
\draw[opacity=0.4] (C0I2Im2) -- (C0I3Im3);
\draw[opacity=0.4] (C2Im2I0) -- (C3Im3I0);
\draw[opacity=0.4] (C2Im2I0) -- (C1Im2I1);
\draw[opacity=0.4] (C2Im2I0) -- (C2Im1Im1);
\draw[opacity=0.4] (C2I0Im2) -- (C3Im1Im2);
\draw[opacity=0.4] (C2I0Im2) -- (C1I0Im1);
\draw[opacity=0.4] (C2I0Im2) -- (C2I1Im3);
\draw[opacity=0.4] (Cm1Im2I3) -- (Cm1Im1I2);
\draw[opacity=0.4] (Cm1I3Im2) -- (C0I2Im2);
\draw[opacity=0.4] (Cm1I3Im2) -- (Cm2I3Im1);
\draw[opacity=0.4] (Cm1I3Im2) -- (Cm1I4Im3);
\draw[opacity=0.4] (Cm2Im1I3) -- (Cm1Im2I3);
\draw[opacity=0.4] (Cm2Im1I3) -- (Cm3Im1I4);
\draw[opacity=0.4] (Cm2Im1I3) -- (Cm2I0I2);
\draw[opacity=0.4] (Cm2I3Im1) -- (Cm1I2Im1);
\draw[opacity=0.4] (C3Im1Im2) -- (C2Im1Im1);
\draw[opacity=0.4] (C3Im2Im1) -- (C4Im3Im1);
\draw[opacity=0.4] (C3Im2Im1) -- (C2Im2I0);
\draw[opacity=0.4] (C3Im2Im1) -- (C3Im1Im2);
\draw[opacity=0.4] (Cm1I4Im3) -- (C0I3Im3);
\draw[opacity=0.4] (Cm3Im1I4) -- (Cm3I0I3);
\draw[opacity=0.4] (Cm4I0I4) -- (Cm3Im1I4);
\draw[opacity=0.4] (Cm4I0I4) -- (Cm4I1I3);
\draw[opacity=0.4] (C0I4Im4) -- (C1I3Im4);
\draw[opacity=0.4] (C0I4Im4) -- (Cm1I4Im3);
\draw[opacity=0.4] (C4Im3Im1) -- (C3Im3I0);
\draw[opacity=0.4] (C4Im4I0) -- (C3Im4I1);
\draw[opacity=0.4] (C4Im4I0) -- (C4Im3Im1);
\draw[fill=red] (barycentric cs:L1=-1.0,L2=2.0,L3=2.0) circle (0.2);
\draw[fill=red] (barycentric cs:L1=2.0,L2=-1.0,L3=2.0) circle (0.2);
\draw[fill=red] (barycentric cs:L1=2.0,L2=2.0,L3=-1.0) circle (0.2);
\draw[fill=blue] (barycentric cs:L1=-2.0,L2=2.0,L3=3.0) circle (0.2);
\draw[fill=green] (barycentric cs:L1=-2.0,L2=3.0,L3=2.0) circle (0.2);
\draw[fill=green] (barycentric cs:L1=2.0,L2=-2.0,L3=3.0) circle (0.2);
\draw[fill=blue] (barycentric cs:L1=2.0,L2=3.0,L3=-2.0) circle (0.2);
\draw[fill=blue] (barycentric cs:L1=3.0,L2=-2.0,L3=2.0) circle (0.2);
\draw[fill=green] (barycentric cs:L1=3.0,L2=2.0,L3=-2.0) circle (0.2);
\draw[fill=green] (barycentric cs:L1=-3.0,L2=2.0,L3=4.0) circle (0.2);
\draw[fill=green] (barycentric cs:L1=2.0,L2=4.0,L3=-3.0) circle (0.2);
\draw[fill=green] (barycentric cs:L1=4.0,L2=-3.0,L3=2.0) circle (0.2);
\draw[fill=blue] (barycentric cs:L1=0.0,L2=1.0,L3=2.0) circle (0.2);
\draw[fill=green] (barycentric cs:L1=0.0,L2=2.0,L3=1.0) circle (0.2);
\draw[fill=green] (barycentric cs:L1=1.0,L2=0.0,L3=2.0) circle (0.2);
\draw[fill=blue] (barycentric cs:L1=1.0,L2=2.0,L3=0.0) circle (0.2);
\draw[fill=blue] (barycentric cs:L1=2.0,L2=0.0,L3=1.0) circle (0.2);
\draw[fill=green] (barycentric cs:L1=2.0,L2=1.0,L3=0.0) circle (0.2);
\draw[fill=red] (barycentric cs:L1=0.0,L2=0.0,L3=3.0) circle (0.2);
\draw[fill=red] (barycentric cs:L1=0.0,L2=3.0,L3=0.0) circle (0.2);
\draw[fill=green] (barycentric cs:L1=-1.0,L2=1.0,L3=3.0) circle (0.2);
\draw[fill=blue] (barycentric cs:L1=-1.0,L2=3.0,L3=1.0) circle (0.2);
\draw[fill=blue] (barycentric cs:L1=1.0,L2=-1.0,L3=3.0) circle (0.2);
\draw[fill=green] (barycentric cs:L1=1.0,L2=3.0,L3=-1.0) circle (0.2);
\draw[fill=red] (barycentric cs:L1=3.0,L2=0.0,L3=0.0) circle (0.2);
\draw[fill=green] (barycentric cs:L1=3.0,L2=-1.0,L3=1.0) circle (0.2);
\draw[fill=blue] (barycentric cs:L1=3.0,L2=1.0,L3=-1.0) circle (0.2);
\draw[fill=green] (barycentric cs:L1=0.0,L2=-1.0,L3=4.0) circle (0.2);
\draw[fill=blue] (barycentric cs:L1=0.0,L2=4.0,L3=-1.0) circle (0.2);
\draw[fill=blue] (barycentric cs:L1=-1.0,L2=0.0,L3=4.0) circle (0.2);
\draw[fill=green] (barycentric cs:L1=-1.0,L2=4.0,L3=0.0) circle (0.2);
\draw[fill=red] (barycentric cs:L1=-2.0,L2=1.0,L3=4.0) circle (0.2);
\draw[fill=red] (barycentric cs:L1=1.0,L2=4.0,L3=-2.0) circle (0.2);
\draw[fill=green] (barycentric cs:L1=4.0,L2=0.0,L3=-1.0) circle (0.2);
\draw[fill=blue] (barycentric cs:L1=4.0,L2=-1.0,L3=0.0) circle (0.2);
\draw[fill=red] (barycentric cs:L1=4.0,L2=-2.0,L3=1.0) circle (0.2);
\draw[fill=green] (barycentric cs:L1=0.0,L2=5.0,L3=-2.0) circle (0.2);
\draw[fill=green] (barycentric cs:L1=-2.0,L2=0.0,L3=5.0) circle (0.2);
\draw[fill=blue] (barycentric cs:L1=-3.0,L2=1.0,L3=5.0) circle (0.2);
\draw[fill=blue] (barycentric cs:L1=1.0,L2=5.0,L3=-3.0) circle (0.2);
\draw[fill=green] (barycentric cs:L1=5.0,L2=-2.0,L3=0.0) circle (0.2);
\draw[fill=blue] (barycentric cs:L1=5.0,L2=-3.0,L3=1.0) circle (0.2);
\draw[fill=red] (barycentric cs:L1=1.0,L2=1.0,L3=1.0) circle (0.2);
\node[anchor=240] (a1) at (barycentric cs:L1=1.0,L2=5.0,L3=-3.0) {$a_1$};
\node[anchor=0] (b1) at (barycentric cs:L1=5.0,L2=-3.0,L3=1.0) {$b_1$};
\node[anchor=120] (c1) at (barycentric cs:L1=-3.0,L2=1.0,L3=5.0) {$c_1$};
\node[anchor=240] (a2) at (barycentric cs:L1=2.0,L2=4.0,L3=-3.0) {$a_2$};
\node[anchor=0] (b2) at (barycentric cs:L1=4.0,L2=-3.0,L3=2.0) {$b_2$};
\node[anchor=120] (c2) at (barycentric cs:L1=-3.0,L2=2.0,L3=4.0) {$c_2$};
\node[anchor=240] (a3) at (barycentric cs:L1=2.0,L2=3.0,L3=-2.0) {$a_3$};
\node[anchor=0] (b3) at (barycentric cs:L1=3.0,L2=-2.0,L3=2.0) {$b_3$};
\node[anchor=120] (c3) at (barycentric cs:L1=-2.0,L2=2.0,L3=3.0) {$c_3$};
\node[anchor=240] (a4) at (barycentric cs:L1=3.0,L2=2.0,L3=-2.0) {$a_4$};
\node[anchor=0] (b4) at (barycentric cs:L1=2.0,L2=-2.0,L3=3.0) {$b_4$};
\node[anchor=120] (c4) at (barycentric cs:L1=-2.0,L2=3.0,L3=2.0) {$c_4$};
\node[anchor=240] (a5) at (barycentric cs:L1=3.0,L2=1.0,L3=-1.0) {$a_5$};
\node[anchor=0] (b5) at (barycentric cs:L1=1.0,L2=-1.0,L3=3.0) {$b_5$};
\node[anchor=120] (c5) at (barycentric cs:L1=-1.0,L2=3.0,L3=1.0) {$c_5$};
\node[anchor=240] (a6) at (barycentric cs:L1=4.0,L2=0.0,L3=-1.0) {$a_6$};
\node[anchor=0] (b6) at (barycentric cs:L1=0.0,L2=-1.0,L3=4.0) {$b_6$};
\node[anchor=120] (c6) at (barycentric cs:L1=-1.0,L2=4.0,L3=0.0) {$c_6$};
\node[anchor=240] (a7) at (barycentric cs:L1=4.0,L2=-1.0,L3=0.0) {$a_7$};
\node[anchor=0] (b7) at (barycentric cs:L1=-1.0,L2=0.0,L3=4.0) {$b_7$};
\node[anchor=120] (c7) at (barycentric cs:L1=0.0,L2=4.0,L3=-1.0) {$c_7$};
\node[anchor=240] (a8) at (barycentric cs:L1=5.0,L2=-2.0,L3=0.0) {$a_8$};
\node[anchor=0] (b8) at (barycentric cs:L1=-2.0,L2=0.0,L3=5.0) {$b_8$};
\node[anchor=120] (c8) at (barycentric cs:L1=0.0,L2=5.0,L3=-2.0) {$c_8$};
\node[anchor=240] (a9) at (barycentric cs:L1=5.0,L2=-3.0,L3=1.0) {$a_9$};
\node[anchor=0] (b9) at (barycentric cs:L1=-3.0,L2=1.0,L3=5.0) {$b_9$};
\node[anchor=120] (c9) at (barycentric cs:L1=1.0,L2=5.0,L3=-3.0) {$c_9$};
\node[anchor=180] (v) at (barycentric cs:L1=1.0,L2=1.0,L3=1.0) {$v$};
\end{tikzpicture}}

 \caption{\label{fig:gvt} The graph $G(v,t)$ for $t=5$.}
\end{figure}

For $v=(0,0,0)$ and $t=5$, the graph $G(v,t)$ together with its labeling of the boundary is shown in Figure~\ref{fig:gvt}. 

We define $\GboundG$ (resp., $\GboundB$) to be the subsets of $\Gbound$ that consist of green (resp., blue) vertices in $\Gbound$. There are no red vertices in $\Gbound$.

% Let $F(v,t)$ be the restriction of $F^\infty(v)$ to $\TR(v,t)$. 
% It is easy to see that all the vertices on the boundary of $\TR(v,t)$ are green. Let $B(v,t)$ be the set of all these vertices, then 
% \[B(v,t)=\{v+(b,a-b,-a),v+(-a,b,a-b),v+(a-b,-a,b)\},\]
% where $0\leq a,b\leq t-1$ and $a+b=t-1$. 

\begin{definition}
A \emph{$G(v,t)$-forest} is a forest $F$ with the same vertex set as $G(v,t)$ such that for every lozenge face of $G(v,t)$, $F$ contains exactly one of its two diagonals, and there are no other edges in $F$.
%  \item two vertices $v+(x,y,z)$ and $v+(x',y',z')$ from $B(v,t)$ belong to the same connected component of $F$ if and only if there exist integers $0\leq a,b\leq t-1$ with $2b\leq t-1$ such that at least one of the following conditions is satisfied:
%  \begin{eqnarray}
%   (x,y,z)=(a-b,-a,b),\quad (x',y',z')=(a,b-a,-b);\\
%   (x,y,z)=(-a,b,a-b),\quad (x',y',z')=(b-a,-b,a);\\
%   (x,y,z)=(b,a-b,-a),\quad (x',y',z')=(-b,a,b-a).  
%  \end{eqnarray}
% \item two vertices $u$ and $w$ on the boundary of $G(v,t)$ belong to the same connected component of $F$ if and only if they belong to the same connected component of $F(v,t)$. 
% \end{enumerate}
\end{definition}

% In particular, if $t$ is even then each connected component of $F$ contains one vertex on each of two sides of $\TR(v,t)$ and no vertices on the third side of $\TR(v,t)$. If $t=2b+1$ is odd then the same holds except that there is one component of $F$ whose restriction to the boundary of $\TR(v,t)$ consists of vertices $(b,0,-b),(0,-b,b),$ and $(-b,b,0)$. 

\def\bpart{\Pi}
\def\bpartG{\Pi^{\operatorname{g}}}
\def\bpartB{\Pi^{\operatorname{b}}}

For each $G(v,t)$-forest $F$, we denote by $\bpart_F=\{B_1,\dots,B_k\}$ the non-crossing partition that $F$ induces on $\Gbound$. More precisely, the sets $B_1,\dots,B_k$ (called \emph{blocks} of $\bpart_F$) define a partition of $\Gbound$ so that two vertices $u,w\in\Gbound$ belong to the same block of $\bpart_F$ if and only if they belong to the same connected component of $F$. In particular, since every connected component of $F$ consists either entirely of green vertices or entirely of red and blue vertices, every block of $\bpart_F$ is called either \emph{green} or \emph{red-blue}. We denote by $\bpartG_F$ and $\bpartB_F$ the corresponding non-crossing partitions of $\GboundG$ and $\GboundB$. Note that the non-crossing partitions $\bpartG_F$ and $\bpartB_F$ are \emph{complementary} in the sense of Kreweras~\cite{Kreweras}. We define the non-crossing partition $\bpart_0$ of $\Gbound$ as follows: for each $1\leq i\leq t$, draw an edge between
\begin{itemize}
 \item $a_i$ and $c_{2t-i}$;
 \item $b_i$ and $a_{2t-i}$;
 \item $c_i$ and $b_{2t-i}$.
\end{itemize}
The union of these edges defines an undirected graph with vertex set $\Gbound$, and we let $\bpart_0$ be the partition of $\Gbound$ into connected components of this graph. Thus $\bpart_0$ consists of pairs of vertices together with one triple $\{a_t,b_t,c_t\}$ which is green when $t$ is even and blue when $t$ is odd.

\begin{definition}
 A $G(v,t)$-forest $F$ is called a \emph{$G(v,t)$-grove} if $\bpart_F=\bpart_0$.
\end{definition}

\begin{figure}
 \centering

\scalebox{1.0}{
\begin{tikzpicture}[scale=0.3]
\coordinate (L1) at (90:5.00);
\coordinate (L2) at (330:5.00);
\coordinate (L3) at (210:5.00);
\coordinate (Cm4I0I4) at (barycentric cs:L1=-3.0,L2=1.0,L3=5.0);
\coordinate (Cm4I1I3) at (barycentric cs:L1=-3.0,L2=2.0,L3=4.0);
\coordinate (Cm3Im1I4) at (barycentric cs:L1=-2.0,L2=0.0,L3=5.0);
\coordinate (Cm3I0I3) at (barycentric cs:L1=-2.0,L2=1.0,L3=4.0);
\coordinate (Cm3I1I2) at (barycentric cs:L1=-2.0,L2=2.0,L3=3.0);
\coordinate (Cm3I2I1) at (barycentric cs:L1=-2.0,L2=3.0,L3=2.0);
\coordinate (Cm2Im1I3) at (barycentric cs:L1=-1.0,L2=0.0,L3=4.0);
\coordinate (Cm2I0I2) at (barycentric cs:L1=-1.0,L2=1.0,L3=3.0);
\coordinate (Cm2I1I1) at (barycentric cs:L1=-1.0,L2=2.0,L3=2.0);
\coordinate (Cm2I2I0) at (barycentric cs:L1=-1.0,L2=3.0,L3=1.0);
\coordinate (Cm2I3Im1) at (barycentric cs:L1=-1.0,L2=4.0,L3=0.0);
\coordinate (Cm1Im2I3) at (barycentric cs:L1=0.0,L2=-1.0,L3=4.0);
\coordinate (Cm1Im1I2) at (barycentric cs:L1=0.0,L2=0.0,L3=3.0);
\coordinate (Cm1I0I1) at (barycentric cs:L1=0.0,L2=1.0,L3=2.0);
\coordinate (Cm1I1I0) at (barycentric cs:L1=0.0,L2=2.0,L3=1.0);
\coordinate (Cm1I2Im1) at (barycentric cs:L1=0.0,L2=3.0,L3=0.0);
\coordinate (Cm1I3Im2) at (barycentric cs:L1=0.0,L2=4.0,L3=-1.0);
\coordinate (Cm1I4Im3) at (barycentric cs:L1=0.0,L2=5.0,L3=-2.0);
\coordinate (C0Im2I2) at (barycentric cs:L1=1.0,L2=-1.0,L3=3.0);
\coordinate (C0Im1I1) at (barycentric cs:L1=1.0,L2=0.0,L3=2.0);
\coordinate (C0I0I0) at (barycentric cs:L1=1.0,L2=1.0,L3=1.0);
\coordinate (C0I1Im1) at (barycentric cs:L1=1.0,L2=2.0,L3=0.0);
\coordinate (C0I2Im2) at (barycentric cs:L1=1.0,L2=3.0,L3=-1.0);
\coordinate (C0I3Im3) at (barycentric cs:L1=1.0,L2=4.0,L3=-2.0);
\coordinate (C0I4Im4) at (barycentric cs:L1=1.0,L2=5.0,L3=-3.0);
\coordinate (C1Im3I2) at (barycentric cs:L1=2.0,L2=-2.0,L3=3.0);
\coordinate (C1Im2I1) at (barycentric cs:L1=2.0,L2=-1.0,L3=2.0);
\coordinate (C1Im1I0) at (barycentric cs:L1=2.0,L2=0.0,L3=1.0);
\coordinate (C1I0Im1) at (barycentric cs:L1=2.0,L2=1.0,L3=0.0);
\coordinate (C1I1Im2) at (barycentric cs:L1=2.0,L2=2.0,L3=-1.0);
\coordinate (C1I2Im3) at (barycentric cs:L1=2.0,L2=3.0,L3=-2.0);
\coordinate (C1I3Im4) at (barycentric cs:L1=2.0,L2=4.0,L3=-3.0);
\coordinate (C2Im3I1) at (barycentric cs:L1=3.0,L2=-2.0,L3=2.0);
\coordinate (C2Im2I0) at (barycentric cs:L1=3.0,L2=-1.0,L3=1.0);
\coordinate (C2Im1Im1) at (barycentric cs:L1=3.0,L2=0.0,L3=0.0);
\coordinate (C2I0Im2) at (barycentric cs:L1=3.0,L2=1.0,L3=-1.0);
\coordinate (C2I1Im3) at (barycentric cs:L1=3.0,L2=2.0,L3=-2.0);
\coordinate (C3Im4I1) at (barycentric cs:L1=4.0,L2=-3.0,L3=2.0);
\coordinate (C3Im3I0) at (barycentric cs:L1=4.0,L2=-2.0,L3=1.0);
\coordinate (C3Im2Im1) at (barycentric cs:L1=4.0,L2=-1.0,L3=0.0);
\coordinate (C3Im1Im2) at (barycentric cs:L1=4.0,L2=0.0,L3=-1.0);
\coordinate (C4Im4I0) at (barycentric cs:L1=5.0,L2=-3.0,L3=1.0);
\coordinate (C4Im3Im1) at (barycentric cs:L1=5.0,L2=-2.0,L3=0.0);
\draw[opacity=0.4] (Cm3I1I2) -- (Cm2I0I2);
\draw[opacity=0.4] (Cm3I1I2) -- (Cm4I1I3);
\draw[opacity=0.4] (Cm3I1I2) -- (Cm3I2I1);
\draw[opacity=0.4] (Cm3I2I1) -- (Cm2I1I1);
\draw[opacity=0.4] (C1Im3I2) -- (C1Im2I1);
\draw[opacity=0.4] (C1I2Im3) -- (C2I1Im3);
\draw[opacity=0.4] (C1I2Im3) -- (C0I2Im2);
\draw[opacity=0.4] (C1I2Im3) -- (C1I3Im4);
\draw[opacity=0.4] (C2Im3I1) -- (C3Im4I1);
\draw[opacity=0.4] (C2Im3I1) -- (C1Im3I2);
\draw[opacity=0.4] (C2Im3I1) -- (C2Im2I0);
\draw[opacity=0.4] (C2I1Im3) -- (C1I1Im2);
\draw[opacity=0.4] (Cm4I1I3) -- (Cm3I0I3);
\draw[opacity=0.4] (C1I3Im4) -- (C0I3Im3);
\draw[opacity=0.4] (C3Im4I1) -- (C3Im3I0);
\draw[opacity=0.4] (Cm1I0I1) -- (C0Im1I1);
\draw[opacity=0.4] (Cm1I0I1) -- (Cm2I0I2);
\draw[opacity=0.4] (Cm1I0I1) -- (Cm1I1I0);
\draw[opacity=0.4] (Cm1I1I0) -- (C0I0I0);
\draw[opacity=0.4] (Cm1I1I0) -- (Cm2I1I1);
\draw[opacity=0.4] (Cm1I1I0) -- (Cm1I2Im1);
\draw[opacity=0.4] (C0Im1I1) -- (C1Im2I1);
\draw[opacity=0.4] (C0Im1I1) -- (Cm1Im1I2);
\draw[opacity=0.4] (C0Im1I1) -- (C0I0I0);
\draw[opacity=0.4] (C0I1Im1) -- (C1I0Im1);
\draw[opacity=0.4] (C0I1Im1) -- (Cm1I1I0);
\draw[opacity=0.4] (C0I1Im1) -- (C0I2Im2);
\draw[opacity=0.4] (C1Im1I0) -- (C2Im2I0);
\draw[opacity=0.4] (C1Im1I0) -- (C0Im1I1);
\draw[opacity=0.4] (C1Im1I0) -- (C1I0Im1);
\draw[opacity=0.4] (C1I0Im1) -- (C2Im1Im1);
\draw[opacity=0.4] (C1I0Im1) -- (C0I0I0);
\draw[opacity=0.4] (C1I0Im1) -- (C1I1Im2);
\draw[opacity=0.4] (Cm2I0I2) -- (Cm1Im1I2);
\draw[opacity=0.4] (Cm2I0I2) -- (Cm3I0I3);
\draw[opacity=0.4] (Cm2I0I2) -- (Cm2I1I1);
\draw[opacity=0.4] (Cm2I2I0) -- (Cm1I1I0);
\draw[opacity=0.4] (Cm2I2I0) -- (Cm3I2I1);
\draw[opacity=0.4] (Cm2I2I0) -- (Cm2I3Im1);
\draw[opacity=0.4] (C0Im2I2) -- (C1Im3I2);
\draw[opacity=0.4] (C0Im2I2) -- (Cm1Im2I3);
\draw[opacity=0.4] (C0Im2I2) -- (C0Im1I1);
\draw[opacity=0.4] (C0I2Im2) -- (C1I1Im2);
\draw[opacity=0.4] (C0I2Im2) -- (Cm1I2Im1);
\draw[opacity=0.4] (C0I2Im2) -- (C0I3Im3);
\draw[opacity=0.4] (C2Im2I0) -- (C3Im3I0);
\draw[opacity=0.4] (C2Im2I0) -- (C1Im2I1);
\draw[opacity=0.4] (C2Im2I0) -- (C2Im1Im1);
\draw[opacity=0.4] (C2I0Im2) -- (C3Im1Im2);
\draw[opacity=0.4] (C2I0Im2) -- (C1I0Im1);
\draw[opacity=0.4] (C2I0Im2) -- (C2I1Im3);
\draw[opacity=0.4] (Cm1Im2I3) -- (Cm1Im1I2);
\draw[opacity=0.4] (Cm1I3Im2) -- (C0I2Im2);
\draw[opacity=0.4] (Cm1I3Im2) -- (Cm2I3Im1);
\draw[opacity=0.4] (Cm1I3Im2) -- (Cm1I4Im3);
\draw[opacity=0.4] (Cm2Im1I3) -- (Cm1Im2I3);
\draw[opacity=0.4] (Cm2Im1I3) -- (Cm3Im1I4);
\draw[opacity=0.4] (Cm2Im1I3) -- (Cm2I0I2);
\draw[opacity=0.4] (Cm2I3Im1) -- (Cm1I2Im1);
\draw[opacity=0.4] (C3Im1Im2) -- (C2Im1Im1);
\draw[opacity=0.4] (C3Im2Im1) -- (C4Im3Im1);
\draw[opacity=0.4] (C3Im2Im1) -- (C2Im2I0);
\draw[opacity=0.4] (C3Im2Im1) -- (C3Im1Im2);
\draw[opacity=0.4] (Cm1I4Im3) -- (C0I3Im3);
\draw[opacity=0.4] (Cm3Im1I4) -- (Cm3I0I3);
\draw[opacity=0.4] (Cm4I0I4) -- (Cm3Im1I4);
\draw[opacity=0.4] (Cm4I0I4) -- (Cm4I1I3);
\draw[opacity=0.4] (C0I4Im4) -- (C1I3Im4);
\draw[opacity=0.4] (C0I4Im4) -- (Cm1I4Im3);
\draw[opacity=0.4] (C4Im3Im1) -- (C3Im3I0);
\draw[opacity=0.4] (C4Im4I0) -- (C3Im4I1);
\draw[opacity=0.4] (C4Im4I0) -- (C4Im3Im1);
\draw[fill=red] (barycentric cs:L1=-1.0,L2=2.0,L3=2.0) circle (0.2);
\draw[fill=red] (barycentric cs:L1=2.0,L2=-1.0,L3=2.0) circle (0.2);
\draw[fill=red] (barycentric cs:L1=2.0,L2=2.0,L3=-1.0) circle (0.2);
\draw[fill=blue] (barycentric cs:L1=-2.0,L2=2.0,L3=3.0) circle (0.2);
\draw[fill=green] (barycentric cs:L1=-2.0,L2=3.0,L3=2.0) circle (0.2);
\draw[fill=green] (barycentric cs:L1=2.0,L2=-2.0,L3=3.0) circle (0.2);
\draw[fill=blue] (barycentric cs:L1=2.0,L2=3.0,L3=-2.0) circle (0.2);
\draw[fill=blue] (barycentric cs:L1=3.0,L2=-2.0,L3=2.0) circle (0.2);
\draw[fill=green] (barycentric cs:L1=3.0,L2=2.0,L3=-2.0) circle (0.2);
\draw[fill=green] (barycentric cs:L1=-3.0,L2=2.0,L3=4.0) circle (0.2);
\draw[fill=green] (barycentric cs:L1=2.0,L2=4.0,L3=-3.0) circle (0.2);
\draw[fill=green] (barycentric cs:L1=4.0,L2=-3.0,L3=2.0) circle (0.2);
\draw[fill=blue] (barycentric cs:L1=0.0,L2=1.0,L3=2.0) circle (0.2);
\draw[fill=green] (barycentric cs:L1=0.0,L2=2.0,L3=1.0) circle (0.2);
\draw[fill=green] (barycentric cs:L1=1.0,L2=0.0,L3=2.0) circle (0.2);
\draw[fill=blue] (barycentric cs:L1=1.0,L2=2.0,L3=0.0) circle (0.2);
\draw[fill=blue] (barycentric cs:L1=2.0,L2=0.0,L3=1.0) circle (0.2);
\draw[fill=green] (barycentric cs:L1=2.0,L2=1.0,L3=0.0) circle (0.2);
\draw[fill=red] (barycentric cs:L1=0.0,L2=0.0,L3=3.0) circle (0.2);
\draw[fill=red] (barycentric cs:L1=0.0,L2=3.0,L3=0.0) circle (0.2);
\draw[fill=green] (barycentric cs:L1=-1.0,L2=1.0,L3=3.0) circle (0.2);
\draw[fill=blue] (barycentric cs:L1=-1.0,L2=3.0,L3=1.0) circle (0.2);
\draw[fill=blue] (barycentric cs:L1=1.0,L2=-1.0,L3=3.0) circle (0.2);
\draw[fill=green] (barycentric cs:L1=1.0,L2=3.0,L3=-1.0) circle (0.2);
\draw[fill=red] (barycentric cs:L1=3.0,L2=0.0,L3=0.0) circle (0.2);
\draw[fill=green] (barycentric cs:L1=3.0,L2=-1.0,L3=1.0) circle (0.2);
\draw[fill=blue] (barycentric cs:L1=3.0,L2=1.0,L3=-1.0) circle (0.2);
\draw[fill=green] (barycentric cs:L1=0.0,L2=-1.0,L3=4.0) circle (0.2);
\draw[fill=blue] (barycentric cs:L1=0.0,L2=4.0,L3=-1.0) circle (0.2);
\draw[fill=blue] (barycentric cs:L1=-1.0,L2=0.0,L3=4.0) circle (0.2);
\draw[fill=green] (barycentric cs:L1=-1.0,L2=4.0,L3=0.0) circle (0.2);
\draw[fill=red] (barycentric cs:L1=-2.0,L2=1.0,L3=4.0) circle (0.2);
\draw[fill=red] (barycentric cs:L1=1.0,L2=4.0,L3=-2.0) circle (0.2);
\draw[fill=green] (barycentric cs:L1=4.0,L2=0.0,L3=-1.0) circle (0.2);
\draw[fill=blue] (barycentric cs:L1=4.0,L2=-1.0,L3=0.0) circle (0.2);
\draw[fill=red] (barycentric cs:L1=4.0,L2=-2.0,L3=1.0) circle (0.2);
\draw[fill=green] (barycentric cs:L1=0.0,L2=5.0,L3=-2.0) circle (0.2);
\draw[fill=green] (barycentric cs:L1=-2.0,L2=0.0,L3=5.0) circle (0.2);
\draw[fill=blue] (barycentric cs:L1=-3.0,L2=1.0,L3=5.0) circle (0.2);
\draw[fill=blue] (barycentric cs:L1=1.0,L2=5.0,L3=-3.0) circle (0.2);
\draw[fill=green] (barycentric cs:L1=5.0,L2=-2.0,L3=0.0) circle (0.2);
\draw[fill=blue] (barycentric cs:L1=5.0,L2=-3.0,L3=1.0) circle (0.2);
\draw[fill=red] (barycentric cs:L1=1.0,L2=1.0,L3=1.0) circle (0.2);
\draw[line width=0.4mm,color=black!30!green] (C1Im3I2) -- (C0Im1I1);
\draw[line width=0.4mm,color=blue] (C1I1Im2) -- (C2I0Im2);
\draw[line width=0.4mm,color=black!30!green] (C2I1Im3) -- (C0I2Im2);
\draw[line width=0.4mm,color=blue] (C0I3Im3) -- (C1I2Im3);
\draw[line width=0.4mm,color=black!30!green] (C1I3Im4) -- (Cm1I4Im3);
\draw[line width=0.4mm,color=black!30!green] (C3Im4I1) -- (C2Im2I0);
\draw[line width=0.4mm,color=black!30!green] (Cm1I1I0) -- (Cm3I2I1);
\draw[line width=0.4mm,color=black!30!green] (Cm1I1I0) -- (Cm2I0I2);
\draw[line width=0.4mm,color=blue] (Cm1I2Im1) -- (Cm2I2I0);
\draw[line width=0.4mm,color=blue] (C0I0I0) -- (Cm1I0I1);
\draw[line width=0.4mm,color=blue] (Cm1I0I1) -- (Cm1Im1I2);
\draw[line width=0.4mm,color=blue] (Cm1Im1I2) -- (C0Im2I2);
\draw[line width=0.4mm,color=blue] (C0I1Im1) -- (C0I0I0);
\draw[line width=0.4mm,color=black!30!green] (C1I0Im1) -- (C0Im1I1);
\draw[line width=0.4mm,color=blue] (C1I1Im2) -- (C0I1Im1);
\draw[line width=0.4mm,color=blue] (Cm2I1I1) -- (Cm3I1I2);
\draw[line width=0.4mm,color=blue] (Cm3I1I2) -- (Cm3I0I3);
\draw[line width=0.4mm,color=blue] (Cm3I0I3) -- (Cm2Im1I3);
\draw[line width=0.4mm,color=blue] (Cm1I2Im1) -- (C0I1Im1);
\draw[line width=0.4mm,color=black!30!green] (C0I2Im2) -- (Cm2I3Im1);
\draw[line width=0.4mm,color=blue] (C0I3Im3) -- (Cm1I3Im2);
\draw[line width=0.4mm,color=blue] (C1Im2I1) -- (C2Im3I1);
\draw[line width=0.4mm,color=blue] (C1Im1I0) -- (C1Im2I1);
\draw[line width=0.4mm,color=blue] (C2Im1Im1) -- (C1Im1I0);
\draw[line width=0.4mm,color=black!30!green] (Cm1Im2I3) -- (Cm2I0I2);
\draw[line width=0.4mm,color=black!30!green] (C3Im1Im2) -- (C1I0Im1);
\draw[line width=0.4mm,color=blue] (C2Im1Im1) -- (C3Im2Im1);
\draw[line width=0.4mm,color=black!30!green] (Cm3Im1I4) -- (Cm4I1I3);
\draw[line width=0.4mm,color=blue] (C3Im3I0) -- (C4Im4I0);
\draw[line width=0.4mm,color=black!30!green] (C4Im3Im1) -- (C2Im2I0);
\coordinate (aa1) at (barycentric cs:L1=1.4,L2=5.0,L3=-3.4);
\draw[line width=0.4mm,color=blue] (C0I4Im4) -- (aa1);
\coordinate (bb1) at (barycentric cs:L1=5.0,L2=-3.4,L3=1.4);
\draw[line width=0.4mm,color=blue] (C4Im4I0) -- (bb1);
\coordinate (cc1) at (barycentric cs:L1=-3.4,L2=1.4,L3=5.0);
\draw[line width=0.4mm,color=blue] (Cm4I0I4) -- (cc1);
\coordinate (aa2) at (barycentric cs:L1=2.4,L2=4.0,L3=-3.4);
\draw[line width=0.4mm,color=black!30!green] (C1I3Im4) -- (aa2);
\coordinate (bb2) at (barycentric cs:L1=4.0,L2=-3.4,L3=2.4);
\draw[line width=0.4mm,color=black!30!green] (C3Im4I1) -- (bb2);
\coordinate (cc2) at (barycentric cs:L1=-3.4,L2=2.4,L3=4.0);
\draw[line width=0.4mm,color=black!30!green] (Cm4I1I3) -- (cc2);
\coordinate (aa3) at (barycentric cs:L1=2.4,L2=3.0,L3=-2.4);
\draw[line width=0.4mm,color=blue] (C1I2Im3) -- (aa3);
\coordinate (bb3) at (barycentric cs:L1=3.0,L2=-2.4,L3=2.4);
\draw[line width=0.4mm,color=blue] (C2Im3I1) -- (bb3);
\coordinate (cc3) at (barycentric cs:L1=-2.4,L2=2.4,L3=3.0);
\draw[line width=0.4mm,color=blue] (Cm3I1I2) -- (cc3);
\coordinate (aa4) at (barycentric cs:L1=3.4,L2=2.0,L3=-2.4);
\draw[line width=0.4mm,color=black!30!green] (C2I1Im3) -- (aa4);
\coordinate (bb4) at (barycentric cs:L1=2.0,L2=-2.4,L3=3.4);
\draw[line width=0.4mm,color=black!30!green] (C1Im3I2) -- (bb4);
\coordinate (cc4) at (barycentric cs:L1=-2.4,L2=3.4,L3=2.0);
\draw[line width=0.4mm,color=black!30!green] (Cm3I2I1) -- (cc4);
\coordinate (aa5) at (barycentric cs:L1=3.4,L2=1.0,L3=-1.4);
\draw[line width=0.4mm,color=blue] (C2I0Im2) -- (aa5);
\coordinate (bb5) at (barycentric cs:L1=1.0,L2=-1.4,L3=3.4);
\draw[line width=0.4mm,color=blue] (C0Im2I2) -- (bb5);
\coordinate (cc5) at (barycentric cs:L1=-1.4,L2=3.4,L3=1.0);
\draw[line width=0.4mm,color=blue] (Cm2I2I0) -- (cc5);
\coordinate (aa6) at (barycentric cs:L1=4.4,L2=0.0,L3=-1.4);
\draw[line width=0.4mm,color=black!30!green] (C3Im1Im2) -- (aa6);
\coordinate (bb6) at (barycentric cs:L1=0.0,L2=-1.4,L3=4.4);
\draw[line width=0.4mm,color=black!30!green] (Cm1Im2I3) -- (bb6);
\coordinate (cc6) at (barycentric cs:L1=-1.4,L2=4.4,L3=0.0);
\draw[line width=0.4mm,color=black!30!green] (Cm2I3Im1) -- (cc6);
\coordinate (aa7) at (barycentric cs:L1=4.4,L2=-1.0,L3=-0.4);
\draw[line width=0.4mm,color=blue] (C3Im2Im1) -- (aa7);
\coordinate (bb7) at (barycentric cs:L1=-1.0,L2=-0.4,L3=4.4);
\draw[line width=0.4mm,color=blue] (Cm2Im1I3) -- (bb7);
\coordinate (cc7) at (barycentric cs:L1=-0.4,L2=4.4,L3=-1.0);
\draw[line width=0.4mm,color=blue] (Cm1I3Im2) -- (cc7);
\coordinate (aa8) at (barycentric cs:L1=5.4,L2=-2.0,L3=-0.4);
\draw[line width=0.4mm,color=black!30!green] (C4Im3Im1) -- (aa8);
\coordinate (bb8) at (barycentric cs:L1=-2.0,L2=-0.4,L3=5.4);
\draw[line width=0.4mm,color=black!30!green] (Cm3Im1I4) -- (bb8);
\coordinate (cc8) at (barycentric cs:L1=-0.4,L2=5.4,L3=-2.0);
\draw[line width=0.4mm,color=black!30!green] (Cm1I4Im3) -- (cc8);
\coordinate (aa9) at (barycentric cs:L1=5.4,L2=-3.0,L3=0.6);
\draw[line width=0.4mm,color=blue] (C4Im4I0) -- (aa9);
\coordinate (bb9) at (barycentric cs:L1=-3.0,L2=0.6,L3=5.4);
\draw[line width=0.4mm,color=blue] (Cm4I0I4) -- (bb9);
\coordinate (cc9) at (barycentric cs:L1=0.6,L2=5.4,L3=-3.0);
\draw[line width=0.4mm,color=blue] (C0I4Im4) -- (cc9);
\draw[fill=red] (barycentric cs:L1=-1.0,L2=2.0,L3=2.0) circle (0.2);
\draw[fill=red] (barycentric cs:L1=2.0,L2=-1.0,L3=2.0) circle (0.2);
\draw[fill=red] (barycentric cs:L1=2.0,L2=2.0,L3=-1.0) circle (0.2);
\draw[fill=blue] (barycentric cs:L1=-2.0,L2=2.0,L3=3.0) circle (0.2);
\draw[fill=green] (barycentric cs:L1=-2.0,L2=3.0,L3=2.0) circle (0.2);
\draw[fill=green] (barycentric cs:L1=2.0,L2=-2.0,L3=3.0) circle (0.2);
\draw[fill=blue] (barycentric cs:L1=2.0,L2=3.0,L3=-2.0) circle (0.2);
\draw[fill=blue] (barycentric cs:L1=3.0,L2=-2.0,L3=2.0) circle (0.2);
\draw[fill=green] (barycentric cs:L1=3.0,L2=2.0,L3=-2.0) circle (0.2);
\draw[fill=green] (barycentric cs:L1=-3.0,L2=2.0,L3=4.0) circle (0.2);
\draw[fill=green] (barycentric cs:L1=2.0,L2=4.0,L3=-3.0) circle (0.2);
\draw[fill=green] (barycentric cs:L1=4.0,L2=-3.0,L3=2.0) circle (0.2);
\draw[fill=blue] (barycentric cs:L1=0.0,L2=1.0,L3=2.0) circle (0.2);
\draw[fill=green] (barycentric cs:L1=0.0,L2=2.0,L3=1.0) circle (0.2);
\draw[fill=green] (barycentric cs:L1=1.0,L2=0.0,L3=2.0) circle (0.2);
\draw[fill=blue] (barycentric cs:L1=1.0,L2=2.0,L3=0.0) circle (0.2);
\draw[fill=blue] (barycentric cs:L1=2.0,L2=0.0,L3=1.0) circle (0.2);
\draw[fill=green] (barycentric cs:L1=2.0,L2=1.0,L3=0.0) circle (0.2);
\draw[fill=red] (barycentric cs:L1=0.0,L2=0.0,L3=3.0) circle (0.2);
\draw[fill=red] (barycentric cs:L1=0.0,L2=3.0,L3=0.0) circle (0.2);
\draw[fill=green] (barycentric cs:L1=-1.0,L2=1.0,L3=3.0) circle (0.2);
\draw[fill=blue] (barycentric cs:L1=-1.0,L2=3.0,L3=1.0) circle (0.2);
\draw[fill=blue] (barycentric cs:L1=1.0,L2=-1.0,L3=3.0) circle (0.2);
\draw[fill=green] (barycentric cs:L1=1.0,L2=3.0,L3=-1.0) circle (0.2);
\draw[fill=red] (barycentric cs:L1=3.0,L2=0.0,L3=0.0) circle (0.2);
\draw[fill=green] (barycentric cs:L1=3.0,L2=-1.0,L3=1.0) circle (0.2);
\draw[fill=blue] (barycentric cs:L1=3.0,L2=1.0,L3=-1.0) circle (0.2);
\draw[fill=green] (barycentric cs:L1=0.0,L2=-1.0,L3=4.0) circle (0.2);
\draw[fill=blue] (barycentric cs:L1=0.0,L2=4.0,L3=-1.0) circle (0.2);
\draw[fill=blue] (barycentric cs:L1=-1.0,L2=0.0,L3=4.0) circle (0.2);
\draw[fill=green] (barycentric cs:L1=-1.0,L2=4.0,L3=0.0) circle (0.2);
\draw[fill=red] (barycentric cs:L1=-2.0,L2=1.0,L3=4.0) circle (0.2);
\draw[fill=red] (barycentric cs:L1=1.0,L2=4.0,L3=-2.0) circle (0.2);
\draw[fill=green] (barycentric cs:L1=4.0,L2=0.0,L3=-1.0) circle (0.2);
\draw[fill=blue] (barycentric cs:L1=4.0,L2=-1.0,L3=0.0) circle (0.2);
\draw[fill=red] (barycentric cs:L1=4.0,L2=-2.0,L3=1.0) circle (0.2);
\draw[fill=green] (barycentric cs:L1=0.0,L2=5.0,L3=-2.0) circle (0.2);
\draw[fill=green] (barycentric cs:L1=-2.0,L2=0.0,L3=5.0) circle (0.2);
\draw[fill=blue] (barycentric cs:L1=-3.0,L2=1.0,L3=5.0) circle (0.2);
\draw[fill=blue] (barycentric cs:L1=1.0,L2=5.0,L3=-3.0) circle (0.2);
\draw[fill=green] (barycentric cs:L1=5.0,L2=-2.0,L3=0.0) circle (0.2);
\draw[fill=blue] (barycentric cs:L1=5.0,L2=-3.0,L3=1.0) circle (0.2);
\draw[fill=red] (barycentric cs:L1=1.0,L2=1.0,L3=1.0) circle (0.2);
\end{tikzpicture}}

 \caption{\label{fig:gvt_grove} A $G(v,t)$-grove.}
\end{figure}

An example of a $G(v,t)$-grove for $v=(0,0,0)$ and $t=5$ is given in Figure~\ref{fig:gvt_grove}. 

Given a $G(v,t)$-grove $F$, we define its \emph{weight} as follows:
\begin{equation}\label{eq:weight_s}
 \wt(F)=\prod_{u\in \Vert(v,t)} x_u^{\deg_F(u)-2+s(u)},
\end{equation}
where $s(u)\in\{0,1,2\}$ is equal to $0$ for non-boundary vertices of $G(v,t)$ and for $u\in\Gbound$, we let
\[s(a_1)=s(b_1)=s(c_1)=2,\quad s(a_i)=s(b_i)=s(c_i)=1,\quad 2\leq i\leq 2t-2.\]
It is convenient to draw $s(u)$ external half-edges from each boundary vertex $u$ as we did in Figure~\ref{fig:gvt_grove}.

% according to the position of $u$ in the triangle $\TR(v,t)$. To describe it carefully, we need to introduce a smaller triangle $\TR(v,t-1)$ which is a convex hull of $v+(t-2)e_{12},v+(t-2)e_{23},$ and $v+(t-2)e_{31}$. All the boundary vertices of $\TR(v,t-1)$ are blue. We put $s(u)$ to be equal to the combined number of sides of $\TR(v,t)$ and $\TR(v,t-1)$ that $u$ belongs to. In other words,
% \[s(u)=\begin{cases}
%         2,&\text{ if $u$ is a vertex of $\TR(v,t)$ or of $\TR(v,t-1)$};\\
%         1,&\text{ if $u$ belongs to the interior of a side of $\TR(v,t)$ or of $\TR(v,t-1)$};\\
%         0,&\text{ otherwise, that is, if $u$ belongs to the interior of $\TR(v,t-1)$}.
%        \end{cases}\]
% We imagine that $F$ is an infinite grove that is ``regular'' outside of $\TR(v,t)$ in the sense that all of its edges point ``away from $v$'', and then $s(u)$ is the number of extra edges of such an infinite grove that do not belong to $\TR(v,t)$. We depict these edges as short blue segments in Figure~TODO. 

We are ready to state the formula due to Carroll and Speyer:
\begin{theorem}[\cite{CS}]\label{thm:CS}
 For $v\in\TP_0$ and $t\geq 2$ such that $t+1\equiv \e_v\pmod 3$, we have 
 \begin{equation}\label{eq:CS}
 \Cube_v(t+1)=\sum_F \wt(F),
 \end{equation}
 where the sum is taken over all $G(v,t)$-groves $F$.
\end{theorem}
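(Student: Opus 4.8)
The plan is to show that the grove generating function satisfies the defining recursion of $\Cube$ and then invoke uniqueness. Write $Z_v(t+1):=\sum_F\wt(F)$, the sum over all $G(v,t)$-groves, so that $Z_v(n)$ is defined for every admissible $n\ge 3$. Since Definition~\ref{dfn:cube_unbounded} determines $\Cube$ uniquely from the initial data, it suffices to prove $Z_v(n)=\Cube_v(n)$ for all $n\ge 3$, and I would do this by strong induction on $n$. The three base cases $n\in\{3,4,5\}$ (one per residue of $n$ modulo $3$) are finite enumerations; for example $G(v,2)$ consists of exactly three lozenges surrounding the red vertex $v$, it has exactly three $G(v,2)$-groves --- each obtained by letting one lozenge contribute its red--blue diagonal and the other two their green--green diagonals --- and their weights $x_{v+e_{12}}x_{v-e_{12}}/x_v$, $x_{v+e_{23}}x_{v-e_{23}}/x_v$, $x_{v+e_{31}}x_{v-e_{31}}/x_v$ sum to $\Cube_v(3)$.

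For the inductive step, fix $n\ge 6$, write $n=t+3$ with $t\ge 3$, and apply~\eqref{eq:cube} at time $t$: it expresses $\Cube_v(t+3)$ through values of $\Cube$ at times $t,t+1,t+2$, all $\le n-1$, which by induction equal the corresponding grove sums (the radii $t-1,t,t+1$ are all $\ge 2$). Hence it suffices to establish the purely combinatorial identity
\[
Z_v(t+3)\,Z_v(t)=\sum_{i\in\{12,23,31\}}Z_{v+e_i}(t+2)\,Z_{v-e_i}(t+1),
\]
and then divide by the nonzero Laurent polynomial $Z_v(t)=\Cube_v(t)$. Expanding the generating functions, this identity asserts the existence of a weight-preserving bijection between the pairs $(F_1,F_2)$, with $F_1$ a $G(v,t+2)$-grove and $F_2$ a $G(v,t-1)$-grove, and the disjoint union over $i\in\{12,23,31\}$ of the pairs $(F_3,F_4)$, with $F_3$ a $G(v+e_i,t+1)$-grove and $F_4$ a $G(v-e_i,t)$-grove.

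Constructing this bijection is the crux and the step I expect to be the main obstacle. The natural approach is a forest analogue of Kuo's graphical condensation: superimpose the large grove $F_1$ with the small grove $F_2$ on the concentric region $\TR(v,t-1)\subset\TR(v,t+2)$ and resolve the resulting configuration near the three corners $v+(t+2)e_i$ of the outer triangle, the prescribed boundary connectivity $\bpart_0$ playing the role that matching boundary conditions play in Kuo's setting. The delicate points are: (a) showing that the connectivity pattern $\bpart_0$ forces a global structure on a grove which leaves exactly three admissible resolutions, indexed by the three corners; (b) verifying these three cases are mutually exclusive and jointly exhaustive; and (c) tracking the exponents $\deg_F(u)-2+s(u)$ through the surgery so that weights are preserved --- this is precisely where the external half-edges encoded in $s$ make the degree bookkeeping balance. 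Alternatively, one may isolate the required statement as an instance of a grove Pl\"ucker-type relation in the sense of Kenyon--Wilson and prove it at that level of generality.

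A less combinatorial route would invoke the Laurent phenomenon of Fomin--Zelevinsky to know a priori that $\Cube_v(t+1)$ is a Laurent polynomial, compute its Newton polytope and the relevant top-degree coefficients (or evaluate it on a suitable one-parameter family), and identify the grove sum as the unique Laurent polynomial with that shape; I would avoid it, since pinning down the Newton polytope of $\Cube_v(t+1)$ is itself substantial. For the present paper the theorem may simply be quoted from~\cite{CS}; the sketch above only records how one would reconstruct it.
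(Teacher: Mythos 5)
The paper does not prove this statement at all: it is quoted verbatim from Carroll--Speyer, which is why the theorem carries the citation \cite{CS}. You acknowledge this in your last sentence, and for the purposes of this paper that is the correct "proof." So the only question is whether your reconstruction would stand on its own, and there it has a genuine gap.

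Your framing (uniqueness of the solution of \eqref{eq:cube} given the initial data, induction on $n$, finite check of the base cases) is sound and matches the strategy of the original Carroll--Speyer argument. But the entire content of the theorem sits in the bilinear identity
\[
Z_v(t+3)\,Z_v(t)=\sum_{i\in\{12,23,31\}}Z_{v+e_i}(t+2)\,Z_{v-e_i}(t+1),
\]
and you do not prove it --- you name it as ``the crux,'' list the three delicate points (the trichotomy of resolutions near the corners forced by $\bpart_0$, exclusivity/exhaustiveness, and the weight bookkeeping through $\deg_F(u)-2+s(u)$), and stop. None of these is routine: a Kuo-condensation surgery for perfect matchings does not transfer verbatim to groves, because the superposition of two groves is not a union of paths and cycles, and the boundary connectivity condition $\bpart_F=\bpart_0$ is a global constraint that must be re-verified after surgery on each of the three pieces. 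Establishing this identity is essentially the whole of the Carroll--Speyer paper (or, in the alternative route you mention, of the Kenyon--Wilson grove Pl\"ucker relations). As written, your argument reduces the theorem to an unproved combinatorial identity of comparable difficulty, so it is an outline rather than a proof; the clean resolution is the one the paper itself adopts, namely to cite \cite{CS}.
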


\begin{figure}
 \centering

\begin{tabular}{ccc}
\scalebox{1.0}{
\begin{tikzpicture}[scale=0.3]
\coordinate (L1) at (90:5.00);
\coordinate (L2) at (330:5.00);
\coordinate (L3) at (210:5.00);
\coordinate (Cm1I0I1) at (barycentric cs:L1=0.0,L2=1.0,L3=2.0);
\coordinate (Cm1I1I0) at (barycentric cs:L1=0.0,L2=2.0,L3=1.0);
\coordinate (C0Im1I1) at (barycentric cs:L1=1.0,L2=0.0,L3=2.0);
\coordinate (C0I0I0) at (barycentric cs:L1=1.0,L2=1.0,L3=1.0);
\coordinate (C0I1Im1) at (barycentric cs:L1=1.0,L2=2.0,L3=0.0);
\coordinate (C1Im1I0) at (barycentric cs:L1=2.0,L2=0.0,L3=1.0);
\coordinate (C1I0Im1) at (barycentric cs:L1=2.0,L2=1.0,L3=0.0);
\draw[opacity=0.4] (Cm1I0I1) -- (C0Im1I1);
\draw[opacity=0.4] (Cm1I0I1) -- (Cm1I1I0);
\draw[opacity=0.4] (Cm1I1I0) -- (C0I0I0);
\draw[opacity=0.4] (C0Im1I1) -- (C0I0I0);
\draw[opacity=0.4] (C0I1Im1) -- (C1I0Im1);
\draw[opacity=0.4] (C0I1Im1) -- (Cm1I1I0);
\draw[opacity=0.4] (C1Im1I0) -- (C0Im1I1);
\draw[opacity=0.4] (C1Im1I0) -- (C1I0Im1);
\draw[opacity=0.4] (C1I0Im1) -- (C0I0I0);
\draw[fill=blue] (barycentric cs:L1=0.0,L2=1.0,L3=2.0) circle (0.2);
\draw[fill=green] (barycentric cs:L1=0.0,L2=2.0,L3=1.0) circle (0.2);
\draw[fill=green] (barycentric cs:L1=1.0,L2=0.0,L3=2.0) circle (0.2);
\draw[fill=blue] (barycentric cs:L1=1.0,L2=2.0,L3=0.0) circle (0.2);
\draw[fill=blue] (barycentric cs:L1=2.0,L2=0.0,L3=1.0) circle (0.2);
\draw[fill=green] (barycentric cs:L1=2.0,L2=1.0,L3=0.0) circle (0.2);
\draw[fill=red] (barycentric cs:L1=1.0,L2=1.0,L3=1.0) circle (0.2);
\draw[line width=0.4mm,color=black!30!green] (C0Im1I1) -- (Cm1I1I0);
\draw[line width=0.4mm,color=blue] (C0I1Im1) -- (C0I0I0);
\draw[line width=0.4mm,color=black!30!green] (C1I0Im1) -- (C0Im1I1);
\coordinate (aa1) at (barycentric cs:L1=1.4,L2=2.0,L3=-0.4);
\draw[line width=0.4mm,color=blue] (C0I1Im1) -- (aa1);
\coordinate (bb1) at (barycentric cs:L1=2.0,L2=-0.4,L3=1.4);
\draw[line width=0.4mm,color=blue] (C1Im1I0) -- (bb1);
\coordinate (cc1) at (barycentric cs:L1=-0.4,L2=1.4,L3=2.0);
\draw[line width=0.4mm,color=blue] (Cm1I0I1) -- (cc1);
\coordinate (aa2) at (barycentric cs:L1=2.4,L2=1.0,L3=-0.4);
\draw[line width=0.4mm,color=black!30!green] (C1I0Im1) -- (aa2);
\coordinate (bb2) at (barycentric cs:L1=1.0,L2=-0.4,L3=2.4);
\draw[line width=0.4mm,color=black!30!green] (C0Im1I1) -- (bb2);
\coordinate (cc2) at (barycentric cs:L1=-0.4,L2=2.4,L3=1.0);
\draw[line width=0.4mm,color=black!30!green] (Cm1I1I0) -- (cc2);
\coordinate (aa3) at (barycentric cs:L1=2.4,L2=0.0,L3=0.6);
\draw[line width=0.4mm,color=blue] (C1Im1I0) -- (aa3);
\coordinate (bb3) at (barycentric cs:L1=0.0,L2=0.6,L3=2.4);
\draw[line width=0.4mm,color=blue] (Cm1I0I1) -- (bb3);
\coordinate (cc3) at (barycentric cs:L1=0.6,L2=2.4,L3=0.0);
\draw[line width=0.4mm,color=blue] (C0I1Im1) -- (cc3);
\coordinate (avar) at (0:3.10);
\node[scale=0.7,anchor=180] (anode) at (avar) {$a$};
\coordinate (bvar) at (60:3.10);
\node[scale=0.7,anchor=240] (bnode) at (bvar) {$b$};
\coordinate (cvar) at (120:3.10);
\node[scale=0.7,anchor=300] (cnode) at (cvar) {$c$};
\coordinate (dvar) at (180:3.10);
\node[scale=0.7,anchor=360] (dnode) at (dvar) {$d$};
\coordinate (evar) at (240:3.10);
\node[scale=0.7,anchor=420] (enode) at (evar) {$e$};
\coordinate (fvar) at (300:3.10);
\node[scale=0.7,anchor=480] (fnode) at (fvar) {$f$};
\node[scale=0.7,anchor=west] (vnode) at (0.00,0.00) {$v$};
\node[anchor=north] (math) at (0.00,-4.00) {$\wt(F)=\frac{x_ax_d}{x_v}$};
\draw[fill=blue] (barycentric cs:L1=0.0,L2=1.0,L3=2.0) circle (0.2);
\draw[fill=green] (barycentric cs:L1=0.0,L2=2.0,L3=1.0) circle (0.2);
\draw[fill=green] (barycentric cs:L1=1.0,L2=0.0,L3=2.0) circle (0.2);
\draw[fill=blue] (barycentric cs:L1=1.0,L2=2.0,L3=0.0) circle (0.2);
\draw[fill=blue] (barycentric cs:L1=2.0,L2=0.0,L3=1.0) circle (0.2);
\draw[fill=green] (barycentric cs:L1=2.0,L2=1.0,L3=0.0) circle (0.2);
\draw[fill=red] (barycentric cs:L1=1.0,L2=1.0,L3=1.0) circle (0.2);
\end{tikzpicture}}
&
\scalebox{1.0}{
\begin{tikzpicture}[scale=0.3]
\coordinate (L1) at (90:5.00);
\coordinate (L2) at (330:5.00);
\coordinate (L3) at (210:5.00);
\coordinate (Cm1I0I1) at (barycentric cs:L1=0.0,L2=1.0,L3=2.0);
\coordinate (Cm1I1I0) at (barycentric cs:L1=0.0,L2=2.0,L3=1.0);
\coordinate (C0Im1I1) at (barycentric cs:L1=1.0,L2=0.0,L3=2.0);
\coordinate (C0I0I0) at (barycentric cs:L1=1.0,L2=1.0,L3=1.0);
\coordinate (C0I1Im1) at (barycentric cs:L1=1.0,L2=2.0,L3=0.0);
\coordinate (C1Im1I0) at (barycentric cs:L1=2.0,L2=0.0,L3=1.0);
\coordinate (C1I0Im1) at (barycentric cs:L1=2.0,L2=1.0,L3=0.0);
\draw[opacity=0.4] (Cm1I0I1) -- (C0Im1I1);
\draw[opacity=0.4] (Cm1I0I1) -- (Cm1I1I0);
\draw[opacity=0.4] (Cm1I1I0) -- (C0I0I0);
\draw[opacity=0.4] (C0Im1I1) -- (C0I0I0);
\draw[opacity=0.4] (C0I1Im1) -- (C1I0Im1);
\draw[opacity=0.4] (C0I1Im1) -- (Cm1I1I0);
\draw[opacity=0.4] (C1Im1I0) -- (C0Im1I1);
\draw[opacity=0.4] (C1Im1I0) -- (C1I0Im1);
\draw[opacity=0.4] (C1I0Im1) -- (C0I0I0);
\draw[fill=blue] (barycentric cs:L1=0.0,L2=1.0,L3=2.0) circle (0.2);
\draw[fill=green] (barycentric cs:L1=0.0,L2=2.0,L3=1.0) circle (0.2);
\draw[fill=green] (barycentric cs:L1=1.0,L2=0.0,L3=2.0) circle (0.2);
\draw[fill=blue] (barycentric cs:L1=1.0,L2=2.0,L3=0.0) circle (0.2);
\draw[fill=blue] (barycentric cs:L1=2.0,L2=0.0,L3=1.0) circle (0.2);
\draw[fill=green] (barycentric cs:L1=2.0,L2=1.0,L3=0.0) circle (0.2);
\draw[fill=red] (barycentric cs:L1=1.0,L2=1.0,L3=1.0) circle (0.2);
\draw[line width=0.4mm,color=blue] (C0I0I0) -- (Cm1I0I1);
\draw[line width=0.4mm,color=black!30!green] (C1I0Im1) -- (Cm1I1I0);
\draw[line width=0.4mm,color=black!30!green] (C1I0Im1) -- (C0Im1I1);
\coordinate (aa1) at (barycentric cs:L1=1.4,L2=2.0,L3=-0.4);
\draw[line width=0.4mm,color=blue] (C0I1Im1) -- (aa1);
\coordinate (bb1) at (barycentric cs:L1=2.0,L2=-0.4,L3=1.4);
\draw[line width=0.4mm,color=blue] (C1Im1I0) -- (bb1);
\coordinate (cc1) at (barycentric cs:L1=-0.4,L2=1.4,L3=2.0);
\draw[line width=0.4mm,color=blue] (Cm1I0I1) -- (cc1);
\coordinate (aa2) at (barycentric cs:L1=2.4,L2=1.0,L3=-0.4);
\draw[line width=0.4mm,color=black!30!green] (C1I0Im1) -- (aa2);
\coordinate (bb2) at (barycentric cs:L1=1.0,L2=-0.4,L3=2.4);
\draw[line width=0.4mm,color=black!30!green] (C0Im1I1) -- (bb2);
\coordinate (cc2) at (barycentric cs:L1=-0.4,L2=2.4,L3=1.0);
\draw[line width=0.4mm,color=black!30!green] (Cm1I1I0) -- (cc2);
\coordinate (aa3) at (barycentric cs:L1=2.4,L2=0.0,L3=0.6);
\draw[line width=0.4mm,color=blue] (C1Im1I0) -- (aa3);
\coordinate (bb3) at (barycentric cs:L1=0.0,L2=0.6,L3=2.4);
\draw[line width=0.4mm,color=blue] (Cm1I0I1) -- (bb3);
\coordinate (cc3) at (barycentric cs:L1=0.6,L2=2.4,L3=0.0);
\draw[line width=0.4mm,color=blue] (C0I1Im1) -- (cc3);
\coordinate (avar) at (0:3.10);
\node[scale=0.7,anchor=180] (anode) at (avar) {$a$};
\coordinate (bvar) at (60:3.10);
\node[scale=0.7,anchor=240] (bnode) at (bvar) {$b$};
\coordinate (cvar) at (120:3.10);
\node[scale=0.7,anchor=300] (cnode) at (cvar) {$c$};
\coordinate (dvar) at (180:3.10);
\node[scale=0.7,anchor=360] (dnode) at (dvar) {$d$};
\coordinate (evar) at (240:3.10);
\node[scale=0.7,anchor=420] (enode) at (evar) {$e$};
\coordinate (fvar) at (300:3.10);
\node[scale=0.7,anchor=480] (fnode) at (fvar) {$f$};
\node[scale=0.7,anchor=west] (vnode) at (0.00,0.00) {$v$};
\node[anchor=north] (math) at (0.00,-4.00) {$\wt(F)=\frac{x_bx_e}{x_v}$};
\draw[fill=blue] (barycentric cs:L1=0.0,L2=1.0,L3=2.0) circle (0.2);
\draw[fill=green] (barycentric cs:L1=0.0,L2=2.0,L3=1.0) circle (0.2);
\draw[fill=green] (barycentric cs:L1=1.0,L2=0.0,L3=2.0) circle (0.2);
\draw[fill=blue] (barycentric cs:L1=1.0,L2=2.0,L3=0.0) circle (0.2);
\draw[fill=blue] (barycentric cs:L1=2.0,L2=0.0,L3=1.0) circle (0.2);
\draw[fill=green] (barycentric cs:L1=2.0,L2=1.0,L3=0.0) circle (0.2);
\draw[fill=red] (barycentric cs:L1=1.0,L2=1.0,L3=1.0) circle (0.2);
\end{tikzpicture}}
&
\scalebox{1.0}{
\begin{tikzpicture}[scale=0.3]
\coordinate (L1) at (90:5.00);
\coordinate (L2) at (330:5.00);
\coordinate (L3) at (210:5.00);
\coordinate (Cm1I0I1) at (barycentric cs:L1=0.0,L2=1.0,L3=2.0);
\coordinate (Cm1I1I0) at (barycentric cs:L1=0.0,L2=2.0,L3=1.0);
\coordinate (C0Im1I1) at (barycentric cs:L1=1.0,L2=0.0,L3=2.0);
\coordinate (C0I0I0) at (barycentric cs:L1=1.0,L2=1.0,L3=1.0);
\coordinate (C0I1Im1) at (barycentric cs:L1=1.0,L2=2.0,L3=0.0);
\coordinate (C1Im1I0) at (barycentric cs:L1=2.0,L2=0.0,L3=1.0);
\coordinate (C1I0Im1) at (barycentric cs:L1=2.0,L2=1.0,L3=0.0);
\draw[opacity=0.4] (Cm1I0I1) -- (C0Im1I1);
\draw[opacity=0.4] (Cm1I0I1) -- (Cm1I1I0);
\draw[opacity=0.4] (Cm1I1I0) -- (C0I0I0);
\draw[opacity=0.4] (C0Im1I1) -- (C0I0I0);
\draw[opacity=0.4] (C0I1Im1) -- (C1I0Im1);
\draw[opacity=0.4] (C0I1Im1) -- (Cm1I1I0);
\draw[opacity=0.4] (C1Im1I0) -- (C0Im1I1);
\draw[opacity=0.4] (C1Im1I0) -- (C1I0Im1);
\draw[opacity=0.4] (C1I0Im1) -- (C0I0I0);
\draw[fill=blue] (barycentric cs:L1=0.0,L2=1.0,L3=2.0) circle (0.2);
\draw[fill=green] (barycentric cs:L1=0.0,L2=2.0,L3=1.0) circle (0.2);
\draw[fill=green] (barycentric cs:L1=1.0,L2=0.0,L3=2.0) circle (0.2);
\draw[fill=blue] (barycentric cs:L1=1.0,L2=2.0,L3=0.0) circle (0.2);
\draw[fill=blue] (barycentric cs:L1=2.0,L2=0.0,L3=1.0) circle (0.2);
\draw[fill=green] (barycentric cs:L1=2.0,L2=1.0,L3=0.0) circle (0.2);
\draw[fill=red] (barycentric cs:L1=1.0,L2=1.0,L3=1.0) circle (0.2);
\draw[line width=0.4mm,color=black!30!green] (C0Im1I1) -- (Cm1I1I0);
\draw[line width=0.4mm,color=black!30!green] (C1I0Im1) -- (Cm1I1I0);
\draw[line width=0.4mm,color=blue] (C0I0I0) -- (C1Im1I0);
\coordinate (aa1) at (barycentric cs:L1=1.4,L2=2.0,L3=-0.4);
\draw[line width=0.4mm,color=blue] (C0I1Im1) -- (aa1);
\coordinate (bb1) at (barycentric cs:L1=2.0,L2=-0.4,L3=1.4);
\draw[line width=0.4mm,color=blue] (C1Im1I0) -- (bb1);
\coordinate (cc1) at (barycentric cs:L1=-0.4,L2=1.4,L3=2.0);
\draw[line width=0.4mm,color=blue] (Cm1I0I1) -- (cc1);
\coordinate (aa2) at (barycentric cs:L1=2.4,L2=1.0,L3=-0.4);
\draw[line width=0.4mm,color=black!30!green] (C1I0Im1) -- (aa2);
\coordinate (bb2) at (barycentric cs:L1=1.0,L2=-0.4,L3=2.4);
\draw[line width=0.4mm,color=black!30!green] (C0Im1I1) -- (bb2);
\coordinate (cc2) at (barycentric cs:L1=-0.4,L2=2.4,L3=1.0);
\draw[line width=0.4mm,color=black!30!green] (Cm1I1I0) -- (cc2);
\coordinate (aa3) at (barycentric cs:L1=2.4,L2=0.0,L3=0.6);
\draw[line width=0.4mm,color=blue] (C1Im1I0) -- (aa3);
\coordinate (bb3) at (barycentric cs:L1=0.0,L2=0.6,L3=2.4);
\draw[line width=0.4mm,color=blue] (Cm1I0I1) -- (bb3);
\coordinate (cc3) at (barycentric cs:L1=0.6,L2=2.4,L3=0.0);
\draw[line width=0.4mm,color=blue] (C0I1Im1) -- (cc3);
\coordinate (avar) at (0:3.10);
\node[scale=0.7,anchor=180] (anode) at (avar) {$a$};
\coordinate (bvar) at (60:3.10);
\node[scale=0.7,anchor=240] (bnode) at (bvar) {$b$};
\coordinate (cvar) at (120:3.10);
\node[scale=0.7,anchor=300] (cnode) at (cvar) {$c$};
\coordinate (dvar) at (180:3.10);
\node[scale=0.7,anchor=360] (dnode) at (dvar) {$d$};
\coordinate (evar) at (240:3.10);
\node[scale=0.7,anchor=420] (enode) at (evar) {$e$};
\coordinate (fvar) at (300:3.10);
\node[scale=0.7,anchor=480] (fnode) at (fvar) {$f$};
\node[scale=0.7,anchor=west] (vnode) at (0.00,0.00) {$v$};
\node[anchor=north] (math) at (0.00,-4.00) {$\wt(F)=\frac{x_cx_f}{x_v}$};
\draw[fill=blue] (barycentric cs:L1=0.0,L2=1.0,L3=2.0) circle (0.2);
\draw[fill=green] (barycentric cs:L1=0.0,L2=2.0,L3=1.0) circle (0.2);
\draw[fill=green] (barycentric cs:L1=1.0,L2=0.0,L3=2.0) circle (0.2);
\draw[fill=blue] (barycentric cs:L1=1.0,L2=2.0,L3=0.0) circle (0.2);
\draw[fill=blue] (barycentric cs:L1=2.0,L2=0.0,L3=1.0) circle (0.2);
\draw[fill=green] (barycentric cs:L1=2.0,L2=1.0,L3=0.0) circle (0.2);
\draw[fill=red] (barycentric cs:L1=1.0,L2=1.0,L3=1.0) circle (0.2);
\end{tikzpicture}}
\\

\end{tabular}

 \caption{\label{fig:gv3} All three $G(v,2)$-groves.}
\end{figure}
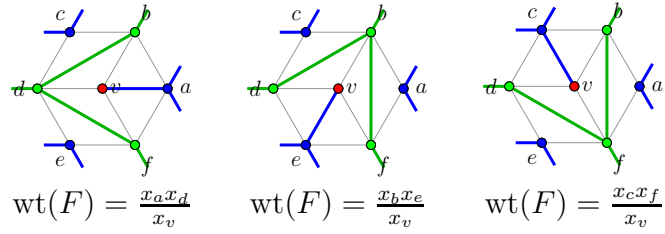

\begin{example}
 Let $v=(0,0,0)$ be the origin and let $a,b,c,d,e,f$ be its neighbors in $\TP_0$ in counterclockwise order. Then 
 \[\Cube_v(3)=\frac{x_ax_d+x_bx_e+x_cx_f}{x_v}.\]
 The graph $G(v,2)$ and the corresponding three $G(v,2)$-groves with their weights can be found in Figure~\ref{fig:gv3}.
\end{example}

\begin{proof}[Proof of Theorem~\ref{thm:entropy}]
	Note that for each $u,v\in\Torus$ and $t+1\equiv \e_v\pmod3$, the degree of $x_u$ in $\Cubetorus_v(t+1)$ grows as some constant multiple of the number of vertices in $G(v,t)$ that are equivalent to $u$ modulo $\Z A+\Z B$. The latter grows as a constant multiple of the total number of vertices in $G(v,t)$ which grows quadratically.
\end{proof}

\def\Nvt{\Nt_{(v,t)}}

\subsection{Cylindrical networks}\label{sect:networks}
In this section, we recall some of our definitions and results on \emph{cylindrical networks} from~\cite{Networks}.

Consider an acyclic directed graph $\Nt$ embedded in some horizontal strip $\TS\subset \R^2$ in the plane such that its vertices $\Vt$ and edges $\Et$ are invariant with respect to the shift by some horizontal vector $\g$. Suppose in addition that we are given a shift-invariant function $\wt:\Et\to K$ assigning weights from some field $K$ to the edges of $\Nt$. We call such a weighted directed graph \emph{a cylindrical network} if the degrees of vertices in $\Nt$ are bounded and if for every directed path in $\Nt$ connecting a vertex $\vt\in\Vt$ to some vertex $\vt+\ell\g\in\Vt$, we have $\ell>0$. We also define in an obvious way the \emph{projection} $N$ of $\Nt$ to the cylinder $\TC=\TS/\Z\g$. Thus $N$ is a weighted directed graph drawn in the cylinder. We state our results from~\cite{Networks} for the case when $\Nt$ is a \emph{planar} graph and $N$ is drawn in the cylinder $\TC$ without self-intersections. In this case, we say that $\Nt$ is a \emph{planar cylindrical network}.

\begin{definition}
	An \emph{$r$-vertex} $\vbft=(\vt_1,\dots,\vt_r)$ in $\Nt$ is an $r$-tuple of vertices of $\Nt$. An \emph{$r$-path} $\Pbft=(\Pt_1,\dots,\Pt_r)$ is an $r$-tuple of directed paths in $\Nt$ that are pairwise vertex disjoint, and we set $\wt(\Pbft)=\wt(\Pt_1)\cdots\wt(\Pt_r)$ where the weight of a path is the product of weights of its edges. If for $1\leq i\leq r$, the path $\Pt_i$ starts at $\ut_i$ and ends at $\vt_i$ then $\ubft=(\ut_1,\dots,\ut_r)$ and $\vbft=(\vt_1,\dots,\vt_r)$ are called the \emph{start} and the \emph{end} of $\Pbft$. We denote by $\pathst(\ubft,\vbft)$ the collection of all $r$-paths in $\Nt$ that start at $\ubft$ and end at $\vbft$, and we set 
	\[\ee(\ubft,\vbft):=\sum_{\Pbft\in\pathst(\ubft,\vbft)} \wt(\Pt).\] 
	
	An $r$-cycle $\Cbf=(C_1,\dots,C_r)$ in $N$ is an $r$-tuple of pairwise vertex disjoint simple directed cycles in $N$. We set $\wt(\Cbf)=\wt(C_1)\cdots\wt(C_r)$. The set of all $r$-cycles in $N$ is denoted by $\Cyc^r(N)$.
\end{definition}

Given an $r$-vertex $\vbft=(\vt_1,\dots,\vt_r)$ and a permutation $\sigma\in\Sfr_r$ of $[r]$, we denote by $\sigma\vbft=(\vt_{\sigma(1)},\dots,\vt_{\sigma(r)})$ the action of $\sigma$ on $\vbft$. We say that two $r$-vertices $\ubft$ and $\vbft$ of $\Nt$ are \emph{non-permutable} if $\pathst(\ubft,\sigma\vbft)$ is empty unless $\sigma$ is the identity permutation.

For a given planar cylindrical network $\Nt$ we define the polynomial $Q_N(t)$ as follows:
  \begin{equation}\label{eq:solenoids_intro}
  Q_N(t)=\sum_{r=0}^d(-t)^{d-r}\sum_{\Cbf\in\Cyc^r(N)} \wt(\Cbf).
  \end{equation}
Here the degree $d$ of $Q_N(t)$ is the maximum integer $r$ such that $\Cyc^r(N)$ is not empty. Recall that for each $1\leq r\leq d$, the polynomial $Q^\plee{r}_N(t)$ of degree $d\choose r$ is given by~\eqref{eq:plee}. For example, $Q^\plee{1}_N(t)=Q_N(t)$ and $Q^\plee{d}_N(t)=t-\alpha_d$, where $\alpha_d$ denotes the constant term of $Q_N(t)$.
  
\begin{theorem}[{\cite[Theorem~2.3(2)]{Networks}}]\label{thm:planar}
	Let $\Nt$ be a planar cylindrical network and let $\ubft=(\ut_1,\dots,\ut_r)$ and $\vbft=(\vt_1,\dots,\vt_r)$ be two non-permutable $r$-vertices in $\Nt$. For $\ell\geq 0$, let $\vbft_\ell=\vbft+\ell\g=(\vt_1+\ell\g,\dots,\vt_r+\ell\g)$. Define the sequence $f:\N\to K$ by 
	\[f(\ell)=\ee(\ubft,\vbft_\ell).\]
	Then the sequence $f$ satisfies a linear recurrence with characteristic polynomial $Q_N^\plee{r}(t)$ for all sufficiently large $\ell$.
\end{theorem}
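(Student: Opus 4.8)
The plan is to encode the weighted path counts of $\Nt$ into a \emph{transfer matrix} $T$ over $K$, to express $f(\ell)$ as a fixed bilinear form in the powers of the $r$-th compound matrix $\Lambda^rT$, and then to extract the recurrence from the Cayley--Hamilton theorem together with the identification of the characteristic polynomial of $\Lambda^rT$ with $Q_N^\plee{r}(t)$.

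First I would fix a generic simple arc $\Gamma$ in the cylinder $\TC$, transverse to $N$, crossing the two boundary circles once and meeting $N$ in some $m$ edges, where $m\geq d=\deg Q_N$. Lifting to $\TS$, let $T=(T_{ab})_{1\leq a,b\leq m}$ be the matrix whose entry $T_{ab}$ is the sum of $\wt(\Pt)$ over all directed paths $\Pt$ in $\Nt$ that leave the $a$-th cut edge, reach the $b$-th cut edge, and stay inside one fundamental domain of the $\Z\g$-action in between; boundedness of the vertex degrees and acyclicity of $\Nt$ make $T$ well defined. A standard first-crossing/last-crossing decomposition then shows that for any two vertices $\ut,\vt$ of $\Nt$ there are fixed vectors $\alpha^{(\ut)},\beta^{(\vt)}\in K^m$ and a nonnegative integer $c$, depending only on the positions of $\ut$ and $\vt$ relative to $\Gamma$, with
\[\ee(\ut,\vt+\ell\g)=(\alpha^{(\ut)})^{\top}T^{\ell-c}\beta^{(\vt)}\qquad\text{for all }\ell\geq c.\]
This already settles the case $r=1$: the right-hand side satisfies the linear recurrence with characteristic polynomial $\det(tI-T)$, and the all-minors (cycle-cover) expansion of this determinant, which on a planar network collapses after sign cancellation to a sum over families of vertex-disjoint simple directed cycles, identifies $\det(tI-T)$ with $t^{m-d}Q_N(t)$ via~\eqref{eq:solenoids_intro}; the stray factor $t^{m-d}$ contributes only trailing zeros that disappear for $\ell$ large.

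For general $r$, let $M_\ell$ be the $r\times r$ matrix with entries $\ee(\ut_i,\vt_j+\ell\g)$. By the Lindström--Gessel--Viennot lemma applied to the planar acyclic graph $\Nt$ in the strip, $\det M_\ell$ equals $\sum_{\sigma\in\Sfr_r}\sgn(\sigma)\prod_i\ee(\ut_i,\vt_{\sigma(i)}+\ell\g)$, which by the non-permutability of $(\ubft,\vbft)$ — inherited by $(\ubft,\vbft_\ell)$ for $\ell$ large — reduces to the identity term, so $\det M_\ell=\ee(\ubft,\vbft_\ell)=f(\ell)$. On the other hand, the displayed formula (after absorbing suitable bounded powers of $T$, which commute with $T$, into the coordinate vectors) gives $M_\ell=U^{\top}T^{\ell-c}W$ for fixed $m\times r$ matrices $U,W$ and a fixed $c$, valid for $\ell\geq c$; hence Cauchy--Binet together with $\Lambda^r(T^{\ell-c})=(\Lambda^rT)^{\ell-c}$ yields $f(\ell)=p^{\top}(\Lambda^rT)^{\ell-c}q$ for fixed $p,q\in K^{\binom{m}{r}}$. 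Therefore $f$ satisfies the linear recurrence with characteristic polynomial $\det(tI-\Lambda^rT)$, and it remains to identify its trailing-zero-free part with $Q_N^\plee{r}(t)$. Over $\Kbar$, write $\det(tI-T)=t^{m-d}\prod_{i=1}^d(t-\gamma_i)$; then~\eqref{eq:solenoids_intro} says exactly that $Q_N(t)=\prod_{i=1}^d(t-\gamma_i)$. The eigenvalues of $\Lambda^rT$ are the products $\gamma_{i_1}\cdots\gamma_{i_r}$ over $r$-subsets of $\{1,\dots,d\}$ together with $0$ with the remaining multiplicity, so its characteristic polynomial is $t^{\binom{m}{r}-\binom{d}{r}}\prod_{1\leq i_1<\dots<i_r\leq d}(t-\gamma_{i_1}\cdots\gamma_{i_r})=t^{\binom{m}{r}-\binom{d}{r}}Q_N^\plee{r}(t)$ by~\eqref{eq:plee}, the extra power of $t$ again affecting only small $\ell$.

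The main obstacle is the sign bookkeeping behind the two uses of Lindström--Gessel--Viennot: one must verify that planarity (the cut edges lie in convex position along $\Gamma$, and $N$ has no self-intersections in $\TC$) together with non-permutability forces every non-identity permutation term in the relevant determinants to cancel or vanish, so that the minors of powers of $T$ literally compute the sign-free counts $f(\ell)$, and, in the same spirit, that the all-minors expansion of $\det(tI-T)$ agrees with the sign-free sum $\sum_{\Cbf\in\Cyc^r(N)}\wt(\Cbf)$ in~\eqref{eq:solenoids_intro}. A secondary technical point is to make the threshold in ``for all sufficiently large $\ell$'' explicit: the path segments near $\ubft$ and near $\vbft_\ell$ must stabilize, and $(\ubft,\vbft_\ell)$ must remain non-permutable, once $\ell$ exceeds a bound depending only on $\ubft$, $\vbft$, and $\Gamma$.
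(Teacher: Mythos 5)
This theorem is not proved in the present paper: it is imported verbatim from \cite[Theorem~2.3(2)]{Networks}, so there is no in-paper argument to compare yours against. What the surrounding text does indicate is the intended architecture of the cited proof: the operation $Q\mapsto Q^\plee{r}$ is developed in \cite[Section~4.1]{Networks} as a statement about families of sequences satisfying a common linear recurrence, and the passage from $r$-path families to determinants of single-path counts is exactly the Lindstr\"om--Gessel--Viennot step in your third paragraph. So the second half of your proposal (LGV plus non-permutability to get $f(\ell)=\det M_\ell$, then Cauchy--Binet and the $r$-th compound to pass from $Q_N$ to $Q_N^\plee{r}$) is the natural route and almost certainly matches the cited one in spirit; where you commit to a specific implementation is the $r=1$ case, which you handle by an explicit cross-section transfer matrix.

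That $r=1$ step is where the genuine gaps are. First, a matrix $T$ indexed by the $m$ edges cut by $\Gamma$ satisfies $\ee(\ut,\vt+\ell\g)=(\alpha^{(\ut)})^{\top}T^{\ell-c}\beta^{(\vt)}$ only if every edge of $N$ crosses $\Gamma$ in the positive direction; otherwise the number of crossings of a path is not an affine function of its winding number $\ell$, and the exponent of $T$ is wrong. Planarity does not hand you such a $\Gamma$; the standard repair is to work instead with the adjacency matrix $A(z)$ of the finite graph $N$ in which a variable $z$ records the signed crossing number, so that $f(\ell)$ becomes a coefficient of a rational function with denominator $\det(I-A(z))$. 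Second, the identification $\det(tI-T)=t^{m-d}Q_N(t)$ is not only ``sign bookkeeping'': cycles of the $T$-graph correspond to closed walks in $N$ that need not be simple and need not have winding number one, so one must combine the Schur-complement relation between $\det(I-A(z))$ and $\det(I-zT)$ with the genuinely planar fact that every simple directed cycle of $N$ is a simple closed curve on the cylinder and hence has winding number exactly one (this is also what makes $\det(I-A(0))=1$ and makes the $z$-degree of a disjoint $r$-cycle collection equal to $r$). Third, your LGV step needs $(\ubft,\vbft_\ell)$, not just $(\ubft,\vbft)$, to be non-permutable; this is not literally the hypothesis and requires a short argument or an adjustment of the threshold on $\ell$. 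You flag the second issue yourself; the first is the one most likely to derail a literal implementation of your construction.
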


\subsection{A bijection between forests and $r$-paths}\label{sect:bijection}
We define a network $\Nvt$ to be a certain weighted directed graph. Its vertex set will be $\Vert(v,t)$ together with all centers of lozenges of $G(v,t)$. For every lozenge $L$ of $G(v,t)$ with vertices $a_L,b_L,c_L,d_L$ and center $e_L$ as in Figure~\ref{fig:nvt}, we introduce four weighted directed edges of $\Nvt$ 
$a_L\to e_L, b_L\to e_L, c_L\to e_L, e_L\to d_L$ with respective weights $\alpha,1,\gamma,$ and $1$. We set
\[\alpha=\gamma=\frac{x_{a_L}x_{c_L}}{x_{b_L}x_{d_L}}.\] This defines the network $\Nvt$. Note that $\Nvt$ is acyclic.

\begin{figure}
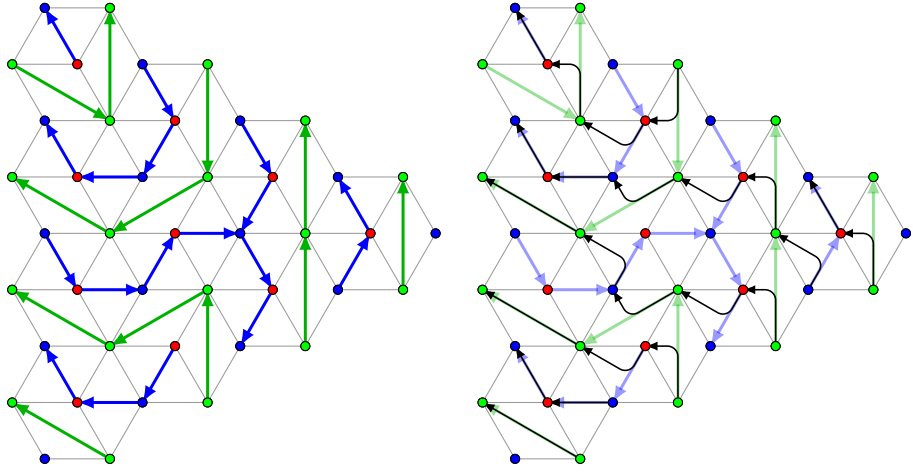

 \centering

\scalebox{1.0}{
% [inline block 1: 2 envs, 35130 chars -> data_tex | \begin{tikzpicture}[scale=0.3] \coordinate (L1) at (90:5.00);...]


 \caption{\label{fig:gvt_rooted} A $G(v,t)$-grove $F$ from Figure~\ref{fig:gvt} rooted in an arbitrary way (left). The boundary $7$-path $\bij(F)$ corresponding to $F$ (right). When $\bij(F)$ contains edges $u\to e\to v$ in $\Nvt$ and $e$ is a center of some lozenge then we draw a black edge with a rounded corner from $u$ to $v$ instead.}
\end{figure}

We view every rooted $G(v,t)$-forest $F$ as an oriented graph: we orient every edge of $F$ towards the root $\root_C$ of the corresponding connected component $C$ of $F$ that this edge belongs to. An example of a rooted $G(v,t)$-forest is given in Figure~\ref{fig:gvt_rooted} (left). Its underlying undirected graph is the $G(v,t)$-grove in Figure~\ref{fig:gvt_grove}, and the edges of each connected component $C$ point towards an arbitrarily chosen boundary vertex $\root_C$.

% Recall that a \emph{$r$-vertex} in $\Nvt$ is an $r$-tuple $\ubft=(u_1,u_2,\dots,u_r)$ of vertices of $\Nvt$.
\begin{definition}
  A \emph{boundary $r$-vertex} is an $r$-vertex $\ubft=(\ut_1,\ut_2,\dots,\ut_r)$ such that $\ut_i\in\Gbound$ for all $1\leq i\leq r$.
\end{definition}

% 
% \begin{definition}
%  A \emph{$k$-path} in $\Nvt$ is a $k$-tuple $P=(p_1,p_2,\dots,p_k)$ of paths in $\Nvt$ that are pairwise vertex disjoint. Each path $p_i$ is required to start at some boundary vertex $u_i\in\Gbound$ and end at some boundary vertex $w_i\in\Gbound$, and the $k$-vertices $U=(u_1,\dots,u_k)$ and $W=(w_1,\dots,w_k)$ are called the \emph{start} of $P$ and the \emph{end} of $P$ respectively. 
% \end{definition}

We restrict our attention to only those $r$-paths $\Pbft$ in $\Nvt$ that start and end at boundary $r$-vertices. We call such an $r$-path $\Pbft$ a \emph{boundary $r$-path}.

Our goal is to define a bijection $\bij$ from the set of all rooted $G(v,t)$-forests to the set of all boundary $r$-paths in $\Nvt$ for $r\geq 0$. We view each boundary $r$-path in $\Nvt$ as a collection of edges of $\Nvt$. It is easy to define $\bij$ but it is non-trivial to prove that it is in fact a bijection. 

Let $F$ be a rooted $G(v,t)$-forest. We are going to describe the set of edges of $\Nvt$ that belong to the boundary $r$-path $\bij(F)$. Let us orient every edge of $F$ towards the root $\root_C$ of the corresponding connected component $C$ of $F$. Consider any lozenge $L$ of $G(v,t)$. By the definition of a $G(v,t)$-forest, there is a unique (oriented) edge $u\to w$ of $F$ inside $L$. Let $a_L,b_L,c_L,d_L,e_L$ be the vertices of $\Nvt$ inside $L$ as in Figure~\ref{fig:nvt}. If $u=d_L$ then we do not choose any edges of $\Nvt$ inside $L$ to belong to $\bij(F)$. Otherwise, we choose the edges $u\to e_L$ and $e_L\to d_L$. This defines $\bij(F)$ as a collection of edges of $\Nvt$. For the rooted $G(v,t)$-forest $F$ from Figure~\ref{fig:gvt_rooted} (left), the corresponding $7$-path $\bij(F)$ is given in Figure~\ref{fig:gvt_rooted} (right). This construction is similar to the well-known bijection between domino tilings and $r$-paths, see e.g.~\cite[Figure~8]{Networks}.

% 
% \begin{figure}
%  \centering
% %  \includegraphics[width=0.5\linewidth]{bij.png}
%  
%  
%  
%  \caption{\label{fig:bij} The boundary $7$-path corresponding to the rooted $G(v,t)$-forest from Figure~\ref{fig:gvt_rooted}.}
% \end{figure}

\begin{figure}
 \centering

\scalebox{1.0}{
\begin{tikzpicture}[scale=0.3]
\coordinate (L1) at (90:5.00);
\coordinate (L2) at (330:5.00);
\coordinate (L3) at (210:5.00);
\coordinate (Cm4I0I4) at (barycentric cs:L1=-3.0,L2=1.0,L3=5.0);
\coordinate (Cm4I1I3) at (barycentric cs:L1=-3.0,L2=2.0,L3=4.0);
\coordinate (Cm3Im1I4) at (barycentric cs:L1=-2.0,L2=0.0,L3=5.0);
\coordinate (Cm3I0I3) at (barycentric cs:L1=-2.0,L2=1.0,L3=4.0);
\coordinate (Cm3I1I2) at (barycentric cs:L1=-2.0,L2=2.0,L3=3.0);
\coordinate (Cm3I2I1) at (barycentric cs:L1=-2.0,L2=3.0,L3=2.0);
\coordinate (Cm2Im1I3) at (barycentric cs:L1=-1.0,L2=0.0,L3=4.0);
\coordinate (Cm2I0I2) at (barycentric cs:L1=-1.0,L2=1.0,L3=3.0);
\coordinate (Cm2I1I1) at (barycentric cs:L1=-1.0,L2=2.0,L3=2.0);
\coordinate (Cm2I2I0) at (barycentric cs:L1=-1.0,L2=3.0,L3=1.0);
\coordinate (Cm2I3Im1) at (barycentric cs:L1=-1.0,L2=4.0,L3=0.0);
\coordinate (Cm1Im2I3) at (barycentric cs:L1=0.0,L2=-1.0,L3=4.0);
\coordinate (Cm1Im1I2) at (barycentric cs:L1=0.0,L2=0.0,L3=3.0);
\coordinate (Cm1I0I1) at (barycentric cs:L1=0.0,L2=1.0,L3=2.0);
\coordinate (Cm1I1I0) at (barycentric cs:L1=0.0,L2=2.0,L3=1.0);
\coordinate (Cm1I2Im1) at (barycentric cs:L1=0.0,L2=3.0,L3=0.0);
\coordinate (Cm1I3Im2) at (barycentric cs:L1=0.0,L2=4.0,L3=-1.0);
\coordinate (Cm1I4Im3) at (barycentric cs:L1=0.0,L2=5.0,L3=-2.0);
\coordinate (C0Im2I2) at (barycentric cs:L1=1.0,L2=-1.0,L3=3.0);
\coordinate (C0Im1I1) at (barycentric cs:L1=1.0,L2=0.0,L3=2.0);
\coordinate (C0I0I0) at (barycentric cs:L1=1.0,L2=1.0,L3=1.0);
\coordinate (C0I1Im1) at (barycentric cs:L1=1.0,L2=2.0,L3=0.0);
\coordinate (C0I2Im2) at (barycentric cs:L1=1.0,L2=3.0,L3=-1.0);
\coordinate (C0I3Im3) at (barycentric cs:L1=1.0,L2=4.0,L3=-2.0);
\coordinate (C0I4Im4) at (barycentric cs:L1=1.0,L2=5.0,L3=-3.0);
\coordinate (C1Im3I2) at (barycentric cs:L1=2.0,L2=-2.0,L3=3.0);
\coordinate (C1Im2I1) at (barycentric cs:L1=2.0,L2=-1.0,L3=2.0);
\coordinate (C1Im1I0) at (barycentric cs:L1=2.0,L2=0.0,L3=1.0);
\coordinate (C1I0Im1) at (barycentric cs:L1=2.0,L2=1.0,L3=0.0);
\coordinate (C1I1Im2) at (barycentric cs:L1=2.0,L2=2.0,L3=-1.0);
\coordinate (C1I2Im3) at (barycentric cs:L1=2.0,L2=3.0,L3=-2.0);
\coordinate (C1I3Im4) at (barycentric cs:L1=2.0,L2=4.0,L3=-3.0);
\coordinate (C2Im3I1) at (barycentric cs:L1=3.0,L2=-2.0,L3=2.0);
\coordinate (C2Im2I0) at (barycentric cs:L1=3.0,L2=-1.0,L3=1.0);
\coordinate (C2Im1Im1) at (barycentric cs:L1=3.0,L2=0.0,L3=0.0);
\coordinate (C2I0Im2) at (barycentric cs:L1=3.0,L2=1.0,L3=-1.0);
\coordinate (C2I1Im3) at (barycentric cs:L1=3.0,L2=2.0,L3=-2.0);
\coordinate (C3Im4I1) at (barycentric cs:L1=4.0,L2=-3.0,L3=2.0);
\coordinate (C3Im3I0) at (barycentric cs:L1=4.0,L2=-2.0,L3=1.0);
\coordinate (C3Im2Im1) at (barycentric cs:L1=4.0,L2=-1.0,L3=0.0);
\coordinate (C3Im1Im2) at (barycentric cs:L1=4.0,L2=0.0,L3=-1.0);
\coordinate (C4Im4I0) at (barycentric cs:L1=5.0,L2=-3.0,L3=1.0);
\coordinate (C4Im3Im1) at (barycentric cs:L1=5.0,L2=-2.0,L3=0.0);
\draw[opacity=0.4] (Cm3I1I2) -- (Cm2I0I2);
\draw[opacity=0.4] (Cm3I1I2) -- (Cm4I1I3);
\draw[opacity=0.4] (Cm3I1I2) -- (Cm3I2I1);
\draw[opacity=0.4] (Cm3I2I1) -- (Cm2I1I1);
\draw[opacity=0.4] (C1Im3I2) -- (C1Im2I1);
\draw[opacity=0.4] (C1I2Im3) -- (C2I1Im3);
\draw[opacity=0.4] (C1I2Im3) -- (C0I2Im2);
\draw[opacity=0.4] (C1I2Im3) -- (C1I3Im4);
\draw[opacity=0.4] (C2Im3I1) -- (C3Im4I1);
\draw[opacity=0.4] (C2Im3I1) -- (C1Im3I2);
\draw[opacity=0.4] (C2Im3I1) -- (C2Im2I0);
\draw[opacity=0.4] (C2I1Im3) -- (C1I1Im2);
\draw[opacity=0.4] (Cm4I1I3) -- (Cm3I0I3);
\draw[opacity=0.4] (C1I3Im4) -- (C0I3Im3);
\draw[opacity=0.4] (C3Im4I1) -- (C3Im3I0);
\draw[opacity=0.4] (Cm1I0I1) -- (C0Im1I1);
\draw[opacity=0.4] (Cm1I0I1) -- (Cm2I0I2);
\draw[opacity=0.4] (Cm1I0I1) -- (Cm1I1I0);
\draw[opacity=0.4] (Cm1I1I0) -- (C0I0I0);
\draw[opacity=0.4] (Cm1I1I0) -- (Cm2I1I1);
\draw[opacity=0.4] (Cm1I1I0) -- (Cm1I2Im1);
\draw[opacity=0.4] (C0Im1I1) -- (C1Im2I1);
\draw[opacity=0.4] (C0Im1I1) -- (Cm1Im1I2);
\draw[opacity=0.4] (C0Im1I1) -- (C0I0I0);
\draw[opacity=0.4] (C0I1Im1) -- (C1I0Im1);
\draw[opacity=0.4] (C0I1Im1) -- (Cm1I1I0);
\draw[opacity=0.4] (C0I1Im1) -- (C0I2Im2);
\draw[opacity=0.4] (C1Im1I0) -- (C2Im2I0);
\draw[opacity=0.4] (C1Im1I0) -- (C0Im1I1);
\draw[opacity=0.4] (C1Im1I0) -- (C1I0Im1);
\draw[opacity=0.4] (C1I0Im1) -- (C2Im1Im1);
\draw[opacity=0.4] (C1I0Im1) -- (C0I0I0);
\draw[opacity=0.4] (C1I0Im1) -- (C1I1Im2);
\draw[opacity=0.4] (Cm2I0I2) -- (Cm1Im1I2);
\draw[opacity=0.4] (Cm2I0I2) -- (Cm3I0I3);
\draw[opacity=0.4] (Cm2I0I2) -- (Cm2I1I1);
\draw[opacity=0.4] (Cm2I2I0) -- (Cm1I1I0);
\draw[opacity=0.4] (Cm2I2I0) -- (Cm3I2I1);
\draw[opacity=0.4] (Cm2I2I0) -- (Cm2I3Im1);
\draw[opacity=0.4] (C0Im2I2) -- (C1Im3I2);
\draw[opacity=0.4] (C0Im2I2) -- (Cm1Im2I3);
\draw[opacity=0.4] (C0Im2I2) -- (C0Im1I1);
\draw[opacity=0.4] (C0I2Im2) -- (C1I1Im2);
\draw[opacity=0.4] (C0I2Im2) -- (Cm1I2Im1);
\draw[opacity=0.4] (C0I2Im2) -- (C0I3Im3);
\draw[opacity=0.4] (C2Im2I0) -- (C3Im3I0);
\draw[opacity=0.4] (C2Im2I0) -- (C1Im2I1);
\draw[opacity=0.4] (C2Im2I0) -- (C2Im1Im1);
\draw[opacity=0.4] (C2I0Im2) -- (C3Im1Im2);
\draw[opacity=0.4] (C2I0Im2) -- (C1I0Im1);
\draw[opacity=0.4] (C2I0Im2) -- (C2I1Im3);
\draw[opacity=0.4] (Cm1Im2I3) -- (Cm1Im1I2);
\draw[opacity=0.4] (Cm1I3Im2) -- (C0I2Im2);
\draw[opacity=0.4] (Cm1I3Im2) -- (Cm2I3Im1);
\draw[opacity=0.4] (Cm1I3Im2) -- (Cm1I4Im3);
\draw[opacity=0.4] (Cm2Im1I3) -- (Cm1Im2I3);
\draw[opacity=0.4] (Cm2Im1I3) -- (Cm3Im1I4);
\draw[opacity=0.4] (Cm2Im1I3) -- (Cm2I0I2);
\draw[opacity=0.4] (Cm2I3Im1) -- (Cm1I2Im1);
\draw[opacity=0.4] (C3Im1Im2) -- (C2Im1Im1);
\draw[opacity=0.4] (C3Im2Im1) -- (C4Im3Im1);
\draw[opacity=0.4] (C3Im2Im1) -- (C2Im2I0);
\draw[opacity=0.4] (C3Im2Im1) -- (C3Im1Im2);
\draw[opacity=0.4] (Cm1I4Im3) -- (C0I3Im3);
\draw[opacity=0.4] (Cm3Im1I4) -- (Cm3I0I3);
\draw[opacity=0.4] (Cm4I0I4) -- (Cm3Im1I4);
\draw[opacity=0.4] (Cm4I0I4) -- (Cm4I1I3);
\draw[opacity=0.4] (C0I4Im4) -- (C1I3Im4);
\draw[opacity=0.4] (C0I4Im4) -- (Cm1I4Im3);
\draw[opacity=0.4] (C4Im3Im1) -- (C3Im3I0);
\draw[opacity=0.4] (C4Im4I0) -- (C3Im4I1);
\draw[opacity=0.4] (C4Im4I0) -- (C4Im3Im1);
\draw[fill=red] (barycentric cs:L1=-1.0,L2=2.0,L3=2.0) circle (0.2);
\draw[fill=red] (barycentric cs:L1=2.0,L2=-1.0,L3=2.0) circle (0.2);
\draw[fill=red] (barycentric cs:L1=2.0,L2=2.0,L3=-1.0) circle (0.2);
\draw[fill=blue] (barycentric cs:L1=-2.0,L2=2.0,L3=3.0) circle (0.2);
\draw[fill=green] (barycentric cs:L1=-2.0,L2=3.0,L3=2.0) circle (0.2);
\draw[fill=green] (barycentric cs:L1=2.0,L2=-2.0,L3=3.0) circle (0.2);
\draw[fill=blue] (barycentric cs:L1=2.0,L2=3.0,L3=-2.0) circle (0.2);
\draw[fill=blue] (barycentric cs:L1=3.0,L2=-2.0,L3=2.0) circle (0.2);
\draw[fill=green] (barycentric cs:L1=3.0,L2=2.0,L3=-2.0) circle (0.2);
\draw[fill=green] (barycentric cs:L1=-3.0,L2=2.0,L3=4.0) circle (0.2);
\draw[fill=green] (barycentric cs:L1=2.0,L2=4.0,L3=-3.0) circle (0.2);
\draw[fill=green] (barycentric cs:L1=4.0,L2=-3.0,L3=2.0) circle (0.2);
\draw[fill=blue] (barycentric cs:L1=0.0,L2=1.0,L3=2.0) circle (0.2);
\draw[fill=green] (barycentric cs:L1=0.0,L2=2.0,L3=1.0) circle (0.2);
\draw[fill=green] (barycentric cs:L1=1.0,L2=0.0,L3=2.0) circle (0.2);
\draw[fill=blue] (barycentric cs:L1=1.0,L2=2.0,L3=0.0) circle (0.2);
\draw[fill=blue] (barycentric cs:L1=2.0,L2=0.0,L3=1.0) circle (0.2);
\draw[fill=green] (barycentric cs:L1=2.0,L2=1.0,L3=0.0) circle (0.2);
\draw[fill=red] (barycentric cs:L1=0.0,L2=0.0,L3=3.0) circle (0.2);
\draw[fill=red] (barycentric cs:L1=0.0,L2=3.0,L3=0.0) circle (0.2);
\draw[fill=green] (barycentric cs:L1=-1.0,L2=1.0,L3=3.0) circle (0.2);
\draw[fill=blue] (barycentric cs:L1=-1.0,L2=3.0,L3=1.0) circle (0.2);
\draw[fill=blue] (barycentric cs:L1=1.0,L2=-1.0,L3=3.0) circle (0.2);
\draw[fill=green] (barycentric cs:L1=1.0,L2=3.0,L3=-1.0) circle (0.2);
\draw[fill=red] (barycentric cs:L1=3.0,L2=0.0,L3=0.0) circle (0.2);
\draw[fill=green] (barycentric cs:L1=3.0,L2=-1.0,L3=1.0) circle (0.2);
\draw[fill=blue] (barycentric cs:L1=3.0,L2=1.0,L3=-1.0) circle (0.2);
\draw[fill=green] (barycentric cs:L1=0.0,L2=-1.0,L3=4.0) circle (0.2);
\draw[fill=blue] (barycentric cs:L1=0.0,L2=4.0,L3=-1.0) circle (0.2);
\draw[fill=blue] (barycentric cs:L1=-1.0,L2=0.0,L3=4.0) circle (0.2);
\draw[fill=green] (barycentric cs:L1=-1.0,L2=4.0,L3=0.0) circle (0.2);
\draw[fill=red] (barycentric cs:L1=-2.0,L2=1.0,L3=4.0) circle (0.2);
\draw[fill=red] (barycentric cs:L1=1.0,L2=4.0,L3=-2.0) circle (0.2);
\draw[fill=green] (barycentric cs:L1=4.0,L2=0.0,L3=-1.0) circle (0.2);
\draw[fill=blue] (barycentric cs:L1=4.0,L2=-1.0,L3=0.0) circle (0.2);
\draw[fill=red] (barycentric cs:L1=4.0,L2=-2.0,L3=1.0) circle (0.2);
\draw[fill=green] (barycentric cs:L1=0.0,L2=5.0,L3=-2.0) circle (0.2);
\draw[fill=green] (barycentric cs:L1=-2.0,L2=0.0,L3=5.0) circle (0.2);
\draw[fill=blue] (barycentric cs:L1=-3.0,L2=1.0,L3=5.0) circle (0.2);
\draw[fill=blue] (barycentric cs:L1=1.0,L2=5.0,L3=-3.0) circle (0.2);
\draw[fill=green] (barycentric cs:L1=5.0,L2=-2.0,L3=0.0) circle (0.2);
\draw[fill=blue] (barycentric cs:L1=5.0,L2=-3.0,L3=1.0) circle (0.2);
\draw[fill=red] (barycentric cs:L1=1.0,L2=1.0,L3=1.0) circle (0.2);
\tikzset{>=latex}
\draw[->,line width=0.4mm,color=black!30!green] (C1Im3I2) -- (C0Im1I1);
\draw[->,line width=0.4mm,color=blue] (C2I0Im2) -- (C1I1Im2);
\draw[->,line width=0.4mm,color=blue] (C1I1Im2) -- (C1I2Im3);
\draw[->,line width=0.4mm,color=blue] (C1I2Im3) -- (C0I3Im3);
\draw[->,line width=0.4mm,color=blue] (C0I3Im3) -- (C0I4Im4);
\draw[->,line width=0.4mm,color=black!30!green] (C3Im4I1) -- (C2Im2I0);
\draw[->,line width=0.4mm,color=blue] (Cm2I1I1) -- (Cm2I2I0);
\draw[->,line width=0.4mm,color=blue] (Cm1I0I1) -- (Cm2I1I1);
\draw[->,line width=0.4mm,color=black!30!green] (Cm1I1I0) -- (Cm2I3Im1);
\draw[->,line width=0.4mm,color=black!30!green] (C0Im1I1) -- (Cm1I1I0);
\draw[->,line width=0.4mm,color=blue] (Cm1Im1I2) -- (Cm1I0I1);
\draw[->,line width=0.4mm,color=blue] (C0Im2I2) -- (Cm1Im1I2);
\draw[->,line width=0.4mm,color=blue] (C0I0I0) -- (C0I1Im1);
\draw[->,line width=0.4mm,color=blue] (C1Im1I0) -- (C0I0I0);
\draw[->,line width=0.4mm,color=black!30!green] (C1I0Im1) -- (C0I2Im2);
\draw[->,line width=0.4mm,color=black!30!green] (Cm2I0I2) -- (Cm3I2I1);
\draw[->,line width=0.4mm,color=blue] (Cm3I0I3) -- (Cm3I1I2);
\draw[->,line width=0.4mm,color=blue] (Cm2Im1I3) -- (Cm3I0I3);
\draw[->,line width=0.4mm,color=blue] (C0I1Im1) -- (Cm1I2Im1);
\draw[->,line width=0.4mm,color=blue] (Cm1I2Im1) -- (Cm1I3Im2);
\draw[->,line width=0.4mm,color=black!30!green] (C0I2Im2) -- (Cm1I4Im3);
\draw[->,line width=0.4mm,color=blue] (C2Im3I1) -- (C1Im2I1);
\draw[->,line width=0.4mm,color=blue] (C1Im2I1) -- (C1Im1I0);
\draw[->,line width=0.4mm,color=black!30!green] (C2Im2I0) -- (C1I0Im1);
\draw[->,line width=0.4mm,color=black!30!green] (Cm1Im2I3) -- (Cm2I0I2);
\draw[->,line width=0.4mm,color=blue] (C2Im1Im1) -- (C2I0Im2);
\draw[->,line width=0.4mm,color=blue] (C3Im2Im1) -- (C2Im1Im1);
\draw[->,line width=0.4mm,color=black!30!green] (Cm3Im1I4) -- (Cm4I1I3);
\draw[->,line width=0.4mm,color=blue] (C4Im4I0) -- (C3Im3I0);
\draw[->,line width=0.4mm,color=blue] (C3Im3I0) -- (C3Im2Im1);
\draw[fill=red] (barycentric cs:L1=-1.0,L2=2.0,L3=2.0) circle (0.2);
\draw[fill=red] (barycentric cs:L1=2.0,L2=-1.0,L3=2.0) circle (0.2);
\draw[fill=red] (barycentric cs:L1=2.0,L2=2.0,L3=-1.0) circle (0.2);
\draw[fill=blue] (barycentric cs:L1=-2.0,L2=2.0,L3=3.0) circle (0.2);
\draw[fill=green] (barycentric cs:L1=-2.0,L2=3.0,L3=2.0) circle (0.2);
\draw[fill=green] (barycentric cs:L1=2.0,L2=-2.0,L3=3.0) circle (0.2);
\draw[fill=blue] (barycentric cs:L1=2.0,L2=3.0,L3=-2.0) circle (0.2);
\draw[fill=blue] (barycentric cs:L1=3.0,L2=-2.0,L3=2.0) circle (0.2);
\draw[fill=green] (barycentric cs:L1=3.0,L2=2.0,L3=-2.0) circle (0.2);
\draw[fill=green] (barycentric cs:L1=-3.0,L2=2.0,L3=4.0) circle (0.2);
\draw[fill=green] (barycentric cs:L1=2.0,L2=4.0,L3=-3.0) circle (0.2);
\draw[fill=green] (barycentric cs:L1=4.0,L2=-3.0,L3=2.0) circle (0.2);
\draw[fill=blue] (barycentric cs:L1=0.0,L2=1.0,L3=2.0) circle (0.2);
\draw[fill=green] (barycentric cs:L1=0.0,L2=2.0,L3=1.0) circle (0.2);
\draw[fill=green] (barycentric cs:L1=1.0,L2=0.0,L3=2.0) circle (0.2);
\draw[fill=blue] (barycentric cs:L1=1.0,L2=2.0,L3=0.0) circle (0.2);
\draw[fill=blue] (barycentric cs:L1=2.0,L2=0.0,L3=1.0) circle (0.2);
\draw[fill=green] (barycentric cs:L1=2.0,L2=1.0,L3=0.0) circle (0.2);
\draw[fill=red] (barycentric cs:L1=0.0,L2=0.0,L3=3.0) circle (0.2);
\draw[fill=red] (barycentric cs:L1=0.0,L2=3.0,L3=0.0) circle (0.2);
\draw[fill=green] (barycentric cs:L1=-1.0,L2=1.0,L3=3.0) circle (0.2);
\draw[fill=blue] (barycentric cs:L1=-1.0,L2=3.0,L3=1.0) circle (0.2);
\draw[fill=blue] (barycentric cs:L1=1.0,L2=-1.0,L3=3.0) circle (0.2);
\draw[fill=green] (barycentric cs:L1=1.0,L2=3.0,L3=-1.0) circle (0.2);
\draw[fill=red] (barycentric cs:L1=3.0,L2=0.0,L3=0.0) circle (0.2);
\draw[fill=green] (barycentric cs:L1=3.0,L2=-1.0,L3=1.0) circle (0.2);
\draw[fill=blue] (barycentric cs:L1=3.0,L2=1.0,L3=-1.0) circle (0.2);
\draw[fill=green] (barycentric cs:L1=0.0,L2=-1.0,L3=4.0) circle (0.2);
\draw[fill=blue] (barycentric cs:L1=0.0,L2=4.0,L3=-1.0) circle (0.2);
\draw[fill=blue] (barycentric cs:L1=-1.0,L2=0.0,L3=4.0) circle (0.2);
\draw[fill=green] (barycentric cs:L1=-1.0,L2=4.0,L3=0.0) circle (0.2);
\draw[fill=red] (barycentric cs:L1=-2.0,L2=1.0,L3=4.0) circle (0.2);
\draw[fill=red] (barycentric cs:L1=1.0,L2=4.0,L3=-2.0) circle (0.2);
\draw[fill=green] (barycentric cs:L1=4.0,L2=0.0,L3=-1.0) circle (0.2);
\draw[fill=blue] (barycentric cs:L1=4.0,L2=-1.0,L3=0.0) circle (0.2);
\draw[fill=red] (barycentric cs:L1=4.0,L2=-2.0,L3=1.0) circle (0.2);
\draw[fill=green] (barycentric cs:L1=0.0,L2=5.0,L3=-2.0) circle (0.2);
\draw[fill=green] (barycentric cs:L1=-2.0,L2=0.0,L3=5.0) circle (0.2);
\draw[fill=blue] (barycentric cs:L1=-3.0,L2=1.0,L3=5.0) circle (0.2);
\draw[fill=blue] (barycentric cs:L1=1.0,L2=5.0,L3=-3.0) circle (0.2);
\draw[fill=green] (barycentric cs:L1=5.0,L2=-2.0,L3=0.0) circle (0.2);
\draw[fill=blue] (barycentric cs:L1=5.0,L2=-3.0,L3=1.0) circle (0.2);
\draw[fill=red] (barycentric cs:L1=1.0,L2=1.0,L3=1.0) circle (0.2);
\end{tikzpicture}}

 \caption{\label{fig:forest_zero} The rooted $G(v,t)$-forest that corresponds to the boundary $0$-path in $\Nvt$.}
\end{figure}
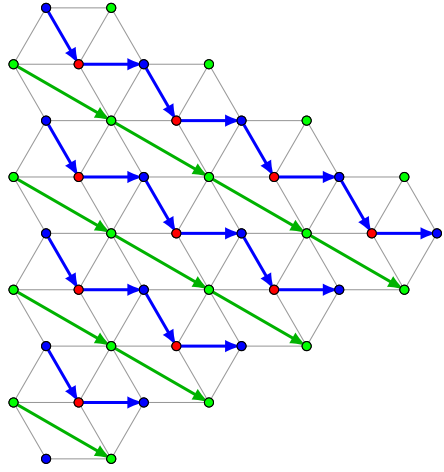

\def\extrawt{W_{(v,t)}}
\begin{theorem}\label{thm:bij}
 The map $\bij$ is a bijection between rooted $G(v,t)$-forests $F$ and boundary $r$-paths in $\Nvt$ for $r\geq 0$. Moreover, this bijection is weight-preserving:
 \[\wt(F)=\wt(\bij(F))\cdot \extrawt,\quad \text{where } \extrawt=
 \frac{x_{a_1}x_{a_3}\cdots x_{a_{2t-1}}}{x_{a_2}x_{a_4}\cdots x_{a_{2t-2}}}\]
 is the weight of the unique rooted $G(v,t)$-forest shown in Figure~\ref{fig:forest_zero} that corresponds to the boundary $0$-path in $\Nvt$.
\end{theorem}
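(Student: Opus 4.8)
## Proof proposal

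The plan is to establish that $\bij$ is a bijection by constructing an explicit inverse $\bijinv$, and then to verify the weight identity separately by a local (lozenge-by-lozenge) bookkeeping argument.

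\emph{Constructing $\bijinv$.} Given a boundary $r$-path $\Pbft$ in $\Nvt$, viewed as a set of edges, I first observe that inside each lozenge $L$ of $G(v,t)$ the path $\Pbft$ uses either no edges at all, or exactly the pair $u\to e_L$ and $e_L\to d_L$ for a unique vertex $u\in\{a_L,b_L,c_L\}$: indeed, $e_L$ has in-degree three (from $a_L,b_L,c_L$) and out-degree one (to $d_L$) in $\Nvt$, so a vertex-disjoint collection of directed paths passing through $e_L$ must enter once and leave once, and $\Pbft$ touches $e_L$ if and only if it uses an edge incident to it. I then define $\bijinv(\Pbft)$ to be the $G(v,t)$-forest $F$ whose unique diagonal in the lozenge $L$ is: the edge $u\to d_L$ (equivalently the undirected edge $\{u,d_L\}$) if $\Pbft$ passes through $e_L$ as above, and the edge $a_L\to d_L$, i.e. the \emph{other} diagonal $\{b_L,c_L\}$ wait—I need to be careful: the two diagonals of $L$ are $\{a_L,d_L\}$ wait, looking at Figure~\ref{fig:nvt}, the lozenge has vertices $a_L,b_L,c_L,d_L$ in cyclic order with $a_L,d_L$ of one color class and $b_L,c_L$ of the other, so its two diagonals are $\{a_L,d_L\}$ and $\{b_L,c_L\}$. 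The edge $u\to e_L\to d_L$ with $u=a_L$ corresponds to the diagonal $\{a_L,d_L\}$ oriented toward $d_L$; if $\Pbft$ omits the lozenge, we put the diagonal $\{a_L,d_L\}$ oriented \emph{away} from $d_L$, i.e. $d_L\to a_L$. If $u=b_L$ (resp. $u=c_L$) we put the diagonal $\{b_L,c_L\}$. This assignment of one oriented diagonal per lozenge automatically satisfies the $G(v,t)$-forest edge condition; the orientations consistently point toward certain boundary vertices, and I claim the resulting oriented graph is a rooted forest. This is the crux.

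\emph{The main obstacle: $\bijinv(\Pbft)$ really is a rooted forest inducing $\bpart_0$.} The hard part is proving that the oriented graph $F=\bijinv(\Pbft)$ has no directed cycle and no undirected cycle, that every connected component has a unique ``sink'' lying in $\Gbound$ (the root), and that the induced partition $\bpart_F$ on $\Gbound$ equals $\bpart_0$. For the acyclicity, I would argue using the planarity and the acyclicity of $\Nvt$: a directed cycle in $F$ would, after inserting the midpoints $e_L$, yield a closed directed walk in $\Nvt$, contradicting that $\Nvt$ is acyclic (stated in the excerpt). For the root structure, each component of $F$ contains exactly the endpoints of the paths of $\Pbft$ that lie in that region of $G(v,t)$, plus the ``unused'' boundary structure; the out-degree-$\le 1$ property of every non-$d_L$ vertex in each lozenge forces every vertex to have at most one outgoing $F$-edge, so components are functional graphs on trees, i.e. in-forests with a single root. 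That $\bpart_F=\bpart_0$ should follow by comparing with the distinguished $0$-path case of Figure~\ref{fig:forest_zero} and noting that rerouting a single path of $\Pbft$ between two boundary vertices $\ut_i,\vt_i$ changes $F$ only along that path, merging/splitting components exactly so that the green/blue boundary partitions track the complementary Kreweras partitions $\bpartG,\bpartB$; since $\ubft,\vbft$ range over boundary $r$-vertices, the block structure of $\bpart_0$ — the pairs $\{a_i,c_{2t-i}\}$ etc. and the central triple — is preserved. I expect this combinatorial verification, essentially a careful induction on the number of lozenges or a homotopy argument on $\Pbft$, to be where all the real work lies. Conversely one checks $\bij\circ\bijinv=\mathrm{id}$ and $\bijinv\circ\bij=\mathrm{id}$ directly from the lozenge-local descriptions, which is routine once the well-definedness is in place.

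\emph{The weight identity.} For the weight claim I would compare $\wt(F)$ from~\eqref{eq:weight_s} with $\wt(\bij(F))\cdot\extrawt$ lozenge by lozenge. The factor $\alpha=\gamma=\frac{x_{a_L}x_{c_L}}{x_{b_L}x_{d_L}}$ assigned to the relevant edges of $\Nvt$ inside $L$, multiplied over all lozenges used by $\bij(F)$, should reassemble into $\prod_u x_u^{\deg_F(u)-2}$ up to the fixed correction coming from boundary vertices and their $s(u)$-values: each interior vertex $u$ of $G(v,t)$ appears as a ``$b$'' or ``$d$'' corner (contributing $-1$) or an ``$a$'' or ``$c$'' corner (contributing $+1$) in a lozenge precisely according to which diagonal was chosen, and summing these exponents over all lozenges containing $u$ gives $\deg_F(u)-2$ by a standard Euler-type count (each interior vertex has exactly as many incident lozenges as needed so that the total is $\deg_F(u)-2$). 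The boundary vertices are where $\bij(F)$ and $F$ differ systematically, and the residual discrepancy is exactly $\extrawt=\frac{x_{a_1}x_{a_3}\cdots x_{a_{2t-1}}}{x_{a_2}x_{a_4}\cdots x_{a_{2t-2}}}$, which by construction equals $\wt$ of the forest of Figure~\ref{fig:forest_zero} (the one mapping to the empty $0$-path, for which $\bij(F)$ carries weight $1$). Thus it suffices to check the identity for that single forest and then observe that both sides transform identically under each elementary rerouting, completing the proof.
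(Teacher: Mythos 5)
There is a genuine gap at what you yourself identify as the crux: the acyclicity of $F=\bijinv(\Pbft)$. Your proposed argument — that a directed cycle in $F$ would, ``after inserting the midpoints $e_L$,'' lift to a closed directed walk in $\Nvt$ — does not work, because the endpoints of an $F$-edge and of its corresponding $\Nvt$-path do not match. The oriented diagonal $a_L\to c_L$ (or $c_L\to a_L$) corresponds to the $\Nvt$-path $a_L\to e_L\to d_L$ (resp.\ $c_L\to e_L\to d_L$), which ends at $d_L$, a vertex of the \emph{other} diagonal; and the diagonal $d_L\to b_L$ corresponds to \emph{no} $\Nvt$-edges at all. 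So a green cycle in $F$, or a red--blue cycle containing an edge of type $d_L\to b_L$, produces nothing resembling a closed walk in $\Nvt$, and the acyclicity of $\Nvt$ gives no contradiction. The paper closes this gap with a genuinely different, planar argument: any cycle in $F$ is monochromatic, a green cycle must enclose a red or blue vertex, that vertex has an outgoing edge which cannot cross the cycle and hence leads to a red--blue cycle strictly inside, which in turn encloses a green vertex, and so on — an infinite descent that is impossible. You would need this (or an equivalent topological argument); nothing in your sketch substitutes for it. Relatedly, you dismiss the forward direction — that $\bij(F)$ really is a vertex-disjoint collection of boundary-to-boundary paths — as routine, but it is the other half of the proof and requires the observation that each vertex $u$ has a \emph{unique} incoming $\Nvt$-edge, coming from the unique lozenge $L(u)$ with $u=d_{L(u)}$.

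Two secondary problems. First, your description of the lozenge is wrong: its diagonals are $\{a_L,c_L\}$ and $\{b_L,d_L\}$ (one monochromatic green, one red--blue), not $\{a_L,d_L\}$ and $\{b_L,c_L\}$, and the ``no edges'' case must produce $d_L\to b_L$, not $d_L\to a_L$; as written your local rule does not even select a diagonal. Second, you set out to prove that $\bpart_F=\bpart_0$, but this is not part of the statement and is false for general boundary $r$-paths (the paper exhibits a counterexample in Figure~\ref{fig:counter12}); the theorem is a bijection with \emph{all} rooted forests, and the grove/connectivity question is handled separately later. Your treatment of the weight identity, by contrast, is essentially the paper's argument (verify the base case of the empty $0$-path and check invariance under changing one diagonal at a time) and is fine.
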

\begin{proof}
 We first show that $\bij(F)$ is indeed a boundary $r$-path for some $r\geq 0$ and not merely a collection of edges of $\Nvt$. Since there are no edges in $\Nvt$ connecting two boundary vertices to each other, it suffices to show the following:
 \begin{enumerate}[(i)]
  \item\label{item:boundary} every boundary vertex of $\Nvt$ has at most one outgoing edge and at most one incoming edge in $\bij(F)$;
  \item\label{item:non_boundary} every non-boundary vertex of $\Nvt$ either is isolated or has precisely one incoming and one outgoing edge in $\bij(F)$.
 \end{enumerate}
Consider any vertex $u$ of $\Nvt$. If $u$ is the center of some lozenge of $G(v,t)$ then~\eqref{item:non_boundary} is obvious by construction of $\bij$, so suppose $u\in\Vert(v,t)$. Note that there is at most one edge oriented towards $u$ in $\Nvt$. Moreover, there is at most one oriented edge $u\to w$ in $F$, so the outdegree of $u$ in $\bij(F)$ is also at most one. This proves~\eqref{item:boundary} for all boundary vertices $u\in\Gbound$. Suppose now that $u\in\Vert(v,t)\setminus\Gbound$. In this case, the edge $u\to w$ in $F$ exists and is unique.

There is exactly one lozenge $L(u)$ of $G(v,t)$ that contains the unique incoming edge of $u$ in $\Nvt$. This is the lozenge $L(u)$ in Figure~\ref{fig:nvt} for which $u=d_{L(u)}$. There is also exactly one lozenge $L'$ of $G(v,t)$ that contains the edge $u\to w$. If $L'\neq L(u)$ then $\bij(F)$ clearly contains exactly one edge directed towards $u$ (namely, the edge inside $L(u)$) and exactly one edge directed from $u$ (namely, the one inside $L'$). Otherwise, if $L'=L(u)$ then $u$ has to be isolated. We are done with~\eqref{item:non_boundary} and thus we have shown that $\bij(F)$ is a boundary $r$-path for some $r\geq 0$.

Suppose now that $\Pbft$ is a boundary $r$-path in $\Nvt$. We are going to define $F=\bijinv(\Pbft)$ as a collection of oriented edges with vertex set $\Vert(v,t)$. Consider any lozenge $L$ of $G(v,t)$ and label the corresponding vertices of $\Nvt$ by $a_L,b_L,c_L,d_L,e_L$ as in Figure~\ref{fig:nvt}. Since $e_L$ is not a boundary vertex, there are four options of how the edges of $\Pbft$ inside $L$ can look like: 
\begin{enumerate}
 \item there are no edges of $\Pbft$ inside $L$;
 \item the only two edges are $a_L\to e_L\to d_L$;
 \item the only two edges are $b_L\to e_L\to d_L$;
 \item the only two edges are $c_L\to e_L\to d_L$.
\end{enumerate}
For each of the four options, we will choose the unique oriented edge of $F$ inside $L$:
\begin{enumerate}
 \item if there are no edges of $\Pbft$ inside $L$, choose $(d_L\to b_L)\in F$;
 \item if the edges are $a_L\to e_L\to d_L$, choose $(a_L\to c_L)\in F$;
 \item if the edges are  $b_L\to e_L\to d_L$, choose $(b_L\to d_L)\in F$;
 \item if the edges are  $c_L\to e_L\to d_L$, choose $(c_L\to a_L)\in F$.
\end{enumerate}
This defines $F$ as a collection of oriented edges. It is clear that if $F$ is a rooted $G(v,t)$-forest then $\bij(F)=\Pbft$ because we are basically inverting the local rule for $\bij$ inside every lozenge. We will show that $F$ is a rooted $G(v,t)$-forest where the edges of every connected component $C$ of $F$ are oriented towards some boundary vertex $\root_C\in\Gbound$. It suffices to show the following:
 \begin{enumerate}[(i)]
  \item\label{item:boundary_inv} every boundary vertex of $G(v,t)$ has at most one outgoing edge in $F$;
  \item\label{item:non_boundary_inv} every non-boundary vertex of $G(v,t)$ has exactly one outgoing edge in $F$;
  \item\label{item:cycles} there are no cycles in $F$.
 \end{enumerate}
 Consider any vertex $u\in\Vert(v,t)$. Since $\Pbft$ is an $r$-path in $\Nvt$, there is at most one outgoing edge of $u$ in $\Pbft$. If there is exactly one such edge then from the definition of $F=\bijinv(\Pbft)$, it is clear that therefore there is at most one outgoing edge of $u$ in $F$. On the other hand, if there is no such edge then $u$ is either a source in $F$ in which case we are done or $u$ is isolated in $\Pbft$ and there is a lozenge $L(u)$ of $G(v,t)$ such that $u=d_{L(u)}$ in $L(u)$ and then the unique edge in $F$ coming out from $u$ will be $d_{L(u)}\to b_{L(u)}$ inside $L(u)$. If $u$ is not on the boundary then either $u$ has an outgoing edge in $\Pbft$ or $u$ is isolated in $\Pbft$, because every path in $\Pbft$ starts and ends on the boundary. In both cases we have shown that $u$ has exactly one outgoing edge in $F$ which proves~\eqref{item:boundary_inv} and~\eqref{item:non_boundary_inv} together. 
 
 To prove~\eqref{item:cycles}, suppose that there is a cycle $C$ in $F$. By~\eqref{item:boundary_inv} and~\eqref{item:non_boundary_inv}, $C$ has to be a directed cycle. Note also that the vertices in $C$ are either all green or all red-blue. Suppose that they are all green. One easily observes\footnote{Indeed, take any (green) edge $e$ of $C$ and consider the lozenge $L$ containing it. It has a red and a blue vertex, and one of them therefore necessarily lies inside of $C$ because they lie on different sides of $e$.} that then there must be a red or a blue vertex inside of $C$. Every such vertex does not belong to $\Gbound$ and thus necessarily has an outgoing edge in $F$. This edge cannot intersect $C$ so its end is another red or blue vertex inside of $C$. Continuing in this fashion, we get a red-blue cycle $C_1$ inside $C$. But it is also easy to see that every red-blue cycle has to contain a green point inside of it, and so by the above argument we will get a green cycle $C_2$ inside $C_1$. This process has to terminate at some point leading to a cycle in $F$ that has no vertices inside of it which is a contradiction since such a cycle cannot exist in $F$. We are done with~\eqref{item:cycles}.
 
 We have thus defined two maps $\bij$ and $\bijinv$, it is obvious that they are inverse to each other, and by the above series of claims, $\bij$ maps each rooted $G(v,t)$-forest to an $r$-path $\Pbft$ in $\Nvt$ for some $r$, and conversely, for every such $\Pbft$ the map $\bijinv$ yields a rooted $G(v,t)$-forest. Therefore we are done with the claim that $\bij$ is a bijection.
 
 To see why we have $\wt(F)=\wt(\bij(F))\cdot\extrawt$, note that the map $\bij$ actually extends to arbitrary collections of directed edges in $G(v,t)$ such that for every lozenge of $G$ we choose exactly one of its four possible oriented diagonals. Every such collection $F$ has weight given by~\eqref{eq:weight_s}, and its image is some collection $\bij(F)$ of edges in $\Nvt$ whose weight can be defined as the product of the edges contained in it. Clearly when $\bij(F)$ contains no edges then we have $\wt(F)=1\cdot \extrawt$ so the formula is correct. It is easy to see that it remains correct when we alter just one edge of $F$. Since we can obtain all rooted $G(v,t)$-forests in this way, the result follows. We are done with Theorem~\ref{thm:bij}.
\end{proof}

Let $F$ now be a $G(v,t)$-grove. We choose a \emph{canonical root} for each connected component $C$ of $F$ as follows. If $C$ contains a boundary vertex $c_i$ for some $i$ then we set $c_i$ as the root of $C$. Otherwise the intersection of $C$ with $\Gbound$ consists of two vertices $b_i$ and $a_{2t-i}$ for some $1\leq i<t$, in which case we choose $b_i$ as the root of $C$. Thus for each $G(v,t)$-grove $F$, we set $\bij(F)$ to be the $r$-path in $\Nvt$ that corresponds to the rooted $G(v,t)$-forest obtained from $F$ by orienting its edges towards the roots that we have chosen above. For the $G(v,t)$-grove from Figure~\ref{fig:gvt_grove}, the corresponding canonically rooted $G(v,t)$-forest and the $r$-path $\bij(F)$ are shown in Figure~\ref{fig:grove_bij}.

\begin{figure}
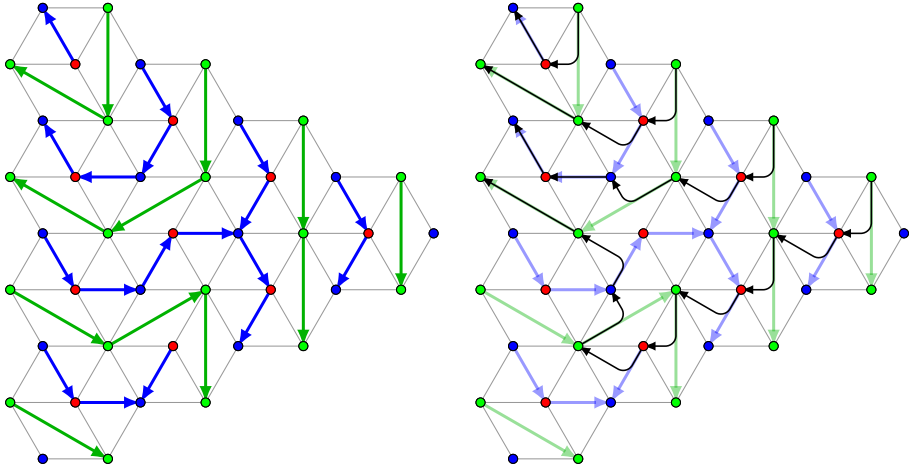

 \centering

% [inline block 2: 1 envs, 31427 chars -> data_tex | \begin{tabular}{cc} \scalebox{1.0}{...]


 \caption{\label{fig:grove_bij} A $G(v,t)$-grove rooted in a canonical way and the corresponding $4$-path in $\Nvt$.}
\end{figure}

Let $\ubft=(a_2,a_4,\dots,a_{2t-2})$ and $\wbft=(b_{t-1},b_{t-2},\dots,b_1)$ be two boundary $(t-1)$-vertices. We let $\pathst(\ubft,\wbft)$ denote the set of $(t-1)$-paths from $\ubft$ to $\wbft$ in $\Nvt$.

We will later show that if $F$ is a canonically rooted $G(v,t)$-grove then $\bij(F)\in\pathst(\ubft,\wbft)$. One may hope that the image of $\bij$ is the whole $\pathst(\ubft,\wbft)$. However, Figure~\ref{fig:counter12} demonstrates that this is not the case. We now describe the preimage of $\pathst(\ubft,\wbft)$ under $\bij$.

\def\roots{R}
Given a rooted $G(v,t)$-forest $F$, we denote by $\roots(F)\subset \Gbound$ the set of its roots. Thus for any $G(v,t)$-grove $F$ rooted canonically as above we have 
\[\roots(F)=\roots_0:=\{c_1,c_2,\dots,c_{2t-1},b_1,b_2,\dots,b_{t-1}\}.\]
\begin{remark}\label{rmk:counter}
Note that the $G(v,t)$-forest $F$ in Figure~\ref{fig:counter12} satisfies $\roots(F)=\roots_0$ even though $F$ is not a $G(v,t)$-grove.
\end{remark}

\begin{theorem}\label{thm:bij_groves}
 The map $\bij$ is a bijection between the set of rooted $G(v,t)$-forests $F$ with $\roots(F)=\roots_0$ and the set $\pathst(\ubft,\wbft)$.
\end{theorem}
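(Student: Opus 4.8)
The plan is to leverage Theorem~\ref{thm:bij}, which already establishes that $\bij$ is a weight-preserving bijection between \emph{all} rooted $G(v,t)$-forests and \emph{all} boundary $r$-paths in $\Nvt$ (for all $r\geq 0$). Since Theorem~\ref{thm:bij_groves} concerns the restriction of this known bijection to a subset of the domain, the entire content of the statement is to identify, under $\bij$, the image of the set $\{F : \roots(F)=\roots_0\}$ and to check it equals $\pathst(\ubft,\wbft)$. So the proof reduces to two inclusions at the level of ``which boundary vertices are endpoints of the $r$-path $\bij(F)$.''

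First I would make precise the correspondence between the roots of $F$ and the endpoints of $\bij(F)$. For a rooted $G(v,t)$-forest $F$ with edges oriented toward the chosen roots, a boundary vertex $u\in\Gbound$ is the start of a path in $\bij(F)$ exactly when $u$ has an outgoing edge in $F$ but no incoming edge in $\bij(F)$ (i.e. $u$ is not a $d_L$ for any adjacent lozenge), and $u$ is the end of a path in $\bij(F)$ exactly when $u$ has no outgoing edge in $F$, i.e. $u$ is a root, \emph{and} $u$ is not isolated. Tracing through the local rule for $\bij$ in Figure~\ref{fig:nvt}: an oriented edge $x\to y$ of $F$ inside a lozenge $L$ contributes the subpath $x\to e_L\to d_L$ to $\bij(F)$ unless $x=d_L$ (in which case it contributes nothing). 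Therefore the ``sources'' of $\bij(F)$ — the set of starting boundary vertices — is exactly the set of boundary vertices $u$ that have at least one outgoing $F$-edge inside a lozenge $L$ in which $u\neq d_L$; and the set of ``sinks'' of $\bij(F)$ is exactly $\roots(F)$ minus those roots that happen to be isolated in $\bij(F)$. Because $\bij$ is a bijection onto \emph{all} boundary $r$-paths and the number of sources equals the number of sinks for any such $r$-path, it then suffices to compute one of the two.

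Next I would carry out the combinatorics of the specific root set $\roots_0=\{c_1,\dots,c_{2t-1},b_1,\dots,b_{t-1}\}$. The claim is that a rooted $G(v,t)$-forest $F$ has $\roots(F)=\roots_0$ if and only if $\bij(F)\in\pathst(\ubft,\wbft)$, where $\ubft=(a_2,a_4,\dots,a_{2t-2})$ and $\wbft=(b_{t-1},\dots,b_1)$. The forward direction: if $\roots(F)=\roots_0$ then the non-roots in $\Gbound$ are precisely $a_1,a_2,\dots,a_{2t-1}$ together with $b_t,b_{t+1},\dots,b_{2t-1}$ (recall $b_{2t-1}=c_1$, which lies in $\roots_0$). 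One checks from the geometry of $G(v,t)$ (the positions of the $a_i$, $b_i$, $c_i$ on the boundary, and which of them can play the role of $d_L$ in an adjacent lozenge) that among these non-roots, the ones with no incoming $\bij(F)$-edge — hence the genuine sources — are exactly $a_2,a_4,\dots,a_{2t-2}$; the odd-indexed $a_i$'s and the vertices $b_t,\dots,b_{2t-1}$ always occur as $d_L$ for their unique adjacent lozenge and so never start a path. Dually, each root in $\roots_0$ is non-isolated (each has an incoming edge in $\bij(F)$) — this uses that a root $\root_C$ of a component $C$ is isolated in $\bij(F)$ only in the degenerate case where the single edge of $F$ at $\root_C$ points ``outward'' as $d_L\to b_L$, which the canonical-root choices preclude for vertices in $\roots_0$. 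So the sources are $\{a_2,\dots,a_{2t-2}\}$ and the sinks are $\roots_0$; since $\bij(F)$ is a bona fide $r$-path with $r=t-1$ equal to the number of sources, its endpoint multiset is $(\ubft,\wbft)$ up to ordering, and planarity/noncrossing of the paths (the graph $\Nvt$ is acyclic and $N$-planar as in Section~\ref{sect:bijection}) forces the matching to be the non-permuting one, giving $\bij(F)\in\pathst(\ubft,\wbft)$. The reverse direction is the same argument run backwards: starting from $\Pbft\in\pathst(\ubft,\wbft)$, the set $F=\bijinv(\Pbft)$ produced in the proof of Theorem~\ref{thm:bij} is a rooted $G(v,t)$-forest whose roots are exactly the sinks of $\Pbft$, namely $\wbft$'s entries plus the forced sinks $c_1,\dots,c_{2t-1}$, i.e. $\roots_0$.

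The main obstacle will be the bookkeeping in the previous paragraph: verifying, purely from the explicit coordinates of the boundary labels $a_i,b_i,c_i$ and the lozenge structure of $G(v,t)$, that the ``forced $d_L$'' boundary vertices are exactly $\{a_1,a_3,\dots,a_{2t-1}\}\cup\{b_t,\dots,b_{2t-1}\}$ and symmetrically for the $c$'s, so that removing the $\roots_0$ roots leaves precisely the source set $\ubft$. This is a finite case-check but it is where all the geometry of the triangle enters; I would organize it by the three ``sides'' $a$, $b$, $c$ of $\Gbound$ and use the rotational symmetry $\sigma$ to reduce to one side. Everything else — that $\bij$ and $\bijinv$ are mutually inverse, that the images are genuine $r$-paths, and that sources/sinks are counted correctly — is inherited verbatim from Theorem~\ref{thm:bij}. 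I would also record, as a one-line remark pointing at Figure~\ref{fig:counter12} and Remark~\ref{rmk:counter}, that the hypothesis $\roots(F)=\roots_0$ is strictly weaker than ``$F$ is a grove,'' so that Theorem~\ref{thm:bij_groves} is genuinely about $\roots_0$-rooted forests and the further cut-down to groves (needed for the recurrence coefficients) is a separate step handled afterward via the complementarity of $\bpartG_F$ and $\bpartB_F$.
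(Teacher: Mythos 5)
Your overall strategy coincides with the paper's: restrict the bijection of Theorem~\ref{thm:bij}, identify the image of $\{F:\roots(F)=\roots_0\}$ by computing the sources and sinks of $\bij(F)$, and use the non-crossing of paths to force the non-permuted matching of endpoints. However, your sink analysis contains a genuine error. You assert that ``each root in $\roots_0$ is non-isolated (each has an incoming edge in $\bij(F)$)'' and conclude that the sinks are all of $\roots_0$. This is false: of the $3t-2$ vertices in $\roots_0$, only $b_1,\dots,b_{t-1}$ are sinks of $\bij(F)$; the remaining roots $c_1,\dots,c_{2t-1}$ (including $a_1=c_{2t-1}$) are isolated in $\bij(F)$. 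As stated, your claim also breaks the count — a $(t-1)$-path cannot have $t-1$ sources and $3t-2$ sinks — so the subsequent ``the endpoint multiset is $(\ubft,\wbft)$'' does not follow from what precedes it. The justification you offer (that a root is isolated ``only in the degenerate case where the single edge of $F$ at $\root_C$ points outward as $d_L\to b_L$, which the canonical-root choices preclude'') is incoherent: a root has no outgoing edge by definition, so whether a root $u$ is a sink or isolated is decided purely by whether the unique lozenge $L(u)$ with $u=d_{L(u)}$ lies inside $G(v,t)$ (sink) or outside (isolated). This geometric dichotomy, applied to both roots and non-roots, is exactly the content of the paper's proof.

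Relatedly, your proposed target for the ``bookkeeping'' — that the forced-$d_L$ boundary vertices are $\{a_1,a_3,\dots,a_{2t-1}\}\cup\{b_t,\dots,b_{2t-1}\}$ ``and symmetrically for the $c$'s'' — is inconsistent with what the sink side requires: for $b_1,\dots,b_{t-1}$ to be sinks they must themselves be forced $d_L$'s, while no $c_i$ may be one (and $a_1=c_{2t-1}$ is not one either). What the source and sink sets actually force is that $L(u)$ lies inside $G(v,t)$ precisely for $u\in\{a_3,a_5,\dots,a_{2t-3}\}\cup\{b_1,\dots,b_{2t-2}\}$ and outside precisely for $u\in\{a_2,a_4,\dots,a_{2t-2}\}\cup\{c_1,\dots,c_{2t-1}\}$; intersecting with the root/non-root partition given by $\roots_0$ then yields sources $\{a_2,a_4,\dots,a_{2t-2}\}=\ubft$ and sinks $\{b_1,\dots,b_{t-1}\}=\wbft$. (A smaller slip of the same kind: your first description of the sources, ``boundary vertices with an outgoing $F$-edge inside a lozenge $L$ with $u\neq d_L$,'' omits the no-incoming-edge requirement and would wrongly include the pass-through vertices $a_3,a_5,\dots$.) With these corrections your argument becomes the paper's proof; the reverse direction you sketch is then fine, since the roots of $\bijinv(\Pbft)$ are exactly the sinks of $\Pbft$ together with the isolated boundary vertices whose $L(u)$ lies outside $G(v,t)$, which is $\roots_0$.
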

\begin{proof}
 To prove that the image of $\bij$ is contained in $\pathst(\ubft,\wbft)$, it suffices to show that for every $G(v,t)$-forest $F$ with $\roots(F)=\roots_0$, $\bij(F)$ is a $(t-1)$-path and that every path in $\bij(F)$ starts at some vertex of $\ubft$ and ends at some vertex of $\wbft$. If we establish this then the ordering of the vertices of $\ubft$ and the vertices of $\wbft$ will be unique because the vertices of $\ubft$ appear earlier in the clockwise order than the vertices of $\wbft$ and the paths in $\bij(F)$ cannot cross each other. Recall that in this case the $(t-1)$-vertices $\ubft$ and $\wbft$ are called \emph{non-permutable}.
 
 It follows from the proof of Theorem~\ref{thm:bij} that some path of $\bij(F)$ starts at a vertex $u\in\Vert(v,t)$ if and only if both of the following conditions are satisfied:
 \begin{itemize}
  \item the lozenge $L(u)$ of $G$ from Figure~\ref{fig:nvt} has to lie outside of $G(v,t)$;
  \item $u$ has to have an outgoing edge in $F$.
 \end{itemize}
 Indeed, if the first condition is not satisfied then we have shown in the proof of Theorem~\ref{thm:bij} that $u$ is either isolated or has both an incoming and an outgoing edge. If the second condition is not satisfied then $u$ does not have an outgoing edge in $\bij(F)$ as well. Conversely, if both conditions are satisfied then $u$ has an outgoing edge but does not have an incoming edge in $\bij(F)$ which is exactly the case when some path in $\bij(F)$ starts at $u$. 
 
 Thus the set of vertices where some path of $\bij(F)$ starts is exactly the set $\{a_2,a_4,\dots,a_{2t-2}\}$. A completely similar argument shows that the set of vertices where some path of $\bij(F)$ ends is exactly $\{b_1,b_2,\dots,b_{t-1}\}$ because these are the vertices $u$ such that the lozenge $L(u)$ lies inside $G(v,t)$ and that do not have an outgoing edge in $F$. We have shown that $\bij(F)\in\pathst(\ubft,\wbft)$.

 Now consider any $(t-1)$-path $\Pbft\in\pathst(\ubft,\wbft)$ and let $F=\bijinv(\Pbft)$ be the corresponding $G(v,t)$-forest. We claim that $\roots(F)=\roots_0$. Recall that $\roots(F)$ consists of all the vertices $u\in\Vert(v,t)$ that do not have an outgoing edge in $F$. For each such vertex $u$, an argument analogous to the above shows that we have exactly two possibilities:
 \begin{itemize}
  \item the lozenge $L(u)$ of $G$ from Figure~\ref{fig:nvt} lies outside of $G(v,t)$ and no path in $\Pbft$ starts at $u$;
  \item the lozenge $L(u)$ lies inside of $G(v,t)$ and some path in $\Pbft$ ends at $u$.
 \end{itemize}
It is clear now that the set of roots of $F$ is precisely $\roots_0$. We are done with the proof of Theorem~\ref{thm:bij_groves}.
\end{proof}

\begin{figure}
 \centering
%  % [inline block 3: 2 envs, 31732 chars -> data_tex | \begin{tabular}{|c|c|}\hline 	 %  \includegraphics[width=0.5\linewidth]{counter_1.png} &...]


 \caption{\label{fig:counter12} A $G(v,t)$-forest $F$ that is not a $G(v,t)$-grove even though $\bij(F)\in\pathst(\ubft,\wbft)$.}
\end{figure}

Given a $G(v,t)$-forest $F$ with $\roots(F)=\roots_0$, we would like to give a necessary condition for $F$ to be a canonically oriented $G(v,t)$-grove. 

\def\Fbound{\partial F}
\begin{definition}
	Let $F$ be any $G(v,t)$-forest. Define the map $\Fbound:\Gbound\to\roots(F)$ as follows: for any vertex $u\in\Gbound$, we put $\Fbound(u):=\root_C$ where $C$ is the connected component of $F$ containing $u$ and $\root_C$ is its root.
\end{definition}

Several properties of the map $\Fbound$ are immediate. First, if $u\in\GboundB$ then $\Fbound(u)$ is blue, otherwise it is green. In other words, $\Fbound$ preserves the colors of vertices, so we call an arrow $u\to\Fbound(u)$ \emph{green} (resp., \emph{blue}) if both $u$ and $\Fbound(u)$ are green (resp., blue). Second, the (combinatorial) arrows $u\to \Fbound(u)$ are pairwise non-intersecting, that is, given two vertices $u\neq w\in \Gbound$ so that $\Fbound(u)\neq\Fbound(w)$, it is not the case that $(u,w,\Fbound(u),\Fbound(w))$ are cyclically oriented on the boundary of $G(v,t)$. Third, for any $u\in\roots(F)$ we have $\Fbound(u)=u$.

\begin{figure}

\begin{tabular}{cc}
\scalebox{1.0}{
\begin{tikzpicture}[scale=0.3]
\coordinate (L1) at (90:5.00);
\coordinate (L2) at (330:5.00);
\coordinate (L3) at (210:5.00);
\coordinate (Cm4I0I4) at (barycentric cs:L1=-3.0,L2=1.0,L3=5.0);
\coordinate (Cm4I1I3) at (barycentric cs:L1=-3.0,L2=2.0,L3=4.0);
\coordinate (Cm3Im1I4) at (barycentric cs:L1=-2.0,L2=0.0,L3=5.0);
\coordinate (Cm3I1I2) at (barycentric cs:L1=-2.0,L2=2.0,L3=3.0);
\coordinate (Cm3I2I1) at (barycentric cs:L1=-2.0,L2=3.0,L3=2.0);
\coordinate (Cm2Im1I3) at (barycentric cs:L1=-1.0,L2=0.0,L3=4.0);
\coordinate (Cm2I2I0) at (barycentric cs:L1=-1.0,L2=3.0,L3=1.0);
\coordinate (Cm2I3Im1) at (barycentric cs:L1=-1.0,L2=4.0,L3=0.0);
\coordinate (Cm1Im2I3) at (barycentric cs:L1=0.0,L2=-1.0,L3=4.0);
\coordinate (Cm1I3Im2) at (barycentric cs:L1=0.0,L2=4.0,L3=-1.0);
\coordinate (Cm1I4Im3) at (barycentric cs:L1=0.0,L2=5.0,L3=-2.0);
\coordinate (C0Im2I2) at (barycentric cs:L1=1.0,L2=-1.0,L3=3.0);
\coordinate (C0I4Im4) at (barycentric cs:L1=1.0,L2=5.0,L3=-3.0);
\coordinate (C1Im3I2) at (barycentric cs:L1=2.0,L2=-2.0,L3=3.0);
\coordinate (C1I2Im3) at (barycentric cs:L1=2.0,L2=3.0,L3=-2.0);
\coordinate (C1I3Im4) at (barycentric cs:L1=2.0,L2=4.0,L3=-3.0);
\coordinate (C2Im3I1) at (barycentric cs:L1=3.0,L2=-2.0,L3=2.0);
\coordinate (C2I0Im2) at (barycentric cs:L1=3.0,L2=1.0,L3=-1.0);
\coordinate (C2I1Im3) at (barycentric cs:L1=3.0,L2=2.0,L3=-2.0);
\coordinate (C3Im4I1) at (barycentric cs:L1=4.0,L2=-3.0,L3=2.0);
\coordinate (C3Im2Im1) at (barycentric cs:L1=4.0,L2=-1.0,L3=0.0);
\coordinate (C3Im1Im2) at (barycentric cs:L1=4.0,L2=0.0,L3=-1.0);
\coordinate (C4Im4I0) at (barycentric cs:L1=5.0,L2=-3.0,L3=1.0);
\coordinate (C4Im3Im1) at (barycentric cs:L1=5.0,L2=-2.0,L3=0.0);
\draw[opacity=0.4] (Cm3I1I2) -- (Cm4I1I3);
\draw[opacity=0.4] (Cm3I1I2) -- (Cm3I2I1);
\draw[opacity=0.4] (C1I2Im3) -- (C2I1Im3);
\draw[opacity=0.4] (C1I2Im3) -- (C1I3Im4);
\draw[opacity=0.4] (C2Im3I1) -- (C3Im4I1);
\draw[opacity=0.4] (C2Im3I1) -- (C1Im3I2);
\draw[opacity=0.4] (Cm2I2I0) -- (Cm3I2I1);
\draw[opacity=0.4] (Cm2I2I0) -- (Cm2I3Im1);
\draw[opacity=0.4] (C0Im2I2) -- (C1Im3I2);
\draw[opacity=0.4] (C0Im2I2) -- (Cm1Im2I3);
\draw[opacity=0.4] (C2I0Im2) -- (C3Im1Im2);
\draw[opacity=0.4] (C2I0Im2) -- (C2I1Im3);
\draw[opacity=0.4] (Cm1I3Im2) -- (Cm2I3Im1);
\draw[opacity=0.4] (Cm1I3Im2) -- (Cm1I4Im3);
\draw[opacity=0.4] (Cm2Im1I3) -- (Cm1Im2I3);
\draw[opacity=0.4] (Cm2Im1I3) -- (Cm3Im1I4);
\draw[opacity=0.4] (C3Im2Im1) -- (C4Im3Im1);
\draw[opacity=0.4] (C3Im2Im1) -- (C3Im1Im2);
\draw[opacity=0.4] (Cm4I0I4) -- (Cm3Im1I4);
\draw[opacity=0.4] (Cm4I0I4) -- (Cm4I1I3);
\draw[opacity=0.4] (C0I4Im4) -- (C1I3Im4);
\draw[opacity=0.4] (C0I4Im4) -- (Cm1I4Im3);
\draw[opacity=0.4] (C4Im4I0) -- (C3Im4I1);
\draw[opacity=0.4] (C4Im4I0) -- (C4Im3Im1);
\draw[fill=blue] (barycentric cs:L1=-2.0,L2=2.0,L3=3.0) circle (0.2);
\draw[fill=green] (barycentric cs:L1=-2.0,L2=3.0,L3=2.0) circle (0.2);
\draw[fill=green] (barycentric cs:L1=2.0,L2=-2.0,L3=3.0) circle (0.2);
\draw[fill=blue] (barycentric cs:L1=2.0,L2=3.0,L3=-2.0) circle (0.2);
\draw[fill=blue] (barycentric cs:L1=3.0,L2=-2.0,L3=2.0) circle (0.2);
\draw[fill=green] (barycentric cs:L1=3.0,L2=2.0,L3=-2.0) circle (0.2);
\draw[fill=green] (barycentric cs:L1=-3.0,L2=2.0,L3=4.0) circle (0.2);
\draw[fill=green] (barycentric cs:L1=2.0,L2=4.0,L3=-3.0) circle (0.2);
\draw[fill=green] (barycentric cs:L1=4.0,L2=-3.0,L3=2.0) circle (0.2);
\draw[fill=blue] (barycentric cs:L1=-1.0,L2=3.0,L3=1.0) circle (0.2);
\draw[fill=blue] (barycentric cs:L1=1.0,L2=-1.0,L3=3.0) circle (0.2);
\draw[fill=blue] (barycentric cs:L1=3.0,L2=1.0,L3=-1.0) circle (0.2);
\draw[fill=green] (barycentric cs:L1=0.0,L2=-1.0,L3=4.0) circle (0.2);
\draw[fill=blue] (barycentric cs:L1=0.0,L2=4.0,L3=-1.0) circle (0.2);
\draw[fill=blue] (barycentric cs:L1=-1.0,L2=0.0,L3=4.0) circle (0.2);
\draw[fill=green] (barycentric cs:L1=-1.0,L2=4.0,L3=0.0) circle (0.2);
\draw[fill=green] (barycentric cs:L1=4.0,L2=0.0,L3=-1.0) circle (0.2);
\draw[fill=blue] (barycentric cs:L1=4.0,L2=-1.0,L3=0.0) circle (0.2);
\draw[fill=green] (barycentric cs:L1=0.0,L2=5.0,L3=-2.0) circle (0.2);
\draw[fill=green] (barycentric cs:L1=-2.0,L2=0.0,L3=5.0) circle (0.2);
\draw[fill=blue] (barycentric cs:L1=-3.0,L2=1.0,L3=5.0) circle (0.2);
\draw[fill=blue] (barycentric cs:L1=1.0,L2=5.0,L3=-3.0) circle (0.2);
\draw[fill=green] (barycentric cs:L1=5.0,L2=-2.0,L3=0.0) circle (0.2);
\draw[fill=blue] (barycentric cs:L1=5.0,L2=-3.0,L3=1.0) circle (0.2);
\tikzset{>=latex}
\draw[->,line width=0.4mm,color=black!30!green] (C1I3Im4) -- (Cm1I4Im3);
\draw[->,line width=0.4mm,color=blue] (C1I2Im3) -- (Cm1I3Im2);
\draw[->,line width=0.4mm,color=black!30!green] (C2I1Im3) -- (Cm2I3Im1);
\draw[->,line width=0.4mm,color=blue] (C2I0Im2) -- (Cm2I2I0);
\draw[->,line width=0.4mm,color=black!30!green] (C3Im1Im2) -- (C1Im3I2);
\draw[->,line width=0.4mm,color=blue] (C3Im2Im1) -- (C2Im3I1);
\draw[->,line width=0.4mm,color=black!30!green] (C4Im3Im1) -- (C3Im4I1);
\draw[->,line width=0.4mm,color=blue] (C0Im2I2) -- (Cm2I2I0);
\draw[->,line width=0.4mm,color=black!30!green] (Cm1Im2I3) -- (Cm3I2I1);
\draw[->,line width=0.4mm,color=blue] (Cm2Im1I3) -- (Cm3I1I2);
\draw[->,line width=0.4mm,color=black!30!green] (Cm3Im1I4) -- (Cm4I1I3);
\end{tikzpicture}}
&
\scalebox{1.0}{
\begin{tikzpicture}[scale=0.3]
\coordinate (L1) at (90:5.00);
\coordinate (L2) at (330:5.00);
\coordinate (L3) at (210:5.00);
\coordinate (Cm4I0I4) at (barycentric cs:L1=-3.0,L2=1.0,L3=5.0);
\coordinate (Cm4I1I3) at (barycentric cs:L1=-3.0,L2=2.0,L3=4.0);
\coordinate (Cm3Im1I4) at (barycentric cs:L1=-2.0,L2=0.0,L3=5.0);
\coordinate (Cm3I1I2) at (barycentric cs:L1=-2.0,L2=2.0,L3=3.0);
\coordinate (Cm3I2I1) at (barycentric cs:L1=-2.0,L2=3.0,L3=2.0);
\coordinate (Cm2Im1I3) at (barycentric cs:L1=-1.0,L2=0.0,L3=4.0);
\coordinate (Cm2I2I0) at (barycentric cs:L1=-1.0,L2=3.0,L3=1.0);
\coordinate (Cm2I3Im1) at (barycentric cs:L1=-1.0,L2=4.0,L3=0.0);
\coordinate (Cm1Im2I3) at (barycentric cs:L1=0.0,L2=-1.0,L3=4.0);
\coordinate (Cm1I3Im2) at (barycentric cs:L1=0.0,L2=4.0,L3=-1.0);
\coordinate (Cm1I4Im3) at (barycentric cs:L1=0.0,L2=5.0,L3=-2.0);
\coordinate (C0Im2I2) at (barycentric cs:L1=1.0,L2=-1.0,L3=3.0);
\coordinate (C0I4Im4) at (barycentric cs:L1=1.0,L2=5.0,L3=-3.0);
\coordinate (C1Im3I2) at (barycentric cs:L1=2.0,L2=-2.0,L3=3.0);
\coordinate (C1I2Im3) at (barycentric cs:L1=2.0,L2=3.0,L3=-2.0);
\coordinate (C1I3Im4) at (barycentric cs:L1=2.0,L2=4.0,L3=-3.0);
\coordinate (C2Im3I1) at (barycentric cs:L1=3.0,L2=-2.0,L3=2.0);
\coordinate (C2I0Im2) at (barycentric cs:L1=3.0,L2=1.0,L3=-1.0);
\coordinate (C2I1Im3) at (barycentric cs:L1=3.0,L2=2.0,L3=-2.0);
\coordinate (C3Im4I1) at (barycentric cs:L1=4.0,L2=-3.0,L3=2.0);
\coordinate (C3Im2Im1) at (barycentric cs:L1=4.0,L2=-1.0,L3=0.0);
\coordinate (C3Im1Im2) at (barycentric cs:L1=4.0,L2=0.0,L3=-1.0);
\coordinate (C4Im4I0) at (barycentric cs:L1=5.0,L2=-3.0,L3=1.0);
\coordinate (C4Im3Im1) at (barycentric cs:L1=5.0,L2=-2.0,L3=0.0);
\draw[opacity=0.4] (Cm3I1I2) -- (Cm4I1I3);
\draw[opacity=0.4] (Cm3I1I2) -- (Cm3I2I1);
\draw[opacity=0.4] (C1I2Im3) -- (C2I1Im3);
\draw[opacity=0.4] (C1I2Im3) -- (C1I3Im4);
\draw[opacity=0.4] (C2Im3I1) -- (C3Im4I1);
\draw[opacity=0.4] (C2Im3I1) -- (C1Im3I2);
\draw[opacity=0.4] (Cm2I2I0) -- (Cm3I2I1);
\draw[opacity=0.4] (Cm2I2I0) -- (Cm2I3Im1);
\draw[opacity=0.4] (C0Im2I2) -- (C1Im3I2);
\draw[opacity=0.4] (C0Im2I2) -- (Cm1Im2I3);
\draw[opacity=0.4] (C2I0Im2) -- (C3Im1Im2);
\draw[opacity=0.4] (C2I0Im2) -- (C2I1Im3);
\draw[opacity=0.4] (Cm1I3Im2) -- (Cm2I3Im1);
\draw[opacity=0.4] (Cm1I3Im2) -- (Cm1I4Im3);
\draw[opacity=0.4] (Cm2Im1I3) -- (Cm1Im2I3);
\draw[opacity=0.4] (Cm2Im1I3) -- (Cm3Im1I4);
\draw[opacity=0.4] (C3Im2Im1) -- (C4Im3Im1);
\draw[opacity=0.4] (C3Im2Im1) -- (C3Im1Im2);
\draw[opacity=0.4] (Cm4I0I4) -- (Cm3Im1I4);
\draw[opacity=0.4] (Cm4I0I4) -- (Cm4I1I3);
\draw[opacity=0.4] (C0I4Im4) -- (C1I3Im4);
\draw[opacity=0.4] (C0I4Im4) -- (Cm1I4Im3);
\draw[opacity=0.4] (C4Im4I0) -- (C3Im4I1);
\draw[opacity=0.4] (C4Im4I0) -- (C4Im3Im1);
\draw[fill=blue] (barycentric cs:L1=-2.0,L2=2.0,L3=3.0) circle (0.2);
\draw[fill=green] (barycentric cs:L1=-2.0,L2=3.0,L3=2.0) circle (0.2);
\draw[fill=green] (barycentric cs:L1=2.0,L2=-2.0,L3=3.0) circle (0.2);
\draw[fill=blue] (barycentric cs:L1=2.0,L2=3.0,L3=-2.0) circle (0.2);
\draw[fill=blue] (barycentric cs:L1=3.0,L2=-2.0,L3=2.0) circle (0.2);
\draw[fill=green] (barycentric cs:L1=3.0,L2=2.0,L3=-2.0) circle (0.2);
\draw[fill=green] (barycentric cs:L1=-3.0,L2=2.0,L3=4.0) circle (0.2);
\draw[fill=green] (barycentric cs:L1=2.0,L2=4.0,L3=-3.0) circle (0.2);
\draw[fill=green] (barycentric cs:L1=4.0,L2=-3.0,L3=2.0) circle (0.2);
\draw[fill=blue] (barycentric cs:L1=-1.0,L2=3.0,L3=1.0) circle (0.2);
\draw[fill=blue] (barycentric cs:L1=1.0,L2=-1.0,L3=3.0) circle (0.2);
\draw[fill=blue] (barycentric cs:L1=3.0,L2=1.0,L3=-1.0) circle (0.2);
\draw[fill=green] (barycentric cs:L1=0.0,L2=-1.0,L3=4.0) circle (0.2);
\draw[fill=blue] (barycentric cs:L1=0.0,L2=4.0,L3=-1.0) circle (0.2);
\draw[fill=blue] (barycentric cs:L1=-1.0,L2=0.0,L3=4.0) circle (0.2);
\draw[fill=green] (barycentric cs:L1=-1.0,L2=4.0,L3=0.0) circle (0.2);
\draw[fill=green] (barycentric cs:L1=4.0,L2=0.0,L3=-1.0) circle (0.2);
\draw[fill=blue] (barycentric cs:L1=4.0,L2=-1.0,L3=0.0) circle (0.2);
\draw[fill=green] (barycentric cs:L1=0.0,L2=5.0,L3=-2.0) circle (0.2);
\draw[fill=green] (barycentric cs:L1=-2.0,L2=0.0,L3=5.0) circle (0.2);
\draw[fill=blue] (barycentric cs:L1=-3.0,L2=1.0,L3=5.0) circle (0.2);
\draw[fill=blue] (barycentric cs:L1=1.0,L2=5.0,L3=-3.0) circle (0.2);
\draw[fill=green] (barycentric cs:L1=5.0,L2=-2.0,L3=0.0) circle (0.2);
\draw[fill=blue] (barycentric cs:L1=5.0,L2=-3.0,L3=1.0) circle (0.2);
\tikzset{>=latex}
\draw[->,line width=0.4mm,color=black!30!green] (C1I3Im4) -- (Cm1I4Im3);
\draw[->,line width=0.4mm,color=blue] (C1I2Im3) -- (Cm1I3Im2);
\draw[->,line width=0.4mm,color=black!30!green] (C2I1Im3) -- (Cm2I3Im1);
\draw[->,line width=0.4mm,color=blue] (C2I0Im2) -- (Cm2I2I0);
\draw[->,line width=0.4mm,color=black!30!green] (C3Im1Im2) -- (Cm3I2I1);
\draw[->,line width=0.4mm,color=blue] (C3Im2Im1) -- (C2Im3I1);
\draw[->,line width=0.4mm,color=black!30!green] (C4Im3Im1) -- (C3Im4I1);
\draw[->,line width=0.4mm,color=blue] (C0Im2I2) -- (C2Im3I1);
\draw[->,line width=0.4mm,color=black!30!green] (Cm1Im2I3) -- (Cm3I2I1);
\draw[->,line width=0.4mm,color=blue] (Cm2Im1I3) -- (Cm3I1I2);
\draw[->,line width=0.4mm,color=black!30!green] (Cm3Im1I4) -- (Cm4I1I3);
\end{tikzpicture}}
\\

\end{tabular}

	\caption{\label{fig:fbound_good} The map $\Fbound_0$ (left). The map $\Fbound$ for the $G(v,t)$-forest $F$ in Figure~\ref{fig:counter12} (right).}
\end{figure}

For any $G(v,t)$-grove $F$, the collection of arrows $u\to\Fbound(u)$ for all $u\in\Gbound$ is shown in Figure~\ref{fig:fbound_good} (left). We denote this map by $\Fbound_0$. On the other hand, for the $G(v,t)$-forest in Figure~\ref{fig:counter12}, the map $\Fbound$ is shown in Figure~\ref{fig:fbound_good} (right). It is clear that a $G(v,t)$-forest $F$ is a $G(v,t)$-grove rooted in a canonical way if and only if $\Fbound(u)=\Fbound_0(u)$ for all $u\in\Gbound$.

We say that a vertex $u\in\Gbound$ is a \emph{root vertex} if $u\in\roots_0$ and a \emph{non-root vertex} otherwise.

\begin{proposition}\label{prop:chord}
	Let $F$ be a $G(v,t)$-forest with $\roots(F)=\roots_0$. Then $F$ is a $G(v,t)$-grove if and only if $\Fbound(a_{t+1})=b_{t-1}$.
\end{proposition}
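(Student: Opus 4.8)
The forward implication is immediate. If $F$ is a $G(v,t)$-grove then it is necessarily rooted canonically (that is the only rooting of a grove with root set $\roots_0$), so $\Fbound=\Fbound_0$; since for $i=t-1$ the block of $\bpart_0$ containing $a_{t+1}$ is the pair $\{b_{t-1},a_{2t-(t-1)}\}=\{b_{t-1},a_{t+1}\}$, whose canonical root is the $b$-vertex $b_{t-1}$, we get $\Fbound(a_{t+1})=\Fbound_0(a_{t+1})=b_{t-1}$. (When $t=2$ one has $a_{t+1}=b_{t-1}$ and there is nothing to prove.)

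For the converse I would assume $\roots(F)=\roots_0$ and $\Fbound(a_{t+1})=b_{t-1}$ and prove $\bpart_F=\bpart_0$, equivalently $\Fbound=\Fbound_0$. The tools are: the arrows $u\mapsto\Fbound(u)$ are colour-preserving, fix $\roots_0$ pointwise, and are pairwise non-crossing on the circle $\Gbound$; each block of $\bpart_F$ contains exactly one element of $\roots_0$; green and blue vertices alternate around $\Gbound$, and $\bpartG_F$, $\bpartB_F$ are Kreweras-complementary, so $\bpart_F$ is recovered from $\bpartG_F$ alone. The hypothesis provides the chord $a_{t+1}$--$b_{t-1}$: the two vertices lie in one component $C$ with $\roots_0\cap C=\{b_{t-1}\}$. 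This chord, together with $C$, splits the boundary circle into a short arc $S$ carrying the roots $b_1,\dots,b_{t-2}$, the non-roots $a_{t+2},\dots,a_{2t-2}$ and the corner $a_{2t-1}=b_1$, and a long arc carrying the remaining boundary vertices; by non-crossing, every component of $F$ other than $C$ lies in exactly one of the two arcs. First I would show $C\cap\Gbound=\{a_{t+1},b_{t-1}\}$: a further boundary vertex of $C$ would have to be a $b$-vertex on $S$ of the same colour as $b_{t-1}$, and enclosing the intervening vertices would cut off a sub-arc with strictly more non-roots than roots, contradicting one-root-per-block (here the alternating colours and the explicit list of which $a_i,b_i$ are roots are used). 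Then I would peel $S$ inward from the corner: $b_1$ must be a singleton block (enclosing the adjacent green non-root $a_{2t-2}$ would leave it rootless), $a_{2t-2}$ is then forced to pair with $b_2$, and so on, producing exactly the blocks $\{a_{2t-i},b_i\}$ $(2\le i\le t-2)$ of $\bpart_0$ on $S$; the long arc is handled analogously, recovering $\{a_i,c_{2t-i}\}$, $\{c_i,b_{2t-i}\}$ and the central triple $\{a_t,b_t,c_t\}$.

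The step I expect to be the main obstacle is the long-arc peeling: in contrast to $S$, the purely local constraints (non-crossing together with one root per block) do not by themselves pin down the blocks there --- a near-central non-root green vertex could a priori reach a farther green root while trapping some roots as singletons, and one checks that this remains consistent with the global root/block count. The way to kill this freedom is to bring in Kreweras complementarity: once the green arrows along an initial stretch of the long arc are forced (by non-crossing from the already-determined arrows near the chord, and by the positions of the green roots among the $c_i$ and $b_i$), the complementary blue partition $K(\bpartG_F)$ becomes rigid, and comparing it with the blue part of $\bpart_0$ --- checking compatibility with the colour of $a_t,b_t,c_t$ in each parity of $t$ --- forces the remaining blocks. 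I would organise the induction so that its base case is the explicit description of $\bpart_0$ and $\Fbound_0$ recorded above, and its inductive step is a single corner-peeling lemma (non-crossing $+$ one-root-per-block $+$ the Kreweras link force the outermost unresolved block) applied to each arc in turn.
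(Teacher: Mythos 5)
Your forward direction and your inventory of tools (colour-preserving, pairwise non-crossing arrows $u\to\Fbound(u)$ fixing $\roots_0$ pointwise, exactly one element of $\roots_0$ per block) match the paper's, and your peeling of the short arc is sound. The genuine gap is exactly the step you flag yourself: the long arc. There the constraints you actually establish (non-crossing plus one root per block) do admit spurious matchings --- a non-root near the centre could skip past roots of its own colour, leaving them as singletons, and this is not excluded by any counting of roots versus blocks, since $\bpart_0$ itself contains singleton blocks such as $\{b_1\}$, $\{c_1\}$, $\{c_{2t-1}\}$. Your proposed remedy, that Kreweras complementarity makes $\bpartB_F$ ``rigid'' once an initial stretch of green arrows is forced, is a hope rather than an argument: you do not say why rigidity follows, and as stated it is circular, since you would need part of $\bpartG_F$ to constrain $\bpartB_F$ and vice versa. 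So the converse is not proved.

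The missing forcing mechanism in the paper is a count of colour changes. One observes that the sequence $a_1,a_2,\dots,a_t,b_t,b_{t+1},\dots,b_{2t-1}$ has exactly $2(t-1)$ colour changes, the same as the facing sequence of roots $c_{2t-1},c_{2t-2},\dots,c_1$; since $\Fbound$ preserves colours, the arrows do not cross, and (by non-crossing with the chord $a_{t+1}\to b_{t-1}$ together with the first claim) no arrow from this side may land on $b_1,\dots,b_{t-1}$, the assignment is forced to match maximal monochromatic runs in order, giving $\Fbound(a_i)=c_{2t-i}$ for $1\leq i\leq t$ and $\Fbound(b_i)=c_{2t-i}$ for $t\leq i<2t$; one more application of the same count yields $\Fbound(a_i)=b_{2t-i}$ for $t<i<2t$, hence $\Fbound=\Fbound_0$. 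Without this equal-number-of-colour-changes observation (or an equivalent), your induction has no valid inductive step on the long arc.
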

\begin{proof}
	Since $\Fbound_0(a_{t+1})=b_{t-1}$, it is obvious that if $\Fbound(a_{t+1})\neq b_{t-1}$ then $F$ is not a $G(v,t)$-grove. Conversely, suppose for example that the arrow $a_{t+1}\to b_{t-1}$ of $F$ is green (the case when it is blue is completely analogous). Then we claim that for any other non-root green vertex $u\in\GboundG$, we have $\Fbound(u)\neq b_{t-1}$. Indeed, one easily observes that otherwise there would be a blue non-root vertex $w$ such that the blue arrow $w\to\Fbound(w)$ necessarily intersects either $a_{t+1}\to b_{t-1}$ or $u\to b_{t-1}$ because there are no blue vertices from $\roots_0$ in the region of the complement of these two arrows that contains $w$.
	
	We claim that 
	\begin{equation}\label{eq:correct_connectivity}
		\Fbound(a_i)=c_{2t-i}\quad \text{for $1\leq i\leq t$};\quad \Fbound(b_i)=c_{2t-i}\quad\text{for $t\leq i<2t$}.
	\end{equation}
	
	This is true because the number of color changes in the sequence $a_1,a_2,\dots,a_t,b_t,b_{t+1},\dots,b_{2t-1}$ is $2(t-1)$ which is the same as the number of color changes in the sequence $c_{2t-1},c_{2t-2},\dots,c_1$. Therefore since $\Fbound$ preserves the colors and since the arrows do not intersect,~\eqref{eq:correct_connectivity} follows. Here we use the fact that none of the arrows points to $b_{i}$ for $1\leq i\leq t-1$.
	
	By the same argument, it follows that $\Fbound(a_i)=b_{2t-i}$ for $t<i<2t$ and therefore we get $\Fbound(u)=\Fbound_0(u)$ for all $u\in\Gbound$. We are done with the proposition.
\end{proof}

% 
% 
% \begin{figure}
%  \centering
%  \includegraphics[width=0.5\linewidth]{counter_3.png}
%  \caption{\label{fig:counter3} This  counterexample shows that even though each particular path is ok, together they give something bad.}
% \end{figure}

\def\Finfty{F_<}
\subsection{Cylindric groves}
We now explain how to use Theorem~\ref{thm:CS} to give a formula similar to~\eqref{eq:CS} for the cube recurrence in a cylinder. Recall that $\TS_m$ is the strip which is a subset of $\TP_0$ given by $0\leq i\leq m$. Let $v=(i,j,k)\in\TS_m$ be a vertex and consider an integer $t\geq 2$ such that $t+1\equiv\e_v\pmod 3$. Define the graph $G_m(v,t)$ to be the restriction of $G(v,t)$ to $\TS_m$ and denote $\Vert_m(v,t)=\Vert(v,t)\cap \TS_m$ to be its vertex set. 

We now choose a special $G(v,t)$-grove $\Finfty(v,t)$. For every lozenge face $L$ of $G(v,t)$, we say that it is \emph{above} (resp., \emph{below}) $v$ if the first coordinate of each of its vertices is greater than or equal (resp., less than or equal) to $i$. To define $\Finfty(v,t)$, we need to specify for each lozenge $L$ whether we choose a green diagonal or a red-blue diagonal. And the rule is, we choose a green diagonal if and only if one of the following conditions is satisfied:
\begin{itemize}
 \item $L$ is above $v$ and its green diagonal is parallel to $(1,1,-2)$, or
 \item $L$ is below $v$ and its green diagonal is parallel to $(-1,2,-1)$.
\end{itemize}
In particular, if $L$ is neither below nor above $v$ then we choose a red-blue diagonal in it. An example of $\Finfty(v,t)$ together with $\bij(\Finfty(v,t))$ is given in Figure~\ref{fig:finfty}.

\begin{figure}
 \centering

% [inline block 4: 1 envs, 30774 chars -> data_tex | \begin{tabular}{cc} \scalebox{1.0}{...]


 \caption{\label{fig:finfty} The grove $\Finfty(v,t)$.}
\end{figure}

Finally, let $F_m(v,t)$ be the restriction of $\Finfty(v,t)$ to $\TS_m$. 

\begin{definition}
A \emph{$G_m(v,t)$-grove} is a forest $F$ with vertex set $\Vert_m(v,t)$ satisfying the following conditions:
\begin{enumerate}
 \item\label{item:all_edges} $F$ contains all edges $(v,v+e_{23})$ where $v$ is a red boundary vertex of $\TS_m$;
 \item for every lozenge face of $G_{m}(v,t)$, $F$ contains exactly one of its two diagonals, and there are no other edges in $F$;
 \item two boundary vertices of $G_m(v,t)$ belong to the same connected component of $F$ if and only if they belong to the same connected component of $F_m(v,t)$. 
\end{enumerate}
\end{definition}

\def\Ft{{\tilde F}}

In other words, every $G_m(v,t)$-grove $F$ can be viewed as a $G(v,t)$-grove $\Ft$ that coincides with $\Finfty(v,t)$ outside of the strip $\TS_m$ and coincides with $F$ inside $\TS_m$.

\def\q{q}
\def\k{\kappa}
\begin{proposition}\label{prop:CS}
 For a non-boundary vertex $v\in\TS_m$ and $t\geq 2$ such that $t+1\equiv \e_v\pmod 3$, the cube recurrence in the cylinder satisfies
 \[\Cubec_v(t+1)=\sum_F \wt(F),\]
 where the sum is taken over all $G_m(v,t)$-groves $F$ and their weight is defined by $\wt(F):=\wt(\Ft)$.
\end{proposition}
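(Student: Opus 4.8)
The plan is to obtain Proposition~\ref{prop:CS} from Carroll--Speyer's Theorem~\ref{thm:CS} by realizing the cube recurrence in the cylinder as a specialization of the unbounded one. As a first step I would extend the cylinder's initial data from the columns $0<i<m$ to all of $\TP_0$: keep $x_u$ equal to the cylinder variables for $0<i_u<m$, set $x_u=1$ on the two columns $i_u=0$ and $i_u=m$, and then choose the remaining initial values $x_u$ with $i_u<0$ or $i_u>m$, as rational functions of $\x^\TC$, so that the unbounded values $\Cube_u(t)$ remain identically $1$ along $i_u=0$ and $i_u=m$ for every admissible $t$. Such an extension exists: imposing $\Cube_u\equiv 1$ on $i_u=0$ turns~\eqref{eq:cube} at that column into a relation transporting data outward across $i_u=-1,-2,\dots$, which can be solved one column at a time, and symmetrically for $i_u>m$; I expect this propagation to be routine. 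Once the extension is fixed, the forward cube recurrence on $0<i_u<m$ with the frozen boundary columns has a unique solution, so $\Cube_u=\Cubec_u$ there; in particular $\Cubec_v(t+1)=\Cube_v(t+1)$.

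Applying Theorem~\ref{thm:CS} to this specialized unbounded recurrence writes $\Cubec_v(t+1)=\sum_{F'}\wt(F')$, the sum over all $G(v,t)$-groves $F'$ with weights evaluated at the extended data. The main point is then to show that this sum collapses onto the subsum over those $F'$ whose restriction to $\TP_0\setminus\TS_m$ coincides with that of $\Finfty(v,t)$ --- equivalently, over the groves $\tilde F$ attached to $G_m(v,t)$-groves $F$ --- and that for such $F'$ we have $\wt(F')=\wt(\tilde F)$ as a Laurent polynomial in $\x^\TC$. This is exactly the role of $\Finfty(v,t)$: its defining rule (green diagonals parallel to $(1,1,-2)$ above $v$ and to $(-1,2,-1)$ below $v$) makes its part outside $\TS_m$ a laminar family of parallel paths, which is the unique way to complete the boundary connectivity of a $G_m(v,t)$-grove to a genuine $G(v,t)$-grove with the correct global non-crossing partition $\bpart_0$, and for which all exterior weight exponents $\deg_{F'}(u)-2+s(u)$ vanish, so $\wt(\tilde F)$ really does involve only the cylinder variables. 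Identifying the family $\{\tilde F\}$ with the $G(v,t)$-groves agreeing with $\Finfty(v,t)$ outside $\TS_m$ is immediate from the definitions.

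The substantive difficulty will be precisely this collapse: showing that, after the extension above, every grove whose exterior differs from $\Finfty(v,t)|_{\TP_0\setminus\TS_m}$ contributes $0$. One route is to group $G(v,t)$-groves by their restriction to $\TP_0\setminus\TS_m$ and to analyze, column by column outward from $i=0$ and $i=m$, how the transported exterior data makes any deviation from the laminar pattern cancel in a sign-reversing pair or produce a vanishing factor; this should reduce to a local computation at a single lozenge straddling a frozen column. A perhaps cleaner alternative is induction on $t$: rewrite~\eqref{eq:cube} as $\Cubec_v(t+1)\Cubec_v(t-2)=\sum_{ij}\Cubec_{v+e_{ij}}(t)\Cubec_{v-e_{ij}}(t-1)$, apply the inductive hypothesis to each factor on the right (using $\Cubec\equiv1$ on the boundary columns when $v$ is adjacent to them), and match the resulting rearrangement of $G_m$-groves with the $G$-grove rearrangement in Carroll--Speyer's inductive proof; here the frozen boundary enters only through the value $1$ and through the compatibility of $\Finfty(v,t)$ with $\Finfty(v\pm e_{ij},t-1)$ and $\Finfty(v,t-2)$, with a base case of small $t$ checked directly (essentially Figure~\ref{fig:gv3} with some vertices frozen to $1$). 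The remaining items --- the propagation in the extension, the vanishing of the exterior exponents of $\Finfty$, and the bijection $F\mapsto\tilde F$ --- are routine bookkeeping.
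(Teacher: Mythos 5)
Your overall strategy (realize $\Cubec$ as a specialization of the unbounded recurrence and collapse the Carroll--Speyer sum onto groves agreeing with $\Finfty(v,t)$ outside the strip) is the right one, but both of the steps you defer are genuine gaps, and the first is not ``routine''. Enforcing $\Cube_u\equiv 1$ exactly on the column $i=0$ forces, for each such $u$ and each admissible $t$, the relation
$0=\Cube_{u+e_{12}}(t+2)\,\Cube_{u-e_{12}}(t+1)+\Cube_{u+e_{31}}(t+2)\,\Cube_{u-e_{31}}(t+1)$
(the $e_{23}$ term contributes exactly $1$, matching the left-hand side), and here $u-e_{12}$ and $u+e_{31}$ lie in column $-1$ while the other two factors lie in column $1$. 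So the exterior values you must produce necessarily carry minus signs and denominators, they are constrained by an infinite nonlinear system in the initial data (the quantities appearing are outputs of the recurrence, not free unknowns that can be transported one column at a time), and their existence and nonvanishing is far from clear. Once the exterior data consists of such signed rational functions, your claim that deviant groves ``produce a vanishing factor'' loses its footing, and the sign-reversing involution you would need instead is precisely the hard combinatorial content you have not supplied. Your alternative route---redoing the Carroll--Speyer induction on $t$ inside the cylinder---would in principle work, but it amounts to reproving their theorem rather than deducing the proposition from it.

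The paper avoids both problems with a one-parameter degeneration instead of an exact extension: substitute $x_u:=q^{\kappa(u)}$ for every $u$ not in the interior of $\TS_m$, where $\kappa$ depends only on the first coordinate, vanishes on $[0,m]$, and is convex and extremely rapidly growing outside that interval. An induction on $t$ using convexity gives $\Cube_w(t)\asymp q^{\kappa(w)}$, so the boundary-column values tend to $1$ and the interior values tend to $\Cubec_w(t)$ as $q\to 0$; simultaneously, in the grove expansion, any grove whose exterior differs from that of $\Finfty(v,t)$ acquires a net positive power of $q$ that the inner columns cannot compensate, so its weight tends to $0$, and the surviving groves are exactly the extensions $\Ft$ of $G_m(v,t)$-groves. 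You should adopt some such limiting argument; as written, neither of your two deferred steps goes through.
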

\begin{proof}
 We use the same trick as in the proof of~\cite[Theorem~3.1.4]{GP2}: for every vertex $u\in\TR(v,t)$ that does not belong to the interior of $\TS_m$, we are going to substitute $x_u:=\q^{\k(u)}$ for some integer $\k(u)$ that depends on the first coordinate of $u$ and is a convex function. More precisely, if $u=(a,b,c)$ then we set 
 \[\k(u)=\k(a)=\begin{cases}
         0,&\text{if $0\leq a\leq m$}\\
         a(m-a)2^{2^{a(m-a)}},&\text{otherwise}.
        \end{cases}\]
Note that when $u$ is a non-boundary vertex of $\TS_m$ then $\k(u)=0$ but we do not substitute $x_u=1$.
 
 After such a substitution, the values $\Cube_w(t)$ of the unbounded cube recurrence become rational functions in $\q$ and $\x^\TC$. We would like to show that for $w\in\TS_m$ and $t\equiv\e_w\pmod 3$, the value of the unbounded cube recurrence $\Cube_w(t)$ tends to the value $\Cubec_w(t)$ of the cube recurrence in a cylinder as $\q$ tends to $0$. We prove this by induction on $t$, where the induction hypothesis is that for any vertex $w$ of $\TP_0$ and any $t'<t$ such that $t'\equiv\e_w\pmod 3$, the value of the unbounded cube recurrence satisfies
 \begin{equation}\label{eq:asymp}
\Cube_w(t')\asymp \q^{\k(w)}
 \end{equation}
 as $\q\to0$. Here by $g(\q)\asymp h(\q)$ we mean that $\frac{g(\q)}{h(\q)}$ tends to a non-zero rational function in $\x^\TC$. The base $t=0,1,2$ of induction is clear and the induction step is an easy direct computation. Indeed, take any vertex $u=(a,b,c)$. 
%  If $u$ is a non-boundary vertex of $\TS_m$ then we have $\Cube_w(t+1)\asymp 1$ or $\Cube_w(t+2)\asymp 1$ for all the neighbors $w$ of $u$, and thus we will have $\Cube_u(t+3)\asymp 1$ as well. Otherwise, l
 We have 
 \[\Cube_u(t+3)\Cube_u(t)\asymp \q^{\k(a+1)+\k(a-1)}+\q^{2\k(a)}.\]
 Using the convexity of $\k$, the induction step follows since $\q^{\k(a+1)+\k(a-1)}+\q^{2\k(a)}\asymp \q^{2\k(a)}$ and $\Cube_u(t)\asymp \q^{\k(a)}$. 
 Due to \eqref{eq:asymp}, when $w$ belongs to the boundary of $\TS_m$, the value of $\Cube_w(t)$ tends to $1$ as $\q\to 0$. Therefore for any $w\in\TS_m$, the value $\Cube_w(t)$ of the unbounded cube recurrence tends to the value $\Cubec_w(t)$ of the cube recurrence in a cylinder as $\q\to 0$.
 
 Theorem~\ref{thm:CS} gives a formula for the values of the unbounded cube recurrence in terms of $G(v,t)$-groves. Each grove will have a weight that is a rational function in $\x^\TC$ and $\q$. To finish the proof of the theorem, it suffices to show that $\wt(F)$ tends to zero as $\q\to 0$ unless $F$ coincides with $\Finfty(v,t)$ outside of the strip $\TS_m$. A more precise formulation of this condition is that for any two vertices $u$ and $w$ that are not in the interior of $\TS_m$, there is an edge connecting them in $F$ if and only if there is an edge connecting them in $\Finfty(v,t)$. We show this claim by induction on $a$ where $u=(a,b,c)$. The base case is when $u=v+(t-1)e_{12}$ so that $a=i+t-1$ and the claim for this case follows trivially from the definition of a grove. For the induction step, note that if for all the vertices with the first coordinate greater than $a$, the groves $F$ and $\Finfty(v,t)$ coincide then they have to coincide for all the vertices with the first coordinate $a$ because otherwise we will have $\q^{\k(a)}$ in the numerator and since $\k(a)$ grows rapidly in $a$, the vertices with smaller values of $\k(a)$ together will not be able to compensate for it. This finishes the proof of the proposition. %TODO REWRITE THIS MORE RIGOROUSLY?
\end{proof}

\def\Ntnm{{\Nt_{n,m}}}

One easily observes that if a path in $\bij(\Finfty(v,t))$ starts outside of $\TS_m$ then it stays outside of $\TS_m$, see Figure~\ref{fig:finfty}. Thus for a $G_m(v,t)$-grove $F$, we can define $\bij(F):=\bij(\Ft)\cap \TS_m$ in the sense that we remove the paths of $\bij(\Ft)$ that stay outside of $\TS_m$. We are finally ready to describe the planar cylindrical network $\Ntnm$ to which we will apply the results of Section~\ref{sect:networks}.

\begin{definition}
The vertex set of $\Ntnm$ is $\TS_m$ together with the centers of all lozenges $L$ of $G$ such that all four vertices of $L$ belong to $\TS_m$. The part of $\Ntnm$ inside of each such lozenge $L$ is given in Figure~\ref{fig:nvt}. Additionally, for every pair $(u,u+e_{23})$ of vertices on the boundary of $\TS_m$, the network $\Ntnm$ contains an edge $u+e_{23}\to u$ of weight $1$. %A part of $\Ntnm$ for $n=3$ and $m=4$ is given in Figure~TODO~NTM.
\end{definition}

% such that the image of $\bij(\Ft)$ would be an $r$-path in $\Nvtm$. Note that the graph $G_m$ has lozenge faces and also some boundary triangle faces. For each lozenge face $L$, the part of $\Nvtm$ inside $L$ is defined in the same was as in Figure~\ref{fig:nvt}. Each boundary triangle $T$ of $G_m$ has a red, a blue, and a green vertex which we denote $r,g,$ and $b$ respectively. The part of  $\Nvtm$ inside $T$ by definition contains just a single edge $b\to r$ with weight $1$. 

This definition together with Theorem~\ref{thm:bij_groves} yields the following. 
\begin{corollary} \label{cor:paths_cyl}
 Let $v=(i,j,k)\in\TS_m$ be a vertex and define $r:=m-i$. Then the map $\bij$ restricts to an injection from the set of $G_m(v,t)$-groves to the set $\pathst(\ubft_r,\wbft_r)$ of $r$-paths $\Pbft$ in $\Ntnm$ that start at $\ubft_r=(a_2,a_4,\dots,a_{2r})$ and end at $\wbft_r=(b_{t-1},b_{t-2},\dots,b_{t-r})$. Moreover, we have
 \[\wt(F)=\wt(\bij(F)).\]
\end{corollary}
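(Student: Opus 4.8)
The plan is to reduce everything to Theorem~\ref{thm:bij_groves} by extending each $G_m(v,t)$-grove to the whole triangle. Given a $G_m(v,t)$-grove $F$, let $\Ft$ be its canonical extension: the $G(v,t)$-grove that agrees with $\Finfty(v,t)$ outside $\TS_m$ and with $F$ inside. Root $\Ft$ canonically, as in the discussion preceding Theorem~\ref{thm:bij_groves}; then $\roots(\Ft)=\roots_0$, so by Theorem~\ref{thm:bij_groves} the path $\bij(\Ft)$ is a $(t-1)$-path in $\Nvt$ with start $\ubft=(a_2,a_4,\dots,a_{2t-2})$ and end $\wbft=(b_{t-1},b_{t-2},\dots,b_1)$. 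Since $\ubft$ and $\wbft$ are non-permutable, the $\ell$-th path of $\bij(\Ft)$ runs from $a_{2\ell}$ to $b_{t-\ell}$, and the coordinate formulas for the $a_i$ and $b_i$ show that both of these endpoints have first coordinate $i+\ell$. I would then define $\bij(F):=\bij(\Ft)\cap\TS_m$ (discarding the paths of $\bij(\Ft)$ that stay outside $\TS_m$) and read off the statement.

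The first step is to identify which paths survive the restriction. Writing $r=m-i$ (and assuming, as holds for the values of $t$ occurring in the applications, that $r\le t-1$), the $\ell$-th path has both endpoints strictly outside $\TS_m$ exactly when $i+\ell>m$, i.e.\ $\ell>r$; for those $t-1-r$ values the whole path stays outside $\TS_m$ by the observation recorded before Corollary~\ref{cor:paths_cyl} (a path of $\bij(\Finfty(v,t))$ starting outside $\TS_m$ stays outside, and $\Ft$ agrees with $\Finfty(v,t)$ there). For $1\le\ell\le r$ the endpoints $a_{2\ell}$ and $b_{t-\ell}$ lie in $\TS_m$ (with $a_{2r}$, $b_{t-r}$ on its top boundary), and one must check the path itself never leaves $\TS_m$: if it reached the top boundary and attempted to continue upward it would enter the region above $\TS_m$, where $\Ft=\Finfty(v,t)$, and the coordinate confinement built into $\Finfty(v,t)$ (its green diagonals above $v$ being parallel to $(1,1,-2)$, forcing the $\bij$-image there to be level-bounded, as for the quoted observation) prevents a path from rising above level $m$; the bottom boundary is symmetric. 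Hence $\bij(F)$ consists of exactly the $r$ paths from $\ubft_r=(a_2,\dots,a_{2r})$ to $\wbft_r=(b_{t-1},\dots,b_{t-r})$, and since each uses only lozenge-edges of $G(v,t)$ contained in $\TS_m$, and such lozenges are lozenges of $\Ntnm$, we get $\bij(F)\in\pathst(\ubft_r,\wbft_r)$.

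Injectivity is then immediate: every extension $\Ft$ agrees with $\Finfty(v,t)$ outside $\TS_m$, and because a $G(v,t)$-grove has boundary partition $\bpart_0$ regardless of $F$, the canonical roots, and hence the orientations of all edges of $\bij(\Ft)$ outside $\TS_m$, are the same for all of them; so the portion of $\bij(\Ft)$ outside $\TS_m$ is one fixed family of paths, independent of $F$. Therefore $F\mapsto\bij(F)$ retains the same information as $F\mapsto\bij(\Ft)$, which is injective (in fact bijective onto $\pathst(\ubft,\wbft)$) by Theorem~\ref{thm:bij_groves}.

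Finally, for the weight identity I would invoke Theorem~\ref{thm:bij} for the full triangle: with $\Ft$ canonically rooted, $\wt(\Ft)=\wt(\bij(\Ft))\cdot\extrawt=\wt(\bij(F))\cdot\wt(\mathcal{P}_{\mathrm{out}})\cdot\extrawt$, where $\mathcal{P}_{\mathrm{out}}$ is the fixed family of paths of $\bij(\Finfty(v,t))$ lying outside $\TS_m$. Since $\wt(F)$ for a $G_m(v,t)$-grove is, by Proposition~\ref{prop:CS}, the $\q\to0$ specialization of $\wt(\Ft)$ under $x_u=\q^{\k(u)}$ (so $x_u=1$ on $\partial\TS_m$ and $x_u\to0$ strictly above or below), it remains to verify that $\wt(\mathcal{P}_{\mathrm{out}})\cdot\extrawt$ specializes to $1$ in this limit, a telescoping computation using that $\k$ is constant on each level together with the explicit edge-weights of $\Finfty(v,t)$ outside $\TS_m$, in the spirit of the analogous verification for domino tilings in \cite{Networks,GP2}; alternatively, one can bypass most of the bookkeeping via the one-edge-flip argument from the proof of Theorem~\ref{thm:bij}, since both $\wt(F)$ and $\wt(\bij(F))$ change by the same factor when a single lozenge diagonal inside $\TS_m$ is flipped, so it suffices to check the identity for the single reference grove $F=F_m(v,t)$. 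I expect this weight reconciliation, namely getting the outside and boundary contributions to cancel cleanly against $\extrawt$ in the $\q\to0$ limit, to be the main obstacle; the rest is bookkeeping with the coordinate formulas, plus the confinement claim of the second paragraph, which also deserves a careful argument.
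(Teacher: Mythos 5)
Your proposal is correct and essentially matches the paper's (very terse) proof: the paper dismisses the injection and endpoint claims as clear consequences of the definition $\bij(F):=\bij(\Ft)\cap\TS_m$ and the observation that paths of $\bij(\Finfty(v,t))$ starting outside $\TS_m$ stay outside, and proves the weight identity exactly by your ``alternative'' route, namely the one-edge-flip argument reducing to the single check $\wt(F)=\wt(\bij(F))=1$ for the reference grove coinciding with $\Finfty$ inside $\TS_m$. Your primary $\q\to 0$ reconciliation is an unnecessary detour, since $\wt(F)$ is defined as $\wt(\Ft)$ and the flip argument closes the weight claim directly.
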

\begin{proof}
	We only need to prove the part about weights because the rest of the statements are clear. The fact that $\wt(F)=\wt(\bij(F))=1$ can be easily checked when $F$ coincides with $\Finfty$ inside of $\TS_m$, and then the equality of weights follows from the observation that when we flip one edge of $F$ inside some lozenge $L$, the weight of $F$ is multiplied by the same amount as the weight of $\bij(F)$. Similarly to Theorem~\ref{thm:bij}, this again requires extending $\bij(F)$ from $G_m(v,t)$-groves to arbitrary collections of edges so that in every lozenge of $G_m(v,t)$ we choose precisely one edge.
\end{proof}

An example of a $G_m(v,t)$-grove $F$ together with $\bij(F)$ is given in Figure~\ref{fig:gmvt_grove_bij}.

\begin{figure}
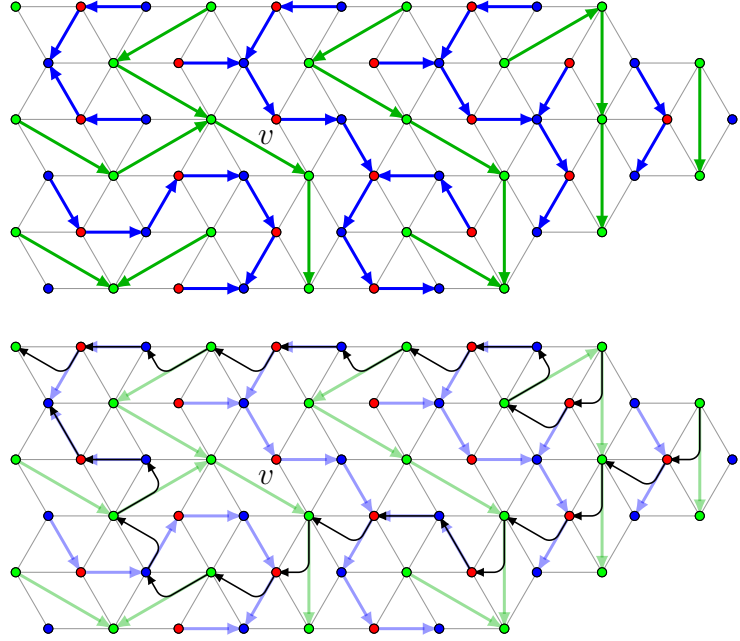


% [inline block 5: 1 envs, 46108 chars -> data_tex | \begin{tabular}{c} \scalebox{1.0}{...]


	\caption{\label{fig:gmvt_grove_bij} A $G_m(v,t)$-grove $F$ (top) and the corresponding $2$-path $\bij(F)$ (bottom).}
\end{figure}

\def\gg{g}
\def\Nnm{{N_{n,m}}}
Recall that the vertex variables $x_v$ for $v\in\TS_m$ satisfy $x_v=1$ if $v$ belongs to the boundary of $\TS_m$ and otherwise we have $x_v=x_{v+3\gg}$ where $\gg=ne_{23}=(0,n,-n)$ for some integer $n\geq 1$. Thus even though the edges and the vertices of $\Ntnm$ are periodic with respect to the shift by $3e_{23}$, the edge weights in $\Ntnm$ are periodic with respect to the shift by $\g:=3\gg$. Just as in Section~\ref{sect:networks}, we define the projection $\Nnm$ of $\Ntnm$ to the cylinder $\TC$. 

\subsection{Integrability}\label{sect:integrability}
In this section, we prove Theorems~\ref{thm:recurrence_1},~\ref{thm:cube_formula} as well as Theorem~\ref{thm:pleth} which implies both of them. The case $v=(m-1,j,k)$ described in Theorem~\ref{thm:cube_formula} is particularly simple as in this case the image of the map $\bij$ is the whole set $\pathst(\ubft_1,\wbft_1)$:

\begin{proposition} \label{prop:paths_1}
	For any $1$-path $\Pbft\in\pathst(\ubft_1,\wbft_1)$ in $\Ntnm$, the preimage $F=\bijinv(\Pbft)$ is a $G_m(v,t)$-grove.
\end{proposition}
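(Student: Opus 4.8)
The plan is to reduce Proposition~\ref{prop:paths_1} to the combinatorial characterization already established in Theorem~\ref{thm:bij_groves} and Proposition~\ref{prop:chord}. First I would set $r=m-i=1$ so that $v=(m-1,j,k)$ and $\ubft_1=(a_2)$, $\wbft_1=(b_{t-1})$ are single vertices, and observe that $\pathst(\ubft_1,\wbft_1)$ consists of $1$-paths, i.e.\ ordinary directed paths in $\Ntnm$ from $a_2$ to $b_{t-1}$. Since $\Ntnm$ is the restriction of $\Nvt$ to $\TS_m$ together with the boundary edges $u+e_{23}\to u$, and since a path from $a_2$ to $b_{t-1}$ automatically stays within $\TS_m$ (this is why the restriction is harmless), such a path $\Pbft$ can be regarded as a single-path boundary $r$-path in $\Nvt$ after extending it by the part of $\bij(\Finfty(v,t))$ lying outside $\TS_m$. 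Applying $\bijinv$ from Theorem~\ref{thm:bij} then yields a rooted $G(v,t)$-forest $\Ft=\bijinv(\Pbft)$, and by the proof of Theorem~\ref{thm:bij_groves} we get $\roots(\Ft)=\roots_0$.

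\textbf{Key steps.} It remains to show that $\Ft$ is actually a $G(v,t)$-grove, equivalently (by Proposition~\ref{prop:chord}) that $\Fbound(a_{t+1})=b_{t-1}$. Here is where the special case $r=1$ does the work: the single path of $\Pbft$ ends at $b_{t-1}$, which is the \emph{only} element of $\wbft_1$. By the analysis in the proof of Theorem~\ref{thm:bij_groves}, the set of boundary vertices where some path of $\bij(\Ft)$ ends is exactly the set of vertices $u$ such that the lozenge $L(u)$ lies inside $G_m(v,t)$ and $u$ has no outgoing edge in $\Ft$; for $r=1$ this set is precisely $\{b_{t-1}\}$. Now I would trace which component of $\Ft$ the vertex $a_{t+1}$ lies in. The vertex $a_{t+1}$ is a non-root vertex (it is not in $\roots_0$), so it has an outgoing edge; following the directed path in $\Ft$ out of $a_{t+1}$ we must eventually reach a root, and since $\roots(\Ft)=\roots_0$ and $a_{t+1}$ is green (when $t$ is even; blue when odd — handle both analogously), the root of its component is a green (resp.\ blue) element of $\roots_0$. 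I would then argue by the non-crossing property of the arrows $u\to\Fbound(u)$ together with the color-change count in~\eqref{eq:correct_connectivity}: because $b_{t-1}$ is the unique sink of $\bij(\Ft)$ inside $\TS_m$ and the unique root in $\{b_1,\dots,b_{t-1}\}$ reachable, the connectivity forced on the boundary is exactly $\Fbound_0$, in particular $\Fbound(a_{t+1})=b_{t-1}$.

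\textbf{Main obstacle.} The delicate point will be making precise the claim that when $r=1$ the forced boundary map equals $\Fbound_0$ \emph{everywhere}, not just at $a_{t+1}$ — i.e.\ ruling out the kind of ``defect'' exhibited in Figure~\ref{fig:counter12}, which occurs precisely when $r\geq 2$ because there one has room to reroute a path between two distinct $b_i$'s. For $r=1$ there is only one entry and one exit among the relevant boundary vertices, so a counting argument on color changes along the boundary arcs $a_1,\dots,a_t,b_t,\dots,b_{2t-1}$ versus $c_{2t-1},\dots,c_1$, combined with the fact that $\Fbound$ preserves colors and arrows do not cross, should pin down the entire matching uniquely. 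I expect the cleanest route is: first establish $\Fbound(a_{t+1})=b_{t-1}$ directly from the sink analysis, then invoke Proposition~\ref{prop:chord} to conclude $\Ft$ is a grove, rather than verifying $\Fbound=\Fbound_0$ pointwise. This keeps the argument short and isolates the one nontrivial verification. Finally, since $\bij$ is injective on $G_m(v,t)$-groves (Corollary~\ref{cor:paths_cyl}) and we have now shown every $\Pbft\in\pathst(\ubft_1,\wbft_1)$ has a grove preimage, $\bij$ restricts to a bijection between $G_m(v,t)$-groves and $\pathst(\ubft_1,\wbft_1)$, which is the assertion.
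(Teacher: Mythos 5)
Your overall skeleton is the same as the paper's: extend $\Pbft$ by the frozen part of $\bij(\Finfty(v,t))$ outside $\TS_m$ to get a $(t-1)$-path in $\pathst(\ubft,\wbft)$, conclude $\roots(\Ft)=\roots_0$ from Theorem~\ref{thm:bij_groves}, and reduce everything to the single condition $\partial\Ft(a_{t+1})=b_{t-1}$ via Proposition~\ref{prop:chord}. The gap is in how you verify that one condition. You try to force it by a non-crossing/color-counting argument from the fact that $b_{t-1}$ is the unique endpoint of the single path inside $\TS_m$. But non-crossing, color preservation, and $\roots(\Ft)=\roots_0$ are precisely the hypotheses that Figure~\ref{fig:counter12} shows to be insufficient: there $\partial F(a_{t+1})$ is a $c$-vertex rather than $b_{t-1}$, with the $a$'s and $b$'s pairing off among themselves. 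Your assertion that this defect ``occurs precisely when $r\geq 2$'' is exactly what needs proof, and the phrase ``the unique root in $\{b_1,\dots,b_{t-1}\}$ reachable'' begs the question of why the component of $a_{t+1}$ reaches any $b_i$ at all rather than some $c_j$. Note moreover that $a_{t+1}$ has first coordinate roughly $m-1+t/2$, far above the strip, so the directed path out of $a_{t+1}$ in $\Ft$ begins in the frozen region; nothing you say about the single path inside $\TS_m$ controls where it ends up.

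The missing input --- which is the entire content of the paper's one-line argument --- is a structural property of $\Finfty(v,t)$ itself: when $r=1$ (so that $b_{t-1}$ lies on the line $i=m$), the part of $\Finfty(v,t)$ lying outside $\TS_m$ \emph{already contains a path from $a_{t+1}$ to $b_{t-1}$}. Since $\Ft$ agrees with $\Finfty(v,t)$ outside $\TS_m$ for every choice of $\Pbft$, the connectivity $\partial\Ft(a_{t+1})=b_{t-1}$ is forced independently of what happens inside the strip, and Proposition~\ref{prop:chord} finishes the proof. Your write-up mentions the extension by $\Finfty(v,t)$ only as bookkeeping and never uses this property, so the key step remains unproven. (A minor point: the proposition asserts only that every preimage is a grove, i.e.\ surjectivity onto groves; the resulting bijection is recorded separately in Corollary~\ref{cor:paths_1}.)
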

\begin{proof}
\def\Fboundt{{\partial \tilde F}}
	We need to show that $\Fboundt(u)=\Fbound_0$ for all $u\in\Gbound$, where $\Ft$ is the extension of $F$ to a $G(v,t)$-forest. By Proposition~\ref{prop:chord}, it suffices to show that $\Fboundt(a_{t+1})=b_{t-1}$. But this is clear since the part of the $G(v,t)$-forest $\Finfty$ that lies outside of $\TS_m$ already contains a path from $a_{t+1}$ to $b_{t-1}$ and since $\Ft$ coincides with $\Finfty$ outside of $\TS_m$, we get that indeed $\Fbound(a_{t+1})=b_{t-1}$. This finishes the proof of the proposition.
\end{proof}

\begin{corollary}\label{cor:paths_1}
	Let $v=(m-1,j,k)\in\TS_m$ and let $t\geq 2$ be such that $t+1\equiv \e_v\pmod 3$. Then 
	\[\Cubec_v(t)=\sum_{\Pt} \wt(\Pt),\]
	where the sum is taken over all paths $\Pt$ in $\Ntnm$ that start at $a_2=v+(1,t-2,1-t)$ and end at $b_{t-1}$ given by
	\begin{equation}\label{eq:b_t-1}
	b_{t-1}= v+\begin{cases}
	           	(1,\frac{-1-t}2,\frac{t-1}2),&\text{if $t$ is odd};\\
	           	(1,\frac{-t}2,\frac{t-2}2),&\text{if $t$ is even}.
	           \end{cases}
	\end{equation}
\end{corollary}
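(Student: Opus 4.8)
The plan is to combine Proposition~\ref{prop:CS}, Proposition~\ref{prop:paths_1}, and Corollary~\ref{cor:paths_cyl} in the special case $i=m-1$, so that $r=m-i=1$. First I would invoke Proposition~\ref{prop:CS} to write $\Cubec_v(t)=\Cubec_v(t)$ (here $t\equiv\e_v\pmod 3$, so with $t$ replaced by $t+1$ in the statement of that proposition we get $t-1+1\equiv\e_v$, matching the hypothesis $t+1\equiv\e_v\pmod3$ for the grove construction on $G_m(v,t-1)$; to streamline the exposition I would actually keep the running index so that $\Cubec_v(t+1)=\sum_F \wt(F)$ over $G_m(v,t)$-groves $F$, then simply note the statement of the corollary is the shift of this by one step). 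So the content is: $\Cubec_v(t)$ equals the sum of $\wt(F)$ over all $G_m(v,t-1)$-groves.

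Next I would apply Corollary~\ref{cor:paths_cyl} with $r=1$: the map $\bij$ gives an injection from $G_m(v,t)$-groves into $\pathst(\ubft_1,\wbft_1)$, the set of single paths in $\Ntnm$ from $\ubft_1=(a_2)$ to $\wbft_1=(b_{t-1})$, and it is weight-preserving, $\wt(F)=\wt(\bij(F))$. The only thing preventing this from being a bijection in general is surjectivity, and this is exactly what Proposition~\ref{prop:paths_1} supplies in the case $v=(m-1,j,k)$: every $1$-path $\Pbft\in\pathst(\ubft_1,\wbft_1)$ has preimage $\bijinv(\Pbft)$ that is an honest $G_m(v,t)$-grove. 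Hence $\bij$ restricts to a weight-preserving bijection between $G_m(v,t)$-groves and $\pathst(\ubft_1,\wbft_1)$, and summing $\wt$ over both sides gives $\Cubec_v(t+1)=\sum_{\Pt}\wt(\Pt)$ over all paths $\Pt$ from $a_2$ to $b_{t-1}$ in $\Ntnm$. Reindexing ($t\mapsto t-1$ throughout, or equivalently just reading off the statement of Corollary~\ref{cor:paths_1} as the one-step shift) yields the claimed identity.

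The remaining task is purely bookkeeping: I would verify that the coordinates of $a_2$ and $b_{t-1}$ are exactly as written. For $a_2$ this is immediate from the explicit formula $a_{2i}=v+(i,t-2i,i-t)$ with $i=1$, giving $a_2=v+(1,t-2,1-t)$. For $b_{t-1}$ one uses the two formulas $b_{2i}=v+(t-2i,i-t,i)$ and $b_{2i+1}=v+(t-1-2i,i-t+1,i)$: when $t$ is odd, write $t-1=2i$ with $i=(t-1)/2$, so $b_{t-1}=v+(t-(t-1),(t-1)/2-t,(t-1)/2)=v+(1,\frac{-1-t}{2},\frac{t-1}{2})$; when $t$ is even, write $t-1=2i+1$ with $i=(t-2)/2$, so $b_{t-1}=v+(t-1-(t-2),(t-2)/2-t+1,(t-2)/2)=v+(1,\frac{-t}{2},\frac{t-2}{2})$. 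This matches \eqref{eq:b_t-1}.

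I do not anticipate a genuine obstacle here: all the real work is done in the cited results, and the ``hard part'' is only the clerical matter of getting the index shift consistent between Proposition~\ref{prop:CS} (which produces $\Cubec_v(t+1)$ from $G_m(v,t)$-groves) and the desired statement (phrased for $\Cubec_v(t)$ and paths indexed by $t$). The cleanest way to avoid confusion is to fix at the outset that the corollary is exactly the $(t-1)$-instance of the bijection combined with Proposition~\ref{prop:CS}, state this once, and then only carry out the coordinate computation above.
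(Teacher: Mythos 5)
Your proof is correct and is exactly the paper's (one-line) proof of Corollary~\ref{cor:paths_1}: Proposition~\ref{prop:CS} turns $\Cubec_v$ into a sum over $G_m(v,t)$-groves, Corollary~\ref{cor:paths_cyl} with $r=1$ gives the weight-preserving injection into $\pathst(\ubft_1,\wbft_1)$, and Proposition~\ref{prop:paths_1} supplies surjectivity, so the map is a weight-preserving bijection. One caution on the bookkeeping you worried about: the ``reindex $t\mapsto t-1$'' remark is unnecessary and actually clashes with your own coordinate check (under that shift $a_2$ would become $v+(1,t-3,2-t)$); the right resolution is that the corollary's left-hand side should be read as $\Cubec_v(t+1)$ (consistent with how it is invoked in the proof of Theorem~\ref{thm:cube_formula}), with $a_2$ and $b_{t-1}$ the boundary labels of $G(v,t)$ for the \emph{same} $t$ --- which is precisely what your computation of $a_2=v+(1,t-2,1-t)$ and of~\eqref{eq:b_t-1} verifies.
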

\begin{proof}
	Follows from Proposition~\ref{prop:paths_1} and Corollary~\ref{cor:paths_cyl}.
\end{proof}

Since $\Ntnm$ is a planar cylindrical network, we would like to describe the polynomial $Q_\Nnm(t)$ given by~\eqref{eq:solenoids_intro}. Recall that the polynomial $Q(t)$ is given by~\eqref{eq:Q_t}.

\begin{proposition} \label{prop:Q_t}
	We have $Q(t)=Q_\Nnm(t)$. In other words, for every $r=0,1,\dots,m$, we have
	\[J_r=\sum_{\Cbf\in\Cyc^r(\Nnm)} \wt(\Cbf).\]	
\end{proposition}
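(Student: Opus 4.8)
The plan is to extend the forest-to-path bijection $\bij$ of Section~\ref{sect:bijection} to the $\g$-periodic setting, obtaining a weight-preserving bijection between $(3n,m)$-groves $F$ and families of pairwise vertex-disjoint simple directed cycles in $\Nnm$, refined so that the groves with $h(F)=r$ correspond exactly to the $r$-element families. Granting this, for every $r$ we get $J_r=\sum_{F:\ h(F)=r}\wt(F)=\sum_{\Cbf\in\Cyc^r(\Nnm)}\wt(\Cbf)$ by~\eqref{eq:J_r}, and comparing with the definitions~\eqref{eq:Q_t} of $Q(t)$ and~\eqref{eq:solenoids_intro} of $Q_\Nnm(t)$ (using that the $r=0$ term is the empty family of weight $1=J_0$ and the $r=m$ term the unique maximal family of weight $1=J_m$) gives $Q(t)=Q_\Nnm(t)$.

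First I would set up the bijection using exactly the local rule of Figure~\ref{fig:nvt} that defines $\bij$. Inside each lozenge $L$ of $G_m$, the unique diagonal of a $(3n,m)$-grove $F$ contained in $L$ determines either no edge of $\Ntnm$ or the pair $u\to e_L\to d_L$; together with all boundary edges $u+e_{23}\to u$ of $\Ntnm$ (present since $F$ contains every edge $(v,v+e_{23})$ on the boundary of $\TS_m$ by condition~(2) of Definition~\ref{dfn:3n_m_grove}), this produces a $\g$-periodic collection $\bij(F)$ of edges of $\Ntnm$. Just as in the proof of Theorem~\ref{thm:bij}, every vertex of $\Ntnm$ met by $\bij(F)$ has in-degree and out-degree exactly one, and since $\Ntnm$ is acyclic, $\bij(F)$ contains no finite cycle; hence $\bij(F)$ is a $\g$-periodic disjoint union of bi-infinite directed paths, which descends to a family $\Cbf$ of pairwise vertex-disjoint simple directed cycles in $\Nnm$. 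The inverse is the local inversion rule $\bijinv$ from Theorem~\ref{thm:bij}; that $\bijinv(\Cbf)$ is a genuine forest all of whose components meet both boundaries $i=0$ and $i=m$ (condition~(4) of Definition~\ref{dfn:3n_m_grove}), rather than merely a collection of diagonals, is proved by the same ``a spurious red-blue cycle encloses a green vertex with an outgoing edge, forcing an infinite descent'' argument as for the no-cycles property in Theorem~\ref{thm:bij}, the homotopically non-trivial case being excluded because the orientation of $\Ntnm$ is monotone across the strip. I also need to check that $\bij$ is weight-preserving on the nose, $\wt(F)=\wt(\Cbf)$, where $\wt(\Cbf)$ is the product of edge weights over one fundamental domain: as in Corollary~\ref{cor:paths_cyl} and the last paragraph of Theorem~\ref{thm:bij}, both weights equal $1$ for the grove mapping to the empty family, and flipping a single diagonal of $F$ inside a lozenge $L$ multiplies $\wt(F)=\prod_v x_v^{\deg_F(v)-2}$ and $\wt(\Cbf)$ by the same monomial $(x_{a_L}x_{c_L}/x_{b_L}x_{d_L})^{\pm1}$, so the equality propagates from the $h(F)=0$ grove by flips.

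The step I expect to be the main obstacle is showing that $h(F)=r$ if and only if $\bij(F)$ has exactly $r$ cycles. Two ingredients are needed. First, by the planarity of $\Ntnm$ and its monotone orientation, every simple directed cycle of $\Nnm$ winds around the cylinder exactly once, i.e.\ lifts to a bi-infinite path from $\tilde v$ to $\tilde v+\g$; this belongs to the structure theory of cylindrical networks of~\cite{Networks}. Second, one must identify the number of cycles of $\bij(F)$ with the displacement invariant $h(F)$. The cleanest route is a deformation argument: the $h(F)=0$ grove is the one all of whose diagonals lean the same way, so that (as in Figure~\ref{fig:finfty}) every path of $\bij$ escapes any fixed window and $\Cbf$ is empty; the $h(F)=m$ grove is the opposite extreme, mapping to the unique family of $m$ disjoint winding cycles; and each elementary ``staircase flip'' increasing $h$ by $1$ (which, since $h(C)$ is constant over the connected components of $F$, affects every component at once) re-routes $\bij(F)$ so as to splice in exactly one additional winding cycle. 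Equivalently, one may compute the number of cycles of $\bij(F)$ as its intersection number with a fixed transversal arc of the cylinder from $i=0$ to $i=m$, and read off directly from the lozenge rule that this equals $h(F)$.

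Putting the three steps together, $J_r=\sum_{\Cbf\in\Cyc^r(\Nnm)}\wt(\Cbf)$ for all $r$; in particular $\Cyc^r(\Nnm)\neq\emptyset$ precisely for $0\le r\le m$, so the degree $d$ in~\eqref{eq:solenoids_intro} equals $m$, matching the range of $h$ and the fact that $J_m=1\neq0$, and hence $Q(t)=Q_\Nnm(t)$.
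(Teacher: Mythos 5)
Your overall architecture is the same as the paper's: apply the local rule of $\bij$ lozenge by lozenge to a $(3n,m)$-grove, observe that the resulting $\g$-periodic edge set projects to a vertex-disjoint cycle collection in $\Nnm$, invert via $\bijinv$, and check weight preservation by single-diagonal flips. Those parts are essentially right, with one small omission: the local rule of $\bij$ depends on the \emph{orientation} of the diagonal inside each lozenge (the diagonal $b_Ld_L$ contributes no edges when oriented $d_L\to b_L$ but contributes $b_L\to e_L\to d_L$ when oriented $b_L\to d_L$), so you must first root every component of $F$ — the paper roots each component at its unique green or blue vertex on the lower boundary — before the map is even defined. Writing ``the unique diagonal \dots determines either no edge or the pair $u\to e_L\to d_L$'' glosses over this.

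The genuine gap is the step you yourself flag as the main obstacle: showing that $h(F)$ equals the number of cycles of $\bij(F)$. Neither of your two routes works as stated. The deformation argument presupposes ``elementary staircase flips'' that increase $h$ by $1$ while splicing in exactly one cycle, but a single diagonal flip generally destroys the grove property (it changes the boundary connectivity arbitrarily, and $h$ is only defined for groves), and you neither construct the compound moves that stay within the set of groves nor show they connect the $h=r$ and $h=r+1$ strata. The intersection-number argument is not something one can ``read off directly from the lozenge rule'': $h(F)$ is a global connectivity invariant (which bottom boundary vertex each component joins to which top boundary vertex), whereas the cycles of $\bij(F)$ are assembled from local data, and the edges of $\bij(F)$ share vertices with the diagonals of $F$ inside each lozenge, so transversality with a path in $F$ is not automatic. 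Bridging exactly this local-to-global step is where the paper does its real work: it first checks the claim when all cycles are horizontal, then introduces six local moves (applied at a carefully chosen ``earliest lowest'' vertex of a non-horizontal cycle) that make every cycle horizontal while provably preserving $h(F)$, including a delicate tracking of the roots $[v_1],[v_2],[v_3]$ and a separate treatment of the case $n=1$. Without an argument of comparable substance, the identity $J_r=\sum_{\Cbf\in\Cyc^r(\Nnm)}\wt(\Cbf)$ is not established.
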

\begin{proof}
	Recall that by~\eqref{eq:J_r}, $J_r$ is defined as the sum of $\wt(F)$ over all $(3n,m)$-groves $F$ from Definition~\ref{dfn:3n_m_grove} satisfying $h(F)=r$. For each connected component $C$ of $F$, choose the root $\root_C$ to be the unique green or blue vertex of $C$ that lies on the bottom boundary of $\TS_m$. Applying $\bij$ to this rooted version of $F$ (that is, applying it to every lozenge of $G$ inside $\TS_m$) yields a collection of edges in $\Ntnm$ that is periodic with respect to the shift by $\g$ and it is easy to see that every vertex of $\Ntnm$ either is isolated or has indegree and outdegree $1$ in $\bij(F)$. Therefore $\bij(F)$ projects to a vertex-disjoint collection $\Cbf$ of cycles in $\Nnm$.
	
	Similarly, given an $r$-cycle $\Cbf\in\Cyc^r(\Nnm)$, we can apply $\bijinv$ to it inside of every lozenge of $G$ and a similar argument shows that we will get a $(3n,m)$-grove $F$ rooted as in the previous paragraph. Thus there is a bijection $\bij$ between the set of $(3n,m)$-groves and the set of $r$-cycles $\Cbf$ for $0\leq r\leq m$. It is straightforward to check that $\wt(F)=\wt(\bij(F))$ but there is one more thing we need to verify, namely, that $h(F)=r$ where $r$ is the number of cycles in $\bij(F)$.

	\begin{figure}
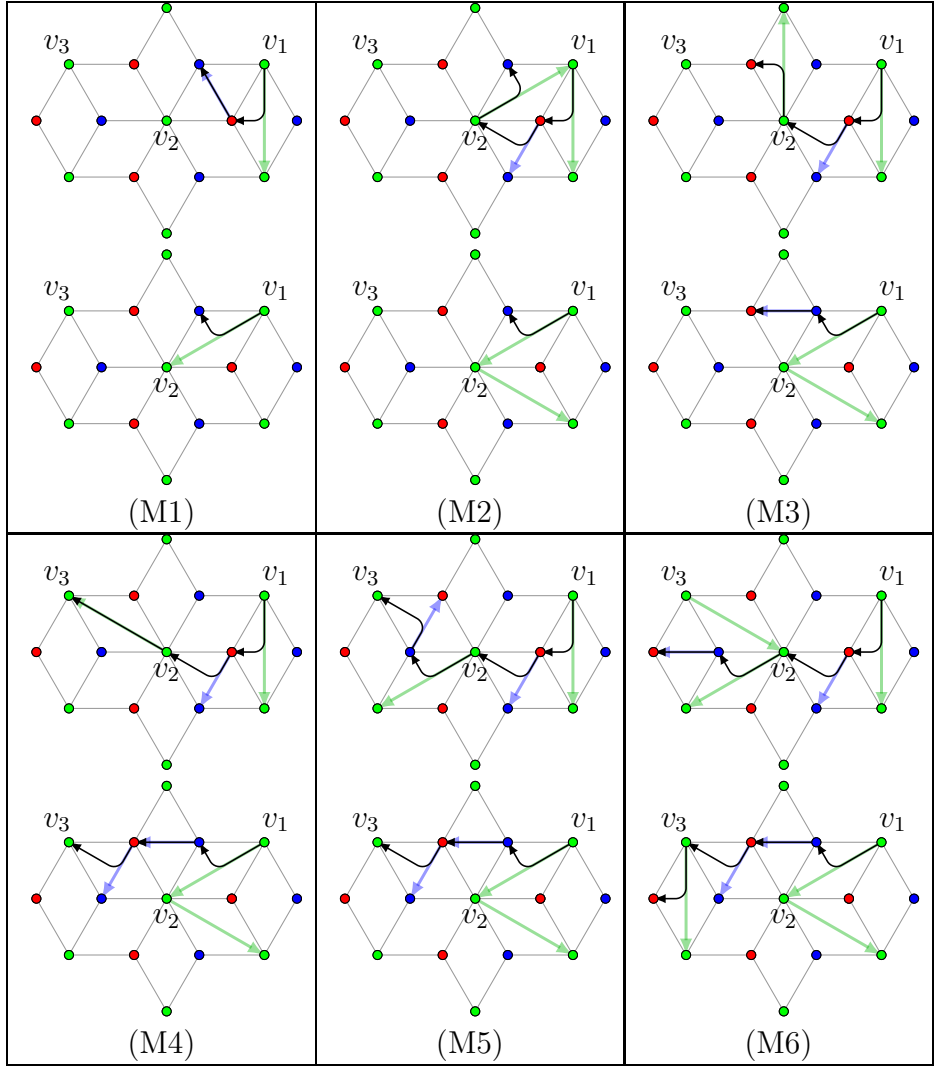


% [inline block 6: 1 envs, 62001 chars -> data_tex | \begin{tabular}{|c|c|c|}\hline \scalebox{1.0}{...]


		\caption{\label{fig:six} The six local moves that one can apply to a non-horizontal cycle.}
	\end{figure}

	Let us show that for any $r$-cycle $\Cbf\in\Cyc^r(\Nnm)$, its preimage $F=\bijinv(\Cbf)$ satisfies $h(F)=r$. First, it is easy to check that if every cycle $C\in \Cbf$ is \emph{horizontal} (meaning that all vertices of $C$ that are vertices of $G$ have the same first coordinate) then $h(F)=r$. Now suppose that not every cycle in $\Cbf$ is horizontal and choose $C\in\Cbf$ so that $C$ is not horizontal but all cycles of $\Cbf$ that are above $C$ are horizontal. Let us look at the set $V(C)$ of vertices of $G$ belonging to $C$. Let $u\in V(C)$ be a vertex such that its first coordinate is minimal, and among all such vertices $u$ choose the one such that the first coordinate of the vertex $v_1\in V(C)$ that precedes $u$ in $C$ would not be minimal. In other words, $u$ is the ``earliest'' among the ``lowest'' vertices of $C$. It follows that $u$ is red and $v_1$ is green, see Figure~\ref{fig:six}. Let $v_1=(i_1,j_1,k_1)$ and define the green vertices $v_2=(i_2,j_2,k_2)$ and $v_3=(i_3,j_3,k_3)$ by
	\[i_1=i_2+1,\quad j_1=j_2+1,\quad k_1=k_2-2;\]
	\[i_3=i_2+1,\quad j_3=j_2-2,\quad k_3=k_2+1.\]
	See Figure~\ref{fig:six} for an illustration of how $v_1,v_2,$ and $v_3$ are positioned relative to each other. 
	
	There are six possibilities of how $C$ can evolve from $v_1$ shown in Figure~\ref{fig:six}. For each of them, we give a local move that modifies $C$ and prove that this local move does not affect $h(F)$. Each local move decreases the number of vertices of $C$ with minimal first coordinate and thus applying these local moves to $C$ eventually transforms $C$ into a horizontal cycle. Thus using these local moves one can make every cycle of $\Cbf$ horizontal without changing $h(F)$ which immediately yields the result. The only thing left to show is why these local moves do not affect $h(F)$. 
	
	Recall that $h(F)$ is defined  as follows. Orient $F$ in a canonical way so that the root of every connected component would be on the lower boundary of $\TC$. Take any green vertex $v'=(m,j',k')$ and look at the root $v=(0,j,k)$ of its connected component in $F$. Then $h(F)=(j'-j+2m)/3$ and it does not depend on the choice of $v'$. For any green vertex $v$ of $F$, we define $[v]$ to be the root of the connected component of $F$ containing $v$. This vertex $[v]$ can be obtained from $v$ by following the oriented edges of $F$ (recall that every vertex of $F$ with nonzero first coordinate has exactly one outgoing edge). Thus it suffices to show that we do not change $[v]$ for at least one vertex $v$ on the upper boundary of $\TC$ when we apply our local move.
	
	As it is apparent from Figure~\ref{fig:six}, the only green vertices of $F$ that change their outgoing edge belong to the set $\{v_1,v_2,v_3\}$. Moreover, it is easy to see that each local move does not actually change $[v_1]$ and $[v_3]$.\footnote{Except for the move (M1) which may change $[v_1]$ if $v_2$ belongs to some other cycle in $\Cbf$. However, the move (M1) does not change $[v_2]$ and $[v_3]$ so an analogous argument applies in this case.} Thus the only case we need to consider is when for every green vertex $v$ on the upper boundary of $\TC$, the path from $v$ to $[v]$ in $F$ passes through $v_2$. In particular, this implies that there is exactly one green vertex $v$ on the upper boundary of $\TC$, so we get $n=1$ and $v_3=v_1$. The fact that in this case each of $[v_1],[v_2],$ and $[v_3]$ is preserved under each local move is verified in a straightforward way.
\end{proof}

We are now ready to prove Theorem~\ref{thm:cube_formula}.
\begin{proof}[Proof of Theorem~\ref{thm:cube_formula}]
	Let $v=(m-1,j,k)$ as in the theorem. We need to show~\eqref{eq:cube_formula}, in other words, we need to show that the sequence $s:\N\to \Q(\x)$ defined by $s(\ell)=\Cubec_{v+\ell g}(\e_v+2\ell n)$ satisfies a linear recurrence with characteristic polynomial $Q(t)$. Let us carefully apply Corollary~\ref{cor:paths_1} to this case. Let $v'=v+\ell g$ and $t'=\e_v+2\ell n-1$, then $\Cubec_{v'}(t'+1)$ counts directed paths in $\Ntnm$ from $a_2=v'+(1,t'-2,1-t')$ to $b_{t'-1}$ given by~\eqref{eq:b_t-1}. Substituting the correct values for $v'$ and $t'$ we get
	\[a_2=v+(1,t-3,2-t)+\ell\g;\quad b_{t'-1}=v+\begin{cases}
	                                                 	(1,-\frac {\e_v}2, \frac{\e_v-2}2),&\text{ if $\e_v$ is even};\\
	                                                 	(1,\frac{1-\e_v}2, \frac{\e_v-3}2),&\text{ if $\e_v$ is odd}.
	                                                 \end{cases}\]
	Thus $b_{t'-1}$ is fixed while $a_2$ increases by $\g$ every time we increase $\ell$. Thus Theorem~\ref{thm:planar} applies, and by Proposition~\ref{prop:Q_t}, the characteristic polynomial $Q(t)$ of the recurrence is precisely the one given in~\eqref{eq:cube_formula}. We are done with the proof of Theorem~\ref{thm:cube_formula}.
\end{proof}

Consider now an arbitrary vertex $v=(i,j,k)\in \TS_m$ and define $r=m-i$. If $\bij$ from Corollary~\ref{cor:paths_cyl} was a bijection between the set of $G_m(v,t)$-groves and the set $\pathst(\ubft_r,\wbft_r)$ then Theorem~\ref{thm:pleth} would follow immediately from Theorem~\ref{thm:planar}. Unfortunately, $\bij$ is just an injection: as we have mentioned in Remark~\ref{rmk:counter}, there can be $r$-paths $\Pbft$ in $\pathst(\ubft_r,\wbft_r)$ such that $\bijinv(\Pbft)$ has the wrong connectivity. Additionally, there are some $r$-paths in $\pathst(\ubft_r,\wbft_r)$ that do not stay entirely inside the graph $G_m(v,t)$. Even worse, the vertices $\ubft_r$ and $\wbft_r$ are \emph{permutable} in $\Ntnm$, for example, there exists an $r$-path starting at $\ubft_r$ and ending at $\wbft'_r:=(b_{t-2},b_{t-1},b_{t-3},b_{t-4},\dots,b_{t-r})$ (such a path would necessarily have to exit the graph $G_m(v,t)$). Thus we need to do more work in order to resolve all these issues and prove the linear recurrence relation.

\begin{definition}
	For a vertex $\vt=(i,j,k)$ of $\Ntnm$, define 
	\[h(\vt):=i+k=-j.\]
\end{definition}

\begin{lemma}\label{lemma:h}
	Suppose that $\Pt$ is a path in $\Ntnm$ and let $(\vt_1,\vt_2,\dots,\vt_p)$ be the vertices of $\Pt$ that are also vertices of $G$ rather than centers of lozenges of $G$. We have 
	\[h(\vt_1)\leq h(\vt_2)\leq\dots\leq h(\vt_p),\quad\text{and}\quad h(\vt_\ell)<h(\vt_{\ell+2})\ \forall\, 1\leq\ell\leq p-2.\]
\end{lemma}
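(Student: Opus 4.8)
The plan is to control $h$ along each elementary step of $\Pt$, where an elementary step runs from one $\vt_\ell$ to the next. First I would record the local structure of $\Ntnm$: every edge either joins two vertices of $G$ --- and these are exactly the boundary edges $u+e_{23}\to u$ --- or joins a vertex of $G$ to the center $e_L$ of a lozenge $L$, and no two centers are adjacent. Hence for consecutive entries $\vt_\ell,\vt_{\ell+1}$ of the list, either (a) $\vt_{\ell+1}=\vt_\ell-e_{23}$ (a \emph{boundary step}), or (b) $\vt_\ell\to e_L\to\vt_{\ell+1}$ for a unique lozenge $L$ with $\vt_{\ell+1}=d_L$ and $\vt_\ell\in\{a_L,b_L,c_L\}$ (a \emph{lozenge step}).

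The key geometric input is that the level lines of $h$ are the lines of direction $e_{31}=(-1,0,1)$, the only one of $\pm e_{12},\pm e_{23},\pm e_{31}$ with vanishing second coordinate. For a boundary step $h$ increases by exactly $1$, since $e_{23}$ has second coordinate $+1$. For a lozenge step I would check directly from the gadget of Figure~\ref{fig:nvt}, by running through the six placements of a unit lozenge of $G$ relative to the level lines, that the four values $\{h(u):u\in L\}$ are either $(a,a+1,a+2,a+1)$, with $d_L$ the unique vertex of value $a+2$, or $(a,a,a+1,a+1)$ --- the \emph{degenerate} case --- in which the two vertices of value $a+1$ are the endpoints of an $e_{31}$-edge of $L$ (hence a green--red or green--blue edge of $G$, since $G$ contains no red--blue edge), and $d_L$ is the non-green one of these two. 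In every case a lozenge step leaves $h$ unchanged or increases it, with equality exactly when $L$ is degenerate and $\vt_\ell$ is the non-$d_L$ vertex of value $a+1$; together with the boundary case this gives $h(\vt_1)\le\dots\le h(\vt_p)$.

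For $h(\vt_\ell)<h(\vt_{\ell+2})$, observe that if either of the two steps raises $h$ by $1$ (in particular if either is a boundary step) while the other does not lower $h$, we are done. So assume both steps are lozenge steps along which $h$ is constant: then both lozenges $L,L'$ are degenerate, $\vt_\ell$ and $\vt_{\ell+1}=d_L$ are the two maximal vertices of $L$, and $\vt_{\ell+1}$ and $\vt_{\ell+2}=d_{L'}$ are the two maximal vertices of $L'$, so $\{\vt_\ell,\vt_{\ell+1}\}$ and $\{\vt_{\ell+1},\vt_{\ell+2}\}$ are $e_{31}$-edges; thus $\vt_\ell,\vt_{\ell+2}\in\{\vt_{\ell+1}+e_{31},\vt_{\ell+1}-e_{31}\}$. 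If $\vt_\ell=\vt_{\ell+2}$ then (necessarily $L\neq L'$, since $d_L$ is not a source of the gadget of $L$) $L$ and $L'$ are distinct lozenges sharing the edge $\{\vt_\ell,\vt_{\ell+1}\}$, forcing them to lie on opposite sides of it and contradicting that each lies strictly below this edge; so $\vt_\ell\neq\vt_{\ell+2}$, and $\vt_\ell,\vt_{\ell+1},\vt_{\ell+2}$ are three consecutive vertices of one level line, whose colours run through $0,1,2$ cyclically. An $e_{31}$-edge $\{p,p+e_{31}\}$ lies in $G$ only when $p$ is green or blue; requiring both $\{\vt_\ell,\vt_{\ell+1}\}$ and $\{\vt_{\ell+1},\vt_{\ell+2}\}$ to be edges of $G$ therefore forces the middle vertex $\vt_{\ell+1}$ to be green. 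But $\vt_{\ell+1}=d_L$ is a maximal vertex of the degenerate lozenge $L$, hence the non-green one of the two, by the previous paragraph --- a contradiction.

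The main obstacle is the verification in the second paragraph --- that $d_L$ attains $\max_{u\in L}h(u)$ and, when this maximum is attained twice, is the non-green of the two maximal vertices. This is the only place where the precise shape of the Carroll--Speyer gadget (Figure~\ref{fig:nvt}) is used, and it reduces to a short case analysis over the six orientations of a unit lozenge of $G$; once it is available, the colour bookkeeping of the third paragraph is immediate, so I expect the bulk of the work to lie in setting up and carrying out that case check cleanly.
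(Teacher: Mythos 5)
Your proposal is correct and is essentially a fleshed-out version of the paper's (one-line) argument: both rest on inspecting the gadget of Figure~\ref{fig:nvt} to see that $h$ is non-decreasing along every step, plus the color observation for strictness --- your ``an $h$-constant lozenge step goes from a green vertex to the non-green maximal vertex $d_L$'' is exactly the contrapositive of the paper's ``if $\vt_\ell$ is red or blue then $h(\vt_\ell)<h(\vt_{\ell+1})$''. The case check you defer is genuinely routine (there are three lozenge orientations, not six), and your stated outcome of it is correct.
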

\begin{proof}
	This follows immediately from the definition of $\Ntnm$, see Figure~\ref{fig:nvt}. In particular, if $\vt_\ell$ is either red or blue then $h(\vt_\ell)<h(\vt_{\ell+1})$ which implies the second claim.
\end{proof}

% \def\const{ \operatorname{const}}
% Note that $h(\vt)$ is a linear function and thus we can consider lines $h(\vt)=y$ for $y\in\Z$ which are certain sets of vertices of $\Ntnm$. 
Note that since $\g=(0,3n,-3n)$, we get that $h(\vt+\g)=h(\vt)-3n$.

\begin{definition}
	We say that an $r$-vertex $\vbft=(\vt_1,\vt_2,\dots,\vt_r)$ in $\Ntnm$ is \emph{$h$-constant} if we have \[h(\vt_1)=h(\vt_2)=\dots=h(\vt_r).\]
	In this case, we define $h(\vbft):=h(\vt_i)$ where $1\leq i\leq r$ is arbitrary.
\end{definition}

An immediate consequence of Lemma~\ref{lemma:h} is the following.
\begin{corollary}\label{cor:non_permutable}
	Let $\ubft$ and $\vbft$ be two $h$-constant $r$-vertices in $\Ntnm$. Then they are non-permutable.
\end{corollary}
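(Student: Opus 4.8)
\textbf{Proof proposal for Corollary~\ref{cor:non_permutable}.}

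The plan is to deduce this directly from Lemma~\ref{lemma:h} together with the definition of non-permutability. Recall that two $r$-vertices $\ubft=(\ut_1,\dots,\ut_r)$ and $\vbft=(\vt_1,\dots,\vt_r)$ are non-permutable if $\pathst(\ubft,\sigma\vbft)$ is empty unless $\sigma$ is the identity permutation. So suppose $\ubft$ and $\vbft$ are both $h$-constant, and suppose for contradiction that there is a non-identity permutation $\sigma\in\Sfr_r$ with $\pathst(\ubft,\sigma\vbft)$ nonempty. Let $\Pbft=(\Pt_1,\dots,\Pt_r)$ be an $r$-path in $\pathst(\ubft,\sigma\vbft)$, so that $\Pt_\ell$ starts at $\ut_\ell$ and ends at $\vt_{\sigma(\ell)}$.

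The key step is to exploit the fact that all of the $\ut_\ell$ have the same value of $h$ and all of the $\vt_{\sigma(\ell)}$ have the same value of $h$. By Lemma~\ref{lemma:h}, for each path $\Pt_\ell$ the quantity $h$ is (weakly) monotone non-decreasing along the vertices of $\Pt_\ell$ that lie on $G$; in particular $h(\ut_\ell)\le h(\vt_{\sigma(\ell)})$, and since both are independent of $\ell$, these two common values $h(\ubft)$ and $h(\vbft)$ satisfy $h(\ubft)\le h(\vbft)$. That alone does not immediately give the contradiction, so the next step is to look more carefully at the planarity of $\Ntnm$ and the cyclic order of the boundary vertices on the cylinder: because $\Ntnm$ (more precisely its projection, or the lift to the strip) is planar, the paths $\Pt_1,\dots,\Pt_r$ are pairwise vertex-disjoint, hence pairwise non-crossing, and this forces them to connect the $\ut_\ell$ to the $\vt_{\sigma(\ell)}$ in an order-preserving way once we have arranged the start and end vertices in a consistent cyclic/linear order. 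The essential point to nail down is that when $\ubft$ and $\vbft$ are $h$-constant, the vertices $\ut_1,\dots,\ut_r$ occur in a definite order along a level set of $h$ and likewise for $\vt_1,\dots,\vt_r$, and a vertex-disjoint family of directed paths in a planar acyclic graph respecting these two orderings must realize the identity matching. I expect to phrase this by observing that along a single $h$-level the vertices are linearly ordered by (say) the first coordinate $i$, and that a directed path cannot ``swap'' relative position with a disjoint directed path in the planar embedding, so $\sigma$ must preserve that order and hence be the identity.

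The main obstacle will be making the ``planar disjoint paths respect the cyclic order'' argument precise in the setting of the cylinder rather than the disc: one has to be careful that the relevant start and end vertices all lie on a portion of the boundary (or on an $h$-level curve) that is an \emph{arc}, not the full circle, so that ``cyclic order'' becomes an honest linear order and the non-crossing property of vertex-disjoint paths genuinely forbids a transposition. I would handle this by passing to the universal cover — the strip $\TS_m$ with its $\g$-periodic structure — lifting $\Pbft$ there, and using Lemma~\ref{lemma:h} to control which fundamental domains the paths can reach; since $h(\vt+\g)=h(\vt)-3n$, the $h$-constant hypothesis pins the endpoints into a bounded region, after which the standard Lindström--Gessel--Viennot-type non-crossing argument for planar graphs applies verbatim. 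If a fully clean lift argument proves cumbersome, an alternative is to invoke directly the characterization of non-permutable vertices from \cite{Networks} together with the monotonicity of $h$, reducing the statement to a short combinatorial check that two $h$-level sets cannot be matched by a nontrivial planar permutation.
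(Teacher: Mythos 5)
Your argument is correct and is exactly what the paper has in mind: the paper gives no proof beyond declaring the corollary an immediate consequence of Lemma~\ref{lemma:h}, and the content of that ``immediate'' step is precisely your combination of the $h$-monotonicity along paths with the planar non-crossing argument for vertex-disjoint path families. One simplification: the $r$-paths in $\pathst(\ubft,\sigma\vbft)$ live by definition in $\Ntnm$, which is already the planar network in the strip (the cylinder only enters via the projection $\Nnm$), so no passage to a universal cover is needed --- the two $h$-level sets are transversal segments of the strip, Lemma~\ref{lemma:h} confines every path to the topological disc between them, and disjointness then forces the identity matching.
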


 Let now $v=(i,j,k)\in\TS_m$ be a vertex and define $r:=m-i$. As we know from Corollary~\ref{cor:paths_cyl}, $\bij$ maps each $G_m(v,t)$-grove $F$ to some $r$-path $\bij(F)\in\pathst(\ubft_r,\wbft_r)$ that starts at $\ubft_r=(a_2,a_4,\dots,a_{2r})$ and ends at $\wbft_r=(b_{t-1},b_{t-2},\dots,b_{t-r})$. We give a necessary and sufficient condition for an $r$-path $\Pbft\in\pathst(\ubft_r,\wbft_r)$ to belong to the image of $\bij$.

\begin{lemma}\label{lemma:M}
There exist two integers $M_1$ and $M_2$ depending only on $n$, $m$, and $r$ such that $\Pbft$ belongs to the image of $\bij$ if and only if there is another $r$-path $\Pbft'\in\pathst(\ubft_r,\wbft_r)$ that belongs to the image of $\bij$ and coincides with $\Pbft$ for all vertices $\ut$ of $\Ntnm$ satisfying either $h(b_t)+M_2\leq h(\ut)$ or $h(\ut)\leq h(a_2)+M_1$.
\end{lemma}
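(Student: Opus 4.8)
\textbf{Proof plan for Lemma~\ref{lemma:M}.}

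The plan is to isolate the "local" obstructions that prevent an $r$-path $\Pbft\in\pathst(\ubft_r,\wbft_r)$ from being $\bij(F)$ for a genuine $G_m(v,t)$-grove $F$, and to show that each such obstruction is visible only in a bounded window of $h$-values near the top boundary $h(b_t)$ or near the bottom boundary $h(a_2)$. First I would recall from Corollary~\ref{cor:paths_cyl} and Theorem~\ref{thm:bij_groves} exactly when $\bijinv(\Pbft)$ fails to be a $G_m(v,t)$-grove: either (i) $\Pbft$ uses a vertex or edge that lies outside $G_m(v,t)$, or (ii) $\Pbft$ stays inside $G_m(v,t)$ but its preimage $\Ft$ (the extension to a $G(v,t)$-forest that agrees with $\Finfty(v,t)$ outside $\TS_m$) has the wrong connectivity, i.e.\ $\Fboundt\neq\Fbound_0$, equivalently by Proposition~\ref{prop:chord} the chord from $a_{t+1}$ does not go to $b_{t-1}$. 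Using Lemma~\ref{lemma:h}, the function $h$ is weakly increasing along any path and strictly increasing over two steps, so the graph $G_m(v,t)$ occupies $h$-values in a window $[h(a_2),h(b_t)]$ up to an additive constant depending only on $n,m,r$; hence a path can only leave $G_m(v,t)$ (obstruction (i)) within a bounded distance of one of the two boundary levels. This gives constants controlling obstruction (i).

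Next I would handle obstruction (ii). The key observation is that the connectivity condition $\Fboundt=\Fbound_0$, via Proposition~\ref{prop:chord}, is a statement about how a single distinguished chord (from $a_{t+1}$) connects across $G(v,t)$, and this chord is entirely determined by what $\Ft$ does inside $\TS_m$ together with the fixed behaviour of $\Finfty(v,t)$ outside $\TS_m$. I claim the discrepancy — whether the chord ends at $b_{t-1}$ or somewhere else — is detected by the cyclic pattern of the arrows $\Fboundt(u)$ restricted to boundary vertices $u\in\Gbound$, and by the non-crossing property this pattern is forced once one knows the relative positions of the path-endpoints; the only way the path-combinatorics can "go wrong" is through a crossing-type defect that, again by the $h$-monotonicity of Lemma~\ref{lemma:h}, must occur within a bounded $h$-window near one of the two boundaries. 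More precisely, I would argue that if $\Pbft$ agrees with some valid $\bij(F)$ outside the windows $h(\ut)\le h(a_2)+M_1$ and $h(\ut)\ge h(b_t)+M_2$, then the portion of $\Pbft$ in the "bulk" $h(a_2)+M_1<h(\ut)<h(b_t)+M_2$ is forced to be the correct grove behaviour: inside the bulk, $G_m(v,t)$ looks like a periodic strip of lozenges with no boundary triangles distinguishing the two cases, so the local $\bij$/$\bijinv$ rules uniquely propagate the connectivity; any defect would have to be anchored at a boundary level.

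For the quantitative part, I would take $M_1$ and $M_2$ large enough that: the portion of $G(v,t)$ with $h\le h(a_2)+M_1$ contains all the "lower" boundary vertices $a_{2i}, b_i$ together with the part of $\Finfty(v,t)$ that determines their roots; symmetrically for $M_2$ near the top; and large enough that the strip of $G_m(v,t)$ in the intermediate range is the standard periodic lozenge region on which Corollary~\ref{cor:paths_1}-style arguments (no boundary triangles, unique propagation) apply. The forward direction of the "if and only if" is trivial — take $\Pbft'=\Pbft$. For the reverse direction, given $\Pbft$ agreeing with a valid $\Pbft'=\bij(F')$ in both windows, I would show $\bijinv(\Pbft)$ is itself a $G_m(v,t)$-grove: it stays inside $G_m(v,t)$ (since leaving $G_m(v,t)$ only happens in the windows where $\Pbft=\Pbft'$ already, and $\Pbft'$ stays inside), and $\partial\widetilde{\bijinv(\Pbft)}(a_{t+1})=b_{t-1}$ because the chord's endpoints live in the windows where $\Pbft$ matches $\Pbft'$, whose chord does end at $b_{t-1}$ by Proposition~\ref{prop:chord}, while the intermediate bulk behaviour is forced and identical.

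\textbf{Main obstacle.} The hard part will be the precise bookkeeping in the reverse direction: pinning down that the "bulk" region $h(a_2)+M_1<h(\ut)<h(b_t)+M_2$ of $G_m(v,t)$ is genuinely indistinguishable between grove-type and non-grove-type behaviour, i.e.\ that the connectivity defect of Proposition~\ref{prop:chord} truly cannot be created purely in the bulk but must be "anchored" within a bounded distance of a boundary level. This requires a careful analysis of which lozenge-flips in a periodic strip can alter the $a_{t+1}$-chord's terminus, using the non-crossing structure of $\Fboundt$ and the explicit description of $\Finfty(v,t)$; choosing $M_1,M_2$ then amounts to bounding, in terms of $n,m,r$ only, how far from the boundary such an anchoring defect can propagate. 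Everything else — the $h$-monotonicity bounds for obstruction (i), and the trivial forward direction — follows routinely from Lemma~\ref{lemma:h}, Proposition~\ref{prop:chord}, and Corollary~\ref{cor:paths_cyl}.
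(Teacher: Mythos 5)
Your skeleton is right: the forward direction is trivial with $\Pbft'=\Pbft$, and your handling of the containment obstruction via Lemma~\ref{lemma:h} (the starting vertices $a_2,\dots,a_{2r}$ of any $r$-path in $\pathst(\ubft_r,\wbft_r)$ all lie in a window of $h$-width $M_1=h(a_{2r})-h(a_2)$, so $\Pbft$ can only exit $G_m(v,t)$ where it already agrees with $\Pbft'$) is essentially the paper's choice of $M_1$. The gap is exactly where you flag it: the connectivity obstruction. Your proposed mechanism --- that in the bulk $h(a_2)+M_1<h(\ut)<h(b_t)+M_2$ the local $\bij$/$\bijinv$ rules ``uniquely propagate'' the connectivity, so that any defect must be ``anchored'' near a boundary level --- is not correct as stated. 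The bulk connectivity of $\bijinv(\Pbft)$ is not forced: $r$-paths that agree near both boundaries can induce genuinely different non-crossing partitions of $\Gbound$ (this is precisely the phenomenon of Remark~\ref{rmk:counter} and Figure~\ref{fig:counter12}), so one cannot argue that connectivity defects are impossible to create in the bulk.

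What actually closes the argument, and what is missing from your plan, is that by Proposition~\ref{prop:chord} only \emph{one} piece of connectivity matters --- whether $a_{t+1}$ reaches $b_{t-1}$, equivalently the connected component of $b_t$ --- and this particular component is confined to a window of bounded $h$-width at the top. The paper defines $M_2$ as the extremal value of $h(b_t)-h(u)$ over all $u\in\TS_m$ connected to $b_t$ in some $G_m(v,t)$-grove; the non-obvious point, which your plan does not address, is that this bound is independent of $t$ (there is a ``right-justified'' grove whose components are weakly rightmost, and for it the component of $b_t$ stabilizes), so $M_2$ depends only on $n$, $m$, $r$ as the lemma requires. With this choice, $\bijinv(\Pbft)$ and $F'=\bijinv(\Pbft')$ agree on the top window because $\bijinv$ acts lozenge by lozenge; the component of $b_t$ in $F'$ lies entirely inside that window, hence the components of $b_t$ in the two forests coincide, and Proposition~\ref{prop:chord} finishes the proof. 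In short, the bulk is not ``forced'' --- it is \emph{irrelevant}, because the grove condition is localized near $b_t$; without that localization your bulk-propagation step cannot be completed.
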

\begin{proof}
	We only need to prove one direction since if $\Pbft$ itself belongs to the image of $\bij$ then we can set $\Pbft'=\Pbft$. 
	
	We need to specify $M_1$ and $M_2$. Set $M_1=h(a_{2r})-h(a_2)$. It is a bit harder to describe $M_2$. Given a $G_m(v,t)$-grove $F$, we set $M_2(F)$ to be the minimum of $h(b_t)-h(u)$ where $u\in \TS_m$ is connected to $b_t$ in $F$. We set $M_2$ to be the minimum of $M_2(F)$ where $F$ runs over the set of all $G_m(v,t)$-groves. It is easy to observe that $M_2$ does not depend on $t$ because there is a specific \emph{right-justified} $G_m(v,t)$-grove $F'$ such that every connected component of $F'$ is weakly to the right of every connected component of $F$ and for this $F'$, $M_2(F')$ is the same for all sufficiently large $t$. 
	
	Suppose now that $\Pbft,\Pbft'\in\pathst(\ubft_r,\wbft_r)$ satisfy the requirements of the lemma with the above choice of $M_1$ and $M_2$. We want to show that $\Pbft$ belongs to the image of $\bij$. Because of the way we chose $M_1$, $\Pbft$ stays inside of $G_m(v,t)$ and thus the preimage $\bijinv(\Pbft)$ is a certain $G_m(v,t)$-forest. On the other hand, our choice of $M_2$ ensures that the connected components of $b_t$ in $\bijinv(\Pbft)$ and in $\bijinv(\Pbft')$ are the same. The result follows by Proposition~\ref{prop:chord}.
\end{proof}

% 
% The following refinement of Corollary~\ref{cor:paths_cyl} will be crucial for the rest of our proof:
% 
% \begin{lemma}
% 	 Let $v=(i,j,k)\in\TS_m$ be a vertex, define $r:=m-i$. Then there exist two positive integers $M_1$ and $M_2$ such that:
% 	 \begin{enumerate}
% 	 	\item 
% 	 \end{enumerate}
% 
% 	 
% 	 the map $\bij$ restricts to an injection from the set of $G_m(v,t)$-groves to the set $\pathst(\ubft_m,\wbft_m)$ of $r$-paths $\Pbft$ in $\Ntnm$ that start at $\ubft_m=(a_2,a_4,\dots,a_{2r})$ and end at $\wbft_m=(b_{t-1},b_{t-2},\dots,b_{t-r})$. Moreover, we have
%  \[\wt(F)=\wt(\bij(F)).\]
% \end{lemma}

%Our proof consists of two parts. In Section~\ref{sect:networks}, we generalize our approach from~\cite{GP2} to give a general result on linearizability for planar cylindrical networks. In Section~\ref{sect:temperley} we then give a weight-preserving bijection between groves and tuples of non-crossing lattice paths in a certain network. As we will see, combining these two results together will yield Theorems~\ref{thm:recurrence_1} and~\ref{thm:cube_formula} almost immediately. 

We are ready to finish the proof of Theorem~\ref{thm:pleth}.

\begin{proof}[Proof of Theorem~\ref{thm:pleth}]
% Let us first outline the key steps in the proof. 
We need to show that the sequence of values $(\Cubec_{v+\ell g}(\e_v+2\ell n))_{\ell\in\N}$ of the cube recurrence in the cylinder satisfies a linear recurrence with characteristic polynomial $Q^\plee{r}(t)$ where $Q(t)$ is given by~\eqref{eq:Q_t}. By Proposition~\ref{prop:CS}, $\Cubec_{v+\ell g}(\e_v+2\ell n)$ is a sum of weights of all $G_m(v,t)$-groves. The map $\bij$ assigns to each such $G_m(v,t)$-grove $F$ an $r$-path $\Pbft_\ell=\bij(F)\in\pathst(\ubft_\ell,\wbft)$ between two $r$-vertices $\ubft_\ell$ and $\wbft$ of $\Ntnm$, but not all $r$-paths in $\pathst(\ubft_\ell,\wbft)$ belong to the image of $\bij$. We decompose each path $\Pbft\in\pathst(\ubft_\ell,\wbft)$ as a concatenation of three paths $\Pbft^\parr1,\Pbft^\parr2,$ and $\Pbft^\parr3$. The decomposition is similar to the one we used in the proof of~\cite[Theorem~4.9]{Networks}. Namely, the path $\Pbft^\parr i$ is the restriction of $\Pbft$ to the set of all vertices $\ut$ of $\Ntnm$ satisfying:
\[
\begin{cases}
	h(\ut)\leq M_1-3n\ell,&\text{if $i=1$};\\
	M_1-3n\ell\leq h(\ut)\leq M_2,&\text{if $i=2$};\\
	M_2\leq h(\ut),&\text{if $i=3$.}
\end{cases}\]
Here $M_1, M_2\in\Z$ are the constants given by Lemma~\ref{lemma:M}. By Lemma~\ref{lemma:M}, the fact that the $r$-path $\Pbft$ belongs to the image of $\bij$ depends only on $\Pbft^\parr1$ and $\Pbft^\parr3$ but not on $\Pbft^\parr2$. Since there are finitely many choices for $\Pbft^\parr1$ and $\Pbft^\parr3$, we get that $\Cubec_{v+\ell g}(\e_v+2\ell n)$ decomposes as a finite linear combination of sequences that satisfy a linear recurrence with characteristic polynomial $Q^\plee{r}(t)$. This implies that $\Cubec_{v+\ell g}(\e_v+2\ell n)$ itself satisfies a linear recurrence with the same characteristic polynomial, which finishes the proof of the theorem. 
\end{proof}

\section*{Acknowledgment}
The author is indebted to Pavlo Pylyavskyy for his numerous contributions to this project.

\bibliographystyle{plain}
\bibliography{cube}

\begin{thebibliography}{10}

\bibitem{CS}
Gabriel~D. Carroll and David~E. Speyer.
\newblock The cube recurrence.
\newblock {\em Electron. J. Combin.}, 11(1):Research Paper 73, 31 pp.
  (electronic), 2004.

\bibitem{DM3}
Etsur\=o Date, Michio Jimbo, and Tetsuji Miwa.
\newblock Method for generating discrete soliton equations. {IV}, {V}.
\newblock {\em J. Phys. Soc. Japan}, 52(3):761--765, 766--771, 1983.

\bibitem{DM2}
Etsur\=o Date, Michio Jimbo, Tetsuji Miwa, and and.
\newblock Method for generating discrete soliton equations. {III}.
\newblock {\em J. Phys. Soc. Japan}, 52(2):388--393, 1983.

\bibitem{DM1}
Etsur\=o Date, Michio Jinbo, and Tetsuji Miwa.
\newblock Method for generating discrete soliton equations. {I}, {II}.
\newblock {\em J. Phys. Soc. Japan}, 51(12):4116--4124, 4125--4131, 1982.

\bibitem{DK1}
Philippe Di~Francesco and Rinat Kedem.
\newblock Positivity of the {$T$}-system cluster algebra.
\newblock {\em Electron. J. Combin.}, 16(1):Research Paper 140, 39, 2009.

\bibitem{FZ}
Sergey Fomin and Andrei Zelevinsky.
\newblock Cluster algebras. {I}. {F}oundations.
\newblock {\em J. Amer. Math. Soc.}, 15(2):497--529 (electronic), 2002.

\bibitem{FZCube}
Sergey Fomin and Andrei Zelevinsky.
\newblock The {L}aurent phenomenon.
\newblock {\em Adv. in Appl. Math.}, 28(2):119--144, 2002.

\bibitem{GP1}
Pavel Galashin and Pavlo Pylyavskyy.
\newblock The classification of {Z}amolodchikov periodic quivers.
\newblock {\em Amer. J. Math.}, to appear.
\newblock \href{http://arxiv.org/abs/1603.03942}{arXiv:1603.03942}.

\bibitem{GP2}
Pavel Galashin and Pavlo Pylyavskyy.
\newblock Quivers with subadditive labelings: classification and integrability.
\newblock {\em arXiv:1606.04878}, 2016.

\bibitem{Networks}
Pavel Galashin and Pavlo Pylyavskyy.
\newblock Linear recurrences for cylindrical networks.
\newblock {\em arXiv:1704.05160}, 2017.

\bibitem{GP3}
Pavel Galashin and Pavlo Pylyavskyy.
\newblock Quivers with additive labelings: classification and algebraic
  entropy.
\newblock {\em arXiv:1704.05024}, 2017.

\bibitem{Henriques}
Andr{\'e} Henriques.
\newblock A periodicity theorem for the octahedron recurrence.
\newblock {\em J. Algebraic Combin.}, 26(1):1--26, 2007.

\bibitem{HS}
Andr{\'e} Henriques and David~E. Speyer.
\newblock The multidimensional cube recurrence.
\newblock {\em Adv. Math.}, 223(3):1107--1136, 2010.

\bibitem{K}
Bernhard Keller.
\newblock The periodicity conjecture for pairs of {D}ynkin diagrams.
\newblock {\em Ann. of Math. (2)}, 177(1):111--170, 2013.

\bibitem{Kreweras}
G.~Kreweras.
\newblock Sur les partitions non crois\'ees d'un cycle.
\newblock {\em Discrete Math.}, 1(4):333--350, 1972.

\bibitem{KN}
A.~Kuniba and T.~Nakanishi.
\newblock Spectra in conformal field theories from the {R}ogers dilogarithm.
\newblock {\em Modern Phys. Lett. A}, 7(37):3487--3494, 1992.

\bibitem{KNS}
Atsuo Kuniba, Tomoki Nakanishi, and Junji Suzuki.
\newblock Functional relations in solvable lattice models. {I}. {F}unctional
  relations and representation theory.
\newblock {\em Internat. J. Modern Phys. A}, 9(30):5215--5266, 1994.

\bibitem{Miwa}
Tetsuji Miwa.
\newblock On {H}irota's difference equations.
\newblock {\em Proc. Japan Acad. Ser. A Math. Sci.}, 58(1):9--12, 1982.

\bibitem{OTGR}
Y.~Ohta, K.~M. Tamizhmani, B.~Grammaticos, and A.~Ramani.
\newblock Singularity confinement and algebraic entropy: the case of the
  discrete {P}ainlev\'e equations.
\newblock {\em Phys. Lett. A}, 262(2-3):152--157, 1999.

\bibitem{Propp}
James Propp.
\newblock The many faces of alternating-sign matrices.
\newblock In {\em Discrete models: combinatorics, computation, and geometry
  ({P}aris, 2001)}, Discrete Math. Theor. Comput. Sci. Proc., AA, pages
  043--058 (electronic). Maison Inform. Math. Discr\`et. (MIMD), Paris, 2001.

\bibitem{P}
Pavlo Pylyavskyy.
\newblock Zamolodchikov integrability via rings of invariants.
\newblock {\em Journal of Integrable Systems}, 1(1):xyw010, 2015.

\bibitem{RVT}
F.~Ravanini, A.~Valleriani, and R.~Tateo.
\newblock Dynkin {TBA}s.
\newblock {\em Internat. J. Modern Phys. A}, 8(10):1707--1727, 1993.

\bibitem{Schief}
Wolfgang~Karl Schief.
\newblock Lattice geometry of the discrete {D}arboux, {KP}, {BKP} and {CKP}
  equations. {M}enelaus' and {C}arnot's theorems.
\newblock {\em J. Nonlinear Math. Phys.}, 10(suppl. 2):194--208, 2003.

\bibitem{OEIS}
N.~J.~A. Sloane.
\newblock The on-line encyclopedia of integer sequences.

\bibitem{Sp}
David~E. Speyer.
\newblock Perfect matchings and the octahedron recurrence.
\newblock {\em J. Algebraic Combin.}, 25(3):309--348, 2007.

\bibitem{Vo}
Alexandre~Yu. Volkov.
\newblock On the periodicity conjecture for {$Y$}-systems.
\newblock {\em Comm. Math. Phys.}, 276(2):509--517, 2007.

\bibitem{Z}
Al.~B. Zamolodchikov.
\newblock On the thermodynamic {B}ethe ansatz equations for reflectionless
  {$ADE$} scattering theories.
\newblock {\em Phys. Lett. B}, 253(3-4):391--394, 1991.

\end{thebibliography}

\end{document}